\theoremstyle{plain}
\newtheorem{thmx}{Theorem}
\renewcommand{\thethmx}{\Alph{thmx}} 
\newtheorem{thm}{Theorem}[section]  
\newtheorem{lem}[thm]{Lemma}
\newtheorem{claim}[thm]{Claim}
\newtheorem{proposition}[thm]{Proposition}
\newtheorem{cor}[thm]{Corollary}
\newtheorem{conjecture}[thm]{Conjecture}
\theoremstyle{definition}
\newtheorem{dfn}[thm]{Definition}
\theoremstyle{remark}
\newtheorem{rem}[thm]{Remark}
\numberwithin{equation}{section}  
\theoremstyle{plain}
\newlist{thmlist}{enumerate}{1}
\setlist[thmlist]{wide = 0pt, labelwidth = 2em, labelsep*=0em, itemindent = 0pt, leftmargin = \dimexpr\labelwidth + \labelsep\relax, noitemsep,topsep = 1ex, font=\normalfont, label=(\roman*), ref=\thethm.(\roman{thmlisti})}
\newlist{thmenum}{enumerate}{1} % also creates a counter called 'propenumi'
\setlist[thmenum]{wide = 0pt, labelwidth = 2em, labelsep*=0em, itemindent = 0pt, leftmargin = \dimexpr\labelwidth + \labelsep\relax, noitemsep,topsep = 1ex, font=\normalfont, label=(\roman*), ref=\thethmx.(\roman{thmenumi})}%{label=\alph*), ref=\thethmx~(\alph*)}
\newlist{corlist}{enumerate}{1} % also creates a counter called 'propenumi'
\setlist[corlist]{wide = 0pt, labelwidth = 2em, labelsep*=0em, itemindent = 0pt, leftmargin = \dimexpr\labelwidth + \labelsep\relax, noitemsep,topsep = 1ex, font=\normalfont, label=(\roman*), ref=\thecorx.(\roman{corlisti})}%{label=\alph*), ref=\thethmx~(\alph*)}
\crefname{lem}{Lemma}{Lemmas} 
\crefname{conjecture}{Conjecture}{Conjectures}
\crefname{thm}{Theorem}{Theorems}
\crefname{proposition}{Proposition}{Propositions}
\crefname{dfn}{Definition}{Definitions}
\crefname{rem}{Remark}{Remarks}
\crefname{cor}{Corollary}{Corollaries}
\crefname{corx}{Corollary}{Corollaries}
\crefname{problem}{Problem}{Problems}
\crefname{thmx}{Theorem}{Theorems}
\crefname{claim}{Claim}{Claims}
\crefname{assumption}{Assumption}{Assumptions}
\crefname{main}{Main Theorem}{Main Theorems}
\def\ep{\varepsilon}
\def\D{\mathscr{D}} 
\def\D{D}
\newcommand{\da}{\rightharpoonup}
\newcommand*{\rom}[1]{\expandafter\@slowromancap\romannumeral #1@}
\newcommand{\crefnames}[3]{%
	\@for\next:=#1\do{%
		\expandafter\crefname\expandafter{\next}{#2}{#3}%
	}%
}
\newcommand{\cA}{\mathcal A}
\newcommand{\cO}{\mathcal O}
\newcommand{\bC}{\mathbb{C}}
\newcommand{\bD}{\mathbb{D}}
\newcommand{\bP}{\mathbb{P}}
\newcommand{\bZ}{\mathbb{Z}}
\newcommand{\pN}[1]{\bar{N}_{#1}^{\partial}}
  \def\spec{\textrm{Spec}\,}
\newcommand{\Spab}{\mathrm{Sp}_{\mathrm{sab}}}
\newcommand{\Sph}{\mathrm{Sp}_{\mathrm{h}}}
\newcommand{\Spp}{\mathrm{Sp}_{\mathrm{p}}}
\newcommand{\Spalg}{\mathrm{Sp}_{\mathrm{alg}}}
\newcommand{\ord}{{\rm ord}\,}
\newcommand{\ram}{{\rm ram}\,}
\title[On the GGL conjecture]{Hyperbolicity and fundamental groups of complex quasi-projective varieties (I): Maximal quasi-Albanese dimension by Nevanlinna theory}
 \date{\today} 
\author[B. Cadorel]{Beno\^{i}t Cadorel} 
\email{benoit.cadorel@univ-lorraine.fr}
\address{Institut \'Elie Cartan de Lorraine, Universit\'e de Lorraine, F-54000 Nancy,
	France}
\urladdr{http://www.normalesup.org/~bcadorel/} 
\author[Y. Deng]{Ya Deng}
\email{ya.deng@math.cnrs.fr, deng@imj-prg.fr}
\address{CNRS,  
	Institut de Math\'ematiques de Jussieu-Paris Rive Gauche,
	Sorbonne Universit\'e, Campus Pierre et Marie Curie,
	4 place Jussieu, 75252 Paris Cedex 05, France}
\urladdr{https://ydeng.perso.math.cnrs.fr}
\author[K. Yamanoi]{Katsutoshi Yamanoi}
\email{yamanoi@math.sci.osaka-u.ac.jp}
\address{Department of Mathematics, Graduate School of Science, Osaka University, Toyonaka,  Osaka 560-0043, Japan} 
\urladdr{https://sites.google.com/site/yamanoimath/} 
 \subjclass{32H30, 14K20}
\begin{document} 
%\begin{abstract}In this paper we establish a Second Main Theorem for holomorphic maps $f:Y \to X$, where $Y$ is a proper ramified covering of $\bC_{>1}:=\{z\in\bC \mid |z|>1\}$ and $X$ is a complex quasi-projective variety of maximal quasi-Albanese dimension. As a consequence, we prove the generalized Green–Griffiths–Lang conjecture for such $X$, thereby extending the 2015 result of the third author from the projective to the quasi-projective case. \end{abstract}
	
	\begin{abstract}
	This is the first part of a series of three papers.
  In this paper, we establish a Big Picard type theorem for holomorphic maps $f:Y \to X$, where $Y$ is a ramified covering of the punctured disc $\bD^*$ with small ramification and $X$ is a complex quasi-projective variety of log-general type and of maximal quasi-Albanese dimension.
As a byproduct, we prove the generalized Green–Griffiths–Lang conjecture for such $X$. 
This paper summarizes the parts of the three-paper series that are based primarily on Nevanlinna theory.
	\end{abstract}
	
	\maketitle

\tableofcontents
\section{Introduction}
\subsection{Generalized Green-Griffiths-Lang conjecture}
The generalized version of the Green–Griffiths and Lang conjectures is the complex analogue of the Bombieri–Lang conjecture about the Zariski density of the set of rational points of an algebraic variety of general type.  It is a fundamental problem in the study of the hyperbolicity of algebraic varieties (cf. \cite[I, 3.5]{Lan97} and \cite[VIII, Conj. 1.3]{Lan97}). To formulate this conjecture, we first introduce several notions of non-hyperbolicity loci, usually referred to as special subsets in various senses.
\begin{dfn}[Special subsets] \label{def:special2}
	Let $X$ be a smooth quasi-projective variety.
	\begin{thmlist}
		\item $\Spab(X) := \overline{\bigcup_{f}f(A_0)}^{\mathrm{Zar}}$, where $f$ ranges over all non-constant rational maps $f:A\dashrightarrow X$ from all non-trivial semi-abelian varieties $A$ to $X$ such that $f$ is regular $A_0\to X$ on a Zariski open subset $A_0\subset A$ whose complement $A\backslash A_0$ has codimension at least two;
		\item $\Sph(X) := \overline{\bigcup_{f}f(\mathbb{C})}^{\mathrm{Zar}}$, where $f$ ranges over all non-constant holomorphic maps from $\mathbb{C}$ to $X$;
		\item $\Spalg(X) := \overline{\bigcup_{V} V}^{\mathrm{Zar}}$, where $V$ ranges over all positive-dimensional closed subvarieties of $X$ which are not of log general type;
		\item $\Spp(X) := \overline{\bigcup_{f}f(\bD^*)}^{\mathrm{Zar}}$, where $f$ ranges over all holomorphic maps from the punctured disk $\bD^*$ to $X$ with essential singularity at the origin, i.e., $f$ has no holomorphic extension $\bar{f}:\mathbb D\to\overline{X}$ to a projective compactification $\overline{X}$.
	\end{thmlist}
\end{dfn}
The \emph{generalized  Green-Griffiths-Lang conjecture} can be stated as follows.  
\begin{conjecture}[Generalized  Green-Griffiths-Lang]\label{conj:GGL}
	Let $X$ be a smooth quasi-projective variety.  Then the following properties are equivalent:
	\begin{thmlist} 
		\item  $X$ is of log general type; 
		\item  $X$ is \emph{strongly of general type},  i.e.,
		$\Spalg(X)\subsetneqq X$;
		\item  $X$ is \emph{pseudo Picard hyperbolic},  i.e., $\Spp(X)\subsetneqq X$; 
		\item  $X$ is \emph{pseudo Brody hyperbolic},  i.e., $\Sph(X)\subsetneqq X$; 
		\item  
		$\Spab(X)\subsetneqq X$. 
	\end{thmlist}
\end{conjecture}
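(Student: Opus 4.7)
The plan is to prove \Cref{conj:GGL} for $X$ of maximal quasi-Albanese dimension, as announced in the abstract, so that the quasi-Albanese map $\alpha : X \to A$ is generically finite onto its image in the semi-abelian variety $A$. The central ingredient will be a Big Picard extension theorem for holomorphic maps $f : Y \to X$, where $Y$ is a ramified covering of $\bD^*$ with small ramification; from such a statement the remaining conditions of the conjecture will follow.

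I would first handle the algebraic equivalence (i) $\Leftrightarrow$ (ii). The direction (ii) $\Rightarrow$ (i) is tautological. For (i) $\Rightarrow$ (ii) under the maximal quasi-Albanese hypothesis, one invokes the Kawamata--Ueno theory of subvarieties of semi-abelian varieties: any subvariety $V \subseteq X$ not of log general type has image $\alpha(V)$ contained in a translate of a proper semi-abelian subvariety $B \subsetneq A$, and the union of the resulting preimages is contained in a proper Zariski closed subset of $X$. The same classification applied to rational maps from semi-abelian varieties into $X$ gives (v) in parallel.

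For the central step, given $f : Y \to X$ as above, I would form $g := \alpha \circ f : Y \to A$ and invoke the Nevanlinna theory of holomorphic maps into semi-abelian varieties. The key input is a second main theorem with truncated counting function of Noguchi--Winkelmann--Yamanoi type for $g$ against the boundary divisor $\partial \bar{A}$ of a smooth equivariant compactification $\bar{A}$. Combined with the small ramification of $Y \to \bD^*$, this bounds the Nevanlinna order function $T(r, g)$ by $N_1(r, g, \partial \bar{A})$; the log general type assumption on $X$ then forces $g$ either to extend holomorphically across the puncture or to factor through a proper translated semi-abelian subvariety $B + a \subsetneq A$. In the first case, generic finiteness of $\alpha$ combined with a standard removable singularity theorem lifts the extension of $g$ to one of $f$, up to modification along the ramification locus of $\alpha$, which itself lies in a proper Zariski closed subset of $X$. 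In the second case, $f(Y) \subseteq \alpha^{-1}(B + a)$ is already absorbed into the locus produced in (ii), hence into $\Spp(X)$, which yields (iii). The implication (iii) $\Rightarrow$ (iv) then follows by precomposing entire curves $\mathbb{C} \to X$ with $z \mapsto 1/z$: the maximal quasi-Albanese hypothesis excludes positive-dimensional rational subvarieties of $X$ outside the exceptional locus of $\alpha$, and therefore rules out the possibility of a holomorphic extension to $\mathbb{P}^1$.

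The principal difficulty, in my view, is not the Nevanlinna machinery itself nor the algebro-geometric reductions, but their interface on a small ramified cover $Y$ of $\bD^*$. One must formulate and prove the truncated second main theorem directly on $Y$ rather than on $\bD^*$, requiring an Ahlfors-type comparison of counting functions together with a matching bound on the ramification counting function; only the smallness of the ramification of $Y \to \bD^*$ keeps the extra contributions from being absorbed into the error term. A further subtlety is to translate the factorization of $g$ through a proper translated semi-abelian subvariety back into the Zariski-closure formulation of \Cref{def:special2}, so as to obtain $\Spp(X) \subsetneq X$ rather than merely a bound on the image of an individual map $f$.
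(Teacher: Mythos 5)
Your proposal correctly restricts to the case of maximal quasi-Albanese dimension, which is the only case the paper actually establishes (as \cref{cor:20221102}); the statement in full generality remains a conjecture. But several steps of your sketch either do not work or gloss over the central difficulty.

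\textbf{The implication $\Spp(X)\subsetneqq X \Rightarrow \Sph(X)\subsetneqq X$.} Your argument via precomposition with $z\mapsto 1/z$ is broken. If $f:\bC\to X$ is non-constant, the map $f(1/z):\bD^*\to X$ has an essential singularity at $0$ \emph{only if} $f$ fails to extend to $\bP^1$. Your attempt to rule out the extension using the absence of rational curves does not close the gap: the relevant extension is to a projective compactification $\overline{X}$, and a map $\bP^1\to\overline{X}$ can meet the boundary, so the ``no rational curves in $X$'' argument does not directly apply. Moreover, even granting the extension is ruled out, you never produce a map $\bD^*\to X$ with essential singularity whose Zariski closure agrees with that of $f(\bC)$ in the remaining case. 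The paper's proof of $\Sph\subseteq\Spp$ (\cref{lem:inclusion}) uses $z\mapsto f(\exp(1/z))$: since $\exp$ has an essential singularity at $\infty$ and surjects onto $\bC^*$, the composition always has an essential singularity at $0$ and has the same Zariski closure as $f$. No case analysis, no rational curves.

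\textbf{The $(i)\Rightarrow(ii)$ reduction.} You propose to prove $\Spalg(X)\subsetneqq X$ directly from Kawamata--Ueno. This is plausible in spirit, but as written it is incomplete: you assert that any $V\subset X$ not of log general type has $\alpha(V)$ in a translate of a proper subtorus, and that ``the union of the resulting preimages is contained in a proper Zariski closed subset.'' The second assertion is exactly the content that needs a proof, namely a finiteness statement for the subtori involved; as $V$ and the translate vary, there is no a priori bound. The paper sidesteps this by proving $(a)\Rightarrow(c)\Rightarrow(d)\Rightarrow(e)$ analytically first, then deduces $(e)\Rightarrow(b)$ from $\Spab(X)\subsetneqq X$ using the structure results of \cref{sec:quasi}. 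The authors even remark explicitly that although $(a)\Leftrightarrow(b)$ is a purely algebraic statement, their implication $(a)\Rightarrow(b)$ passes through the analytic Theorem A.

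\textbf{The Big Picard step.} Your sketch --- form $g=\alpha\circ f$, apply a truncated second main theorem for maps into semi-abelian varieties, and ``the log general type assumption then forces $g$ to extend or factor through a proper subtorus'' --- does not engage with the real problem. $A$ itself has $\overline\kappa(A)=0$, so an SMT against $\partial\overline A$ alone says nothing about the log general type of $X$; the whole difficulty is to produce a divisor on (a modification of the closure of) the image whose log canonical bundle is big, transport the estimates there, and handle the fact that $f(Y)$ may not be Zariski dense. In the paper this is an eight-section argument (\S\ref{subsec:4.2}--\S\ref{subsec:4.7}) built on a multi-parameter induction over stabilizer subtori, Stein factorizations, and lifts to covers $\overline{B}\times\Sigma\dashrightarrow\overline{W}$ --- the covers being precisely what forces one to develop Nevanlinna theory on ramified coverings of $\bC_{>\delta}$ in the first place, not merely on $\bD^*$. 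You correctly identify the ramified-cover interface as a difficulty, but the paper's own remark (\cref{rem:20250904}) makes clear that the genuine obstacle is the simultaneous presence of non-compactness (as in \cite{NWY13}) and non-Zariski-dense images (as in \cite{Yam15}); your sketch shows no awareness of how one passes from estimates for the lift $g:Y'\to\overline B\times\Sigma$ back to $f:Y\to X$, which is where the second half of \cref{lem:20220909gg} does its work.
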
 

The first two sets $\Spab(X)$ and $\Sph(X)$ are introduced by Lang for the compact case.
He made the following two conjectures (cf. \cite[I, 3.5]{Lan97} and \cite[VIII, Conjecture 1.3]{Lan97}):
\begin{itemize}
	\item
	$\Spab(X)\subsetneqq X$ if and only if $X$ is of general type.
	\item
	$\Spab(X)=\Sph(X)$.
\end{itemize}
The first assertion implicitly include the following third conjecture:
\begin{itemize}
	\item
	$\Spab(X)=\Spalg(X)$.
\end{itemize}  
The original two conjectures imply the famous strong Green-Griffiths conjecture that varieties of (log) general type are pseudo Brody hyperbolic.  

Let us discuss some known cases of the conjecture.
\cref{conj:GGL} is known to hold for curves. However, when $\dim X \geq 2$, there has been only few progress. We summarize below four classes of varieties for which \cref{conj:GGL} has been established:

\begin{itemize}
	\item Complex projective surfaces with big cotangent bundles, proved by McQuillan \cite{McQ}, and later extended to quasi-projective surfaces with big logarithmic cotangent bundles by El Goul \cite{ElG}.
	\item Subvarieties of abelian varieties, by the classical theorem of Bloch–Ochiai–Kawamata, and more generally subvarieties of semi-abelian varieties, by Noguchi \cite{Nog81}.
	\item Projective varieties of maximal Albanese dimension, by Kawamata \cite{Kaw81} and the third author \cite{Yam15}.
	\item General hypersurfaces in projective space $\bP^n$ ($n \geq 3$) of sufficiently high degree, proved in \cite{DMR} (based on the strategy of Siu \cite{Siu02}), with degree bounds subsequently improved in \cite{Dar16,MT,BK24,Cad24}; as well as complements of general hypersurfaces of high degree in $\bP^n$ ($n \geq 2$), proved in \cite{Dar16,BD19}, to mention only a few.
\end{itemize}

In this paper, we establish \cref{conj:GGL} for quasi-projective varieties of maximal quasi-Albanese dimension 
(see \cref{cor:20221102}), thus generalizing the result of \cite{Yam15} into the non-compact setting.
Here we say that a quasi-projective manifold $X$ has maximal quasi-Albanese dimension if the quasi-Albanese map $\alpha:X\to A(X)$ is generically finite onto $\overline{\alpha(X)}$.
By the universal property of the quasi-Albanese maps, this is equivalent to the existence of a morphism $a:X\to A$ to a semi-Abelian variety $A$ such that $\dim X=\dim a(X)$.

\subsection{Main results}
We consider a Riemann surface $Y$ equipped with a proper surjective holomorphic map 
$\pi:Y\to\mathbb C_{>\delta}$, 
where $\mathbb C_{>\delta}:=\{z\in\bC \mid |z|>\delta\}$
for some positive constant $\delta>0$.
	Our first result is a Big Picard type theorem for holomorphic maps $f:Y \to X$, where $X$ is a smooth complex quasi-projective variety of log-general type and of maximal quasi-Albanese dimension.
	This theorem holds provided that $\pi:Y\to\mathbb C_{>\delta}$ has small ramifications compared with the growth order of $f$, i.e., $N_{\ram\pi}(r)=O(\log r)+o(T_f(r))||$.
	The notations are from Nevanlinna theory; specifically, the symbol $||$ means that the stated estimate is valid as $r\to\infty$ with the possible exception of a set of finite Lebesgue measure. 
	We will introduce other notations in \cref{subsec:notion Nevanlinna}.

	 	\begin{thmx}\label{thm2nd}
		Let $X$ be a smooth quasi-projective variety which is of log general type.
		Assume that there is a morphism $a:X\to A$ to a semi-Abelian variety $A$ such that $\dim X=\dim a(X)$.
		Then there exists a proper Zariski closed set $\Xi\subsetneqq X$ with the following property:
		Let $f:Y\to X$ be a holomorphic map such that $N_{\ram\pi}(r)=O(\log r)+o(T_f(r))||$ and that $f(Y)\not\subset \Xi$.
		Then $f$ does not have essential singularity over $\infty$, i.e.,	there exists an extension $\overline{f}:\overline{Y}\to\overline{X}$ of $f$, where $\overline{Y}$ is a Riemann surface such that  $\pi:Y\to \mathbb C_{>\delta}$ extends to a proper map $\overline{\pi}:\overline{Y}\to \mathbb C_{>\delta}\cup\{\infty\}$  
		and $\overline{X}$ is a compactification of $X$.	
	\end{thmx}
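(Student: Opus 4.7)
The natural strategy is to reduce the statement to a second main theorem (SMT) for holomorphic maps into semi-abelian varieties, applied to the composition $g := a \circ f : Y \to A$. Since $\dim X = \dim a(X)$, the image $Z := \overline{a(X)}$ is a log-general-type subvariety of $A$, and $a : X \to Z$ is generically finite. By the Ueno--Kawamata structure theorem for closed subvarieties of semi-abelian varieties, the log general type of $X$ forces $Z$ not to be contained in any translate of a proper semi-abelian subvariety of $A$, and the union of subvarieties $V \subset X$ whose $a$-image lies in such a translate is a proper Zariski closed subset. Together with the non-finite locus of $a$ and the non-log-general-type subvarieties of $X$, this produces the required exceptional set $\Xi \subsetneqq X$.

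The technical heart of the proof will be a truncated SMT in the ramified setting, of the shape: for every Zariski-dense holomorphic map $g : Y \to A$, every smooth equivariant compactification $\overline A$ with boundary $\partial A$, every effective reduced divisor $D$ on $\overline A$, every ample $\overline L$ on $\overline A$, and every $\ep > 0$,
\[
T_{g, D}(r) \leq N^{(1)}_{g, D}(r) + \ep \, T_{g, \overline L}(r) + O(\log r) + O\bigl(N_{\ram \pi}(r)\bigr) \quad ||,
\]
where $N^{(1)}$ denotes the counting function truncated at multiplicity one. In the entire-curve case $Y = \bC$ this is the SMT of Noguchi--Winkelmann--Yamanoi with truncation level one; its extension to $Y$ should proceed via the same jet-differential and logarithmic-derivative-lemma technology on $\overline A$, the differentiations on $Y$ producing extra terms of order $N_{\ram\pi}(r)$, which are precisely of the size permitted by the small-ramification hypothesis $N_{\ram\pi}(r) = O(\log r) + o(T_f(r)) \, ||$.

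Granted such an SMT, the Big Picard conclusion should follow by choosing $D \subset \overline A$ so that $a^\ast D$ dominates a big multiple of $K_{\overline X} + \partial X$ on the locus where $a$ is finite; this is possible by the log general type of $X$ combined with the maximal quasi-Albanese dimension. For $f$ with $f(Y) \not\subset \Xi$, after replacing $A$ by the smallest translate of a semi-abelian subvariety containing $a(X)$ (a harmless reduction) the map $g := a \circ f$ has Zariski-dense image. The truncation-one counting function $N^{(1)}_{g, D}(r)$ can then be controlled by the topology of $Y$ near $\infty$ (essentially a count of ends of the ramified cover), and the SMT yields $T_f(r) = O(\log r)$; a standard Nevanlinna criterion then forces $f$ to extend holomorphically across $\infty$. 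The main obstacle is the truncated SMT itself in the semi-abelian, ramified setting: truncation level one was already the deepest ingredient of \cite{Yam15} in the compact case, and carrying it out while simultaneously tracking the boundary $\partial A$ and the ramification of $\pi$ is expected to be the genuine technical task.
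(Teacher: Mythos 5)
Your proposal correctly identifies the broad architecture: reduce to a truncated second main theorem for maps toward semi-abelian varieties with ramification terms, then apply the log-general-type hypothesis. But there is a genuine gap at the step that actually makes the argument close, namely your claim that the truncation-one counting function ``can be controlled by the topology of $Y$ near $\infty$ (essentially a count of ends of the ramified cover).'' This is not what happens and is not enough: $N^{(1)}_{g,D}(r)$ counts intersections of $g(Y)$ with $D$ and has nothing to do with the topology of $Y$; for a generic $D$ on $\overline{A}$, $N^{(1)}_{g,D}(r)$ will be comparable to $T_g(r)$ and you gain nothing. The paper's actual mechanism is geometric: one passes to the normalization $X\to W=\overline{a(X)}$, chooses a divisor $D$ on that model such that the map $f$ (having image inside $X$, hence inside the open locus) can only meet $D$ above a codimension-$\geq 2$ locus $Z\subset X$, so that $\overline{N}_f(r,D)=\overline{N}_f(r,D\cap Z)$; this truncated intersection with a codimension-two set is then estimated by a separate jet-theoretic argument (\cref{lem:202209151}, \cref{pro:202208061}) yielding an $\ep T_f(r)$ bound. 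Without this observation the SMT with truncation one does not give $T_f(r)=O(\log r)$.

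Two further omissions would also block you. First, $\overline{a(X)}$ is generally not a translate of a semi-abelian subvariety, so replacing $A$ by ``the smallest translate containing $a(X)$'' is no reduction at all; the proof must work with an arbitrary closed subvariety $W\subset A\times S$, which forces an induction over semi-abelian subvarieties $B\subset A$ via the function classes $I_B$ and the alternative ``either $a\circ f\in I_C$ for some smaller $C$, or the estimate holds'' that recurs in \cref{pro:20220807}--\cref{cor:20220805}. Second, and decisive for the non-compact case, the lack of a Poincar\'e reducibility theorem for semi-abelian varieties means one cannot split off $B$ by an \'etale cover as in \cite{Yam15}; the paper instead constructs a generically finite cover $\sigma:\Sigma\to\overline{W/B}$ and lifts $f$ to a further ramified cover $Y'\to Y$ landing in $\overline{B}\times\Sigma$, which is exactly why the statement must be proven for general $Y\to\bC_{>\delta}$ with small ramification rather than just for $Y=\bC_{>\delta}$ (see \cref{rem:20250904}). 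Your sketch does not anticipate either of these structural necessities.
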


	Note that the identity map $\mathbb C_{>\delta}\to \mathbb C_{>\delta}$ trivially satisfies the small ramification condition in the theorem.
	Thus, we can apply \cref{thm2nd} when $Y$ is $\mathbb C_{>\delta}$ itself, which implies that $X$ in the theorem is Picard hyperbolic. 
 Nevertheless, for further applications it is essential to study Nevanlinna theory not only on $\mathbb{C}_{>\delta}$ itself, but also on its ramified coverings. 
This broader perspective is crucial for establishing the hyperbolicity of algebraic varieties. 
Such applications appear, for instance, in the recent work of Kebekus and Rousseau \cite{KR24} on the hyperbolicity of Campana orbifolds (or $\mathcal{C}$-pairs), as well as in Part~(II) of this paper series, where we study the hyperbolicity of complex quasi-projective varieties carrying a big and semi-simple complex local system (see \cite[Theorems~A, C \& D]{CDY22}).  
In particular, for further applications  in \cite{CDY22}, we will make use of the following consequence of \cref{thm2nd}.
See \cref{sec:11} for more discussion about the condition stated in the theorem.

\begin{thmx}[=\cref{thm:20250911}]\label{thm:theorem b}
	Let $X$ be a smooth quasi-projective variety with a smooth projective compactification $\overline{X}$ such that the boundary divisor $D=\overline{X}\setminus X$ is a simple normal crossing divisor. 
	Assume that there exists a finite surjective morphism $p:\overline{\Sigma}\to \overline{X}$ from a normal projective variety $\overline{\Sigma}$ together with non-zero sections $\tau_1,\ldots,\tau_l \in H^0(\overline{\Sigma}, p^*\Omega_{\overline{X}}(\log D))$ satisfying the following two conditions:
	\begin{thmenum} 
		\item  
		setting $\Sigma=p^{-1}(X)$, the variety $\Sigma$ is of log-general type and admits a morphism $a:\Sigma\to A$ into a semi-abelian variety $A$ with $\dim \Sigma=\dim a(\Sigma)$;
		\item  
		defining
		\[ 
		R:=\{s\in \overline{\Sigma} \mid \exists\, i \ \text{such that } \tau_i(s)=0 \}, 
		\]
		then $R\subsetneqq \overline{\Sigma}$ is a proper Zariski closed subset, and $p:\overline{\Sigma}\to \overline{X}$ is \'etale outside $R$.
	\end{thmenum}
	Then every holomorphic map $f:\mathbb D^*\to X$ with Zariski dense image has a holomorphic extension $\bar{f}:\mathbb D\to\overline{X}$.
\end{thmx}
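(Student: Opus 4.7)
The plan is to deduce \cref{thm:theorem b} from \cref{thm2nd} by pulling back the cover $p$ along $f$ and applying the main theorem to the resulting lift. Given a Zariski-dense $f:\bD^*\to X$, let $Y$ be the normalization of an irreducible component of the fibre product $\bD^*\times_{\overline X}\overline\Sigma$ dominating $\bD^*$; this produces a finite proper surjective morphism $\pi:Y\to\bD^*$ together with a holomorphic lift $g:Y\to\Sigma$ satisfying $p\circ g=f\circ\pi$, with $Y$ a connected Riemann surface. Identifying $\bD^*\cong\bC_{>1}$ via $z\mapsto 1/z$, we place ourselves in the setup of \cref{thm2nd} applied to $g:Y\to\Sigma$ (after passing to a smooth model of $\overline\Sigma$, which is harmless).

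The remaining hypotheses of \cref{thm2nd} for $g:Y\to\Sigma$ are easy to check. Hypothesis~(i) of \cref{thm:theorem b} directly supplies the log general type and the maximal quasi-Albanese dimension of $\Sigma$. Moreover $g(Y)$ is Zariski dense in $\Sigma$: indeed $p\bigl(g(Y)\bigr)=f(\bD^*)$ is Zariski dense in $X$, and the preimage under the finite morphism $p$ of any proper Zariski closed subset of $X$ is again proper in $\Sigma$, so density of $f(\bD^*)$ forces density of $g(Y)$. In particular $g(Y)\not\subset\Xi$ for the exceptional locus $\Xi\subsetneq\Sigma$ furnished by \cref{thm2nd}.

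The technical heart is the verification of the small-ramification estimate $N_{\ram\pi}(r)=O(\log r)+o(T_g(r))||$. By hypothesis~(ii), $p$ is \'etale outside $R\subsetneq\overline\Sigma$, so the interior ramification of $\pi$ sits over $f^{-1}\bigl(p(R)\cap X\bigr)\subset\bD^*$. The idea is to exploit the sections $\tau_i$: each pullback $g^*\tau_i$ is a holomorphic section of $\pi^*f^*\Omega_{\overline X}(\log D)$, i.e.\ a holomorphic $1$-form on $Y$, and a local computation in a chart where $\pi$ takes the form $u\mapsto u^{e_y}$ shows that, whenever $\tau_i(g(y))\neq 0$, one has $\mathrm{ord}_y(g^*\tau_i)\geq e_y-1$. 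Since $R=\bigcup_i\{\tau_i=0\}$, at every interior ramification point $y$ of $\pi$ at least one index $i$ realises this full bound, so $N_{\ram\pi}(r)$ is dominated by the combined counting function of the zeros of the $g^*\tau_i$. The desired $o(T_g(r))$-bound for those zero counting functions then follows from a Nevanlinna-theoretic argument, combining the first main theorem applied to $g$ with a logarithmic derivative lemma on the Riemann surface $Y$.

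With the small-ramification estimate in hand, \cref{thm2nd} produces an extension $\overline g:\overline Y\to\overline\Sigma$, where $\overline Y$ compactifies $Y$ by adjoining finitely many points above $0\in\bD$. The composition $p\circ\overline g:\overline Y\to\overline X$ then extends $f\circ\pi$. For each $y_0\in\overline\pi^{-1}(0)$, the local form $\pi(u)=u^e$ makes $\pi(u)$ sweep a full punctured neighbourhood of $0\in\bD^*$ as $u\to y_0$, so continuity of $\overline g$ at $y_0$ forces $\lim_{z\to 0}f(z)=p\bigl(\overline g(y_0)\bigr)$; this limit is therefore independent of $y_0$, and the Riemann extension theorem provides the desired holomorphic extension $\overline f:\bD\to\overline X$. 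The \emph{main obstacle} throughout is paragraph~3, i.e.\ upgrading the pointwise vanishing-order inequality $\mathrm{ord}_y(g^*\tau_i)\geq e_y-1$ into the integrated Nevanlinna bound $o(T_g(r))+O(\log r)$ on $N_{\ram\pi}(r)$, rather than the crude $O(T_g(r))$ estimate supplied by the first main theorem alone.
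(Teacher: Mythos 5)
Your overall strategy matches the paper's: pull back the cover along $f$ to get $\pi:Y\to\bC_{>\delta}$ and a lift $g:Y\to\Sigma$, reduce the small-ramification hypothesis of \cref{thm2nd} to an estimate on $\overline{N}_g(r,Z_i)$ where $Z_i$ is the vanishing locus of $\omega_i=\mu^*\tau_i$, and then deduce the extension. However, your treatment of the ramification estimate has a genuine gap, and a secondary imprecision.

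The main gap is that your paragraph~3 implicitly assumes that the coefficient function $\xi_i := g^*\omega_i/\pi^*(dz)$ is not identically zero. When $\xi_i\not\equiv 0$, the paper's argument is exactly what you sketch: $\xi_i$ is a \emph{holomorphic} map $Y\to\bP^1$ (so $N_{\xi_i}(r,[\infty])=0$), the lemma on logarithmic derivatives in the form \eqref{eq:First} yields $N_{\xi_i}(r,[0])=o(T_g(r))+O(\log r)||$, and since $g(y)\in Z_i$ forces $\xi_i(y)=0$ one gets $\overline{N}_g(r,Z_i)\leq N_{\xi_i}(r,[0])$. But it is entirely possible that $g^*\omega_i$ vanishes identically as a section of $\pi^*\Omega_{\bC_{>\delta}}$, even though $g$ has Zariski-dense image: the covector $\omega_i(g(\cdot))$ can be nonzero pointwise yet always lie in the kernel of the cotangent map along the directions traced by $g$. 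In that degenerate case the Nevanlinna machinery you invoke produces nothing — there is no $\xi_i$ to estimate. The paper handles this via \cref{lem:20250911}, a substantial separate argument: one passes to the quasi-Albanese map of $\overline{S}$, shows that the image of the component of $Z_i$ collapses to a point $a$ in a semiabelian quotient $A'$, and then uses a Riemann--Roch comparison of jets along $a$ (producing divisors $D_n$ of degree $\sim n^{1-1/(2\dim S)}$ cutting $V_n$ to order $n$) to force $\overline{N}_g(r,u(Z))= o(T_g(r))+O(\log r)||$. Your proposal omits this case entirely, and it cannot be patched by the tools you cite.

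A smaller point: the identification ``$g^*\tau_i$ is a holomorphic section of $\pi^*f^*\Omega_{\overline X}(\log D)$, i.e.\ a holomorphic 1-form on $Y$'' conflates the rank-$\dim X$ bundle with a line bundle, and the ensuing local inequality $\mathrm{ord}_y(g^*\tau_i)\geq e_y-1$ holds for \emph{every} $i$ regardless of whether $\tau_i(g(y))=0$ — it is just the vanishing of $\pi^*(dz)$ inside $\Omega_Y$, not a detection of ramification per se, so the ``at least one $i$ realises the full bound'' step is not what is doing the work. The paper avoids this by working in the trivial line bundle $\pi^*\Omega_{\bC_{>\delta}}$ (trivialized by $\pi^*dz$) rather than in $\Omega_Y$, so that $\xi_i$ is a pole-free meromorphic function and $N_{\ram\pi}(r)\leq\deg\pi\cdot\sum_i\overline{N}_g(r,Z_i)$ follows directly from $\mathrm{supp}(\ram\pi)\subset\bigcup_i g^{-1}(Z_i)$.
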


	Next we state our result on \Cref{conj:GGL}, which we prove as a consequence of \cref{thm2nd}.
	\begin{thmx}\label{cor:20221102}
		Let $X$ be a smooth quasi-projective variety. 
		Assume that there is a morphism $a:X\to A$ to a semi-Abelian variety $A$ such that $\dim X=\dim a(X)$.
		Then the following properties are equivalent:  
		\begin{enumerate}[wide = 0pt,  noitemsep,  font=\normalfont, label=(\alph*)] 
			\item \label{being general type1} $X$ is of log general type; 
			 	\item \label{strong LGT1} 
			$X$ is strongly of log general type;  
			\item  \label{pseudo Picard1} $X$ is pseudo Picard hyperbolic;
			\item \label{pseudo Brody} $X$ is pseudo Brody hyperbolic;
			\item \label{spab}
			$\Spab(X)\subsetneqq X$. 
		\end{enumerate}
	\end{thmx}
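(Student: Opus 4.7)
The plan is to reduce all five conditions to a single geometric condition on the quasi-Albanese image $W:=\overline{a(X)}\subseteq A$. Since $a:X\to W$ is generically finite (by the maximal quasi-Albanese dimension hypothesis), $X$ is of log general type if and only if $W$ is. Let $\mathrm{Sp}(W)\subseteq W$ denote the union of all translates of positive-dimensional semi-abelian subgroups of $A$ that are contained in $W$. The Kawamata--Ueno structure theorem \cite{Kaw81,Yam15} asserts that $\mathrm{Sp}(W)\subsetneqq W$ if and only if $W$ is of log general type. Let $E\subsetneqq X$ denote the proper Zariski closed exceptional locus where $a$ fails to be quasi-finite.

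For the direction $(a)$ implies each of (b), (c), (d), (e): the implication $(a)\Rightarrow(c)$ is immediate from \cref{thm2nd} applied with $\pi=\mathrm{id}$ on $\bC_{>1}\cong\bD^*$ (the ramification condition being vacuous), yielding $\Spp(X)\subseteq\Xi\subsetneqq X$. For $(a)\Rightarrow(d)$, restrict any entire curve $f:\bC\to X$ to $\bC\setminus\overline{\bD}\cong\bC_{>1}$ and apply \cref{thm2nd}: either the image lies in $\Xi$ (hence $f(\bC)\subseteq\Xi$ by the identity principle applied to the closed analytic set $f^{-1}(\Xi)\subseteq\bC$), or $f$ extends to $\bar f:\bP^1\to\overline{X}$, whereupon Bloch--Ochiai--Kawamata applied to $a\circ f:\bC\to A$ places $a(f(\bC))$ in a translate of a positive-dimensional semi-abelian subgroup $B+a_0\subseteq W$, forcing $f(\bC)\subseteq a^{-1}(\mathrm{Sp}(W))$. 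For $(a)\Rightarrow(e)$, any rational $f:A'\dashrightarrow X$ from a semi-abelian $A'$, regular off a codimension-$\geq 2$ subset, composes with $a$ to give a morphism of semi-abelian varieties $A'\to A$ whose image is a translate of a semi-abelian subgroup of $A$ contained in $W$; hence $f(A_0)\subseteq a^{-1}(\mathrm{Sp}(W))$. For $(a)\Rightarrow(b)$, a positive-dimensional $V\subseteq X$ not of log general type either lies in $E$, or has $a|_V$ generically finite onto $\overline{a(V)}\subseteq W$, which forces $\overline{a(V)}$ to be not of log general type (since generic finiteness pulls back log general type); by Kawamata--Ueno, $\overline{a(V)}\subseteq\mathrm{Sp}(W)$, so $V\subseteq a^{-1}(\mathrm{Sp}(W))\cup E$.

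The converse implications $(b),(c),(d),(e)\Rightarrow(a)$ all follow uniformly by contrapositive: if $X$ is not of log general type, then neither is $W$, so $\mathrm{Sp}(W)=W$ by Kawamata--Ueno. Then through every point $x\in X\setminus E$ passes a translate of a positive-dimensional semi-abelian subgroup of $A$ contained in $W$, whose preimage under the generically finite $a$ provides a semi-abelian variety mapping into $X$ through $x$. This yields $\Spab(X)=X$ directly; restricting these morphisms to $1$-parameter subgroups gives $\Sph(X)=X$; inverting at $0$ and using that non-constant morphisms $\bP^1\to A$ do not exist (so the entire curves must have essential singularity at $\infty$) yields $\Spp(X)=X$; and since the semi-abelian images are themselves non-log-general-type subvarieties, $\Spalg(X)=X$.

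The main technical hurdle will be the invocation of the Kawamata--Ueno structure theorem in the quasi-projective setting for closed subvarieties of semi-abelian varieties, together with the careful bookkeeping of the exceptional locus $E$ and of the rational curve case in $(a)\Rightarrow(d)$. The maximal quasi-Albanese dimension hypothesis is what makes the whole reduction effective, transferring log-geometric questions about $X$ to the well-understood structure theory on the semi-abelian variety $A$.
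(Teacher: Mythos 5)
There is a genuine gap: your entire reduction rests on the claim that ``$X$ is of log general type if and only if $W=\overline{a(X)}$ is,'' and the ``only if'' direction is false. A generically finite dominant morphism $a:X\to W$ gives $\bar\kappa(X)\geq\bar\kappa(W)$, so $W$ of log general type forces $X$ of log general type, but not conversely. A concrete counterexample: let $A$ be a simple abelian surface with a smooth ample divisor $\Theta$, and let $a:X\to A$ be the double cover branched along a smooth member of $|2\Theta|$. Then $K_X=a^*\Theta$ is big, so $X$ is of general type, $\dim X=\dim a(X)=2$, yet $W=A$ has $\bar\kappa(W)=0$ and hence $\mathrm{Sp}(W)=W$. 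Your claimed containments $\Spab(X),\Spalg(X)\subseteq a^{-1}(\mathrm{Sp}(W))\cup E$ would then give no information, since $a^{-1}(\mathrm{Sp}(W))\cup E=X$. So the forward implications $(a)\Rightarrow(b),(e)$ cannot be obtained by this reduction to the Kawamata--Ueno structure theory on $A$ alone; one needs the analytic input of \cref{thm2nd} even to produce a proper exceptional set, which is exactly what the paper does for $(a)\Rightarrow(c)$ and then propagates to $(d),(e)$ through the purely containment-theoretic \cref{lem:inclusion} ($\Spab\subseteq\Sph\subseteq\Spp$).

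A second, smaller imprecision appears in your converse argument: you say the preimage under $a$ of a translate $B+a_0\subseteq W$ ``provides a semi-abelian variety mapping into $X$ through $x$.'' But $a^{-1}(B+a_0)$ is merely a subvariety of $X$ that admits a generically finite morphism \emph{to} $B+a_0$; it is neither semi-abelian nor the target of any obvious non-constant rational map from a semi-abelian variety. Producing such a rational map requires knowing the inverse direction, which is exactly the content of the paper's \cref{lem:abelian pi0} and \cref{lem:20230509}: after quasi-Stein factorisation and Kawamata's characterisation, the map to the semi-abelian variety becomes birational and one can invert it over a big open set. The paper's proof of $(e)\Rightarrow(b)$ never asserts that $W$ is of log general type; instead it runs the Iitaka fibration of an arbitrary non-degenerate $V\subseteq X$ and applies \cref{lem:20230509} to a very general fibre with $\bar\kappa=0$, which is a strictly weaker and correct hypothesis.
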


	It is worth noting that the equivalence of \ref{being general type1} and \ref{strong LGT1} is a purely algebro-geometric statement.
	However our implication \ref{being general type1}$\implies$\ref{strong LGT1} involves the analytic statement \ref{being general type1}$\implies$\ref{pseudo Picard1} (namely, \cref{thm2nd}) and is therefore not purely algebro-geometric.
	\cref{cor:20221102} also plays a crucial role in the second part of this paper series \cite{CDY22}.

	\medspace
	
The paper is organized as follows. We first prove some results on varieties with maximal quasi-Albanese dimension and logarithmic Kodaira dimension zero, and then deduce \cref{cor:20221102} from \cref{thm2nd}. After introducing the necessary notations and estimates from Nevanlinna theory in \cref{subsec:notion Nevanlinna} and \cref{sec:20250904}, we present the proof of \cref{thm2nd} in \cref{subsec:4.2}–\cref{subsec:4.7}. 
The proof relies on the arguments of \cite{NWY13} and \cite{Yam15}.
In \cite{NWY13}, the non-compact setting is treated, while it only considers entire curves $\bC\to X$ with Zariski dense images. In contrast, \cite{Yam15} handles holomorphic maps with non-Zariski dense images, but is restricted to the compact setting.
Our situation requires us to bridge these two cases.
In \cref{rem:20250904}, we briefly discuss the additional difficulties of our proof compared to the proofs in \cite{NWY13} and \cite{Yam15}, coming from the simultaneous consideration of non-compactness and non-Zariski dense images.
Finally in \S \ref{sec:11}, we prove \cref{thm:theorem b} using \cref{thm2nd}.

This paper forms the first part of the long preprint on arXiv \cite{CDY22original}, which has been split into three parts for journal submission.  
It corresponds to Sections 3 and 4 of \cite{CDY22original}.
This paper treats the part of the series which is mainly based on Nevanlinna theory.
 In Part Two, we shall treat Sections 2 and 5–9, and in Part Three, Sections 10–12 of \cite{CDY22original}.

\subsection*{Acknowledgment.}  
B.C. and Y.D. acknowledge support from the ANR grant Karmapolis (ANR-21-CE40-0010).  
K.Y. acknowledges support from JSPS Grant-in-Aid for Scientific Research (C) 22K03286.

 \section{Varieties with logarithmic Kodaira dimension zero and maximal quasi-Albanese dimension} \label{sec:quasi} 
We first recall a fundamental result for semiabelian varieties. 
\begin{lem}[\protecting{\cite[Propositions 5.6.21 \& 5.6.22]{NW13}}] \label{prop:Koddimabb}
	Let \(\cA\) be a semi-abelian variety. 
	\begin{enumerate}[label=(\alph*)]
		\item Let \(X \subset \cA\) be a closed subvariety. Then \(\overline{\kappa}(X) \geq 0\) with equality if and only if \(X\) is a translate of a semi-abelian subvariety.
		\item Let \(Z \subset \cA\) be a Zariski closed subset. Then \(\overline{\kappa}(\cA - Z) \geq 0\) with equality if and only if \(Z\) has no component of codimension \(1\).  \qed
	\end{enumerate}
\end{lem}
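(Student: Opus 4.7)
The plan is to reduce both statements to the Ueno--Kawamata structure theorem for closed subvarieties of a semi-abelian variety, combined with the elementary fact that any smooth equivariant compactification $\overline{\cA}$ of $\cA$ with simple normal crossing boundary satisfies $K_{\overline{\cA}}+\partial\overline{\cA}\sim 0$, so in particular $\overline{\kappa}(\cA)=0$.

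For part (a), the easy direction is that if $X$ is a translate of a semi-abelian subvariety $G\subseteq \cA$, then $X\cong G$ is itself semi-abelian, and the observation above applied to $G$ gives $\overline{\kappa}(X)=0$. For the converse, assuming $\overline{\kappa}(X)=0$, I would let $G\subseteq \cA$ denote the neutral component of the stabilizer $\{a\in\cA\mid a+X=X\}$ and consider the quotient $\pi\colon \cA\to \cA/G$. The Ueno--Kawamata theorem (in its semi-abelian version due to Noguchi) asserts that $X\to W:=\pi(X)$ is a $G$-bundle and that $W$ is of log general type with finite stabilizer in $\cA/G$. The logarithmic Iitaka addition formula then yields $\overline{\kappa}(X)=\overline{\kappa}(W)+\overline{\kappa}(G)=\dim W$, so $\overline{\kappa}(X)=0$ forces $\dim W=0$, meaning that $X$ is a single fiber of $\pi$, i.e., a translate of $G$.

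For part (b), let $D$ be the reduced divisor on $\overline{\cA}$ obtained as the sum of the closures of the codimension-one components of $Z$. Since log pluricanonical sections extend across loci of codimension $\geq 2$ by Hartogs, one has $\overline{\kappa}(\cA-Z)=\kappa(\overline{\cA},K_{\overline{\cA}}+\partial\overline{\cA}+D)=\kappa(\overline{\cA},D)$. If $Z$ has no component of codimension one, then $D=0$ and this equals $0$. If instead $D\neq 0$, pick an irreducible component $D_0$ of $D$; monotonicity of the log Kodaira dimension under enlarging the boundary gives $\overline{\kappa}(\cA-Z)\geq \overline{\kappa}(\cA-D_0)$, so it suffices to show $\overline{\kappa}(\cA-D_0)\geq 1$. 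Let $G_0$ be the neutral component of the stabilizer of $D_0$; since $D_0$ is a proper divisor one has $G_0\subsetneq \cA$, and the quotient $\pi\colon \cA\to \cA/G_0$ sends $D_0$ onto a divisor $\overline{D}_0\subset \cA/G_0$ with finite stabilizer. Applying the log-general-type output of Ueno--Kawamata to $\overline{D}_0$ shows that $(\cA/G_0)-\overline{D}_0$ has log Kodaira dimension equal to $\dim(\cA/G_0)\geq 1$, and the logarithmic addition formula for the $G_0$-bundle $\cA-D_0\to (\cA/G_0)-\overline{D}_0$ then yields $\overline{\kappa}(\cA-D_0)=\dim(\cA/G_0)\geq 1$, as required.

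The main obstacle is the logarithmic Iitaka addition formula for the fibrations above, whose fibers and base sit inside semi-abelian varieties; the remaining ingredients (existence of the stabilizer fibration, Hartogs-type extension of log pluriforms across codimension $\geq 2$ loci, and monotonicity of $\overline{\kappa}$ under enlarging the boundary) are standard. In the generality needed here, the addition formula rests on Kawamata's foundational work on fibrations into abelian varieties together with Noguchi's extension to the semi-abelian case, which is precisely why \cite{NW13} is the natural reference.
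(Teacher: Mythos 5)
The paper does not prove this lemma; the \verb|\qed| in the statement signals that it is taken from \cite[Propositions 5.6.21 \& 5.6.22]{NW13} without proof. Your sketch reconstructs the standard argument from the Ueno--Kawamata circle of ideas and is broadly on the right track, and part (a) is fine. But there is a genuine gap in part~(b), at the point where you write ``applying the log-general-type output of Ueno--Kawamata to $\overline{D}_0$ shows that $(\cA/G_0)-\overline{D}_0$ has log Kodaira dimension $\dim(\cA/G_0)$.'' The Ueno--Kawamata structure theorem applied to the \emph{subvariety} $\overline{D}_0\subset \cA/G_0$ with finite stabilizer gives $\overline{\kappa}(\overline{D}_0)=\dim\overline{D}_0$, i.e.\ $\overline{D}_0$ is itself of log general type. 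What you actually need is a statement about the \emph{complement}, namely that $\kappa\bigl(\overline{\cA/G_0},\,\overline{D}_0\bigr)=\dim(\cA/G_0)$ when $\mathrm{St}^0(\overline{D}_0)=0$, which measures $\overline{\kappa}((\cA/G_0)-\overline{D}_0)$ via your earlier identity $K_{\overline{\cA}}+\partial\overline{\cA}\sim 0$. These two facts (a divisor is of log general type as a variety; a divisor with finite stabilizer has maximal Iitaka dimension as a line bundle) are distinct theorems, and neither one immediately implies the other. Indeed, the second fact is precisely the irreducible-finite-stabilizer case of part (b) itself, so as written your argument reduces part (b) to a base case that is never independently justified.

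The base case is true and classical --- it is Kawamata's theorem that for an effective divisor $D$ on an abelian variety $A$ one has $\kappa(A,D)=\dim A-\dim\mathrm{St}^0(D)$ \cite[Theorem 4]{Kaw81}, together with Noguchi's extension to semi-abelian varieties recorded in \cite{NW13} --- but it needs to be cited as such rather than derived from the subvariety structure theorem. Everything else in your sketch (the reduction to the quotient by the connected stabilizer, the log Iitaka addition formula for the $G_0$-bundle, the Hartogs step for removing codimension $\geq 2$ loci, which is essentially \cref{lem:same Kd}, and the monotonicity of $\overline{\kappa}$ under enlarging the boundary) is sound.
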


Next we prove two lemmas that will be used to discuss the structures on varieties with maximal quasi-Albanese dimension and logarithmic Kodaira dimension zero.

\begin{lem}  \label{lem:same Kd}
	Let $f:X\to Y$ be a dominant birational morphism of  quasi-projective manifolds. 
	Let $E\subset X$ be a Zariski closed subset such that $\overline{f(E)}$ has codimension at least two.  
	Assume $\bar{\kappa}(Y)\geq 0$.
	Then $\bar{\kappa}(X-E)=\bar{\kappa}(X)$. 
\end{lem}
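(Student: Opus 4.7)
The plan is to combine the birational invariance of the logarithmic Kodaira dimension with the classical principle that removing a Zariski closed subset of codimension at least two from a smooth quasi-projective manifold leaves $\bar\kappa$ unchanged. Setting $Z:=\overline{f(E)}\subset Y$, the hypothesis gives $\operatorname{codim}_Y Z\geq 2$ and $E\subset f^{-1}(Z)$, so the open inclusions
\[
f^{-1}(Y\setminus Z)\;\subset\; X\setminus E\;\subset\; X
\]
and the elementary monotonicity $\bar\kappa(U')\geq\bar\kappa(U)$ for $U'\subset U$ open yield
\[
\bar\kappa(X)\;\leq\;\bar\kappa(X\setminus E)\;\leq\;\bar\kappa(f^{-1}(Y\setminus Z)).
\]
Since $f$ is birational between smooth quasi-projective manifolds, so is its restriction $f^{-1}(Y\setminus Z)\to Y\setminus Z$, and birational invariance of $\bar\kappa$ then reduces the lemma to the single equality $\bar\kappa(Y\setminus Z)=\bar\kappa(Y)$, which would collapse the chain above to equalities.

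To establish this remaining equality I would fix a smooth log compactification $(\bar Y,D_Y)$ of $Y$ and take a log resolution $\pi\colon\widetilde Y\to\bar Y$ of the pair $(\bar Y,\, D_Y\cup\overline Z)$, with strict transform $\widetilde D_Y$ and reduced exceptional divisor $F=\sum_j E_j$ over $\overline Z$. Because $\overline Z$ has codimension at least two, each discrepancy $a_j$ appearing in $K_{\widetilde Y}=\pi^*K_{\bar Y}+\sum a_j E_j$ of an exceptional divisor $E_j$ lying over $\overline Z$ is at least $1$, so writing
\[
K_{\widetilde Y}+\widetilde D_Y+F=\pi^*(K_{\bar Y}+D_Y)+\sum c_j E_j
\]
all coefficients $c_j$ are non-negative. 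Since the $E_j$ are $\pi$-exceptional, contract to a codimension $\geq 2$ locus, and satisfy $\pi_*\mathcal O_{\widetilde Y}(\sum c_j E_j)=\mathcal O_{\bar Y}$, the projection formula yields
\[
H^0\bigl(\widetilde Y,\,m(K_{\widetilde Y}+\widetilde D_Y+F)\bigr)\;\cong\;H^0\bigl(\bar Y,\,m(K_{\bar Y}+D_Y)\bigr)\qquad(m\geq 0),
\]
which proves $\bar\kappa(Y\setminus Z)=\bar\kappa(Y)$, and hence the lemma.

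The technical heart of the argument is the discrepancy/projection-formula step: one must carefully verify non-negativity of the $c_j$ for every exceptional divisor, including those sitting over the possibly non-empty intersection $\overline Z\cap D_Y$, and confirm that the extra boundary $F$ cannot create new pluricanonical sections. The assumption $\bar\kappa(Y)\geq 0$ is not logically essential to this approach but matches the regime of the intended application to quasi-Albanese targets of logarithmic Kodaira dimension zero in the sequel.
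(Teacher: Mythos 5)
Your plan breaks at the reduction step, and the gap is not cosmetic. The chain
\[
\bar\kappa(X)\;\leq\;\bar\kappa(X\setminus E)\;\leq\;\bar\kappa\bigl(f^{-1}(Y\setminus Z)\bigr)
\]
is correct, but the next assertion --- that \emph{birational invariance of $\bar\kappa$} identifies $\bar\kappa(f^{-1}(Y\setminus Z))$ with $\bar\kappa(Y\setminus Z)$ --- does not hold. Logarithmic Kodaira dimension is a birational invariant only for \emph{proper} birational morphisms of smooth quasi-projective varieties; the restriction $f^{-1}(Y\setminus Z)\to Y\setminus Z$ inherits no properness, because the hypothesis only gives $f:X\to Y$ as a (possibly non-proper) birational morphism --- and in the actual application (\cref{lem:abelian pi0}) $f$ is indeed non-proper. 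For a non-proper dominant birational morphism $g\colon V\to W$ one only has $\bar\kappa(V)\geq\bar\kappa(W)$ (compactify, use that the boundary of $V$ dominates the boundary of $W$ together with log canonicity of the SNC pair), so your step gives
\[
\bar\kappa\bigl(f^{-1}(Y\setminus Z)\bigr)\;\geq\;\bar\kappa(Y\setminus Z)=\bar\kappa(Y)\;\leq\;\bar\kappa(X),
\]
which does not close the circle: nothing bounds $\bar\kappa(X\setminus E)$ from above by $\bar\kappa(X)$.

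Your codimension-two removal identity $\bar\kappa(Y\setminus Z)=\bar\kappa(Y)$ is fine (LC discrepancies of the SNC pair $(\bar Y, D_Y)$ plus the projection formula, noting that \emph{all} $\pi$-exceptional prime divisors in the resolution --- whether centred on $\overline Z$ or on $D_Y$ --- receive coefficient $a_j-b_j+1\geq 0$), but it is not by itself sufficient. The paper does something genuinely different: it works on a single compactification $\bar X$, uses the discrepancy over the codimension-$\geq 2$ image to write $K_{\bar X}(\bar f^{-1}(B))=\bar f^*K_{\bar Y}(B)+\bar E+F$ with $F\geq 0$, and then invokes $\bar\kappa(Y)\geq 0$ precisely to produce an effective multiple $nK_{\bar Y}(B)$, so that the doubling estimate $n(\bar f^*K_{\bar Y}(B)+D+2\bar E+F)\leq 2n(K_{\bar X}(\bar f^{-1}(B))+D)$ controls the extra boundary $\bar E$. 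Your closing remark that ``the assumption $\bar\kappa(Y)\geq 0$ is not logically essential to this approach'' is therefore unsupported: once the spurious birational-invariance step is removed, your approach does not furnish an alternative proof, and the paper's proof uses $\bar\kappa(Y)\geq 0$ in an essential way.
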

\begin{proof}   
	Let $\bar{f}:\bar{X}\to \bar{Y}$ be a proper birational morphism which extends $f:X\to Y$, where $\bar{X}$ and $\bar{Y}$ are smooth projective compactifications of $X$ and $Y$ such that   $B=\bar{Y}-Y$ is a simple normal crossing divisor. Since the logarithmic Kodaira dimension is  birationally invariant, we may further blow-up  \(\bar{X}\) to assume that 
	\begin{itemize}
		\item $\bar{E}+(\bar{X}-X)$ is a simple normal crossing  divisor,
	\end{itemize} 
	where $\bar{E}\subset \bar{X}$ is the closure of $E$.
	Then we note that $\bar{X}-X$ is also simple normal crossing.
	Write $\bar{X}-X=\bar{f}^{-1}(B)+D$, where $D$ is a reduced divisor on $\bar{X}$ and $\bar{f}^{-1}(B)=\mathrm{supp}\bar{f}^*B$.
	Then we have $f^*K_{\bar{Y}}(B)+\bar{E}\leq K_{\bar{X}}(\bar{f}^{-1}(B))$.
	So we write $K_{\bar{X}}(\bar{f}^{-1}(B))=f^*K_{\bar{Y}}(B)+\bar{E}+F$, where $F$ is effective.
	Then we have
	$$
	K_{\bar{X}}(\bar{f}^{-1}(B))+D+\bar{E}=\bar{f}^*K_{\bar{Y}}(B)+D+2\bar{E}+F
	$$
	By the assumption $\bar{\kappa}(Y)\geq 0$, one has $nK_{\bar{Y}}(B)\geq 0$ for some positive integer $n>0$.
	Therefore,
	$$
	n(\bar{f}^*K_{\bar{Y}}(B)+D+2\bar{E}+F)\leq 2n(\bar{f}^*K_{\bar{Y}}(B)+D+\bar{E}+F)=2n(K_{\bar{X}}(\bar{f}^{-1}(B))+D).
	$$
	Thus 
	$n(K_{\bar{X}}(\bar{f}^{-1}(B))+D+\bar{E})\leq 2n(K_{\bar{X}}(\bar{f}^{-1}(B))+D) $. 
	Recall that $\bar{f}^{-1}(B)+D+\bar{E}$ and $\bar{f}^{-1}(B)+D$ are both simple normal crossing divisors.  	It follows 
	$\bar{\kappa}(X-E)\leq \bar{\kappa}(X)$. 
	Hence $\bar{\kappa}(X-E)=\bar{\kappa}(X)$.  
\end{proof}

\begin{lem}\label{lem:abelian pi0}
	Let $\alpha:X\to \cA$ be  a  (possibly non-proper)  birational morphism  from a quasi-projective manifold $X$ to a semi-abelian variety $\cA$ with   $\overline{\kappa}(X)=0$.  Then  there exists a Zariski closed subset $Z\subset \cA$ of codimension at least two such  that $\alpha$ is   isomorphic
	over $\cA\backslash Z$.   
\end{lem}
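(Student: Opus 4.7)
The plan is to take $W\subset \cA$ to be the largest open subset to which the rational inverse $\alpha^{-1}:\cA\dashrightarrow X$ extends as a morphism $\beta:W\to X$, to set $E:=X\setminus \beta(W)$ and $Z:=\cA\setminus W$, and to show that $Z$ has codimension at least two by combining \cref{prop:Koddimabb} and \cref{lem:same Kd}. Since $\alpha\circ\beta=\mathrm{id}_W$, for every $x=\beta(w)$ the differential $d\alpha_x$ is an isomorphism; thus $\alpha$ is \'etale at $x$, and using that a flat birational morphism of smooth varieties is an open immersion, $\beta(W)$ coincides with the largest open subset of $X$ on which $\alpha$ is an open immersion. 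In particular $\beta(W)$ is open in $X$, and $\alpha|_{\beta(W)}:\beta(W)\xrightarrow{\sim}W$ is an isomorphism with inverse $\beta$.

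The main obstacle is the geometric claim that $\overline{\alpha(E)}$ has codimension $\geq 2$ in $\cA$. For any prime divisor $E_0\subset E$ and a generic $x\in E_0$, $\alpha$ cannot be \'etale at $x$ (otherwise $\alpha$ would be an open immersion near $x$, forcing $x\in\beta(W)$), so $\rank d\alpha_x<\dim X$. If one had $\dim\alpha(E_0)=\dim X-1$, then $d\alpha|_{E_0,x}$ would be injective at generic $x$, forcing $\rank d\alpha_x=\dim X-1$; the usual fiber-dimension estimate at a smooth point would then give $\dim\alpha^{-1}(\alpha(x))\geq 1$ on a dense open of $E_0$. Let $S\subset \cA$ be the closed locus of points whose fiber has positive dimension, so that $S\supset\overline{\alpha(E_0)}$ and $\dim S\geq \dim X-1$. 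Then $\dim\alpha^{-1}(S)\geq\dim S+1\geq\dim X$, forcing $\alpha^{-1}(S)=X$ and hence $\alpha(X)\subset S$; by dominance this yields $S=\cA$, contradicting the fact that $\alpha$ is birational (so its generic fiber is a point). Therefore $\dim\alpha(E_0)\leq\dim X-2$, and $\overline{\alpha(E)}$ has codimension $\geq 2$ in $\cA$.

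Once the claim is established, \cref{lem:same Kd} applied to $\alpha:X\to\cA$---using $\overline{\kappa}(\cA)=0$, which follows from \cref{prop:Koddimabb}(b) with $Z=\emptyset$---yields
\[
\overline{\kappa}(\beta(W))=\overline{\kappa}(X\setminus E)=\overline{\kappa}(X)=0,
\]
and transporting along the isomorphism $\beta$ gives $\overline{\kappa}(W)=0$. Finally, if $Z$ contained a prime divisor $D\subset\cA$, \cref{prop:Koddimabb}(b) applied with $Z=D$ would give $\overline{\kappa}(\cA\setminus D)\geq 1$; since $W\subset\cA\setminus D$ is open, monotonicity of the log Kodaira dimension under open immersions would force $\overline{\kappa}(W)\geq 1$, contradicting the previous identity. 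Hence $Z$ has codimension $\geq 2$, which proves the lemma; the substantive step is the geometric claim above, while the remainder is a clean synthesis of the two preceding lemmas.
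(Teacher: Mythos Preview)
Your overall strategy matches the paper's exactly: remove the exceptional locus to get $X^\circ=\beta(W)$, apply \cref{lem:same Kd} to obtain $\bar\kappa(X^\circ)=0$, transport along the isomorphism to $W=\alpha(X^\circ)$, then invoke \cref{prop:Koddimabb} to conclude that $Z=\cA\setminus W$ has codimension $\geq 2$. The paper carries this out in three lines, taking for granted that the exceptional locus of a birational morphism between smooth varieties has image of codimension $\geq 2$ (which is standard: compactify to a proper birational $\bar\alpha:\bar X\to\bar\cA$; the exceptional locus $\bar E'$ of $\bar\alpha$ has image of codimension $\geq 2$, and $E\subset\bar E'\cap X$ since $\bar\alpha$ being a local isomorphism at $x\in X$ forces $\alpha$ to be one as well).

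However, your direct argument for this geometric claim has a genuine gap. The step ``$\rank d\alpha_x=\dim X-1$, so the usual fiber-dimension estimate gives $\dim\alpha^{-1}(\alpha(x))\geq 1$'' is not valid: a rank drop at $x$ bounds the fiber dimension from \emph{above} (via $\dim_x\alpha^{-1}(\alpha(x))\leq\dim T_x(\text{fiber})=\dim\ker d\alpha_x$), not from below. The map $t\mapsto t^2$ on $\bA^1$ already shows that $d\alpha_x$ non-injective does not force a positive-dimensional fiber; that map is not birational, but your argument does not use birationality at that step. A correct route: if $\dim\alpha(E_0)=\dim X-1$ with generic points $\eta_0\in E_0$ and $\eta_D\in D:=\overline{\alpha(E_0)}$, then $\alpha^*:\mathcal O_{\cA,\eta_D}\to\mathcal O_{X,\eta_0}$ is a local inclusion of DVRs with the same fraction field (by birationality), hence an isomorphism; so $\alpha$ is \'etale at $\eta_0$ and therefore (by Zariski's main theorem, as you correctly note) an open immersion near $\eta_0$, contradicting $E_0\subset E$. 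There is also a minor issue downstream: since $\alpha$ is not proper, the locus $S\subset\cA$ of points with positive-dimensional fiber is only constructible, not closed, so ``$S\supset\overline{\alpha(E_0)}$'' would need an extra argument even if the previous step held.
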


\begin{proof}
	Since $\alpha:X\to \cA$ is birational,  we remove the exceptional locus of $X\to \mathcal{A}$ from $X$ to get $X^\circ\subset X$ such that $\alpha: X^\circ\to \alpha(X^\circ)$ is an isomorphism. 
	By $\bar{\kappa}(\cA)=0$, we may apply \cref{lem:same Kd} to get $\bar{\kappa}(X^\circ)=\bar{\kappa}(X)=0$.  
	By \cref{prop:Koddimabb}, $\mathcal{A}-\alpha(X^\circ)$ has codimension  at least two. \end{proof}

We establish several results concerning structures on varieties with maximal quasi-Albanese dimension and logarithmic Kodaira dimension zero.

\begin{lem}\label{lem:abelian pi}
	Let $\alpha:X\to \cA$ be  a  (possibly non-proper)  morphism  from a quasi-projective manifold $X$ to a semi-abelian variety $\cA$ with   $\overline{\kappa}(X)=0$. 
	Assume that $\dim X=\dim \alpha(X)$.	
	Then  $ \pi_1(X)$ is   abelian.  
\end{lem}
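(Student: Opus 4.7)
My plan is to reduce to the birational case settled in \cref{lem:abelian pi0}, by Stein-factorizing $\alpha$ and showing (via log-ramification and the triviality of $K + \partial$ on the compactified semi-abelian variety) that the finite factor of the factorization is étale, so that $X$ is birational to a semi-abelian variety.

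First, I would reduce to the case where $\alpha$ is surjective. Setting $\mathcal{A}' := \overline{\alpha(X)}$, the hypothesis $\dim X = \dim \alpha(X)$ gives that $\alpha: X \to \mathcal{A}'$ is dominant and generically finite; standard monotonicity of the logarithmic Kodaira dimension under such morphisms yields $\bar{\kappa}(\mathcal{A}') \leq \bar{\kappa}(X) = 0$, while $\bar{\kappa}(\mathcal{A}') \geq 0$ follows from \cref{prop:Koddimabb}(a). Hence $\bar{\kappa}(\mathcal{A}') = 0$, and applying \cref{prop:Koddimabb}(a) again identifies $\mathcal{A}'$ as a translate of a semi-abelian subvariety. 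After translating, we may assume $\alpha: X \to \mathcal{A}$ is surjective and generically finite. Then I would take smooth projective compactifications $\overline{X}$ and $\overline{\mathcal{A}}$ with simple normal crossing boundaries, extend $\alpha$ to $\overline{\alpha}: \overline{X} \to \overline{\mathcal{A}}$ (after blowing up $\overline{X}$ if necessary), and form the Stein factorization $\overline{\alpha} = \overline{\alpha}_2 \circ \overline{\alpha}_1$, where $\overline{\alpha}_1: \overline{X} \to \overline{X}'$ is birational (since $\alpha$ is generically finite) and $\overline{\alpha}_2: \overline{X}' \to \overline{\mathcal{A}}$ is finite. Setting $X' := \overline{\alpha}_2^{-1}(\mathcal{A})$, the induced map $\alpha_2: X' \to \mathcal{A}$ is a finite surjection.

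The crux, and principal obstacle, is to show $\alpha_2$ is étale. Since $K_{\overline{\mathcal{A}}} + \partial\mathcal{A} \sim 0$, on a smooth log-model of $X'$ the log-ramification formula gives $K + \partial = R_{\log}$, so $\bar{\kappa}(X') = \kappa(R_{\log})$; combined with the birationality of $\overline{\alpha}_1$, which forces $\bar{\kappa}(X') = \bar{\kappa}(X) = 0$, we obtain $\kappa(R_{\log}) = 0$. If $\alpha_2$ were ramified, the branch divisor $B \subset \mathcal{A}$ would have codimension-one components by purity of the branch locus, and \cref{prop:Koddimabb}(b) would give $\bar{\kappa}(\mathcal{A} \setminus B) \geq 1$, which on the compactification translates to $\kappa(\overline{B}) \geq 1$. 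Since $R_{\log}$ shares its support with the reduced preimage of $B$, and the Iitaka dimension is preserved both under finite surjections and under passing between effective divisors with the same support, we would deduce $\kappa(R_{\log}) = \kappa(\overline{B}) \geq 1$, contradicting $\kappa(R_{\log}) = 0$. Thus $\alpha_2$ is étale, making $X'$ a finite étale cover of the semi-abelian variety $\mathcal{A}$, hence itself a semi-abelian variety.

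Finally, the restriction $\alpha_1: X \to X'$ of $\overline{\alpha}_1$ is a birational morphism from a quasi-projective manifold to a semi-abelian variety, with $\bar{\kappa}(X) = 0$. Applying \cref{lem:abelian pi0} yields a Zariski closed subset $Z \subset X'$ of codimension at least two such that $\alpha_1$ restricts to an isomorphism $X_0 := \alpha_1^{-1}(X' \setminus Z) \xrightarrow{\sim} X' \setminus Z$. Since $Z$ has codimension at least two in the smooth variety $X'$, we have $\pi_1(X_0) \cong \pi_1(X' \setminus Z) \cong \pi_1(X')$, which is abelian. The open embedding $X_0 \hookrightarrow X$ induces a surjection $\pi_1(X_0) \twoheadrightarrow \pi_1(X)$, since $X \setminus X_0$ is a proper Zariski closed subset of the smooth variety $X$. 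Therefore $\pi_1(X)$ is a quotient of an abelian group, hence is itself abelian.
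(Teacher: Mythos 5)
Your proposal is correct in overall structure — reduce to a birational morphism onto a semi-abelian variety via Stein factorization, then apply \cref{lem:abelian pi0} and the $\pi_1$-comparison — and the final step is essentially the same as the paper's Step 2. However, there is a genuine gap in your argument that $\alpha_2:X'\to\cA$ is \'etale, at exactly the place where the paper instead invokes Kawamata's characterization theorem (\cite[Theorem~26]{Kaw81}), which says precisely: a finite surjective morphism $X'\to\cA$ from a normal variety to a semi-abelian variety with $\bar\kappa(X')=0$ is \'etale and $X'$ is semi-abelian. Your attempt to reprove this from the log-ramification formula does not close.

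The problematic step is the claim ``$R_{\log}$ shares its support with the reduced preimage of $B$.'' This is false in general: for a finite surjective morphism, the ramification locus is contained in, but need not equal, the preimage of the branch locus — some prime divisors lying over a branch component can be unramified. A concrete illustration: for $f:\mathbb{A}^1\to\mathbb{A}^1$, $z\mapsto z^3-3z$, the branch locus is $\{\pm2\}$, but $f^{-1}(2)=\{-1,2\}$ with $z=-1$ ramified and $z=2$ unramified, so $\Supp R\subsetneqq f^{-1}(B)$. Since $\Supp R_{\log}\subseteq \Supp\alpha_2^*\overline{B}$ with possibly strict inclusion, the ``same support'' transfer of Iitaka dimensions gives only $\kappa(R_{\log})\leq\kappa(\overline{B})$, which is the wrong direction; the contradiction with $\kappa(R_{\log})=0$ does not follow from $\kappa(\overline{B})\geq 1$. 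To get the needed inequality $\kappa(R_{\log})\geq\kappa(\overline{B})$ one would have to know that every divisor in $\alpha_2^{-1}(B)$ is ramified — but that is exactly (a consequence of) the statement being proved, so the argument is circular without an additional input. This is why the paper cites Kawamata's theorem rather than re-deriving the \'etaleness. As a secondary point, the assertion that the birationality of $\overline\alpha_1$ ``forces'' $\bar\kappa(X')=\bar\kappa(X)$ is imprecise, since $\bar\kappa$ is only a \emph{proper} birational invariant; what actually holds is $0=\bar\kappa(X)\geq\bar\kappa(X')\geq\bar\kappa(\cA)=0$, where the first inequality uses that $X$ is a Zariski-open subset of the proper birational model $\overline\alpha_1^{-1}(X')$ of $X'$, and the second uses finiteness of $\alpha_2$ together with \cref{prop:Koddimabb}. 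The remaining steps (finite \'etale covers of semi-abelian varieties are semi-abelian, codimension-two removal preserves $\pi_1$, surjectivity of $\pi_1(X_0)\to\pi_1(X)$ for a dense open $X_0\subset X$) are fine.
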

\begin{proof}
	{\em Step 1. We may assume that \(\alpha\) is dominant and birational.}
	Consider the quasi-Stein factorisation $X\stackrel{h}{\to} Y\stackrel{k}{\to} \cA$  of $\alpha$, where $h$  is birational (might not proper), and $k$ is a finite morphism.  
	This is constructed as follows.
	Let $X\subset X'$ be a partial smooth compactification of $X$ such that $\alpha$ extends to a proper map $\alpha':X'\to \cA$.
	Let $X'\to Y\to\cA$ be the Stein factorization of $\alpha'$, where $h':X'\to Y$ have connected fibers and $k:Y\to\cA$ is finite.
	By $\dim X'=\dim Y$, the map $h'$ is birational.
	Let $h:X\to Y$ be the restriction of $h'$ to $X$.
	Then $h$ is birational.

	Now since   $\bar{\kappa}(X)= 0$, one has $$0=\bar{\kappa}(X)\geq\bar{\kappa}(Y)\geq \bar{\kappa}(k(Y))\geq 0$$ where the last inequality follows from \cref{prop:Koddimabb}.     
	Hence $\bar{\kappa}(Y)=\bar{\kappa}(k(Y))=0$.  By  \cref{prop:Koddimabb}    and Kawamata \cite[Theorem~26]{Kaw81},  $k(Y)$ is a semi-abelian variety and  $k$ is finite \'etale. In conclusion, it now suffices to prove the proposition with \(\cA\) replaced by \(Y\). 
	Hence in the following, we may assume that $\alpha$ is dominant and birational.
	
	\noindent
	{\em Step 2.}
	We apply \cref{lem:abelian pi0} to get the isomorphism $\alpha|_{X^{\circ}}:X^{\circ}\to \cA^{\circ}$, where $\cA-\cA^\circ$ of codimension at least two.
	Let $\bar{\alpha}:\overline{X}\to \cA$  be a  proper birational morphism which extends $\alpha:X\to \cA$ such that $\overline{X}$ is smooth.   One gets the following commutative diagram
	\begin{equation*}
		\begin{tikzcd}
			\pi_1( X^{\circ})\arrow[r, "\simeq"] \arrow[d] & \pi_1(\cA^{\circ})\arrow[dd, "\simeq"] \\
			\pi_1(X)  \arrow[d]& \\
			\pi_1(\bar{\alpha}^{-1}(\cA)) \arrow[r, "\simeq"] & \pi_1(\cA)
		\end{tikzcd}
	\end{equation*}
	where the two rows are isomorphisms because they are induced by   proper birational morphisms  between smooth quasi-projective varieties. The map \(\pi_{1}(X^\circ) \to \pi_{1}(X)\) is surjective since it is induced by the inclusion of a dense Zariski open subset. It follows that all the groups in the previous diagram are isomorphic, so \(\pi_{1}(X) \cong \pi_{1}(\cA)\) is abelian.
\end{proof}

\begin{lem}\label{lem:ZD}
	Let $\alpha:X\to \cA$ be  a  (possibly non-proper)  morphism  from a quasi-projective manifold $X$ to a semi-abelian variety $\cA$ with   $\overline{\kappa}(X)=0$. 
	Assume that $\dim X=\dim \alpha(X)$ and $\dim X>0$.
	Then $X$ admits a Zariski dense entire curve $\mathbb C\to X$. 
	In particular, $\Sph(X)=X$.
\end{lem}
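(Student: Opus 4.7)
The plan is to follow the reduction step of the proof of \cref{lem:abelian pi} to bring $\alpha$ into the dominant birational setting, and then to construct a Zariski dense entire curve in $\cA$ avoiding the indeterminacy locus of $\alpha^{-1}$ by translating a one-parameter subgroup.

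First, I would apply the quasi-Stein factorisation $X \xrightarrow{h} Y \xrightarrow{k} \cA$ exactly as in Step~1 of the proof of \cref{lem:abelian pi}. Then $h$ is dominant and birational with $\dim Y=\dim X$, while $k:Y\to k(Y)\subset\cA$ is finite \'etale onto a semi-abelian subvariety; since a connected finite \'etale cover of a semi-abelian variety is itself semi-abelian, $Y$ is a semi-abelian variety of positive dimension. Because $h$ is dominant birational between varieties of the same dimension, any proper closed subvariety $V\subsetneq X$ satisfies $\dim\overline{h(V)}<\dim Y$, so $\overline{h(V)}\subsetneq Y$; hence any entire curve $f:\mathbb C\to X$ whose image in $Y$ is Zariski dense is already Zariski dense in $X$. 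Replacing $\cA$ by $Y$ and $\alpha$ by $h$, we may therefore assume that $\alpha$ is dominant and birational onto a positive-dimensional semi-abelian variety, and it then suffices to construct a Zariski dense entire curve in $\cA$ whose image can be lifted to $X$.

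Next, I would invoke \cref{lem:abelian pi0} to obtain a Zariski closed subset $Z\subset\cA$ with $\codim_{\cA}Z\geq 2$ such that $\alpha$ restricts to an isomorphism $X^\circ\xrightarrow{\sim}\cA\setminus Z$, where $X^\circ:=\alpha^{-1}(\cA\setminus Z)$. It then suffices to construct a Zariski dense entire curve $\mathbb C\to\cA$ whose image lies in $\cA\setminus Z$, since composition with $(\alpha|_{X^\circ})^{-1}$ lifts it to the desired entire curve in $X\supset X^\circ$.

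Finally, I would pick a one-parameter subgroup $\varphi_0:\mathbb C\to\cA$, $t\mapsto\exp(tv)$, for a sufficiently generic tangent vector $v\in\mathrm{Lie}(\cA)$, so that $\overline{\varphi_0(\mathbb C)}^{\mathrm{Zar}}=\cA$ (a generic $v$ avoids the Lie algebras of all proper algebraic subgroups of $\cA$). The locus of base-points $a\in\cA$ for which the translate $\varphi_a(t):=a+\varphi_0(t)$ meets $Z$ is contained in the image of the holomorphic map $Z\times\mathbb C\to\cA$, $(z,t)\mapsto z-\varphi_0(t)$, whose analytic dimension is at most $\dim Z+1\leq\dim\cA-1$; hence this locus cannot cover $\cA$. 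For a generic $a\in\cA$ the translate $\varphi_a$ is therefore a Zariski dense entire curve contained in $\cA\setminus Z$, which finishes the proof; the assertion $\Sph(X)=X$ is then immediate from the definition. The main delicate point is this dimension count, which relies crucially on the codimension bound $\codim Z\geq 2$ provided by \cref{lem:abelian pi0}; with any weaker bound, generic translates of a one-parameter subgroup could be forced to cross $Z$.
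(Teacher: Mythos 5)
Your proposal is correct and follows essentially the same route as the paper: reduce to the dominant birational case via the quasi-Stein factorization, invoke \cref{lem:abelian pi0} to get an isomorphism $X^\circ\cong\cA\setminus Z$ with $\codim Z\geq 2$, and then show that a generic translate of a Zariski-dense one-parameter subgroup of $\cA$ avoids $Z$. The only difference is cosmetic: to see that the ``bad'' set of translation parameters is not all of $\cA$, the paper runs a Baire-category argument with a compact exhaustion of $\mathbb C\times Z$, while you argue via the fact that the image of a holomorphic map from a space of complex dimension $\leq\dim\cA-1$ cannot cover $\cA$ — both are standard and correct ways to conclude.
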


\begin{proof} 
	As in the step 1 of the proof of \cref{lem:abelian pi}, we may assume that $\alpha:X\to \cA$ is birational.
	We apply \cref{lem:abelian pi0} to get the isomorphism $\alpha|_{X^{\circ}}:X^{\circ}\to \cA^{\circ}$, where $\cA-\cA^\circ$ of codimension at least two.
	Set $Z=\cA-\cA^{\circ}$.
	We shall show that $\cA^{\circ}=\cA-Z$ contains a Zariski dense entire curve.
	Let $\varphi:\mathbb C\to \cA$ be a one parameter group such that $\varphi(\mathbb C)\subset \cA$ is Zariski dense.
	We define $F:\mathbb C\times Z\to \cA$ by $F(c,z)=z+\varphi(c)$.
	Then $F$ is a holomorphic map.
	Let $Z_1\subset Z_2\subset \cdots$ be a sequence of compact subsets of $Z$ such that $\cup_nZ_n=Z$.
	Let $K_n=\{|z|\leq n\}\subset \mathbb C$.
	Then $F(K_n\times Z_n)\subset \cA$ is a compact subset.
	Set $O_n=\cA\backslash F(K_n\times Z_n)$.
	Then $O_n\subset \cA$ is an open subset.
	Note that $\dim (\mathbb C\times Z)<\dim \cA$.
	Hence $O_n$ is a dense subset.
	Hence by the Baire category theorem, the intersection $\cap_nO_n$ is dense in $\cA$.
	In particular, we may take $x\in \cap_nO_n$.
	We define $f:\mathbb C\to \cA$ by $f(c)=x+\varphi(c)$.
	Then $f(\mathbb C)\cap Z=\emptyset$.
	Indeed suppose contary $f(c)\in Z$.
	Then $F(-c,f(c))=f(c)+\varphi(-c)=x$.
	Since $(-c,f(c))\in K_n\times Z_n$ for sufficiently large $n$, we have $x\in F(K_n\times Z_n)$, so $x\not\in O_n$.
	This contradicts the choice of $x$.  
\end{proof}

\begin{lem}\label{lem:20230509} 
	Let $\alpha:X\to \cA$ be  a  (possibly non-proper)  morphism  from a quasi-projective manifold $X$ to a semi-abelian variety $\cA$ with   $\overline{\kappa}(X)=0$. 
	Assume that $\dim X=\dim \alpha(X)$ and $\dim X>0$.
	Then $\Spab(X)=X$.
\end{lem}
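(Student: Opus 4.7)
My plan is to reduce immediately to the case where $\alpha$ is birational, and then to realize the inverse of $\alpha$ itself as a rational map from a semi-abelian variety into $X$ which already witnesses $\Spab(X)=X$ in view of the definition.

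First, I would repeat verbatim Step~1 of the proof of \cref{lem:abelian pi}: taking the quasi-Stein factorization $\alpha = k\circ h$ with $h:X\to Y$ birational and $k:Y\to \cA$ finite, \cref{prop:Koddimabb} together with $\overline{\kappa}(X)=0$ forces $\overline{\kappa}(k(Y))=0$, and then Kawamata's theorem \cite[Theorem~26]{Kaw81} makes $k(Y)$ a semi-abelian variety with $k$ finite \'etale. Since $\Spab(X)$ is intrinsic to $X$, I may replace $\cA$ by $Y$ and thus assume $\alpha:X\to \cA$ is a birational morphism onto a semi-abelian variety $\cA$ with $\dim \cA=\dim X>0$.

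Next, I would invoke \cref{lem:abelian pi0} to obtain a Zariski closed subset $Z\subset \cA$ of codimension at least two such that $\alpha$ restricts to an isomorphism $\alpha|_{X^{\circ}}:X^\circ\xrightarrow{\sim}\cA^\circ := \cA\setminus Z$ for some Zariski open $X^\circ\subset X$. The key observation is that the inverse $f:=\alpha^{-1}:\cA\dashrightarrow X$ is then literally an admissible test map in the definition of $\Spab(X)$: it is non-constant, it is a rational map from the non-trivial semi-abelian variety $\cA$ to $X$, it is regular on $\cA^\circ$, and its indeterminacy locus $Z$ has codimension at least two in $\cA$. Since $\alpha$ is birational, $f(\cA^\circ)=X^\circ$ is Zariski dense in $X$, so
\[
\Spab(X)\supseteq \overline{f(\cA^\circ)}^{\mathrm{Zar}}=\overline{X^\circ}^{\mathrm{Zar}}=X,
\]
whence equality.

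I do not anticipate any real obstacle here; the lemma is essentially a bookkeeping consequence of \cref{lem:abelian pi0,prop:Koddimabb} and Kawamata's theorem. The only point requiring a moment's care is verifying that the reduction to the birational case is harmless, which is immediate because the definition of $\Spab(X)$ depends only on $X$ and not on the chosen morphism $\alpha$.
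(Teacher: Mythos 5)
Your proof is correct and follows essentially the same route as the paper: reduce to the birational case via the quasi-Stein factorization argument of Step~1 of \cref{lem:abelian pi}, apply \cref{lem:abelian pi0} to get $\alpha|_{X^\circ}:X^\circ\xrightarrow{\sim}\cA^\circ$ with $\cA\setminus\cA^\circ$ of codimension $\geq 2$, and observe that the inverse $\cA^\circ\to X$ exhibits a rational map from a non-trivial semi-abelian variety regular outside codimension two with Zariski-dense image. The only difference is that you spell out the verification that this map satisfies the definition of $\Spab$, whereas the paper leaves it implicit.
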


\begin{proof}
	As in the step 1 of the proof of \cref{lem:abelian pi}, we may assume that $\alpha:X\to \cA$ is birational.
	We apply \cref{lem:abelian pi0} to get the isomorphism $\alpha|_{X^{\circ}}:X^{\circ}\to \cA^{\circ}$, where $\cA-\cA^\circ$ of codimension at least two.
	Then we have the inverse $\cA^{\circ}\to X$ of the isomorphism $X^{\circ}\to \cA^{\circ}$.
	Thus $\Spab(X)=X$.
\end{proof}
 
	\section{Proof of \cref{cor:20221102} from \cref{thm2nd}}  
We now apply the results of \cref{sec:quasi} to deduce \cref{cor:20221102} from \cref{thm2nd}. We begin with some general facts about special subsets.
\begin{lem}\label{lem:inclusion} 
	Let $X$ be a smooth quasi-projective   variety. Then  one has  $\Spab\subseteq\Sph\subseteq\Spp$. 
\end{lem}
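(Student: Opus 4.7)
The plan is to establish the two inclusions separately, both reducing to density arguments that produce the appropriate curves.

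For $\Spab(X) \subseteq \Sph(X)$: let $f: A \dashrightarrow X$ be a non-constant rational map from a non-trivial semi-abelian variety $A$, regular on $A_0 \subseteq A$ with $\codim(A \setminus A_0) \geq 2$; I want to show $f(A_0) \subseteq \Sph(X)$. I would fix a one-parameter subgroup $\varphi : \bC \to A$ with Zariski dense image, and for each $a \in A$ consider the translated entire curve $\phi_a(c) := a + \varphi(c)$, which still has Zariski dense image in $A$. Following the exhaustion-plus-Baire argument used in the proof of \cref{lem:ZD}, applied to $F: \bC \times (A \setminus A_0) \to A$, $(c,z) \mapsto z - \varphi(c)$, and using that $\dim(\bC \times (A \setminus A_0)) < \dim A$ thanks to the codimension hypothesis, I obtain a Baire-dense (hence topologically dense) subset $D \subseteq A$ such that $\phi_a(\bC) \subseteq A_0$ for every $a \in D$. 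Since $A_0$ is Zariski open in $A$, the intersection $D \cap A_0$ is still dense in $A_0$. For such $a$, the composition $g_a := f \circ \phi_a : \bC \to X$ is a well-defined holomorphic map, non-constant because $\phi_a(\bC)$ is Zariski dense in $A$ while $f$ is non-constant; thus $f(a) = g_a(0) \in \Sph(X)$. The Zariski-closedness of $\Sph(X)$ together with the density of $D \cap A_0$ in $A_0$ then gives $f(A_0) \subseteq \Sph(X)$.

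For $\Sph(X) \subseteq \Spp(X)$: given a non-constant entire curve $g : \bC \to X$, I would precompose with the transcendental map $E(w) := e^{1/w}: \bD^* \to \bC^*$ to form $h := g \circ E : \bD^* \to X$. The key claim is that $h$ has essential singularity at $0$: if instead $h$ extended to a holomorphic map $\tilde{h}: \bD \to \overline{X}$ for some projective compactification $\overline{X}$ of $X$, then by the great Picard theorem every $z_0 \in \bC^*$ would be attained by $E$ along some sequence $w_n \to 0$, giving $g(z_0) = \lim_n \tilde{h}(w_n) = \tilde{h}(0)$ and forcing $g$ to be constant, contradicting our assumption. Hence $h(\bD^*) = g(\bC^*) \subseteq \Spp(X)$ by the very definition of $\Spp(X)$. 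Since $\Spp(X)$ is Zariski closed, the preimage $g^{-1}(\Spp(X))$ is an analytic subset of $\bC$ containing the dense set $\bC^*$, and therefore equals $\bC$; thus $g(\bC) \subseteq \Spp(X)$. Taking union over all such $g$ and then Zariski closure yields $\Sph(X) \subseteq \Spp(X)$.

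The main obstacle I anticipate is in the first inclusion, where the Baire step must be arranged so that it produces curves through a dense set of points of $A_0$ rather than through a single generic point; this is why the parameter $a$ itself is varied and why one relies on the exhaustion by compact subsets exactly as in the proof of \cref{lem:ZD}. The non-constancy of $g_a$ is then an immediate consequence of the Zariski density of $\varphi(\bC)$. The second inclusion is essentially Picard's great theorem together with the elementary observation that a closed analytic subset of $\bC$ meeting the complement of a single point must be all of $\bC$.
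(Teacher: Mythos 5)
Your proof is correct and follows essentially the same route as the paper. For $\Sph \subseteq \Spp$, your argument is the same as the paper's: precompose with $w \mapsto e^{1/w}$ (the paper writes this as $z \mapsto f(\exp(1/z))$ on $\bC^*$ and then restricts to $\bD^*$), use the great Picard theorem to see that the resulting map has an essential singularity at $0$, and observe that its image has the same Zariski closure as $f(\bC)$.

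For $\Spab \subseteq \Sph$, the paper is more economical: it notes that $\bar\kappa(A_0) = 0$ by \cref{prop:Koddimabb} (the complement has codimension $\geq 2$), then invokes \cref{lem:ZD} to get a \emph{single} Zariski-dense entire curve $h:\bC\to A_0$, and concludes because $\overline{(f\circ h)(\bC)}^{\mathrm{Zar}} = \overline{f(A_0)}^{\mathrm{Zar}}$. You instead rerun the exhaustion-plus-Baire argument from the proof of \cref{lem:ZD} to manufacture a \emph{dense family} of entire curves $\phi_a(\bC) \subseteq A_0$ through a euclidean-dense set of base points $a$, and then invoke the Zariski-closedness of $\Sph(X)$ to sweep up all of $\overline{f(A_0)}^{\mathrm{Zar}}$. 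Both arguments use the same underlying Baire-category construction and both are valid; the single-curve version the paper uses is simply cleaner, since one Zariski-dense curve already determines the full Zariski closure of $f(A_0)$, making the dense-family step unnecessary.
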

\begin{proof}
Let $A_0\subset A$ be a Zariski open subset of a non-trivial semi-Abelian variety $A$ such that the complement $A\backslash A_0$ has codimension at least two.
By \cref{prop:Koddimabb}, we have $\bar{\kappa}(A_0)=0$.
Thus $A_0$ admits a Zariski dense entire curve $\mathbb C\to A_0$ by \cref{lem:ZD}, which shows $\Spab\subseteq\Sph$.
	
	For any non-constant holomorphic map $f:\bC\to X$,   the holomorphic map 
	\begin{align*}
		g:\bC^*\to X\\
		z\mapsto f(\exp(\frac{1}{z}))
	\end{align*} has essential singularity at 0. Note that the Zariski closure of $g(\bD^*)$ coincides with the Zariski closure of $f$. It follows that   $\Sph\subseteq\Spp$. 
\end{proof}

\begin{proof}[Proof of \cref{cor:20221102}]  \ref{being general type1}$\implies$\ref{pseudo Picard1}.  It follows from \cref{thm2nd} directly. 
	
	\ref{pseudo Picard1}$\implies$\ref{pseudo Brody}. 	This follows from \cref{lem:inclusion}.

	\ref{pseudo Brody}$\implies$\ref{spab}.
	This follows from \cref{lem:inclusion}.
	
	\ref{spab}$\implies$\ref{strong LGT1}.  
	Let $E\subsetneqq X$ be a proper Zariski closed subset such that $a|_{X\backslash E}:X\backslash E\to A$ is quasi-finite.
	Set $\Xi=\Spab(X)\cup E$.
	Under the hypothesis  \ref{spab}, we have $\Xi\subsetneqq X$. 
	
	Now we prove that all closed subvarieties $V\subset X$ with $V\not\subset \Xi$ are of log-general type.
	Note that $\dim V=\dim a(V)$. 
	Hence $\bar{\kappa}(V)\geq 0$ by \cref{prop:Koddimabb}. 
	We prove that the logarithmic Iitaka fibration $V\dashrightarrow W$ is birational.
	So assume contrary that a very general fiber $F$ is positive dimensional.
	We have $F\not\subset \Xi$, hence $F\not\subset \Spab(X)$, and the logarithmic Kodaira dimension satisfies $\bar{\kappa}(F)=0$.
	Then $a|_F:F\to A$ satisfies $\dim a(F)=\dim F$. 
	Hence by \cref{lem:20230509}, we have $\Spab{F}=F$.
	This contradicts to $F\not\subset \Spab(X)$.
	Hence $V$ is of log general type. 
	
	\ref{strong LGT1}$\implies$\ref{being general type1}.  This is obvious. 
\end{proof} 
Therefore, \cref{thm2nd}   proves  \cref{conj:GGL} for quasi-projective varieties with maximal quasi-Albanese dimension. 
After the preparations in \cref{subsec:4.2}–\cref{subsec:4.7a}, we prove \cref{thm2nd} in \cref{subsec:4.7}.

	\section{Some notions in Nevanlinna theory}\label{subsec:notion Nevanlinna}
	We shall recall some elements of Nevanlinna theory (cf. \cite{NW13}, \cite{yamanoi2015kobayashi}).  
	Let $\delta>0$ be a positive constant and let $Y$ be a Riemann surface equipped with a proper surjective holomorphic map 
$\pi:Y\to\mathbb C_{>\delta}$.
	For $r>2\delta$, define
	$
	Y(r)=\pi^{-1}\big( \mathbb C_{>2\delta}(r) \big)
	$ 
	where $\mathbb C_{>2\delta}(r)=\{z\in \bC\mid  2\delta<|z|<r\}$. 
	In the following, we always assume that $r>2\delta$.
	The \emph{ramification counting function} of the covering $\pi:Y\to  \bC_{>\delta}$ is defined by
	$$
	N_{{\rm ram}\,  \pi}(r):=\frac{1}{{\rm deg} \pi}\int_{2\delta}^{r}\left[\sum_{y\in {Y}(t)} \ord_y \ram \pi \right]\frac{dt}{t},
	$$ 
	where $\ram \pi\subset Y$ is the ramification divisor of $\pi:Y\to\mathbb C_{>\delta}$.

	Let $X$ be a projective variety and let $Z$ be a closed subscheme of $X$. Let $f: Y \rightarrow X$ be a holomorphic map such that $f(Y)\not\subset Z$. 
	Since $Y$ is one dimensional, the pull-back $f^{*} Z$ is a divisor on $Y$. The counting function and truncated function are defined to be 
	$$
	\begin{gathered}
		N_f(r, Z):=\frac{1}{\operatorname{deg}  \pi} \int_{2\delta}^{r}\left[\sum_{y \in Y(t)} \operatorname{ord}_{y} f^{*} Z\right] \frac{d t}{t}, \\
		{N}_f^{(k)}(r, Z):=\frac{1}{\operatorname{deg} \pi} \int_{2\delta}^{r}\left[\sum_{y \in Y(t)} \min \left\{k, \operatorname{ord}_{y} f^{*} Z\right\}\right] \frac{d t}{t} ,
	\end{gathered}
	$$
	where $k\in\mathbb Z_{\geq 1}$ is a positive integer.
	We define the proximity function by
	$$
	m_f(r, Z):=\frac{1}{\operatorname{deg} \pi} \int_{y\in \pi^{-1}(\{|z|=r\})}\lambda_Z(f(y))\ \frac{d\mathrm{arg}\ \pi(y)}{2\pi},
	$$
	where $\lambda_Z:X-\mathrm{supp}\ Z\to \mathbb R_{\geq 0}$ is a Weil function for $Z$ (cf. \cite[Prop 2.2.3]{Yam04}).

	Let $L$ be a line bundle on $X$.
	Let $f:Y\to X$ be a holomorphic map.
	We define the order function $T_f(r,L)$ as follows.
	First suppose that $X$ is smooth.
	We equip with a smooth hermitian metric $h_L$, and let $c_1(L,h_L)$ be the curvature form of $(L, h_L)$.  Set
	$$
	T_f(r,L):=\frac{1}{\operatorname{deg} \pi} \int_{2\delta}^{r}\left[\int_{Y(t)} f^{*}c_1(L,h_L)\right] \frac{d t}{t}.
	$$
	This definition is independent of the choice of  the hermitian metric up to a function  $O(\log r)$.  
	For the general case, let $V\subset X$ be the Zariski closure of $f(Y)$ and let $V'\to V$ be a smooth model of $V$.
	This induces a morphism $p:V'\to X$. 
	We have a natural lifting $f':Y\to V'$ of $f:Y\to V$.
	Then we set
	$$
	T_f(r,L):=T_{f'}(r,p^*L)+O(\log r).
	$$
	This definition does not depend on the choice of $V'\to V$ up to a function  $O(\log r)$.

	\begin{thm}[First Main Theorem]\label{thm:first}
		Let $X$ be a projective variety and let $D$ be an effective Cartier divisor on $X$.
		Let $f:Y\to X$ be a holomorphic curve such that $f(Y)\not\subset D$.
		Then
		$$N_f(r,D)+m_f(r,D)=T_f(r,\mathcal{O}_X(D))+O(\log r).$$
	\end{thm}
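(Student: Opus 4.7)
The plan is a Poincar\'e--Lelong plus Jensen-type argument on the covering $\pi:Y\to\mathbb{C}_{>\delta}$, using the proper exhaustion $\log|\pi|$ in place of $\log|z|$ on $\mathbb{C}$.

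First I would reduce to the case $X$ smooth. Let $V\subset X$ be the Zariski closure of $f(Y)$, and let $p:V'\to V\hookrightarrow X$ be a smooth model of $V$, with $f':Y\to V'$ the natural lift of $f$. Since $f(Y)\not\subset D$ we have $V\not\subset D$, so $p^*D$ is a well-defined effective Cartier divisor on $V'$ and $f^*D=(f')^*(p^*D)$ as divisors on $Y$. Hence the counting and proximity functions are unchanged by this substitution, while $T_f(r,\mathcal{O}_X(D))=T_{f'}(r,p^*\mathcal{O}_X(D))+O(\log r)$ by the definition given in the excerpt. So one may assume $X$ smooth.

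Now fix a holomorphic section $s\in H^{0}(X,\mathcal{O}_X(D))$ with $\mathrm{div}(s)=D$ and a smooth hermitian metric $h$ on $\mathcal{O}_X(D)$, so that $-\log|s|_h^{2}$ agrees with a Weil function $\lambda_D$ for $D$ up to a bounded function on $X\setminus\mathrm{supp}(D)$. The Poincar\'e--Lelong formula pulled back via $f$ gives the identity of currents
\[
dd^{c}\bigl(\log|s\circ f|_h^{-2}\bigr)\;=\;[f^{*}D]\;-\;f^{*}c_{1}(\mathcal{O}_X(D),h)
\]
on $Y$. The key step is then to integrate this identity on $Y(r)$ against the exhaustion $\log|\pi|$, divide by $\deg\pi$, and apply a Green--Jensen formula on the annular region $Y(r)$. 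Integrating the right-hand side against $dt/t$ from $2\delta$ to $r$ reproduces $N_f(r,D)-T_f(r,\mathcal{O}_X(D))$ directly from the definitions, whereas integrating the left-hand side gives $m_f(r,D)$ as the outer boundary contribution at $\pi^{-1}(\{|z|=r\})$ (this is exactly the form of the definition of $m_f$ as a fibrewise average of $\lambda_D\circ f$), plus an inner boundary contribution at $\pi^{-1}(\{|z|=2\delta\})$ which is $O(\log r)$, because the Green kernel for the annulus weights the bounded values of $-\log|s\circ f|_h^{2}$ there by a factor of $\log(r/2\delta)$. Rearranging yields the claimed identity.

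The main technical obstacle is the careful bookkeeping of the Green--Jensen formula on the ramified covering $\pi:Y\to\mathbb{C}_{>\delta}$: one must verify that fibrewise averaging (encoded in the $\tfrac{1}{\deg\pi}$ normalisations of $N_f$, $m_f$ and $T_f$) interacts correctly with Stokes' theorem applied twice on $Y(r)$, and that the inner boundary term together with the ambiguity coming from the choice of $h$ both fit within the $O(\log r)$ error. A minor auxiliary point is to slightly perturb the inner radius $2\delta$ so that $\pi^{-1}(\{|z|=2\delta\})\cap f^{-1}(D)=\emptyset$; this costs only a bounded error and does not affect the conclusion.
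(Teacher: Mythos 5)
Your proposal follows essentially the same route as the paper's proof: reduce to $X$ smooth via the Zariski closure of $f(Y)$ and a desingularization, pull back the Poincar\'e--Lelong current identity, and then run a Stokes/Jensen argument on the annular regions $Y(t)$ integrated against $dt/t$, with the inner-boundary term and the metric-choice ambiguity absorbed into $O(\log r)$. One sign slip to fix: the Poincar\'e--Lelong identity should read $dd^c\log|s\circ f|_h^{-2}=-[f^*D]+f^*c_1(\mathcal{O}_X(D),h)$ (not the opposite signs), so that integrating the right-hand side produces $-N_f(r,D)+T_f(r,\mathcal{O}_X(D))$ and the rearrangement gives $N_f+m_f=T_f+O(\log r)$ rather than the reversed identity.
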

	
	\begin{proof}
		Let $V\subset X$ be the Zariski closure of $f(Y)$ and let $p:V'\to V$ be a desingularization.
		Let $f':Y\to V'$ be the canonical lifting of $f$.
		Replacing $X$, $D$ and $f$ by $V'$, $p^*D$ and $f'$, respectively, we may assume that $X$ is smooth.
		\par

		Let $||\cdot ||$ be a smooth Hermitian metric on $\mathcal{O}_X(D)$ and let $s_D$ be the associated section for $D$.
		By the Poinca{\'e}-Lelong formula, we have 
		$$2dd^c\log (1/||s_D\circ f(y)||)=-\sum_{y\in Y}(\mathrm{ord}_yf^{*}D)\delta _y+f^{*}c_1(\mathcal{O}_X(D),||\cdot ||),$$
		where $\delta_y$ is Dirac current suported on $y$.
		Integrating over $Y(t)$, we get 
		$$
		2\int_{Y(t)}dd^c\log (1/||s_D\circ f(y)||)=-\sum_{y\in Y(t)}\mathrm{ord}_yf^{*}D+\int_{Y(t)}f^{*}c_1(\mathcal{O}_X(D), ||\cdot ||)
		$$
		Hence, we get
		\begin{equation*}
			\begin{split}
				-N_f(r,D)+T_f(r,\mathcal{O}_X(D))
				&=\lim_{\delta'\to \delta+}\frac{2}{\deg\pi}\int_{2\delta'}^r\frac{dt}{t}\int_{\pi^{-1}\big( \mathbb C_{>2\delta'}(t) \big)}dd^c\log \left( \frac{1}{||s_D\circ f(z)||}\right)\\
				&=\lim_{\delta'\to \delta+}\frac{2}{\deg\pi}\int_{2\delta'}^r\frac{dt}{t}\int_{\partial \pi^{-1}\big( \mathbb C_{>2\delta'}(t) \big)}d^c\log \left( \frac{1}{||s_D\circ f(z)||}\right)\\
				&=m_f(r,D)-m_f(2\delta,D)-C_f\log (r/2\delta),
			\end{split}
		\end{equation*}
		where we set
		$$
		C_f=\lim_{\delta'\to \delta+}\frac{2}{\deg\pi}\int_{\pi^{-1}\{ |z|=2\delta'\}}d^c\log \left( \frac{1}{||s_D\circ f(z)||}\right),$$
		which is a constant independent of $r$. 
	\end{proof}

	For any effective divisor $D_L \in  |L|$ with $f(Y)\not\subset D_L$,  the First Main theorem (cf. \cref{thm:first}) implies the following Nevanlinna inequality:
	\begin{align}\label{eqn:20221120}
		N_f(r, D_L) \leqslant T_f(r, L)+O(\log r) .
	\end{align}
	When $L$ is an ample line bundle on $X$, then we have $T_f(r,L)+O(\log r)>0$.
	If $L'$ is another ample line bundle on $X$, then we have $T_f(r,L')=O(T_f(r,L))+O(\log r)$.
	We write $T_f(r)$ for short instead of $T_f(r,L)$ when the order of magnitude as $r\to\infty$ is concerned.

	\begin{lem}\label{lem:20230411}
		Let $X$ be a smooth projective variety and let $f:Y\to X$ be a holomorphic map.
		Assume that $T_f(r)=O(\log r)||$ and $N_{\ram\pi}(r)=O(\log r)||$. 
		Then $f$ does not have essential singularity over $\infty$.
	\end{lem}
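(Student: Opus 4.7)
The plan is to reduce the extension problem for $f$ to showing that finitely many meromorphic functions on $Y$ of Nevanlinna characteristic $O(\log r)$ extend meromorphically across each end of $Y$ over $\infty$, and then to invoke the standard characterization of meromorphic extension across a puncture in terms of logarithmic characteristic growth.

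Fix a very ample line bundle $L$ on $X$ and a basis $s_0,\ldots,s_N$ of $H^0(X,L)$ defining an embedding $X\hookrightarrow\bP^N$, chosen so that no $s_j\circ f$ vanishes identically on $Y$. The meromorphic functions $g_{ij}:=(s_i/s_j)\circ f:Y\to\bP^1$ then satisfy $T_{g_{ij}}(r)=O(T_f(r,L))+O(\log r)=O(\log r)||$ by functoriality of the Nevanlinna characteristic. Since $T_{g_{ij}}$ is nondecreasing and the exceptional set has finite Lebesgue measure, this may be promoted to an unqualified $O(\log r)$ bound. As $X\subset\bP^N$ is projective, it suffices to prove that every $g_{ij}$ extends meromorphically across each added point of $\overline{Y}\setminus Y$, where $\overline{Y}\supset Y$ is the unique Riemann-surface compactification to which $\pi$ extends as a proper map $\overline{\pi}:\overline{Y}\to\mathbb{C}_{>\delta}\cup\{\infty\}$; properness forces $\overline{Y}\setminus Y$ to be finite.

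Localize near each $y_\infty\in\overline{Y}\setminus Y$: choose a local coordinate $w$ with $\pi=c\,w^{-m}$ for some $c\in\bC^{*}$ and $m\in\bZ_{\geq 1}$, set $g=g_{ij}$, and define
\[
A(\rho):=\int_{\{\rho<|w|<\epsilon\}}g^{*}\omega_{\mathrm{FS}},\qquad T^{\mathrm{loc}}(g,\rho):=\int_{\rho}^{\epsilon}A(\tau)\,\frac{d\tau}{\tau}.
\]
For $t$ large, the annulus $\{(|c|/t)^{1/m}<|w|<\epsilon\}$ lies in $Y(t)$; integrating $g^{*}\omega_{\mathrm{FS}}$ over it and over $t\in(t_0,r)$, the substitution $\rho=(|c|/t)^{1/m}$ produces
\[
T_g(r)\;\geq\;\frac{m}{\deg\pi}\,T^{\mathrm{loc}}\!\bigl(g,(|c|/r)^{1/m}\bigr)+O(1),
\]
whence $T^{\mathrm{loc}}(g,\rho)=O(\log(1/\rho))$ as $\rho\to 0^{+}$. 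By the classical Nevanlinna-theoretic form of the Big Picard theorem---a meromorphic function on a punctured disk whose local Nevanlinna characteristic is $O(\log(1/\rho))$ has at worst a pole at the puncture (cf.\ \cite[Ch.~2]{NW13})---each $g_{ij}$ extends meromorphically across every $y_\infty$, and the homogeneous coordinates $[s_0(f):\cdots:s_N(f)]$ then assemble to the required extension $\overline{f}:\overline{Y}\to X\subset\bP^N$.

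The delicate step is the localization inequality. The global characteristic $T_g(r)$ is a double average over the $\deg\pi$ sheets of $\pi$ and over the whole Riemann surface $Y(t)$, whereas $T^{\mathrm{loc}}$ only sees a punctured neighborhood of $y_\infty$. One must track the ramification index $m$ at $y_\infty$ and absorb the contributions from other ends of $Y$ over $\infty$, from the inner boundary $\{|\pi|=2\delta\}$, and from the choice of Hermitian metric into the $O(1)$ error, after which the logarithmic bound propagates cleanly through the change of variables $\rho=(|c|/t)^{1/m}$.
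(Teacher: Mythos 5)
Your argument has a genuine gap: you never establish the existence of the compactification $\overline{Y}$, and you never actually use the hypothesis $N_{\ram\pi}(r)=O(\log r)||$. You write ``$\overline{Y}\supset Y$ is the unique Riemann-surface compactification to which $\pi$ extends as a proper map $\overline{\pi}:\overline{Y}\to\mathbb{C}_{>\delta}\cup\{\infty\}$,'' and then localize near each of the finitely many points of $\overline{Y}\setminus Y$. But such a $\overline{Y}$ need not exist: $\pi:Y\to\mathbb{C}_{>\delta}$ is merely assumed proper surjective, so the ramification divisor $\ram\pi\subset Y$ can a priori be infinite, with the branch locus $\pi(\ram\pi)$ accumulating only at $\infty$. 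If that happens, $\pi$ is never unramified over any $\mathbb{C}_{>s_0}$, and no proper extension $\overline{\pi}:\overline{Y}\to\mathbb{C}_{>\delta}\cup\{\infty\}$ with $\overline{Y}$ a Riemann surface can exist (near a point of $\overline{\pi}^{-1}(\infty)$, $\overline{\pi}$ would have to be finite-to-one and locally of the form $w\mapsto c w^{-m}$, forcing $\pi$ to be unramified in a punctured neighborhood). The conclusion of the lemma, as the paper spells out in \cref{thm2nd}, is precisely the existence of $\overline{Y}$ together with $\overline{f}$, so this is not a harmless convention but exactly the content to be proved first.

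The missing step is supplied by the ramification hypothesis, and this is the first half of the paper's proof: from $N_{\ram\pi}(r)=O(\log r)||$ one deduces that $\nu(s):=\sum_{y\in Y(s)}\ord_y\ram\pi$ is bounded, hence $\ram\pi$ is a finite set, hence $\pi$ is unramified over some $\mathbb{C}_{>s_0}$, and then $\pi^{-1}(\mathbb{C}_{>s_0})$ is a finite disjoint union of punctured-disk covers $z\mapsto z^k$, which have an obvious one-point compactification. Once this is in place, your subsequent steps are sound and give a valid alternative to the paper's second half: you bound the local Nevanlinna characteristic $T^{\mathrm{loc}}(g_{ij},\rho)=O(\log(1/\rho))$ at each added point (the change of variables $\rho=(|c|/t)^{1/m}$ is computed correctly, and the promotion of $O(\log r)||$ to an unconditional $O(\log r)$ via monotonicity is also correct, cf.\ \cref{lem:20230415}), then invoke the classical criterion that a meromorphic function on a punctured disk with logarithmic characteristic growth is meromorphic across the puncture, and assemble the homogeneous coordinates. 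The paper instead passes from $T_f(r)=O(\log r)||$ to boundedness of $\int_{Y(s)}f^*\omega$ and then to finite area $\int_Y f^*\omega<\infty$, closing via Bishop's theorem (\cref{lem:picard}). Both routes work once $\overline{Y}$ exists; but your write-up presents the existence of $\overline{Y}$ as given, which it is not.
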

	
	Here, we use the symbol $||$ to denote that the stated estimate holds for $r\to\infty$  outside an exceptional set of finite Lebesgue measure.

	\begin{proof}
		For $s>2\delta$, we set $\nu(s)=\sum_{y\in {Y}(s)} \ord_y \ram \pi$.
		Then for $2\delta <s<r$, we have
		\begin{equation}\label{eqn:20230410}
			\begin{split}
				N_{{\rm ram}\,  \pi}(r)-N_{{\rm ram}\,  \pi}(s)
				&=\frac{1}{{\rm deg} \pi}\int_{s}^{r}\left[\sum_{y\in {Y}(t)} \ord_y \ram \pi \right]\frac{dt}{t}
				\\
				&\geq \frac{1}{{\rm deg} \pi}\int_{s}^{r}\nu(s)\frac{dt}{t}=\frac{\nu(s)}{{\rm deg} \pi}(\log r-\log s).
			\end{split}
		\end{equation}
		Set $K=\varliminf_{r\to\infty}N_{\ram\pi}(r)/\log r$.
		By $N_{\ram\pi}(r)=O(\log r)||$, we have $K<\infty$.
		By \eqref{eqn:20230410}, we have $\nu(s)\leq K{\rm deg} \pi$.
		Since $s>2\delta$ is arbitraly, the ramification divisor $\ram\pi\subset Y$ consists of finite points.
		Thus we may take $s_0>\delta$ such that $\pi^{-1}(\mathbb C_{>s_0})\to \mathbb C_{>s_0}$ is unramified covering.
		Thus $\pi^{-1}(\mathbb C_{>s_0})$ is a disjoint union of punctured discs.
		Hence we may take an extension $\overline{\pi}:\overline{Y}\to \mathbb C_{>\delta}\cup\{\infty\}$ of the covering $\pi:Y\to \mathbb C_{>\delta}$.
		
		In the following, we replace $\delta$ by $s_0$ and take a connected component of $\pi^{-1}(\mathbb C_{>s_0})$.
		Then we may assume that $\pi:Y\to \mathbb C_{>\delta}$ is unramified and $Y$ is a punctured disc.
		Let $\omega$ be a smooth positive $(1,1)$-form on $X$.
		For $s>2\delta$, we set $\alpha(s)=\int_{Y(s)}f^*\omega$.
		Then by the similar computation as in \eqref{eqn:20230410}, the assumption $T_f(r)=O(\log r)||$ yields that $\alpha(s)$ is bounded on $s>2\delta$.
		Thus $\int_Yf^*\omega<\infty$.
		By \cref{lem:picard} below, we conclude the proof.
	\end{proof}
	The following lemma is well-known to the experts. We refer the readers to \cite[Lemma 3.3]{CD21} for a simpler proof based on Bishop's theorem. 
	\begin{lem}\label{lem:picard}
		Let $X$ be a compact K\"ahler manifold and let $f:\mathbb D^*\to X$ be a holomorphic map from the punctured disc $\mathbb D^*$.
		Let $\omega$ be a smooth K\"ahler form on $X$.
		Suppose $\int_{\mathbb D^*}f^*\omega<\infty$.
		Then $f$ has holomorphic extension $\bar{f}:\mathbb D\to X$. \qed
	\end{lem}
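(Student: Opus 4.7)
The plan is to deduce the extension from Bishop's theorem on analytic subsets of finite volume, applied to the graph of $f$.

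First, I would consider the graph $\Gamma_{\!f}:=\{(z,f(z))\mid z\in\mathbb D^*\}\subset \mathbb D^*\times X$, which is a closed analytic subset of pure dimension one. Equip $\mathbb D\times X$ with a product K\"ahler form $\Omega=\omega_{\mathbb D}+\mathrm{pr}_X^*\omega$, where $\omega_{\mathbb D}$ is the standard K\"ahler form on $\mathbb D$. On $\Gamma_{\!f}$, the restriction of $\Omega$ pulls back to $\omega_{\mathbb D}|_{\mathbb D^*}+f^*\omega$ under the parametrization $z\mapsto(z,f(z))$. Since $\omega_{\mathbb D}$ has finite area on $\mathbb D^*$ and $\int_{\mathbb D^*}f^*\omega<\infty$ by hypothesis, the total mass of $\Gamma_{\!f}$ in $\mathbb D\times X$ is finite.

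Next, Bishop's theorem implies that the closure $\overline{\Gamma_{\!f}}$ in $\mathbb D\times X$ is an analytic subset of pure dimension one, since $\Gamma_{\!f}$ is an analytic subset of $(\mathbb D\times X)\setminus(\{0\}\times X)$ of locally finite $\Omega$-volume, and the complement $\{0\}\times X$ is a closed analytic subset. The projection $p\colon\overline{\Gamma_{\!f}}\to\mathbb D$ is proper since $X$ is compact, and $\overline{\Gamma_{\!f}}$ is irreducible as the closure of the irreducible analytic set $\Gamma_{\!f}$. On the Zariski open set $p^{-1}(\mathbb D^*)=\Gamma_{\!f}$, the map $p$ is a biholomorphism onto $\mathbb D^*$, i.e., a branched cover of degree one. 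Hence $p$ is a finite map of degree one globally, so the fiber $p^{-1}(0)$ consists of a single point $(0,x_0)$.

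Finally, define $\bar{f}\colon\mathbb D\to X$ by $\bar{f}|_{\mathbb D^*}=f$ and $\bar{f}(0)=x_0$. The graph of $\bar{f}$ coincides with $\overline{\Gamma_{\!f}}$, so $\bar f$ is continuous on $\mathbb D$ (the closure of the graph projecting bijectively onto $\mathbb D$ forces continuity at the origin). Since $f$ is holomorphic on $\mathbb D^*$ and bounded (taking values in the compact manifold $X$), Riemann's removable singularity theorem applied in a chart around $x_0$ shows that $\bar f$ is holomorphic at $0$, finishing the proof.

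The only real content is Bishop's extension theorem; the main subtlety to be careful about is verifying that $\overline{\Gamma_{\!f}}$ is genuinely the graph of a single-valued map at $0$, which is why the degree-one argument via the irreducibility of $\overline{\Gamma_{\!f}}$ is the crucial step.
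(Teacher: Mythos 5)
Your proposal is correct and follows essentially the same route as the paper: the paper states the lemma without proof and refers to \cite[Lemma 3.3]{CD21}, which is precisely the argument via Bishop's theorem applied to the graph $\Gamma_f\subset\mathbb D\times X$ with the finite-volume hypothesis coming from $\int_{\mathbb D^*}f^*\omega<\infty$. Your write-up also correctly identifies the one genuinely delicate point, namely showing that $\overline{\Gamma_f}$ is a single-valued graph over $0$; the degree-one argument (phrased most cleanly by passing to the normalization of $\overline{\Gamma_f}$, which is a connected Riemann surface with a proper degree-$1$ map to $\mathbb D$, forcing $p^{-1}(0)$ to be a single point) is the standard way to settle this.
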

	
We state the following lemma, which will be applied later.
	
	\begin{lem}\label{lem:20230415}
		Let $X$ be a smooth projective variety and let $f:Y\to X$ be a holomorphic map.
		If $\varliminf_{r\to\infty}T_f(r)/\log r<+\infty$, then $T_f(r)=O(\log r)$.
	\end{lem}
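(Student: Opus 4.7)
The approach is to exhibit $T_f(r)$ as a convex function of $\log r$. Once that is known, the ratio $T_f(r)/\log r$ admits a limit in $[0,+\infty]$ as $r\to\infty$, so the hypothesis that the $\varliminf$ is finite upgrades automatically to a finite $\limsup$, giving the desired $O(\log r)$ bound.

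\textbf{Step 1 (convexity in $\log r$).} Fix an ample line bundle $L$ on $X$ and a smooth Hermitian metric $h_L$ whose Chern curvature form $\omega:=c_1(L,h_L)$ is pointwise positive (such $h_L$ exists by Kodaira). By the definition recalled in \cref{subsec:notion Nevanlinna},
\begin{equation*}
T_f(r,L)=\frac{1}{\deg\pi}\int_{2\delta}^{r}\Phi(t)\,\frac{dt}{t},\qquad \Phi(t):=\int_{Y(t)} f^{*}\omega.
\end{equation*}
Because $f$ is holomorphic, $f^{*}\omega\geq 0$ on $Y$, and since $Y(t_1)\subset Y(t_2)$ for $t_1<t_2$, the function $\Phi(t)$ is non-negative and non-decreasing. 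Setting $u=\log t$, we obtain
\begin{equation*}
T_f(e^{u},L)-T_f(2\delta,L)=\frac{1}{\deg\pi}\int_{\log(2\delta)}^{u}\Phi(e^{s})\,ds,
\end{equation*}
whose derivative $\Phi(e^{u})/\deg\pi$ is non-decreasing. Hence $\phi(u):=T_f(e^{u},L)$ is a convex, non-decreasing function of $u$.

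\textbf{Step 2 (existence of $\lim \phi(u)/u$).} For any convex function $\phi$ on $[u_0,\infty)$, writing $\phi(u)=\phi(u_0)+\int_{u_0}^{u}\phi'(s)\,ds$ with $\phi'$ non-decreasing, the Cesàro mean $\frac{1}{u-u_0}\int_{u_0}^{u}\phi'(s)\,ds$ converges to $\lim_{s\to\infty}\phi'(s)\in[-\infty,+\infty]$. Therefore $\lim_{u\to\infty}\phi(u)/u$ exists in $[-\infty,+\infty]$; in particular $\varliminf_{u\to\infty}\phi(u)/u=\limsup_{u\to\infty}\phi(u)/u$.

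\textbf{Step 3 (conclusion).} Applying Step 2 to $\phi(u)=T_f(e^{u},L)$, the hypothesis $\varliminf_{r\to\infty}T_f(r)/\log r<+\infty$ forces $\lim_{u\to\infty}\phi(u)/u=K<\infty$. Thus for every $\varepsilon>0$ one has $\phi(u)\leq(K+\varepsilon)u$ for $u$ large, i.e. $T_f(r,L)=O(\log r)$. Since the order function is determined up to $O(\log r)$ and an $O(1)$ multiple by the choice of ample line bundle (as noted right after \eqref{eqn:20221120}), this gives $T_f(r)=O(\log r)$ unconditionally.

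\textbf{Main obstacle.} There is no substantial obstacle here; everything reduces to the soft observation that $T_f(\cdot,L)$ is convex as a function of $\log r$ when $L$ carries a positive curvature metric. The only point needing care is to select $L$ ample with positively-curved $h_L$ so that $\Phi(t)$ is genuinely monotone, and to record that the conclusion does not depend on this choice.
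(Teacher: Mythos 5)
Your proof is correct and rests on exactly the same ingredient as the paper's argument: the monotonicity of $\Phi(t)=\int_{Y(t)}f^*\omega$ coming from $f^*\omega\ge 0$. The paper uses this monotonicity to derive the pointwise bound $\alpha(s)\le \deg\pi\cdot\frac{T_f(r)-T_f(s)}{\log r-\log s}$, lets $r\to\infty$ along a sequence realizing the $\varliminf$ to conclude $\alpha$ is bounded, and then integrates; you instead package the same inequality as convexity of $\phi(u)=T_f(e^u,L)$ in $u=\log r$ and invoke the general fact that a convex function has a well-defined slope at infinity, so $\varliminf=\limsup$. The two presentations are equivalent (your subgradient inequality for $\phi$ is literally the paper's displayed estimate), so this is essentially the same proof.
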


	\begin{proof}
		For $s>2\delta$, we set $\alpha(s)=\int_{Y(s)}f^*\omega$, where $\omega$ is a smooth positive $(1,1)$-form on $X$.
		Then by the similar computation as in \eqref{eqn:20230410}, the assumption $\varliminf_{r\to\infty}T_f(r)/\log r<+\infty$ yields that $\alpha(s)$ is bounded, i.e., there exists a positive constant $C>0$ such that $\alpha(s)<C$ for all $s>2\delta$.
		Hence for all $r>2\delta$, we have
		$$
		\frac{1}{\operatorname{deg} \pi} \int_{2\delta}^{r}\left[\int_{Y(t)} f^{*}\omega\right] \frac{d t}{t}<\frac{C}{\operatorname{deg} \pi}\log \frac{r}{2\delta}.
		$$
		Thus we get $T_f(r)=O(\log r)$.
	\end{proof}

	\section{Lemma on logarithmic derivatives} \label{sec:20250904}
	Let $\pi _Y:Y\to \mathbb C_{>\delta}$ be a Riemann surface with a proper surjective holomorphic map.
	Set $D_Y=\pi_Y^*(\partial /\partial z)$.
	Then $D_Y$ is a meromorphic vector field on $Y$.
	For a meromorphic function $f:Y\to\mathbb P^1$ on $Y$, we set $f'=D_Y(f)$.
	For $r>2\delta$, we set as follows:
	$$
	m(r,f)=\frac{1}{\deg \pi_Y}\int_{y\in \pi_Y^{-1}(\{|z|=r\})}\log \sqrt{1+|f(y)|^2}\frac{d\arg \pi_Y(y)}{2\pi},
	$$
	$$
	T(r,f)=\frac{1}{\deg \pi_Y}\int_{2\delta}^r\frac{dt}{t}\int_{Y(t)}f^*\omega_{\mathrm{F.S.}},
	$$
	where $\omega_{\mathrm{F.S.}}$ is the Fubini-Study form:
	$$
	\omega_{\mathrm{F.S.}}=\frac{1}{(1+|w|^2)^2}\frac{\sqrt{-1}}{2\pi}dw\wedge d\overline{w}.
	$$
	Note that $m(r,f)$ is a proximity function function for $f:Y\to\mathbb P^1$ with respect to the divisor $(\infty)$ on  $\mathbb P^1$, and $
	T(r,f)$ is a order function with respect to the ample line bundle $\mathcal{O}_{\mathbb P^1}(1)$.
	In this section, we provide a simple proof for Nevanlinna's lemma on logarithmic derivatives, \cref{thm:20231119}.
	
	\begin{lem}\label{lem:202311191}
		For $n\in\mathbb Z_{\geq 1}$, we have
		$T(r,f^n)=nT(r,f)+O(\log r)$.
	\end{lem}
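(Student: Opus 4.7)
The plan is to reduce the claim to the First Main Theorem applied to the pole divisor $(\infty) \subset \mathbb{P}^1$, combined with a direct comparison between the two proximity functions. The key geometric observation is that the $n$-th power map $g:\mathbb{P}^1\to\mathbb{P}^1$, $w\mapsto w^n$, satisfies $g^{*}(\infty)=n(\infty)$ as divisors, while on the line bundle side one has $g^{*}\mathcal{O}_{\mathbb{P}^1}(1)\cong\mathcal{O}_{\mathbb{P}^1}(n)$. Since $T(r,h)$ (for a meromorphic function $h$) is by definition the order function $T_h(r,\mathcal{O}_{\mathbb{P}^1}(1))$, only a multiplicative factor of $n$ should appear, up to the ambiguity $O(\log r)$ inherent in $T_h(r,\cdot)$.

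Concretely, I would first invoke \cref{thm:first} with $X=\mathbb{P}^1$ and $D=(\infty)$, which gives
\[
T(r,f)=N_f(r,(\infty))+m(r,f)+O(\log r),\qquad T(r,f^n)=N_{f^n}(r,(\infty))+m(r,f^n)+O(\log r),
\]
using that $\lambda_{(\infty)}(w)=\log\sqrt{1+|w|^2}$ is a Weil function for $(\infty)$. The counting part is immediate: because $(f^n)^{*}(\infty)=n\cdot f^{*}(\infty)$ as divisors on $Y$, summing over fibers and integrating yields $N_{f^n}(r,(\infty))=n\,N_f(r,(\infty))$ identically in $r$.

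For the proximity part, I would use the elementary two-sided comparison
\[
1+|w|^{2n}\;\le\;(1+|w|^2)^n\;\le\;2^n(1+|w|^{2n}),
\]
where the left inequality is the binomial expansion and the right one splits into the cases $|w|\le 1$ and $|w|>1$. Taking $\tfrac12\log$ of each term and integrating on $\pi_Y^{-1}(\{|z|=r\})$ gives $|m(r,f^n)-n\,m(r,f)|\le \tfrac{n\log 2}{2}=O(1)$. Adding the counting and proximity contributions yields $T(r,f^n)=nT(r,f)+O(\log r)$, as required.

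The estimates involved here are completely routine; I do not expect any serious obstacle. If anything, the only point requiring a little care is checking that the proximity-function inequality survives the normalization by $\deg\pi_Y$ and the averaging over the preimage circle $\pi_Y^{-1}(\{|z|=r\})$, but that is automatic since the comparison is pointwise on $\mathbb{P}^1$ and the measure $d\arg\pi_Y(y)/(2\pi\deg\pi_Y)$ is a probability measure on that preimage.
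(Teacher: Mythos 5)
Your proof is correct, but it takes a genuinely different route from the paper's. The paper's argument is a one-liner: write $f^n=\varphi\circ f$ with $\varphi(w)=w^n$ and invoke functoriality of the order function under pullback, using $\varphi^*\mathcal{O}_{\mathbb P^1}(1)\cong\mathcal{O}_{\mathbb P^1}(n)$; the claimed identity then follows because $T(r,f^n)=T_{\varphi\circ f}(r,\mathcal{O}_{\mathbb P^1}(1))=T_f(r,\varphi^*\mathcal{O}_{\mathbb P^1}(1))+O(\log r)=nT_f(r,\mathcal{O}_{\mathbb P^1}(1))+O(\log r)$. Your argument instead decomposes $T$ via \cref{thm:first} into counting and proximity parts and handles each by hand: the exact identity $N_{f^n}(r,(\infty))=n\,N_f(r,(\infty))$ follows from $\varphi^*(\infty)=n(\infty)$, and the bound $|m(r,f^n)-n\,m(r,f)|=O(1)$ from the elementary two-sided inequality $1+|w|^{2n}\le(1+|w|^2)^n\le 2^n(1+|w|^{2n})$. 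Both are correct; the paper's version buys brevity and conceptual clarity at the level of line bundles, while yours is more elementary, does not presuppose any familiarity with functoriality of order functions, and makes the $O(\log r)$ error transparent (it is in fact an $O(1)$ error in the proximity term plus whatever $O(\log r)$ is already absorbed in \cref{thm:first}). One small stylistic note: your verification that the comparison passes to the averaged integral is fine as stated, though one could also simply observe that the integrand comparison is pointwise so the integral inequality is immediate regardless of the normalization; no separate argument about the probability measure is needed.
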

	
	\begin{proof}
		Let $\varphi:\mathbb P^1\to\mathbb P^1$ be defined by $\varphi(w)=w^n$, then we have $f^n=\varphi\circ f$.
		By $\varphi^*\mathcal{O}_{\mathbb P^1}(1)=\mathcal{O}_{\mathbb P^1}(n)$, we obtain our lemma.
	\end{proof}
	
	\begin{lem}\label{lem:202311192}
		$m(r,f'/f)\leq 3T(r,f)+O(\log r)\ ||$
	\end{lem}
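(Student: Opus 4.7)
The plan is to follow the classical proof of Nevanlinna's lemma on the logarithmic derivative, adapted to the ramified-cover setting of $\pi_Y : Y \to \mathbb C_{>\delta}$. First, using the factorization $|f'/f| = \frac{|f'|}{1+|f|^2}\cdot\frac{1+|f|^2}{|f|}$ and the bound $\frac{1+|f|^2}{|f|} \le 2(|f|+|f|^{-1})$, together with the subadditivity $\log^+(uv) \le \log^+u + \log^+v$, I would split
\[
m(r, f'/f) \;\le\; S(r) + m(r, f) + m(r, 1/f) + O(1),
\]
where $S(r) := \frac{1}{\deg\pi_Y}\int_{\pi_Y^{-1}\{|z|=r\}}\log^+\frac{|f'|}{1+|f|^2}\,\frac{d\arg\pi_Y}{2\pi}$ is the mean of the logarithm of the spherical derivative of $f$. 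Applying Jensen's inequality (concavity of $\log$ on the probability measure $\frac{d\arg\pi_Y}{2\pi\deg\pi_Y}$) followed by Cauchy-Schwarz yields
\[
S(r) \;\le\; \tfrac{1}{2}\log^+\!\Bigl(\tfrac{1}{\deg\pi_Y}\int_{\pi_Y^{-1}\{|z|=r\}} \tfrac{|f'|^2}{(1+|f|^2)^2}\,\tfrac{d\arg\pi_Y}{2\pi}\Bigr) + O(1).
\]

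Next, I would identify the integrand $\tfrac{|f'|^2}{(1+|f|^2)^2}$ as the density of $f^*\omega_{\mathrm{F.S.}}$ with respect to the area form $\pi_Y^*(|dz|^2)$ on $Y$. The co-area formula then identifies the boundary integral with the radial derivative $\tfrac{d}{dr}(rT'(r,f))$, modulo contributions from the ramification of $\pi_Y$ which are absorbed into the $O(\log r)$ error thanks to the $\tfrac{1}{\deg\pi_Y}$-normalisation. Applying the classical Borel growth lemma (for any non-decreasing $C^1$ function $\phi$, $\phi'(r) \le \phi(r)^{1+\epsilon}$ outside an exceptional set of finite Lebesgue measure) twice, once to $T(r, f)$ and once to $rT'(r, f)$, gives $\tfrac{d}{dr}(rT'(r, f)) \le T(r, f)^{C} + O(1)\;||$ for some constant $C$, so that $S(r) \le \log^+ T(r, f) + O(1)\;|| \le T(r, f) + O(1)\;||$. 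Combined with $m(r, f) + m(r, 1/f) \le T(r, f) + T(r, 1/f) + O(\log r) = 2T(r, f) + O(\log r)$ — obtained from the First Main Theorem (\cref{thm:first}) and the identity $T(r, 1/f) = T(r, f) + O(1)$ (since the Fubini-Study form is invariant under $w\mapsto 1/w$) — this yields $m(r, f'/f) \le 3T(r, f) + O(\log r)\;||$, with the coefficient $3$ decomposing as $1$ from the spherical-derivative contribution and $2$ from $m(r, f) + m(r, 1/f)$.

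\textbf{Main obstacle.} The principal technical issue is the rigorous justification of the co-area identification in the second step, in the presence of both the ramification of $\pi_Y$ and the non-compactness of $Y$. The normalisation by $\deg\pi_Y$ in the definitions of $m$ and $T$ is precisely what averages sheet multiplicities away; nevertheless one must verify that the ramification divisor of $\pi_Y$ contributes only boundary terms absorbed into the $O(\log r)$ error, and that the inner boundary $\pi_Y^{-1}\{|z|=2\delta\}$ contributes only $O(1)$ (which is automatic since $r > 2\delta$ is fixed from below). The calculus lemma step additionally requires care about the "outside exceptional set" qualifier, which is precisely what the $||$ notation in the statement encodes.
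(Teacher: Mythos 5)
Your proposal is correct and follows essentially the same route as the paper: decompose $m(r,f'/f)$ into the mean of the spherical derivative plus $m(r,f)+m(r,1/f)$, control the first term by Jensen's inequality together with the polar-coordinate identity $\frac{1}{2r}\frac{d}{dr}\!\left(r\frac{d}{dr}T(r,f)\right)=\frac{1}{\deg\pi_Y}\int_{\pi_Y^{-1}\{|z|=r\}}(f^{\#})^2\,\frac{d\arg\pi_Y}{2\pi}$ and two applications of Borel's growth lemma, and control the second by the First Main Theorem plus $T(r,1/f)=T(r,f)$. The only (harmless) differences are cosmetic — you use $\log^+$-subadditivity and Cauchy--Schwarz where the paper uses the exact pointwise identity $\log\sqrt{1+|1/f|^2}+\log\sqrt{1+|f|^2}+\log\sqrt{1+(f^{\#})^2}=\log\sqrt{(|f|+|f|^{-1})^2+|f'/f|^2}\ge\log\sqrt{1+|f'/f|^2}$ and Jensen applied directly to $t\mapsto\frac12\log(1+t)$; also, your worry about ramification corrections in the co-area step is unfounded, since the ramification locus of $\pi_Y$ has measure zero and the identity is exact.
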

	
	\begin{proof}
		Set 
		$$
		f^{\#}=\frac{|f'|}{1+|f|^2}
		$$
		and
		$$
		m^{\#}(r,f)=\frac{1}{\deg \pi _Y}\int_{y\in \pi_Y^{-1}(\{|z|=r\})}\log \sqrt{1+(f^{\#}(y))^2} \frac{d \arg \pi _Y(y)}{2\pi}.
		$$
		We first show
		\begin{equation}\label{eqn:1aa}
			m^{\#}(r,f)\leq T(r,f)+O(\log r)\ ||.
		\end{equation}
		Indeed using convexity of $\log$, we have
		\begin{equation*}
			m^{\#}(r,f)\leq \frac{1}{2}\log \left( 1+\frac{1}{\deg \pi _Y}\int_{y\in \pi_Y^{-1}(\{|z|=r\})}f^{\#}(y)^2\frac{d \arg \pi _Y(y)}{2\pi}\right) .
		\end{equation*}
		Using polar coordinate, we get
		$$
		\int_{Y(r)}f^*\omega_{\mathrm{F.S.}}=\frac{1}{\pi}\int_{2\delta}^rtdt\int_{y\in \pi_Y^{-1}(\{|z|=t\})}f^{\#}(y)^2d\arg\pi_Y(y).
		$$
		This shows
		$$
		\frac{1}{2r}\frac{d}{dr}\left( r\frac{d}{dr}T(r,f)\right) =\frac{1}{\deg \pi _Y}\int_{y\in \pi_Y^{-1}(\{|z|=r\})}f^{\#}(y) ^2\frac{d \arg \pi _Y(y)}{2\pi}.
		$$
		Hence using Borel's growth lemma \cite[p. 13]{NW13}, we get
		\begin{equation*}
			\begin{split}
				m^{\#}(r,f)&\leq  \frac{1}{2}\log \left( 1+\frac{1}{2r}\frac{d}{dr}\left( r\frac{d}{dr}T(r,f)\right)\right) \\
				&\leq  \frac{1}{2}\log \left( 1+\frac{1}{2r}\left( r\frac{d}{dr}T(r,f)\right)^{1+\delta}
				\right) \ ||_{\delta}\\
				&\leq  \frac{1}{2}\log \left( 1+\frac{1}{2}r^{\delta}T(r,f)^{(1+\delta )^2}\right) \ ||_{\delta}\\
				&\leq  T(r,f) +O(\log r)\ ||.
			\end{split}
		\end{equation*}
		This shows our estimate \eqref{eqn:1aa}.
		Here we take $\delta =1$ in the final equation.
		
		\par
		
		Now by
		\begin{multline*}
			\log \sqrt{1+\left\vert \frac{1}{f}\right\vert^2}+\log \sqrt{1+|f|^2}+\log \sqrt{ 1+\left( \frac{|f'|}{1+|f|^2}\right) ^2}
			\\
			=\log \sqrt{ \left(|f|+\frac{1}{|f|}\right)^2+\left\vert \frac{f'}{f}\right\vert ^2} 
			\geq
			\log\sqrt{1+\left\vert \frac{f'}{f}\right\vert ^2},
		\end{multline*}
		we get
		$$
		m(r,f'/f)\leq m(r,1/f)+m(r,f )+m^{\#}(r,f).
		$$
		Using the first main theorem (cf. \cref{thm:first}) and \eqref{eqn:1aa}, we get \cref{lem:202311192}.
	\end{proof}
	
	We prove Nevanlinna's lemma on logarithmic derivatives (cf. \cite[Lemma 1.2.2]{NW13}) in the following form.
	
	\begin{thm}\label{thm:20231119}
		$m(r,f'/f)=o(T(r,f))+O(\log r)\ ||.
		$
	\end{thm}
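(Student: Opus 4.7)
The plan is to upgrade the $O(T(r,f))$ bound of Lemma~\ref{lem:202311192} to the $o(T(r,f))$ refinement by applying it to the family $\{f^n\}_{n\in\mathbb Z_{\geq 1}}$ and using the multiplicativity $T(r,f^n)=nT(r,f)+O(\log r)$ from Lemma~\ref{lem:202311191}. Applying Lemma~\ref{lem:202311192} to $g=f^n$ and using the identity $(f^n)'/f^n=nf'/f$ yields
\[
m(r,nf'/f)\leq 3nT(r,f)+O(\log r)\,||.
\]
The elementary estimates $\log^+(nt)\geq\log^+t$ and $\log^+(nt)\leq\log n+\log^+t$ (for $n\geq 1$, $t\geq 0$) then produce the two-sided comparison $m(r,f'/f)\leq m(r,nf'/f)\leq m(r,f'/f)+\log n$.

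Second, I would revisit the proof of Lemma~\ref{lem:202311192} to extract a sharper bound on the Ahlfors--Shimizu term. Keeping the Borel-growth parameter $\delta>0$ flexible (rather than specializing to $\delta=1$ at the very end, as in the proof of Lemma~\ref{lem:202311192}), we obtain
\[
m^{\#}(r,f)\leq \tfrac{\delta}{2}\log r+\tfrac{(1+\delta)^2}{2}\log T(r,f)+O(1)\,||_\delta,
\]
so in particular $m^{\#}(r,f)=O(\log r+\log T(r,f))=o(T(r,f))+O(\log r)\,||$. Combined with the basic decomposition $m(r,f'/f)\leq m(r,1/f)+m(r,f)+m^{\#}(r,f)$ that comes out of the proof of Lemma~\ref{lem:202311192}, the only obstruction to the desired $o(T(r,f))$ estimate is the contribution from $m(r,1/f)+m(r,f)\leq 2T(r,f)+O(\log r)$, which is genuinely $O(T(r,f))$.

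The main obstacle is to remove this $O(T(r,f))$ contribution via the bootstrap. Naively, applying the decomposition to $f^n$ scales the problematic term by $n$, worsening the bound. The resolution must involve an interplay between the $n$-shift on the left and a division on the right: from
\[
m(r,f'/f)\leq m(r,nf'/f)\leq n\bigl(m(r,f)+m(r,1/f)\bigr)+m^{\#}(r,f^n)+O(\log r)
\]
together with $m^{\#}(r,f^n)=O(\log n+\log T(r,f)+\log r)$, one chooses $n=n(r)\to\infty$ slowly enough as $r\to\infty$ so that $\log n/n$ absorbs the $\epsilon$ in the target bound $\epsilon T(r,f)+C_\epsilon\log r$ while the exceptional sets contributed by successive applications of Borel's growth lemma still assemble into a set of finite Lebesgue measure. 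The hard part is this simultaneous control of the parameter $n(r)$ and the exceptional sets, ensuring that the final estimate survives the use of the ``$||$'' convention.
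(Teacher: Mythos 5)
There is a genuine gap, and it is precisely at the step you flag as ``the main obstacle.'' Your bootstrap uses the power $f^n$, but by Lemma~\ref{lem:202311191} this \emph{multiplies} the order function: $T(r,f^n)=nT(r,f)+O(\log r)$. So the decomposition applied to $g=f^n$ gives
\[
m(r,nf'/f)\le m(r,1/f^n)+m(r,f^n)+m^{\#}(r,f^n)\le 2nT(r,f)+m^{\#}(r,f^n)+O(\log r),
\]
which is strictly worse than the direct bound $m(r,f'/f)\le 3T(r,f)+O(\log r)$ of Lemma~\ref{lem:202311192}. There is no $1/n$ anywhere in this chain of inequalities, so the ``$\log n/n$'' you invoke never appears, and letting $n=n(r)\to\infty$ only inflates the right-hand side further. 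Choosing $n$ to depend on $r$ also breaks the fixed-statement form of Lemma~\ref{lem:202311192} and the management of Borel exceptional sets, but that is a secondary problem; the primary one is that the bound is increasing in $n$, not decreasing.

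The idea that actually works, and is what the paper does, is to take the $n$-th \emph{root} rather than the $n$-th power. Set $f_n=\sqrt[n]{f}$, a multi-valued function that becomes single-valued on a ramified covering $\pi_{Y_n}:Y_n\to\mathbb C_{>\delta}$; this is exactly why the whole Nevanlinna machinery here is set up on ramified coverings of $\mathbb C_{>\delta}$ rather than on $\mathbb C_{>\delta}$ alone. Since $f=f_n^{\,n}$, Lemma~\ref{lem:202311191} now works in your favour: $T(r,f_n)=\tfrac{1}{n}T(r,f)+O(\log r)$. Applying Lemma~\ref{lem:202311192} on $Y_n$ and using $f'/f=n\,f_n'/f_n$ gives
\[
m(r,f'/f)=m(r,n f_n'/f_n)\le m(r,f_n'/f_n)+O(1)\le 3T(r,f_n)+O(\log r)\le \tfrac{3}{n}T(r,f)+O(\log r)\ ||,
\]
which is the $\varepsilon T(r,f)+O_\varepsilon(\log r)$ estimate you want, for $3/n<\varepsilon$. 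The subsequent diagonalisation over the exceptional sets (your last paragraph) is then the correct final step, and the paper carries it out essentially as you sketch. Your sharpened estimate $m^{\#}(r,f)=O(\log r+\log T(r,f))$ is correct but does not circumvent the root-taking step, since the $m(r,1/f)+m(r,f)\le 2T(r,f)+O(\log r)$ contribution can only be shrunk by reducing the order function of the test function, which the root accomplishes and the power does not.
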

	
	Several methods exist for proving Nevanlinna's lemma on logarithmic derivatives. 
	The proof presented here is an application of the ramified covering method used in \cite[Thm 4.8]{yamanoi2015kobayashi} to a simpler context.

	\begin{proof}
		We first show that for every $\varepsilon>0$, we have
		\begin{equation}\label{eqn:202311193}
			m(r,f'/f)\leq \varepsilon T(r,f))+O_{\varepsilon}(\log r)\ ||_{\varepsilon}.
		\end{equation}
		Here $O_{\varepsilon}$ indicates that the implicit constant in the Landau symbol $O$ may depend on $\varepsilon$.
		
		Indeed we take $n\in\mathbb Z_{\geq 1}$ so that $3/n<\varepsilon$.
		We set $f_n=\sqrt[n]{f}$.
		Then $f_n$ is a multi-valued meromorphic function on $Y$.
		Let $\pi_{Y_n}:Y_n\to \mathbb C_{>\delta}$ be the Riemann surface for $f_n$.
		Both $f$ and $f_n$ are considered as meromorphic funtions on $Y_n$.
		We have $f'/f=nf_n'/f_n$.
		By \cref{lem:202311191}, we get
		$$
		m(r, f'/f)= m(r,f_n'/f_n )+O(1)\leq 3T(r,f_n)+O(\log r)\ ||. 
		$$
		Hence by \cref{lem:202311192}, we have 
		$$
		m(r, f'/f)\leq \frac{3}{n}T(r,f)+O(\log r)\ ||. 
		$$
		This shows \eqref{eqn:202311193}.
		
		Suppose that $\varliminf_{r\to\infty}T(r,f)/\log r<+\infty$.
		Then by \cref{lem:20230415}, we have $T(r,f)=O(\log r)$.
		Then by \eqref{eqn:202311193}	, we have $m(r, f'/f)			=O(\log r)$, in particular $m(r,f'/f)=o(T(r,f))+O(\log r)\ ||$.
		
		Next we assume $\varliminf_{r\to\infty}T(r,f)/\log r=+\infty$.
		Then $\log r=o(T(r,f))$.		
		Hence by \eqref{eqn:202311193}	, we have 
		\begin{equation}\label{eqn:20231120}
			m(r, f'/f)			\leq \ep T(r, f)+	o(T(r,f))\ ||_{\varepsilon}
		\end{equation}
		for all $\varepsilon>0$.
		This implies $m(r, f'/f)=o(T(r,f))\ ||$.
		Indeed we take a sequence $2\delta=r_0<r_1<r_2<\cdots$ with $r_n\to\infty$ as follows.
		By \eqref{eqn:20231120}, we have	$m(r, f'/f)\leq \frac{1}{n}T(r,f)$ for all $r>2\delta$ outside some exceptional set $E_n\subset (2\delta,\infty)$ with $|E_n|<\infty$.
		We take $r_n$ such that $|(r_n,\infty)\cap E_n|<1/2^n$.
		We set $\varepsilon(r)=1/n$ if $r_n\leq r<r_{n+1}$, and $\varepsilon(r)=1$ if $r_0<r<r_1$.
		Then $\varepsilon(r)\to0$ if $r\to\infty$.
		Set $E=(r_0,r_1)\cup\bigcup\left((r_n,r_{n+1})\cap E_n\right)$.
		Then we have $m(r, f'/f)\leq \varepsilon(r)T(r,f)$ for all $r>2\delta$ outside $E$, and $|E|<r_1+1$.
		Hence we get $m(r, f'/f)=o(T(r,f))\ ||$, in particular we get $m(r,f'/f)=o(T(r,f))+O(\log r)\ ||$.
		This conclude the proof of the theorem.	
	\end{proof}
	
	Let $V$ be a smooth projective variety and let $D\subset V$ be a simple normal crossing divisor.
	Let $T(V;\log D)$ be the logarithmic tangent bundle. 
	Set $\overline{T}(V;\log D)=P(T(V;\log D)\oplus \mathcal{O}_V)$, which is a smooth compactification of $T(V;\log D)$.
	Let $\partial T(V;\log D)\subset \overline{T}(V;\log D)$ be the boundary divisor.
	Let $f:Y\to V$ be a holomorphic map such that $f(Y)\not\subset D$.
	Then we get a derivative map $j_1(f):Y\to \overline{T}(V;\log D)$.
	By \cref{thm:20231119}, we obtain the following estimate
	\begin{equation}\label{eqn:202311195}
		m_{j_1 (f)}(r,\partial T(V;\log D))=o(T(r,f))+O(\log r)\ ||.
	\end{equation}
	
	This estimate is a direct modification of \cite[Thm 5.1.7 (2)]{Yam04}, whose proof applies to that of \eqref{eqn:202311195}.
	The original proof uses\cite[Prop 5.1.2 (1)]{Yam04} together with \cite[Thm 2.5.1 (2)]{Yam04}. 
	Our modification is to substitute \cite[Thm 2.5.1 (2)]{Yam04} with \cref{thm:20231119} and then follow the same line of reasoning.
	Here we note that we write $j_1 (f)$ instead of "$j_1^{\log} (f)$" as in \cite{Yam04} for simplicity.
	See also \cite[Appendix A]{Vojta2000abc}.
	 The same modification shows that the corresponding estimate for \cite[Thm 5.1.7 (1)]{Yam04} is also valid with the error term $S(r,f)=o(T(r,f))+O(\log r)\ ||$ by the same manner using \cite[Prop 5.1.2 (2)]{Yam04} combined with \cref{thm:20231119}.
	 This will be used in the estimate \eqref{eqn:20250904}.

Let us discuss an application of \eqref{eqn:202311195} to an estimate for logarithmic 1-forms as in \cite{Nog81}.
Let $D$ be a simple normal crossing divisor on a projective manifold $V$ as above.
Let $f:Y\to V$ be a holomorphic map such that $f(Y)\not\subset D$. 
Let $\omega \in H^{0}\left(V, \Omega_{V}(\log D)\right)$ be a logarithmic \(1\)-form. 
 Set $\xi=f^{*} \omega / \pi^{*}(d z)$. 
 Then $\xi$ is a meromorphic function on $Y$, which gives a holomorphic map $\xi:Y\to \bP^1$.  

We claim 
\begin{align}\label{eq:Noguchi}
	 m(r,\xi)=O(\log r)+o(T(r,f))\lVert. 
\end{align}  
The proof goes as follows. 
The logarithmic one form $\omega$ defines a morphism $\varphi:T(V;\log D)\to \mathbb A^1$, which extends to a rational map $\overline{\varphi}:\overline{T}(V;\log D)\dashrightarrow \mathbb P^1$.
By $f(Y)\not\subset D$, we get the lifting $j_1(f):Y\to \overline{T}(\overline{V};\log D)$.
Then we have $\xi=\overline{\varphi}\circ j_1(f)$.
Hence by \cite[Prop 2.3.2 (6)]{Yam04}, we have
$$
m(r, \xi)=m_{\xi}(r,[\infty])=O(m_{j_1(f)}(r,\partial T(V;\log D)))+O(1).$$
Thus by \eqref{eqn:202311195}, we get \eqref{eq:Noguchi}.

It follows from the First Main theorem (cf. \cref{thm:first}) that
\begin{equation*}
 N_{\xi}(r,[0])+m_{\xi}(r,[0])=N_{\xi}(r,[\infty])+ m_{\xi}(r,[\infty])+O(\log r).
\end{equation*}
	Hence by \eqref{eq:Noguchi}, we get
	\begin{equation}\label{eq:First} 
	N_{\xi}(r,[0])\leq N_{\xi}(r,[\infty])+O(\log r)+o(T(r,f))\lVert. 
	\end{equation}

	\subsection*{Convention and notation.} 
	In the rest of this paper, we use the following convention and notation.
	Let $\Sigma$ be a quasi-projective variety and let $\overline{\Sigma}$ be a projective compactification.
	We denote by $f:Y\da \Sigma$ a holomorphic map $\bar{f}:Y\to \overline{\Sigma}$ such that $\bar{f}^{-1}(\Sigma)\not=\emptyset$.
	Moreover for a Zariski closed set $W\subset \Sigma$, we denote by $f(Y)\sqsubset W$ if $\bar{f}(Y)\subset \overline{W}$, where $\overline{W}\subset \overline{\Sigma}$ is the Zariski closure.
	The choice of $\overline{\Sigma}$ is not canonical. 
	Thus, we will use these notation to avoid mentioning the compactification explicitly.

	Let $A$ be a semi-abelian variety and let $S$ be a projective variety.
	For $f:Y\da A\times S$, we set
	$$
	\pN{f}(r):=\frac{1}{\operatorname{deg}  \pi} \int_{2\delta}^{r}\mathrm{card}\left(Y(t)\cap \bar{f}^{-1}(\partial A\times S)\right)\frac{d t}{t}.
	$$
	This definition does not depend on the choice of $\overline{A}$.
	For $f:Y\da A\times S$, we define $f_A:Y\da A$ and $f_S:Y\to S$ by the compositions of $f$ and the projections $A\times S\to A$ and $A\times S\to S$, respectively.
	See \cref{rem:20250904} for the reason why we need to treat the situation above in the proof.

	\section{Preliminaries for the proof of \cref{thm2nd}}
	\label{subsec:4.2}
	
	Let $V$ be a smooth algebraic variety.
	Let $TV$ be the tangent bundle of $V$.
	Then we have $TV=\mathbf{Spec}\, (\mathrm{Sym}\ \Omega^1_V)$. 
	Set $\overline{T}V=P(TV\oplus \mathcal{O}_V)$, which is a smooth compactification of $TV$.
	Then we have $\overline{T}V=\mathbf{Proj}\, ((\mathrm{Sym}\ \Omega^1_V)\otimes_{\mathcal{O}_V}\mathcal O_V[\eta ])$.
	Let $Z\subset V$ be a closed subscheme.
	We define a closed subscheme $Z^{(1)}\subset \overline{T}(V)$ as follows (cf. \cite[p. 38]{Yam04}): 
	Let $U\subset V$ be an affine open subset.
	Let $(g_1,\cdots ,g_l)\subset \Gamma (U,\mathcal O_V)$ be the defining ideal of $Z\cap U$.
	Then $Z^{(1)}\cap \overline{T}(U)$ is defined by the homogeneous ideal
	$$(g_1,\cdots ,g_l,dg_1\otimes 1,\cdots ,dg_l\otimes 1)\subset \Gamma (U,(\mathrm{Sym}\ \Omega^1 _V)\otimes_{\mathcal O_V}\mathcal O_V[\eta ]).
	$$
	We glue $Z^{(1)}\cap \overline{T}(U)$ to define $Z^{(1)}$.
	If $Z\subset V$ is a closed immersion of a smooth algebraic variety, then $Z^{(1)}\cap T(V)=T(Z)$.

	Let $S$ be a projective variety, and let $W\subset A\times S$ be a Zariski closed set, where $A$ is a semi-Abelian variety.
	Let $q:A\times S\to S$ be the second projection and let $q_W:W\to S$ be the restriction of $q$ on $W$.
	Given $f:Y\da A\times S$, we denote by $\Sigma\subset S$ the Zariski closure of $f_S(Y)\subset S$.
	We define $e(f,W)$ to be the dimension of the generic fiber of $q_W^{-1}(\Sigma)\to \Sigma$.
	We set $e(f,W)=-1$ if the generic fiber of $q_W^{-1}(\Sigma)\to \Sigma$ is an empty set.

	Now assume that $W\subset A\times S$ is irreducible and $\overline{q(W)}=S$.
	We also assume that $\dim \mathrm{St}(W)=0$, where the action $A\curvearrowright A\times S$ is defined by $(a,s)\mapsto (a+\alpha,s)$ for $\alpha\in A$.
	Since $W$ and $S$ are integral, and $q_W$ is dominant, there exists a non-empty Zariski open subset $W^o\subset W$ such that $q_W$ is a smooth morphism over $W^o$.
	Let $S^o\subset S$ be a non-empty Zariski open subset such that every $s\in S^o$ satisfies $q_W^{-1}(s)\not=\emptyset$ and:
	\begin{enumerate}[label=(\arabic*)]
	\item
	every irreducible component of $q_W^{-1}(s)$ has non-trivial intersection with $W^o$, and 
	\item
	the stabilizer of every irreducible component of $q_W^{-1}(s)$ is 0-dimensional.
	\end{enumerate}
	
	To justify the second condition, we argue as follows.
	We note that the assertion is local on $S$.
	Therefore, we may shrink $S$ to ensure that any property holding generically on $S$ holds over all of $S$.
	Let $\overline{A}$ be a projective, equivariant compactification and let $\overline{W}\subset \overline{A}\times S$ be the closure.
	By the generic reduced theorem, the fibers $\overline{W}_s$ are reduced for general $s\in S$ (cf. \cite[Tag 054Z, Tag 020I, Tag 0578]{stacks-project}).
Hence by shrinking $S$, we may assume that all fibers are reduced.
	Let $\overline{W}'\to\overline{W}$ be a desingularization, let $\overline{W}'\to S^{\dagger}\overset{u}{\to} S$ be the Stein factorization and let $V\subset \overline{A}\times S^{\dagger}$ be the image of the induced map $\overline{W}'\to \overline{A}\times S^{\dagger}$.
Note that general fibers of $\overline{W}'\to S^{\dagger}$ are smooth and connected, hence irreducible.
Therefore by shrinking $S$, all fibers of $V\to S^{\dagger}$ are irreducible, and are also reduced by the generic reduced theorem.
Now we have a natural surjection $v:V\to \overline{W}$.
Hence by $\dim \mathrm{St}(W)=0$, we have $\dim \mathrm{St}(V)=0$.
Thus by \cite[Lemma 11.12]{Y22}, we have $\dim \mathrm{St}(V_y)=0$ for general fibers $V_y$, thus for all $y\in S^{\dagger}$ by shrinking $S$.
Then for any $s\in S$ and any irreducible component of $\overline{W}_s$, there exists $y\in S^{\dagger}$ such that $u(y)=s$ and $V_y$ is isomorphic to that component.
This confirms the validity of the second assertion.
	
	Assume $S$ is smooth and projective.
	We note 
	\begin{equation}\label{eqn:20221203}
		\overline{T}(A\times S)=A\times S',
	\end{equation}
	where $S'=\overline{\mathrm{Lie}(A)\times TS}$.
	We may define $e(j_1f,W^{(1)})$ from $j_1f:Y\da A\times S'$ and $W^{(1)}\subset A\times S'$.

	\begin{lem}\label{lem:202210041}
		Assume that $S$ is smooth and projective, and that $W\subset A\times S$ is irreducible with $\overline{q(W)}=S$.
		Let $f:Y\da A\times S$ satisfies $e(j_1f,W^{(1)})=e(f,W)$ and $f_S(Y)\not\subset S- S^o$.
		Then we have
		$$T_{f_A}(r)=O(T_{f_S}(r))+O(\log r).$$
	\end{lem}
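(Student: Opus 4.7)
I would bound $T_{f_A}(r)$ by reducing it, via the First Main Theorem (\cref{thm:first}), to bounds on proximity and counting functions against the simple normal crossing boundary $\partial A$ of a smooth equivariant compactification $\overline{A}$ of $A$, and then use the hypothesis $e(j_1 f, W^{(1)}) = e(f,W)$ combined with the logarithmic derivative estimate \eqref{eq:Noguchi} to control these by $T_{f_S}(r) + O(\log r)$.

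Concretely, I would first fix such $\overline{A}$ together with a basis $\omega_1,\ldots,\omega_{\dim A}$ of invariant logarithmic $1$-forms on $(\overline{A},\partial A)$, and form the meromorphic functions $\xi_i = f^{*}(\pi_A^{*}\omega_i)/\pi^{*}(dz)$ on $Y$. Since an ample line bundle $L$ on $\overline{A}$ becomes dominated by a multiple of $\partial A$ plus effective data captured by the $\omega_i$ (choosing $L + k\partial A$ very ample for $k\gg 0$), bounding $T_{f_A}(r)$ up to $O(\log r)$ amounts to bounding $m_{f_A}(r,\partial A) + N_{f_A}(r,\partial A)$, which in turn is bounded in terms of the $\xi_i$ via \eqref{eq:Noguchi} and \eqref{eq:First}.

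The critical step is to use the hypothesis $e(j_1 f, W^{(1)}) = e(f,W)$ to show that each $\xi_i$ essentially descends, modulo logarithmic-derivative error, to a meromorphic datum on $\Sigma := \overline{f_S(Y)}$. Geometrically, this dimension equality forces a general fibre of $W^{(1)}\cap(A\times\Sigma')\to \Sigma'$ to project onto a full-dimensional subset of the corresponding fibre of $V = q_W^{-1}(\Sigma)\to\Sigma$; combined with $\dim \mathrm{St}(W)=0$ and the fact that $f_S(Y) \not\subset S\setminus S^o$, this should yield a relation of the form $\xi_i = f_S^{*} u_i + \sum_j c_{ij}\,h_j^\prime/h_j$, where the $u_i$ are meromorphic on $\Sigma$ and the $h_j$ are characters of $A$ evaluated along $f_A$. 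Applying \cref{thm:20231119} to the $h_j$ and \eqref{eq:Noguchi} to the $u_i$ then yields $m(r,\xi_i) = O(\log r) + o(T_{f_A}(r)) + o(T_{f_S}(r))\,||$, and a parallel argument controls the counting terms.

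The main obstacle will be the descent step above: converting the dimension equality $e(j_1 f, W^{(1)}) = e(f,W)$ into a genuine algebraic identity expressing the $\xi_i$ through data on $\Sigma$. This will require working with the local defining equations of $W^{(1)}$ inside $\overline{T}(A\times S) = A\times S'$, and crucially using $\dim \mathrm{St}(W)=0$ to prevent any $A$-character direction from escaping the $W$-relations, together with the genericity $f_S(Y)\not\subset S\setminus S^o$ to ensure that the fibres of $q_W$ behave uniformly along the image of $f_S$. Once the identity is in hand, absorbing the $o(T_{f_A}(r))$ error into the left-hand side via Borel's growth lemma (as in the proof of \cref{thm:20231119}) yields the desired estimate $T_{f_A}(r) = O(T_{f_S}(r)) + O(\log r)$.
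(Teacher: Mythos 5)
The proposal has a fundamental gap in the strategy for bounding $T_{f_A}(r)$. You propose to reduce $T_{f_A}(r)$ to bounds on $m_{f_A}(r,\partial A)+N_{f_A}(r,\partial A)$ and on logarithmic derivatives $\xi_i$ of the invariant one-forms. This cannot work, because these quantities do not control the order function of a map into a semi-abelian variety. The clearest way to see this is the extreme case where $A$ is an abelian variety: there $\partial A=\emptyset$, so $m_{f_A}(r,\partial A)=N_{f_A}(r,\partial A)=0$ identically, and the logarithmic-derivative lemma gives $m(r,\xi_i)=o(T_{f_A}(r))+O(\log r)\|$ unconditionally, yet $T_{f_A}(r)$ can grow arbitrarily fast. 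So the step ``choosing $L+k\partial A$ very ample'' and reducing $T_{f_A}(r)$ to boundary proximity is already invalid. Similarly, the final step of ``absorbing the $o(T_{f_A}(r))$ error into the left-hand side via Borel's growth lemma'' presupposes you already have an inequality where $T_{f_A}(r)$ appears on the left with coefficient one, which never materializes: the estimates you derive bound auxiliary quantities $m(r,\xi_i)$, not $T_{f_A}(r)$ itself, and the proposed identity $\xi_i = f_S^*u_i+\sum_j c_{ij}\,h_j'/h_j$ with $h_j$ characters evaluated along $f_A$ is circular, since bounding the characters' growth is exactly the content of the conclusion.

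The missing idea is geometric rather than analytic. The hypothesis $e(j_1f,W^{(1)})=e(f,W)$, together with $\dim\mathrm{St}(W)=0$ and the avoidance condition $f_S(Y)\not\subset S\setminus S^o$, forces the image $f(Y)$ into a translate $a+\mathcal{Z}_s(W)$, where $\mathcal{Z}_s(W)\subset A\times S$ is a constructible set whose projection to $S$ is quasi-finite over $S^o$. (The paper establishes this by an integral-curve argument on $A\times\mathbb D$: the push-forward vector field $\mathcal{X}=\tau_*(0,\partial/\partial z)$ is shown to be tangent to a connected component of $W^o_{\mathbb D}$ using the jet hypothesis, and its integral curves are parallel translates.) Consequently $f(Y)$ is Zariski dense in an irreducible $Z$ whose projection to $S$ is generically finite onto its image. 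The order function estimate $T_{f_A}(r)=O(T_{f_S}(r))+O(\log r)$ then comes from a separate, already-proved result for maps into such $Z$ (\cite[Lemma~4]{Yam15}), not from logarithmic derivatives at all. Without this passage to a quasi-finite cover of $\Sigma$, the desired estimate is simply not accessible by the proximity/counting machinery alone.
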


	\begin{rem}
		In the statement of the lemma, by $T_{f_A}(r)$ we mean $T_{f_A}(r,L)$ for some ample line bundle on a compactification $\overline{A}$.
		So this notion has ambiguity, but we have $T_{f_A}(r,L')=O(T_{f_A}(r,L))+O(\log r)$ for other choices.
		Hence the term $O(T_{f_A}(r))+O(\log r)$ has fixed meaning.
		Similar for $O(T_{f_S}(r))+O(\log r)$.
	\end{rem}
	
	\begin{proof}[Proof of \cref{lem:202210041}]
		We define $\mathcal{Z}(W)\subset A\times S\times S$ by
		$$
		\mathcal{Z}(W)=\{ (a,s_1,s_2)\in A\times S\times S;\dim (q_W^{-1}(s_1)\cap (a+q_W^{-1}(s_2)))\geq \dim W-\dim S\} ,
		$$
		where we consider $q_W^{-1}(s_1)$ and $q_W^{-1}(s_2)$ as Zariski closed subsets of $A$.
		For $s\in S$, let $\mathcal{Z}_{s}(W)\subset A\times S$ be the fiber of the composite map $\mathcal{Z}(W)\hookrightarrow A\times S\times S\to S$ over $s\in S$, where the second map $A\times S\times S\to S$ is the third projection.

		\begin{claim}\label{claim:20221018}
			Let $s\in S^o$.
			Let $g:\mathbb D \to A\times S$ be a holomorphic map such that $e(j_1g,W^{(1)})=e(g,W)$ with $g(0)=(a,s)\in A\times S^o$.
			Then $g(\mathbb D )\subset \mathcal{Z}_{s}(W)+a$.
		\end{claim}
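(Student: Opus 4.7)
The plan is to use the jet equality $e(j_1g,W^{(1)})=e(g,W)$ to derive a uniform tangency condition along $g$, integrate it to a rigid translation property for the fibers $q_W^{-1}(g_S(z))\subset A$, and then read off the claim.

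\emph{Step 1 (Uniform tangency).} Since $s\in S^o$, the fiber $q_W^{-1}(s)$ has the generic dimension $\dim W-\dim S$, so $e(g,W)=\dim W-\dim S$. Assume first $g_S$ is non-constant; then $\Sigma=\overline{g_S(\mathbb D)}$ is a curve and the canonical projection $\Sigma'\to\Sigma$ is generically birational, so the equality $e(j_1g,W^{(1)})=e(g,W)$ forces the generic fiber of $q_{W^{(1)}}\colon W^{(1)}\to\Sigma'$ to attain the maximal possible value $\dim W-\dim S$. By the defining equations of $W^{(1)}$ (those of $W$ together with their differentials), this fiber over $\sigma'=(g_S(z),g_S'(z),g_A'(z))$ is exactly
$$\bigl\{b\in q_W^{-1}(g_S(z)):(g_A'(z),g_S'(z))\in T_{(b,g_S(z))}W\bigr\},$$
and maximality forces it to coincide with $q_W^{-1}(g_S(z))$ for generic $z\in\mathbb D$. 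In words, $(g_A'(z),g_S'(z))$ is tangent to $W$ at \emph{every} point $b\in q_W^{-1}(g_S(z))$. The case of constant $g_S$ is handled separately and reduces, via the hypothesis $\dim\mathrm{St}(W)=0$, to $g$ itself being constant, in which case the claim is trivial.

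\emph{Step 2 (Rigid translation).} The uniform tangency of Step 1 asserts that, to first order in $z$, every $b\in q_W^{-1}(g_S(z))$ deforms to $b+g_A'(z)\,dz\in q_W^{-1}(g_S(z+dz))$, i.e.\ the fibers translate rigidly by $g_A'(z)\,dz$. To globalize this, introduce the closed analytic subsets of $\mathbb D\times A$
$$\mathcal F_1=\bigl\{(z,b):(b,g_S(z))\in W\bigr\},\qquad \mathcal F_2=\bigl\{(z,b):b-g_A(z)+a\in q_W^{-1}(s)\bigr\}.$$
Step 1 together with integration from $z=0$ produces the inclusion $\mathcal F_2\subseteq\mathcal F_1$ on a neighborhood of $\{0\}\times q_W^{-1}(s)$. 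Each irreducible component of $\mathcal F_2$ has dimension $1+\dim W-\dim S$ and meets this neighborhood in a non-empty open subset of itself, so by the identity principle for irreducible analytic subvarieties the inclusion propagates, yielding
$$q_W^{-1}(s)+\bigl(g_A(z)-a\bigr)\subseteq q_W^{-1}(g_S(z))\qquad\text{for every }z\in\mathbb D.$$

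\emph{Step 3 (Conclusion).} For every $z\in\mathbb D$,
$$q_W^{-1}(g_S(z))\cap\bigl(g_A(z)-a+q_W^{-1}(s)\bigr)\supseteq g_A(z)-a+q_W^{-1}(s),$$
whose dimension is $\dim W-\dim S$. Thus $(g_A(z)-a,g_S(z),s)\in\mathcal Z(W)$, i.e.\ $g(z)\in\mathcal Z_s(W)+a$, as required.

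\emph{Main obstacle.} The delicate point is Step 2, namely carrying the infinitesimal tangency to a genuine global inclusion when $q_W^{-1}(s)$ is reducible: each irreducible component must translate rigidly by the \emph{same} vector $g_A(z)-a$, and applying the identity principle componentwise requires that this translation vector be uniquely determined. This is precisely where the hypothesis $\dim\mathrm{St}(W)=0$ intervenes, via the construction of $S^o$ which guarantees finite stabilizers for the irreducible components of $q_W^{-1}(s)$ and hence uniqueness of the translation.
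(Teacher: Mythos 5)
Your Step 1 contains the central error. The equality $e(j_1g,W^{(1)})=e(g,W)=\dim W-\dim S$ says only that the fiber $(q'_{W^{(1)}})^{-1}(q'\circ j_1g(z))$ has dimension $\dim W-\dim S$ for generic $z$. Since this fiber is a closed subset of $q_W^{-1}(g_S(z))$ of the same dimension, it contains \emph{at least one} irreducible component of $q_W^{-1}(g_S(z))$ --- but by no means all of them. Condition (1) of $S^o$ explicitly allows $q_W^{-1}(s)$ to be reducible, and nothing forces every component to lie in the jet fiber. So the ``uniform tangency'' you rely on --- that $(g_A'(z),g_S'(z))$ is tangent to $W$ at \emph{every} $b\in q_W^{-1}(g_S(z))$ --- is simply not a consequence of the hypothesis.

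Because of this, your Step 2 aims at a conclusion that is in general false. You try to establish the full rigid translation $q_W^{-1}(s)+g_A(z)-a\subseteq q_W^{-1}(g_S(z))$, and your flow/identity-principle argument needs the tangency to hold along every trajectory emanating from every point of $\{0\}\times q_W^{-1}(s)$; but only the trajectories through the tangency component are known to stay in $W_{\mathbb D}$. The claim itself does not demand the full translation: membership in $\mathcal Z(W)$ only requires $\dim\bigl(q_W^{-1}(g_S(z))\cap(g_A(z)-a+q_W^{-1}(s))\bigr)\ge\dim W-\dim S$, i.e.\ that \emph{some} translated subset of the right dimension fits inside. The paper's proof is correspondingly weaker in its intermediate goal and stronger in its bookkeeping: it defines the analytic set $V\subset W^o_{\mathbb D}$ of points where the pushed vector field $\psi_*(\mathcal X)$ lies in $W^{(1)}$, observes that $\Omega\subset\varphi(V)$ and $\dim_x(\varphi^{-1}(z)\cap V)=\dim W-\dim S$, and deduces by a dimension count that $V$ contains a whole connected component $\widetilde W^o_{\mathbb D}$. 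On that component the vector field is tangent \emph{everywhere}, so its integral curves stay first in $\widetilde W^o_{\mathbb D}$ (locally) and then in the closed analytic set $W_{\mathbb D}$ (globally); one then translates $\varphi^{-1}(\zeta)\cap\widetilde W^o_{\mathbb D}$, a set of dimension $\dim W-\dim S$, which is exactly enough. Finally, your ``Main obstacle'' paragraph invokes $\dim\mathrm{St}(W)=0$ (via finite stabilizers of components) to rescue the argument, but that is a misdiagnosis: condition (2) of $S^o$ is not used in the proof of this claim at all; it enters only afterwards, to show that $q|_{\mathcal Z_s(W)}$ is quasi-finite over $S^o$. Nothing in the claim's proof forces all components of $q_W^{-1}(s)$ to move by the same vector, because one never needs to move them all.
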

		
		\begin{proof}
			This is proved in the sublemma of \cite[Lemma 3]{Yam15} when $A$ is compact.
Our argument proceeds along the same lines as that proof.
			
			By $q\circ g(0)=s\in S^o$, we may take a non-empty open subset $\Omega\subset \mathbb D$ such that $q\circ g(\Omega)\subset S^o$. 
		Hence the condition (1) of $S^o$ implies 
		$$e(g,W)=\dim W-\dim S.$$
Let $q'_{W^{(1)}}:W^{(1)}\to S'$ be the restriction of the second projection $q':A\times S'\to S'$ to $W^{(1)}$.
By shrinking $\Omega$ if necessary, the assumption $e(j_1g,W^{(1)})=e(g,W)$ implies that
\begin{equation*}
\dim (q'_{W^{(1)}})^{-1}(q'\circ j_1g(z))=\dim W-\dim S
\end{equation*}
for all $z\in \Omega$.
Indeed denoting by $\Sigma\subset S'$ the Zariski closure of $q'\circ j_1g(\mathbb D)\subset S'$, the set $\{s\in \Sigma; \dim (q'_{W^{(1)}})^{-1}(s)=\dim W-\dim S\}$ is constructible (cf. \cite[Tag 05F9]{stacks-project}).
Hence we may shrink $\Omega$ as desired above.

Now for each $z\in\Omega$, we may take an irreducible component $I_z$ of $(q'_{W^{(1)}})^{-1}(q'\circ j_1g(z))$ such that 
\begin{equation}\label{eqn:20250928}
\dim I_z=\dim W-\dim S.
\end{equation}
Since $(q'_{W^{(1)}})^{-1}(q'\circ j_1g(z))\subset q_W^{-1}(q\circ g(z))$ as Zariski closed subsets of $A$,  we conclude that $I_z$ is an irreducible component of $q_W^{-1}(q\circ g(z))$.
Hence the condition (1) of $S^o$ implies 
\begin{equation}\label{eqn:20250930}
I_z\cap W^o\not=\emptyset.
\end{equation}

Define
$$\psi:\,A\times \mathbb D\to A\times S,\qquad \psi(b,z)=(b,\;q\circ g(z))$$
and set 
$$W^o_{\mathbb D}:=\psi^{-1}(W^o).$$
On $W^o$, the map $q_W$ is smooth, hence a submersion.
Therefore $W^o_{\mathbb D}$ is a (possibly non-connected) smooth complex manifold, and the induced map $q_{W^o_{\mathbb D}}:W^o_{\mathbb D}\to\mathbb D$ is a submersion.
Hence every connected component of $W^o_{\mathbb D}$ has dimension $\dim W-\dim S+1$.

Define an automorphism $\tau:A\times \mathbb D\to A\times \mathbb D$ by
$$\tau(b,z)=(\,b-a+p\circ g(z),\; z\,).$$
Let $\mathcal{X}$ be the holomorphic vector field on $A\times \mathbb D$ given by
$$\mathcal{X}:=\tau_*(0,\partial/\partial z),$$
where $(0,\partial/\partial z)$ is the standard vector field on $A\times \mathbb D$ and
$\tau_*$ denotes the push-forward. For each $b\in A$ set
$$\tau_b:\mathbb D\to A\times \mathbb D,\qquad \tau_b(z)=\tau(b,z).$$
Each map $\tau_b$ is an integral curve of the vector field $\mathcal{X}$.

We shall show that there exists a connected component $\widetilde W^o_{\mathbb D}$ of $W^o_{\mathbb D}$
such that $\mathcal{X}$ is tangent to $\widetilde W^o_{\mathbb D}$ at every point of
$\widetilde W^o_{\mathbb D}$. 
We specify $\widetilde W^o_{\mathbb D}$ as follows.
Set
\[
\mathcal W:=\{x\in W^{(1)};\ \dim_x (q'_{W^{(1)}}{}^{-1}(q'_{W^{(1)}}(x)))\ge \dim W-\dim S\}.
\]
Then $\mathcal W\subset W^{(1)}$ is Zariski closed.
Put
\[
V:=\{(b,z)\in W^o_{\mathbb D} \ ;\ \psi_*(\mathcal{X}_{(b,z)})\in\mathcal W\},
\]
where $\psi_*:T(A\times \mathbb D)\to T(A\times S)$ is the push-forward of $\psi$.
Then $V\subset W^o_{\mathbb D}$ is an analytic subset.
Note that $\psi_*(\mathcal{X}_{(b,z)})$ is a translate
of $j_1(g)(z)\in T_{g(z)}(A\times S)$.
Thus taking into account \eqref{eqn:20250928}, we get $I_z\cap W^o\subset \varphi^{-1}(z)\cap V$ for $z\in \Omega$, where $\varphi:A\times \mathbb D\to \mathbb D$ is the projection.
In particular, by \eqref{eqn:20250930}, we have 
$$\Omega\subset \varphi(V),$$ 
and by the definition of $V$,
$$\dim_x(\varphi^{-1}(z)\cap V)=\dim W-\dim S$$ 
for every $z\in \Omega$ and $x\in \varphi^{-1}(z)\cap V$.
Since $V\subset  W^o_{\mathbb D}$ is an analytic subset, these two conditions imply that $V$ contains a connected component of $W^o_{\mathbb D}$.
We denote this component by $\widetilde W^o_{\mathbb D}$.

Next we show that $\mathcal{X}$ is tangent to $\widetilde W^o_{\mathbb D}$ at every point of
$\widetilde W^o_{\mathbb D}$.
Since $q_W|_{W^o}:W^o\to S$ is submersion, we have the
identity of subbundles of $T(A\times \mathbb D)$:
\begin{equation*}
T W^o_{\mathbb D} \;=\; (\psi_*)^{-1}(T W^o).
\end{equation*}
Therefore it is enough to show
\begin{equation}\label{eq:TWod}
\psi_*(\mathcal{X}_{(b,z)})\in W^{(1)}
\end{equation}
for every $(b,z)\in\widetilde W^o_{\mathbb D}$.
However by the definition of $V$, this holds for all points of $V$, in particular holds for the subset $\widetilde W^o_{\mathbb D}\subset V$.
Therefore $\mathcal{X}$ is tangent to $\widetilde W^o_{\mathbb D}$ at every point of $\widetilde W^o_{\mathbb D}$.

Now we take $\zeta\in \Omega$, so that $\varphi^{-1}(\zeta)\cap\widetilde W^o_{\mathbb D}\not=\emptyset$.
For each $w\in \varphi^{-1}(\zeta)\cap\widetilde W^o_{\mathbb D}$, we set $c=w+a-p\circ g(\zeta)$.
Then $\tau_c(\zeta)=(w,\zeta)$.
Since $\tau_c:\mathbb D\to A\times\mathbb D$ is an integral curve of $\mathcal{X}$ and $\mathcal{X}$ is tangent
to $\widetilde W^o_{\mathbb D}$ at every point of $\widetilde W^o_{\mathbb D}$, we have $\tau_c(U)\subset \widetilde W^o_{\mathbb D}$ for some neibourhood $U\subset \mathbb D$ of $\zeta$.
Put $W_{\mathbb D}:=\psi^{-1}(W)$, then $W_{\mathbb D}\subset A\times \mathbb D$ is an analytic set, and $W^o_{\mathbb D}\subset W_{\mathbb D}$ is an open subset.
Therefore we have $\tau_c(\mathbb D)\subset W_{\mathbb D}$. 
Thus for every $z\in \mathbb D$ and every $w\in \varphi^{-1}(\zeta)\cap\widetilde W^o_{\mathbb D}$, we have
$\tau_c(z)\in \varphi^{-1}(z)\cap W_{\mathbb D}$. 
Equivalently,
\[
\bigl(p\circ g(z)-p\circ g(\zeta)\bigr) + \bigl(\varphi^{-1}(\zeta)\cap\widetilde W^o_{\mathbb D}\bigr)
\ \subset\ \varphi^{-1}(z)\cap W_{\mathbb D}.
\]
Considering this relation for $z=0$ and arbitrary $z$, we obtain
$$
-p\circ g(\zeta) + \bigl(\varphi^{-1}(\zeta)\cap\widetilde W^o_{\mathbb D}\bigr)
\subset\left\{-p\circ g(0)+(\varphi^{-1}(0)\cap W_{\mathbb D})\right\}\cap \left\{-p\circ g(z)+(\varphi^{-1}(z)\cap W_{\mathbb D})\right\}
$$
for all $z\in \mathbb D$.
This implies
\[
\bigl(p\circ g(z)-p\circ g(0),\; q\circ g(z),\; q\circ g(0)\bigr)\in \mathcal{Z}(W)
\]
for all $z\in \mathbb D$, which exactly means $g(\mathbb D)\in\mathcal{Z}_{s}(W)+a$. 
This completes the proof of \Cref{claim:20221018}.
		\end{proof}

	Now we return to the proof of \Cref{lem:202210041}.
	We remark that $\mathcal{Z}_s(W)\subset A\times S$ is a constructible subset for $s\in S$.
		To see this, we consider the composite map 
$$\phi : W \times W_s \hookrightarrow (A \times S) \times A\to A \times S,$$ 
where the second map is defined by 
\[
(A \times S) \times A  \ni ((a, t), a') \mapsto (a - a', t) \in A \times S.
\]
Then $\mathcal{Z}_s(W)$ is given by
\begin{equation*}
\mathcal{Z}_s(W) = \{x \in A \times S  \mid \dim \phi^{-1}(x) \geq \dim W - \dim S\}.
\end{equation*}
Hence, $\mathcal{Z}_s(W)\subset A\times S$ is a constructible subset (cf. \cite[Tag 05F9]{stacks-project}).

		Now we take $y\in Y$ such that $f_S(y)\in S^o$ and $f(y)=(a,s)\in A\times S^o$.
		By the second condition of the definition of $S^o$, the stabilizer of every irreducible component of $q_W^{-1}(s)$ is 0-dimensional.
		Therefore, the restriction of the natural map 
		$$q|_{\mathcal{Z}_s(W)} :\mathcal{Z}_s(W)\to S$$ 
		over $S^o$ is quasi-finite.
		Let $E\subset S$ be the Zariski closure of $f_S(Y)$ and let $\overline{A}$ be a projective compactification of $A$.
		By \cref{claim:20221018}, we have 
		$$f(Y)\subset \overline{q^{-1}(E)\cap(\mathcal{Z}_s(W)+a)}\subset \overline{A}\times S.$$
		Hence there exists an irreducible component $Z$ of $\overline{q^{-1}(E)\cap(\mathcal{Z}_s(W)+a)}$ such that $f(Y)\subset Z$.
		Since $q^{-1}(E)\cap (\mathcal{Z}_s(W)+a)\subset A\times S$ is a constructible subset, it contains a dense Zariski open subset of $Z$.
		Hence the projection $\mu:Z\to S$ is generically-finite onto its image $\mu(Z)=E$.
		Thus $\dim Z=\dim E$, and $f(Y)\subset Z$ is Zariski dense.
		Then by \cite[Lemma 4]{Yam15}, we have $T_{f_A}(r)=O(T_{f_S}(r))+O(\log r)$.
	\end{proof}

	We recall $S'=\overline{\mathrm{Lie}A\times TS}$ from \eqref{eqn:20221203}, so that $\overline{T}(A\times S)=A\times S'$.
	
	\begin{lem}\label{lem:202210042}
		For $f:Y\da A\times S$, we have
		$$T_{(j_1f)_{S'}}(r)=O( T_{f_S}(r)+N_{\ram \pi _Y}(r)+\pN{f}(r))+O(\log r)+o(T(r,f_A)) \ ||,$$
		where $j_1(f)_{S'}:Y\to S'$ is the composition of $j_1(f):Y\da A\times S'$ and the second projection $A\times S'\to S'$.
	\end{lem}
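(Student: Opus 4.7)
The plan is to lift the question to the compactified logarithmic tangent bundle of the pair $(\overline{A}\times S,\,\partial A\times S)$ and to combine the First Main Theorem (\cref{thm:first}) with the logarithmic derivative estimate \eqref{eqn:202311195}, exploiting the fact that the log tangent bundle of a smooth toroidal compactification of the semi-abelian variety $A$ is trivialised by $\mathrm{Lie}(A)$.

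Concretely, I would first fix a smooth projective equivariant compactification $\overline{A}$ of $A$ with $\partial A:=\overline{A}\setminus A$ a simple normal crossing divisor, set $V:=\overline{A}\times S$ and $D:=\partial A\times S$, and use the canonical trivialisation $T(\overline{A};\log\partial A)=\overline{A}\times\mathrm{Lie}(A)$ to obtain identifications
\[
\overline{T}(V;\log D)\;\cong\;V\times_S S',\qquad \partial T(V;\log D)\;\cong\;V\times_S\partial S',
\]
where $\partial S':=P(\mathrm{Lie}(A)\oplus TS)\subset S'$, and under which the line bundle $\mathcal O(\partial T(V;\log D))$ pulls back $\mathcal O_{S'}(1)$ from the $S'$-factor. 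Letting $\bar f:Y\to V$ denote the holomorphic extension of $f$, the map $(j_1 f)_{S'}$ is exactly the $S'$-projection of $j_1\bar f:Y\to\overline T(V;\log D)$, so that
\[
T_{(j_1 f)_{S'}}(r,\mathcal O_{S'}(1))\;=\;T_{j_1\bar f}\bigl(r,\mathcal O(\partial T(V;\log D))\bigr).
\]

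Next I would apply \cref{thm:first} to the divisor $\partial T(V;\log D)$:
\[
T_{j_1\bar f}\bigl(r,\mathcal O(\partial T(V;\log D))\bigr)= m_{j_1\bar f}\bigl(r,\partial T(V;\log D)\bigr)+N_{j_1\bar f}\bigl(r,\partial T(V;\log D)\bigr)+O(\log r).
\]
By \eqref{eqn:202311195} the proximity term is $o(T(r,\bar f))+O(\log r)\ ||$, and since $T(r,\bar f)=T(r,f_A)+T_{f_S}(r)+O(\log r)$ for ample polarisations, it is absorbed into $o(T(r,f_A))+O(T_{f_S}(r))+O(\log r)\ ||$. For the counting term I would run a local computation with translation-invariant log $1$-forms $\omega_i\in H^0(\overline A,\Omega^1_{\overline A}(\log\partial A))$ trivialising the log cotangent: the meromorphic functions $\xi_i:=f^*\omega_i/\pi^*dz$ have poles of order at most $k-1$ at each ramification point of $\pi_Y$ of order $k$ (contributed by the zeros of $\pi^*dz$) and of order exactly $1$ at every point of $\bar f^{-1}(D)$, because the pull-back of a log $1$-form acquires only a simple log pole along $\bar f^{-1}(D)$ regardless of the intersection multiplicity. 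Combined with the tautological estimate for $j_1(f_S):Y\to\overline T S$ (whose poles come solely from ramification of $\pi_Y$, since $S$ carries no boundary), this yields
\[
N_{j_1\bar f}\bigl(r,\partial T(V;\log D)\bigr)= O\bigl(\pN{f}(r)+N_{\ram\pi_Y}(r)\bigr)+O(\log r).
\]

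To conclude I would pick an ample $L_S$ on $S$ and take $N\gg 0$ so that $L_{S'}:=\mathcal O_{S'}(1)\otimes\rho^*L_S^{\otimes N}$ is ample on $S'$, where $\rho:S'\to S$ is the projective bundle projection; then $T_{(j_1 f)_{S'}}(r,L_{S'})=T_{(j_1 f)_{S'}}(r,\mathcal O_{S'}(1))+N\,T_{f_S}(r,L_S)+O(\log r)$, and substituting the two estimates above produces the asserted bound. I expect the main obstacle to be the local counting step: one must verify that the truncation appearing in $\pN{f}(r)$ (each boundary intersection counted once, independent of the tangency order of $f$ with $\partial A\times S$) really matches the simple-pole behaviour of log $1$-forms under pull-back, higher tangency orders being absorbed not in $N$ but in the proximity term, where they are controlled by \eqref{eqn:202311195}.
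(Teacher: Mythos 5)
Your proposal is correct and follows essentially the same route as the paper: compactify the log tangent bundle, identify $\overline{T}(\overline{A}\times S;\log D)\cong\overline{A}\times S'$ via the trivialisation $T(\overline{A};\log\partial A)=\overline{A}\times\mathrm{Lie}A$, note that the boundary divisor $E$ is the pull-back of the divisor $F=\{\eta=0\}\subset S'$, bound $T_{j_1 f}(r,E)$ by the first main theorem with the proximity term controlled via \eqref{eqn:202311195} and the counting term controlled via the simple-pole/ramification computation (the paper packages that last step as a citation to \cite[(2.4.8)]{Yam04}, which is exactly the local estimate you reconstruct), and finish using $\mathrm{Pic}(S')=\mathrm{Pic}(S)\oplus\mathbb Z[F]$. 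The only stylistic difference is that the paper invokes the cited estimate as a black box, whereas you unwind its inner workings.
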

	
	\begin{proof}
		(cf. \cite[Lemma 2]{Yam15})
		Let $\overline{A}$ be an equivariant compactification, which is smooth and projective.
		Set $D=(\partial A)\times S$.
		Then $D$ is a simple normal crossing divisor on $\overline{A}\times S$.
		Let $T(\overline{A}\times S;\log D)$ be the logarithmic tangent bundle.
		We set $\overline{T}(\overline{A}\times S;\log D)=P(T(\overline{A}\times S;\log D)\oplus \mathcal{O}_{\overline{A}\times S})$, which is a smooth compactification of $T(\overline{A}\times S;\log D)$.
		We set $E=\overline{T}(\overline{A}\times S;\log D)-T(\overline{A}\times S;\log D)$, which is a divisor on $\overline{T}(\overline{A}\times S;\log D)$.
		By $T(A\times S)\subset \overline{T}(\overline{A}\times S;\log D)$, we have $j_1f:Y\to \overline{T}(\overline{A}\times S;\log D)$.
		By \cite[(2.4.8)]{Yam04} and \eqref{eqn:202311195}, we have 
		$$
		T_{j_1f}(r,E)\leq N_{\ram \pi}(r)+\pN{f}(r)+O(\log r)+o(T_f(r))\ ||.
		$$ 
		Note that $T(\overline{A};\log\partial A)=\overline{A}\times \mathrm{Lie}A$ (cf. \cite[Prop 5.4.3]{NW13}).
		Hence we have $\overline{A}\times S'=\overline{T}(\overline{A}\times S;\log D)$ and $S'=\mathbf{Proj}((\mathrm{Sym}\ \Omega^1_S)\otimes_{\mathcal{O}_S}\mathcal O_S[\eta,dz_1,\ldots,dz_{\dim A} ])$, where $\{dz_1,\ldots,dz_{\dim A}\}\subset H^0(\overline{A},\Omega^1_{\overline{A}}(\log \partial A))$ is a basis.
		Let $F\subset S'$ be the divisor defined by $\eta=0$.
		Then we have $p^*F=E$, where $p:\overline{A}\times S'\to S'$ is the second projection.
		Hence we have
		$$T_{(j_1f)_{S'}}(r,F)\leq N_{\ram \pi _Y}(r)+\pN{f}(r)+O(\log r)+o(T(r,f_A)) \ ||.$$
		By $\mathrm{Pic}(S')=\mathrm{Pic}(S)\oplus \mathbb Z[F]$, we obtain our lemma.
	\end{proof}

	\section{Refinement of log Bloch-Ochiai Theorem}\label{sec:rbc} 
	
	Let $A$ be a semi-Abelian variety.
	Let $\mathcal{S}_0(A)$ be the set of all semi-abelian subvarieties of $A$.
	Let $W\subset A\times S$ be a Zariski closed subset.
	For $B\in \mathcal{S}_0(A)$, we set
	\begin{equation}\label{eqn:20221101}
		W^B=\{ x\in W;\ x+B\subset W\}.
	\end{equation}
	Then $W^B=\cap_{b\in B}(b+W)\subset W$ is a Zariski closed subset.
	When $B=\{0\}$, we have $W^{\{ 0\}}=W$.
	
	\begin{proposition}\label{pro:20220807}
		Let $S$ be a projective variety.
		Let $W\subset A\times S$ be an irreducible Zariski closed subset.
		Then there exists a finite subset $P\subset \mathcal{S}_0(A)$ such that 
		for every $f:Y\da W$, there exists $B\in P$ such that $f(Y)\sqsubset W^B$ and
		\begin{equation}\label{eqn:20220921}
			T_{q_B\circ f_A}(r)=O(N_{\ram \pi}(r)+T_{f_S}(r)+\pN{f}(r))+O(\log r)+o(T_f(r))||,
		\end{equation}
		where $q_B:A\to A/B$ is the quotient.
	\end{proposition}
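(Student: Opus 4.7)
The plan is to prove the proposition by a Noetherian induction on $W$, combined with a subsidiary induction on the invariant $e(f,W)$, which takes values in $\{-1,0,1,\ldots\}$ and will strictly decrease under a jet-lifting step. The finite set $P$ is assembled from the finitely many branches encountered in the induction.

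The first reduction is to the case $\dim \mathrm{St}(W)=0$. Set $B_1:=\mathrm{St}(W)^0$, so $W = W^{B_1}$ and $q_{B_1}(W)\subset(A/B_1)\times S$ has zero-dimensional stabilizer. Applying the proposition inductively to the composition $(q_{B_1}\circ f_A,\,f_S):Y\dashrightarrow(A/B_1)\times S$ yields $\bar B\in\mathcal{S}_0(A/B_1)$ from a finite set; setting $B:=q_{B_1}^{-1}(\bar B)\in\mathcal{S}_0(A)$, the $B_1$-invariance of $W$ gives $f(Y)\sqsubset W^B$, and the bound on $T_{q_B\circ f_A}$ follows since $q_B = q_{\bar B}\circ q_{B_1}$. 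A parallel Noetherian descent handles the case where $f(Y)$ is not Zariski dense in $W$, by passing to the irreducible components of the Zariski closure of $f(Y)$.

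Assume then $\dim\mathrm{St}(W)=0$ and that $f(Y)$ is Zariski dense in $W$. Consider the jet lift $j_1 f:Y\dashrightarrow W^{(1)}\subset A\times S'$, where $S'=\overline{\mathrm{Lie}(A)\times TS}$, and compare $e(j_1 f,W^{(1)})$ with $e(f,W)$. If equality holds, then after a further Noetherian reduction absorbing the degenerate case $f_S(Y)\subset S-S^o$ (in which the image lies in the proper closed subvariety $\overline{q^{-1}(\overline{f_S(Y)})}\cap W\subsetneq W$), \cref{lem:202210041} gives $T_{f_A}(r)=O(T_{f_S}(r))+O(\log r)$, and $B=\{0\}$ works. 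In the strict-inequality case $e(j_1 f,W^{(1)})<e(f,W)$, choose an irreducible component $W_1$ of $W^{(1)}$ containing $j_1 f(Y)$. The inductive hypothesis applied to $(j_1 f, W_1)\subset A\times S'$, whose $e$-invariant is strictly smaller, yields $B$ from a finite set $P_{W_1}$ such that $j_1 f(Y)\sqsubset W_1^B$ and a bound for $T_{q_B\circ(j_1 f)_A}(r)$ in terms of $T_{(j_1 f)_{S'}}(r)$, $N_{\mathrm{ram}\,\pi}(r)$, $\pN{j_1 f}(r)$, and $o(T_{j_1 f}(r))$. Since $(j_1 f)_A=f_A$, the image $W_1^B$ projects to $W^B$, so $f(Y)\sqsubset W^B$; substituting the estimate of \cref{lem:202210042} for $T_{(j_1 f)_{S'}}(r)$ then converts the bound into the required form. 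Finiteness of $P$ follows from the finiteness of the components of $W^{(1)}$ at each step, combined with the termination of the $e$-induction.

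The main technical obstacle is tracking the error terms through the jet-lifting step. The inductive hypothesis delivers an $o(T_{j_1 f}(r))$ remainder, and since $T_{j_1 f}(r)\sim T_{(j_1 f)_{S'}}(r)+T_{f_A}(r)+O(\log r)$, this must be absorbed back into $O(N_{\mathrm{ram}\,\pi}(r)+T_{f_S}(r)+\pN{f}(r))+O(\log r)+o(T_f(r))$. This requires verifying that $\pN{j_1 f}$ and $\pN{f}$ are comparable (both count points in $Y$ mapping to $\partial A$ via the first projection, which is the same for $f$ and $j_1 f$), and that the small terms do not generate uncontrolled growth after substitution. This bookkeeping is precisely what dictates the exact shape of the error term in \eqref{eqn:20220921}, making it stable under the derivative operation that drives the induction.
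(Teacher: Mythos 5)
Your proposal captures the core mechanism of the paper's proof: the induction driven by the jet invariant $e(f,W)$, the dichotomy between $e(j_1f,W^{(1)})=e(f,W)$ (Lemma~\ref{lem:202210041}, $B=\{0\}$) and strict inequality (recursing on $W^{(1)}$ with decreased invariant), the reduction to trivial stabilizer by quotienting, and absorbing the degenerate locus $f_S(Y)\subset S\setminus S^o$ by descending to a fixed proper closed subvariety. Your observation about $\pN{j_1 f}$ versus $\pN{f}$ and the role of Lemma~\ref{lem:202210042} in converting the error term is on point.

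There is, however, a genuine gap in the step \emph{``A parallel Noetherian descent handles the case where $f(Y)$ is not Zariski dense in $W$, by passing to the irreducible components of the Zariski closure of $f(Y)$.''} The proposition asks for a \emph{single finite set} $P\subset\mathcal{S}_0(A)$ that works uniformly for \emph{all} $f:Y\da W$, i.e.\ $P$ must be fixed before $f$ is given. But $\overline{f(Y)}$ depends on $f$ and ranges over an infinite collection of proper subvarieties of $W$; the union $\bigcup_{W'} P_{W'}$ over all such $W'$ is not finite, so this descent breaks the finiteness of $P$. The paper avoids this by never restricting to Zariski-dense images: every descent it makes (to $W\cap(A\times V_j)$ for components $V_j$ of a fixed $V=S\setminus S^o$, to $W/\mathrm{St}^0(W)$, to components of $W^{(1)}$) produces finitely many branches determined by $W$ alone, not by $f$. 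In fact, nothing in your own subsequent argument — Lemma~\ref{lem:202210041}, the $e$-comparison, the jet recursion — requires density of $f(Y)$ in $W$, so this reduction is both unnecessary and harmful; it should simply be deleted, after which your argument lines up with the paper's.

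A secondary but related imprecision: calling the outer induction ``Noetherian on $W$'' does not describe what actually happens. When you jet-lift, you move to $W^{(1)}\subset A\times S'$, a subvariety of a \emph{larger} ambient space (with $\dim S'>\dim S$), not a subvariety of $W$. The paper's well-founded formulation is a lexicographic induction on the pair $(k,d)$ where $k$ bounds $e(f,W)$ (strictly decreasing under jet-lifting) and $d=\dim S$ (strictly decreasing under the reduction to $f_S(Y)\subset V\subsetneqq S$, where $k$ is merely preserved). Your subsidiary $e$-induction supplies the first coordinate, but without the second coordinate you cannot justify the $f_S(Y)\subset S\setminus S^o$ reduction, which changes $W$ without lowering $e$.
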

	
	\begin{proof}
		Given $f:Y\da W$, we have $e(f,W)\not=-1$, hence $e(f,W)\in\{0,1,\ldots,\dim A\}$.
		\cref{pro:20220807} is reduced to the following claim by setting $k=\dim A$.
		
		\medskip
		
		\begin{claim}
		Let $(k,d)\in\mathbb Z_{\geq 0}^2$.
		Let $S$ be a projective variety such that $\dim S=d$, and let $A$ be a semi-Abelian variety.
		Let $W\subset A\times S$ be an irreducible Zariski closed subset.
			Then there exists a finite subset $P\subset \mathcal{S}_0(A)$ such that for every $f:Y\da W$ with $e(f,W)\leq k$, there exists $B\in P$ such that $f(Y)\sqsubset W^B$ and \eqref{eqn:20220921}. 
		\end{claim}
		\medskip
		
		We prove this claim by induction on the pair of integers $(k,d)\in\mathbb Z_{\geq 0}^2$, with respect to the lexicographical order on $\mathbb Z_{\geq 0}^2$.
		We therefore prove the claim for the pair $(k,d)$ by assuming that it holds for all pairs $(k',d')$ such that $(k',d')<(k,d)$.
		
		Let $q:A\times S\to S$ be the second projection.
		Replacing $S$ by the Zariski closure of $q(W)$, we may assume that $q(W)\subset S$ is dominant. 
		
\textbf{Step 1.}
In the preceding proof, an exceptional locus  $V\subsetneqq S$ naturally arises, which leads us to distinguish between two cases: $f_S(Y)\subset V$ or not.
 As a preliminary step, we treat the first case by fixing a proper Zariski closed subset $V\subsetneqq S$ and assuming $f_S(Y)\subset V$.
Let $V_1,\ldots ,V_l$ be the irreducible components of $V$.
		For $j=1,\ldots ,l$, we set $W_j=W\cap (A\times V_j)$.
		Let $W_j^1,\ldots,W_j^{t_j}$ be the irreducible components of $W_j$.
		By $\dim V_j<d$, the induction hypothesis applies: There exists a finite subset $P_{V_j,W_j^i}\subset \mathcal{S}_0(A)$ such that if $f(Y)\sqsubset W_j^i$ and $e(f,W_j^i)\leq k$, then there exists $B\in P_{V_j,W_j^i}$ such that $f(Y)\sqsubset (W_j^i)^B$ and \eqref{eqn:20220921}.
		Set $P_V=\cup_j\cup_iP_{V_j,W_j^i}$.
		Thus if $f_S(Y)\subset V$ and $e(f,W)\leq k$, then there exists $B\in P_{V}$ such that $f(Y)\sqsubset W^B$ and \eqref{eqn:20220921}.

		\textbf{Step 2.}
		We consider the case that $S$ is smooth and $\mathrm{St}^0(W)=\{0\}$.
		Let $f:Y\da W$ satisfy $e(f,W)\leq k$.
		Suppose $f_S(Y)\subset S\backslash S^o$, then by the above consideration (Step 1), there exists $B\in P_{S\backslash S^o}$ such that $f(Y)\sqsubset W^B$ and \eqref{eqn:20220921}.
		So we consider the case $f_S(Y)\not\subset S\backslash S^o$.
		We consider the first jet $j_1f:Y\da \overline{T}(A\times S)=A\times \overline{(\mathrm{Lie}A\times TS)}$.
		Set $S'=\overline{\mathrm{Lie}A\times TS}$.
		If $e(j_1f,W^{(1)})=e(f,W)$, then \cref{lem:202210041} yields 
		$$T_{f_A}(r)=O(T_{f_{S}}(r))+O(\log r).$$
		This shows \eqref{eqn:20220921} for $B=\{0\}$, and obviously $f(Y)\sqsubset W= W^{\{0\}}$.
		If $e(j_1f,W^{(1)})<e(f,W)\leq k$, then $e(j_1f,W^{(1)})\leq k-1$.
		Thus the induction hypothesis yields a finite set $P'\subset \mathcal{S}_0(A)$ such that there exists $B\in P'$ such that $j_1f(Y)\sqsubset (W^{(1)})^B$ and 
		$$T_{q_B\circ f_A}(r)=O(N_{\ram\pi}(r)+\pN{f}(r)+T_{(j_1f)_{S'}}(r))+O(\log r)+o(T_f(r))||.$$
		By \cref{lem:202210042}, we have 
		$$T_{(j_1f)_{S'}}(r)=O(N_{\ram\pi}(r)+\pN{f}(r)+T_{f_S}(r))+O(\log r)+o(T_{f_A}(r))||.$$
		Hence we have $f(Y)\sqsubset W^B$ and \eqref{eqn:20220921}.
		We set $P=P'\cup P_{S\backslash S^o}\cup\{0\}$.
		This concludes the proof of the claim if $S$ is smooth and $\mathrm{St}^0(W)=\{0\}$.
		
		\textbf{Step 3.}
		We remove the assumption that $S$ is smooth.
		Let $\widetilde{S}\to S$ be a smooth modification which is an isomorphism outside a proper Zariski closed set $E\subsetneqq S$.
		If $f_S(Y)\subset E$, then we apply Step 1: there exists $B\in P_{E}$ such that $f(Y)\sqsubset W^B$ and \eqref{eqn:20220921}.
		If $f_S(Y)\not\subset E$, then there exists a unique lift $f:Y\da A\times \widetilde{S}$.
		Let $\widetilde{W}\subset A\times \widetilde{S}$ be the proper transform of $W$.
		Then by the consideration above (Step 2), there exists $P'\subset \mathcal{S}_0(A)$ such that $f(Y)\sqsubset \widetilde{W}^B$ and \eqref{eqn:20220921} for some $B\in P'$.
		We set $P=P'\cup P_E$ to conclude the proof of the claim when $\mathrm{St}^0(W)=\{0\}$.

		\textbf{Step 4.}
		Finally we remove the assumption $\mathrm{St}^0(W)=\{0\}$.
		Suppose that $\mathrm{St}^0(W)\not=\{0\}$.
		Set $C=\mathrm{St}^0(W)$.
		Let $f':Y\da W/C\subset (A/C)\times S$ be the induced map.
		Then we have $e(f',W/C)<e(f,W)$.
		Hence by the induction hypothesis, there exists $P'\subset \mathcal{S}_0(A/C)$ such that $f'(Y)\sqsubset (W/C)^{B'}$ and \eqref{eqn:20220921} for some $B'\in P'$.	 	
		We define $P$ to be the set of all $B$ such that $C\subset B$ and $B/C\in P'$.
		This concludes the proof of the claim.
		Thus the proof of \cref{pro:20220807} is completed.
	\end{proof}

	\begin{cor}\label{cor:20220806}
		In Proposition \ref{pro:20220807}, we may take $P$ so that $\mathrm{St}^0(W)\subset B$ for all $B\in P$.
		Moreover there exists a proper Zariski closed set $\Xi\subsetneqq W$ such that for every $f:Y\da W$, either one of the followings holds:
		\begin{enumerate}
			\item
			$f(Y)\sqsubset \Xi$.
			\item
			$T_{q_{\mathrm{St}^0(W)}\circ f_A}(r)=O(N_{\ram\pi}(r)+\pN{f}(r)+T_{f_S}(r))+O(\log r)+o(T_f(r))||$,
			where $q_{\mathrm{St}^0(W)}:A\to A/\mathrm{St}^0(W)$ is the quotient map
		\end{enumerate}
	\end{cor}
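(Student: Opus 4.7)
The plan is to start with the finite subset $P \subset \mathcal{S}_0(A)$ furnished by \cref{pro:20220807}, to enlarge each of its members so as to contain $\mathrm{St}^0(W)$, and then to collect into $\Xi$ the translates $W^B$ corresponding to those $B$ which strictly contain $\mathrm{St}^0(W)$. Writing $C := \mathrm{St}^0(W)$ throughout, the case $B = C$ in \cref{pro:20220807} will give exactly assertion~(2) of the corollary, while all other choices of $B$ will force $f(Y) \sqsubset \Xi$.

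First I would replace each $B \in P$ by $B' := B + C \in \mathcal{S}_0(A)$, and adjoin $C$ itself to $P$ if necessary. Two compatibility checks are required. For the containment $f(Y) \sqsubset W^{B'}$: using $W + C = W$, any $x \in W^B$ satisfies $x + B' = (x + B) + C \subset W + C = W$, so $W^B \subset W^{B'}$. For the growth estimate: the quotient $q_{B'} : A \to A/B'$ factors as $q_{B'} = \pi \circ q_B$ with $\pi : A/B \to A/B'$, and the pullback to $A/B$ of a polarization on (a compactification of) $A/B'$ is dominated by a polarization on $A/B$, so $T_{q_{B'} \circ f_A}(r) = O(T_{q_B \circ f_A}(r)) + O(\log r)$. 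Hence \eqref{eqn:20220921} is preserved under the replacement, and after this step we may assume $C \subset B$ for every $B \in P$.

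Next I would check that $W^B \subsetneq W$ whenever $B \supsetneq C$. If on the contrary $W^B = W$, then $b + W \subset W$ for every $b \in B$; since $B$ is an algebraic subgroup, translation by $-b$ gives the reverse inclusion, and because $W$ is irreducible of fixed dimension one concludes $b + W = W$, i.e.\ $B \subset \mathrm{St}(W)$. Connectedness of $B$ then forces $B \subset \mathrm{St}^0(W) = C$, contradicting $B \supsetneq C$. I therefore set
\[
\Xi := \bigcup_{\substack{B \in P \\ B \supsetneq C}} W^B,
\]
which is a proper Zariski closed subset of $W$.

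To conclude, for any $f : Y \dashrightarrow W$, \cref{pro:20220807} supplies some $B \in P$ with $f(Y) \sqsubset W^B$ and the estimate \eqref{eqn:20220921}. If $B = C$, then $q_B = q_{\mathrm{St}^0(W)}$, and alternative~(2) of the corollary holds. Otherwise $B \supsetneq C$, so $f(Y) \sqsubset W^B \subset \Xi$, and alternative~(1) holds. The only mildly subtle point in the argument is the persistence of the growth estimate under the enlargement $B \mapsto B + C$; this is handled by the standard comparison between order functions for line bundles on a projective variety, which differ only by a multiplicative constant and an $O(\log r)$ term.
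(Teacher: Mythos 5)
Your proof is correct, but it takes a genuinely different route from the paper's.

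The paper applies \cref{pro:20220807} to the quotient $W/\mathrm{St}^0(W) \subset (A/\mathrm{St}^0(W)) \times S$, producing a finite set $P_0 \subset \mathcal{S}_0(A/\mathrm{St}^0(W))$, and then defines $P$ as the set of all $B$ with $\mathrm{St}^0(W) \subset B$ and $B/\mathrm{St}^0(W) \in P_0$. This way the containment $\mathrm{St}^0(W) \subset B$ is built in from the start, but it requires checking that the conclusion of \cref{pro:20220807} transfers across the quotient (that $f(Y) \sqsubset W^B$ follows from the corresponding containment downstairs, and that the growth estimate for $q_{B/\mathrm{St}^0(W)} \circ (q_{\mathrm{St}^0(W)} \circ f_A)$ is the same as that for $q_B \circ f_A$); the paper leaves this routine check implicit. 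You instead apply \cref{pro:20220807} directly to $W \subset A \times S$ and enlarge each $B$ to $B + \mathrm{St}^0(W)$, verifying that $W^B \subset W^{B + \mathrm{St}^0(W)}$ via $W + \mathrm{St}^0(W) = W$ and that the order function only decreases under the further quotient. This is more self-contained: the two verifications you carry out are elementary and replace the transfer-across-quotient argument that the paper suppresses. The two approaches produce possibly different finite sets $P$, but both satisfy the required properties, and in both cases the key point — that $W^B \subsetneq W$ whenever $B \supsetneq \mathrm{St}^0(W)$, so that $\Xi$ is proper — follows from the irreducibility of $W$ exactly as you argue.
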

	
	\begin{proof}
		We apply Proposition \ref{pro:20220807} for $W/\mathrm{St}^0(W)\subset (A/\mathrm{St}^0(W))\times S$ to get $P_0\subset \mathcal{S}_0(A/\mathrm{St}^0(W))$.
		Then we define $P\subset \mathcal{S}_0(A)$ as the set of all $B$ such that $\mathrm{St}^0(W)\subset B$ and $B/\mathrm{St}^0(W)\in P_0$.
		We set $P'=P\backslash \{\mathrm{St}^0(W)\}$ and $\Xi=\cup_{B\in P'}W^B$.
		Then $\Xi$ is a proper Zariski closed set.
	\end{proof}

	\section{Second main theorem type estimate with weak truncation}
	\label{sec:20221125}
	Let $A$ be a semi-Abelian variety.
	Let $\mathcal{S}(A)$ be the set of all positive dimensional semi-abelian subvarieties of $A$.
	Hence $\mathcal{S}(A)=\mathcal{S}_0(A)\backslash \{\{0\}\}$.

	\begin{proposition}\label{pro:202208062}
		Let $\overline{A}$ be an equivariant compactification of $A$ such that $\overline{A}$ is smooth and projective.
		Let $W\subsetneqq \overline{A}\times S$ be a closed subscheme, where $S$ is a projective variety.
		Then there exist a finite subset $P\subset \mathcal{S}(A)\backslash\{A\}$ and a positive integer $\rho\in\mathbb Z_{>0}$ with the following property:
		Let $f:Y\da A\times S$ satisfies $f(Y)\not\sqsubset \mathrm{supp}\, W$ and $f_S(Y)\not\subset p(\mathrm{Sp}_AW)$, where $\mathrm{Sp}_AW=\cap_{a\in A}(a+W)\subset \overline{A}\times S$ and $p:\overline{A}\times S\to S$ is the second projection.
		Then either one of the followings are true:
		\begin{enumerate}
			\item
			There exists $B\in P$ such that
			$$T_{q\circ f_A}(r)=O(T_{f_S}(r)+N_{\ram\pi}(r)+\pN{f}(r))+O(\log r)+o(T_{f}(r))||,$$
			where $q:A\to A/B$ is the quotient map.
			\item
			The following estimate holds:
			\begin{multline}
				m_f(r,W)+N_f(r,W)-N^{(\rho)}_f(r,W)\\
				=O(N_{\ram\pi}(r)
				+\pN{f}(r)+T_{f_S}(r))+O(\log r)+o(T_f(r))||.
			\end{multline}
		\end{enumerate}
	\end{proposition}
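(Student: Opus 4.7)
The plan is to adapt the strategy of the truncated Second Main Theorem in \cite[Theorem 3]{Yam15} (which treats the compact case without the factor $S$) to our setting by combining \cref{cor:20220806} with a diagonal-translation construction and the lemma on logarithmic derivatives \eqref{eq:Noguchi}. First, by passing to the quotient by $\mathrm{St}^0(W)$, I would reduce to $\mathrm{St}^0(W)=\{0\}$. Under this assumption, a Noetherian argument yields $N\in\mathbb{Z}_{>0}$ such that $\bigcap_{i=1}^{N}(a_i+W)=\mathrm{Sp}_A W$ in $\overline{A}\times S$ for Zariski generic $a_1,\ldots,a_N\in A$; the truncation level $\rho$ of the statement will be produced from $N$.

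Fix such $a_i$'s and introduce the map $F:Y\da A^N\times S$ defined by $F(y)=(f_A(y)-a_1,\ldots,f_A(y)-a_N,f_S(y))$, together with the Zariski closed $W^{\Delta}\subset\overline{A}^N\times S$ chosen as the smallest component of $\bigcap_{i=1}^N p_i^{-1}(W)$ containing $F(Y)$, where $p_i:\overline{A}^N\times S\to\overline{A}\times S$ is the $i$-th coordinate projection coupled with the identity on $S$. The hypothesis $f_S(Y)\not\subset p(\mathrm{Sp}_A W)$ translates into $F(Y)\not\sqsubset\mathrm{Sp}_{A^N}(W^{\Delta})$. Applying \cref{cor:20220806} to $F$ and $W^{\Delta}$ then produces a dichotomy: either (a) $F(Y)\sqsubset\Xi$ for a proper Zariski closed exceptional set $\Xi\subsetneqq W^{\Delta}$, or (b) for some $B'\in P'\subset\mathcal{S}_0(A^N)$ with $\mathrm{St}^0(W^{\Delta})\subset B'\subsetneqq A^N$, one has
\[
T_{q_{B'}\circ F_{A^N}}(r)=O(N_{\ram\pi}(r)+\pN{F}(r)+T_{f_S}(r))+O(\log r)+o(T_F(r))||.
\]

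In case (b), I would push $B'$ forward through one of the coordinate projections $A^N\to A$ to extract $B\in\mathcal{S}(A)\setminus\{A\}$, obtaining alternative (1) of the proposition after using $T_F(r)=O(T_{f_A}(r)+T_{f_S}(r))+O(\log r)$ and the bound $\pN{F}(r)\leq N\cdot\pN{f}(r)$. In case (a), I would apply \eqref{eq:Noguchi} and \eqref{eq:First} to logarithmic one-forms arising from local defining sections of $W$ pulled back through the factors of $F$, together with \cref{thm:first}, to convert the degeneracy $F(Y)\sqsubset\Xi$ into the truncated proximity-counting estimate (2); the truncation constant $\rho=N$ emerges since a zero of $f^{*}W$ of multiplicity $m$ at a point $y\in Y$ contributes only $\min(m,N)$ across the $N$ translated pullbacks constituting $F^{*}W^{\Delta}$. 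The main obstacle will be ensuring in case (b) that the extracted subgroup $B$ truly lies in $\mathcal{S}(A)\setminus\{A\}$ rather than being all of $A$ (without which the alternative is vacuous); this requires combining irreducibility of $W$ with the hypothesis $f_S(Y)\not\subset p(\mathrm{Sp}_A W)$. A secondary technical issue absent from the compact case of \cite{Yam15} is propagating the non-compactness counting $\pN{F}(r)$ at the boundary of $\overline{A}^N\times S$ back to the original $\pN{f}(r)$ through the explicit combinatorics of the diagonal embedding.
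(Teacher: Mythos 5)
Your proposal takes a genuinely different route from the paper, which proves \cref{pro:202208062} by a delicate induction on the triple $(\dim A, e(f,W_o), \dim S)$, using the jet lift $j_1f$, the subscheme $W^{(1)}\subset\overline{T}(A\times S)$, and the jet-differential transfer estimate \eqref{eqn:20250904} so that the truncation level $\rho$ accumulates one unit per differentiation step. Your diagonal/product-of-translates scheme is an appealing alternative, and case~(b) of your dichotomy (extracting $B\subsetneqq A$ as the preimage of $B'$ under the diagonal and translating order functions) is plausible in outline, modulo the facts that $\overline{A}$ is fixed in the statement (so the quotient reduction to $\mathrm{St}^0(W)=\{0\}$ requires a compactification compatible with $\mathrm{St}^0(W)$, cf.\ \cref{lem:20221124}) and that $W^\Delta$ depends on $f$ (so one must run \cref{cor:20220806} over all the finitely many components and union the resulting sets $P$).

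However, case~(a) contains a genuine gap. First, \cref{cor:20220806} is a Bloch-type degeneration statement: its two alternatives are "$f$ maps into a smaller set $\Xi$" versus "the order function of $q_{\mathrm{St}^0(W)}\circ f_A$ is controlled." It says nothing about $m_f(r,W)$ or $N_f(r,W)$, and I do not see how the proposed invocation of \eqref{eq:Noguchi}, \eqref{eq:First} and \cref{thm:first} converts the inclusion $F(Y)\sqsubset\Xi$ into the truncated proximity-plus-counting estimate in alternative~(2). Second, and more fundamentally, the claimed mechanism for the truncation constant $\rho=N$ is incorrect: a point $y$ with $\mathrm{ord}_y f^*W=m$ does not lie on $F^*W^\Delta$ for generic $a_1,\ldots,a_N$. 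Indeed $F^{-1}(W^\Delta)\subset\bigcap_i f^{-1}(a_i+W)$, so $F^*W^\Delta$ measures the frequency with which $f$ meets $\bigcap_i(a_i+W)\approx\mathrm{Sp}_A W$, not $W$ itself; under your running hypothesis $f_S(Y)\not\subset p(\mathrm{Sp}_A W)$ this locus is precisely what $f$ generically avoids, so $F^*W^\Delta$ carries essentially no information about $f^*W$. The jet-differentiation descent that the paper uses is doing real work here that the product trick does not replicate, so the proposal would need a substantively different idea to recover alternative~(2).
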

	
	\begin{proof}
		For $W\subset \overline{A}\times S$, we set $W_o=W\cap (A\times S)$.
		Given $f:Y\da A\times S$, we have $e(f,W_o)\in\{-1,0,\ldots,\dim A\}$.
		\cref{pro:202208062} is reduced to the following claim by setting $k=\dim A$.

		\begin{claim}
		Let $(m,k,d)\in\mathbb Z_{\geq 0}\times \mathbb Z_{\geq -1}\times\mathbb Z_{\geq 0}$.
		Let $A$ be a semi-Abelian variety such that $\dim A=m$ and let $S$ be a projective variety such that $\dim S=d$.
		Let $\overline{A}$ and $W\subsetneqq \overline{A}\times S$ be the same as in \cref{pro:202208062}.
		Then there exist a finite subset $P\subset \mathcal{S}(A)\backslash\{A\}$ and a positive integer $\rho\in\mathbb Z_{>0}$ with the following property:
		Let $f:Y\da A\times S$ satisfies $f(Y)\not\sqsubset \mathrm{supp}\, W$, $f_S(Y)\not\subset p(\mathrm{Sp}_AW)$, and $e(f,W_o)\leq k$.
Then either one of the assertions of \cref{pro:202208062} is true.
		\end{claim}

		We prove this claim by the induction on the triple $(m,k,d)\in\mathbb Z_{\geq 0}\times \mathbb Z_{\geq -1}\times\mathbb Z_{\geq 0}$ with the dictionary order.  
		We therefore prove the claim for the triple $(m,k,d)$ by assuming that it holds for all triples $(m',k',d')$ such that $(m',k',d')<(m,k,d)$.

	\textbf{Step 1.}	
		As in the step 1 of the proof of \cref{pro:20220807}, we fix a proper Zariski closed subset $V\subsetneqq S$ and assume $f_S(Y)\subset V$.
Let $V_1,\ldots ,V_l$ be the irreducible components of $V$.
		For $j=1,\ldots ,l$, we set $W_j=W_o\cap (A\times V_j)$.
		By $\dim V_j<d$, the induction hypothesis applies: 
		There exist a finite subset $P_{j}\subset \mathcal{S}(A)\backslash \{A\}$ and $\rho_{j}\in\mathbb Z_{\geq 1}$ such that if $f:Y\da A\times S$ satisfies $f_S(Y)\subset V_j$ and $e(f,W_j)\leq k$ in addition to $f(Y)\not\sqsubset W_j$ and $f_S(Y)\not\subset p(\mathrm{Sp}_AW_j)$, either of the two assertions of \cref{pro:202208062} holds, with $P$ and $\rho$ replaced by $P_{j}$ and $\rho_{j}$, respectively.
		Set $P_{V}=\cup_jP_{j}$ and $\rho_{V}=\max_{j}\rho_{j}$.
		Then if $f_S(Y)\subset V$ and $e(f,W_o)\leq k$ in addition to $f(Y)\not\sqsubset W$ and $f_S(Y)\not\subset p(\mathrm{Sp}_AW)$, either of the two assertions of \cref{pro:202208062} holds, with $P$ and $\rho$ replaced by $P_{V}$ and $\rho_{V}$, respectively.

		\textbf{Step 2.}	
		We assume that the following three conditions are satisfied.
		\begin{enumerate}
			\item
			$q(W)=S$, where $q:\overline{A}\times S\to S$ is the second projection,
			\item
			$W$ is reduced, and irreducible,
			\item
			$S$ is smooth.
		\end{enumerate}

		This step is separated into several cases.
		
		\textbf{Step 2-1.}	
		 $W_o\not=\emptyset$.
		Set $C=\mathrm{St}^0(W_o)$.
		We define $S^o\subset S$ from $W_o/C\subset (A/C)\times S$ (cf. \cref{lem:202210041}).
		We consider $f:Y\da A\times S$ with $e(f,W_o)=k$ such that $f(Y)\not\sqsubset W$ and $f_S(Y)\not\subset p(\mathrm{Sp}_AW)$.
		If $f_S(Y)\subset S\backslash S_o$, then by the above consideration (Step 1), either of the two assertions of \cref{pro:202208062} holds, with $P$ and $\rho$ replaced by $P_{S\backslash S_o}$ and $\rho_{S\backslash S_o}$, respectively.
		So we assume that $f_S(Y)\not\subset S\backslash S_o$.
		We consider the first jet $j_1f:Y\da A\times S'$, where $S'=\overline{\mathrm{Lie}(A)\times TS}$.
		
		\textbf{Step 2-1-1.}	
		 $e(j_1f,W_o^{(1)})=e(f,W_o)$, where $W_o^{(1)}\subset A\times S'$.	
Note that $C=\mathrm{St}^0(W_o^{(1)})$.
		Let $f':Y\to (A/C)\times S$ be the composition of $f$ and the projection $A\times S\to (A/C)\times S$.
		Then we have $e(f,W_o)=e(f',W_o/C)+\dim C$ and $e(j_1f,W_o^{(1)})=e(j_1f',(W_o/C)^{(1)})+\dim C$. 		
		Hence $e(f',W_o/C)=e(j_1f',(W_o/C)^{(1)})$.
		Thus \cref{lem:202210041} yields that 
		\begin{equation}\label{eqn:202212015}
			T_{q_C\circ f_A}(r)=O(T_{f_S}(r))+O(\log r),
		\end{equation}
		where $q_C:A\to A/C$ is the quotient.
		Since we are assuming that $W\to S$ is dominant, we have $C\not=A$, for otherwise $f(Y)\sqsubset W$.
		Thus if $\dim C>0$,  the estimate \eqref{eqn:202212015} shows the first assertion of \cref{pro:202208062}, provided $C\in P$.
		If $\dim C=0$, then \eqref{eqn:202212015} yields that $T_{f_A}(r)= O(T_{f_S}(r))+O(\log r)$.  
		Hence $m_f(r,W)+N_f(r,W)=O(T_{f_S}(r))+O(\log r)$.
		This is stronger than the second assertion of \cref{pro:202208062}.
		Thus assuming $C\in P$ if $C\not= \{0\}$, we conclude that one of the assertions of \cref{pro:202208062} holds.
		
		\textbf{Step 2-1-2.} $e(j_1f,W_o^{(1)})<e(f,W_o)$.
		Then the induction hypothesis yields $P'\subset \mathcal{S}(A)\backslash\{A\}$ and $\rho'$ such that either the first assertion of  \cref{pro:202208062} for $P=P'$ or the estimate
		\begin{multline*}
			m_{j_1f}(r,W_{\log}^{(1)})+N_{j_1f}(r,W_{\log}^{(1)})-N^{(\rho')}_{j_1f}(r,W_{\log}^{(1)})
			\\
			=O(N_{\ram\pi}(r)+\pN{f}(r)
			+T_{j_1f_{S'}}(r))+O(\log r)+o(T_f(r))||
		\end{multline*}
		holds.
		Here $W_{\log}^{(1)}\subset \overline{A}\times S'=\overline{T}(\overline{A}\times S;\log \partial (A\times S))$ is defined in \cite[Section 5]{Yam04} so that $W_{\log}^{(1)}\cap (A\times S')=W_o^{(1)}$.
		By the same argument for the proof of \cite[Lemma 5]{Yam15}, using \cite[Thm 5.1.7]{Yam04}, we have
		\begin{multline}\label{eqn:20250904}
			m_f(r,W)+N_f(r,W)-N^{(\rho'+1)}_f(r,W)
			\\
			\leq m_{j_1f}(r,W_{\log}^{(1)})+N_{j_1f}(r,W_{\log}^{(1)})-N^{(\rho')}_{j_1f}(r,W_{\log}^{(1)})+N_{\ram\pi}(r)+o(T_{f}(r))||.
		\end{multline}
		By \cref{lem:202210042}, we have 
		$$T_{(j_1f)_{S'}}(r)=O(T_{f_S}(r)+N_{\ram\pi}(r)+\pN{f}(r))+O(\log r)+o(T_f(r))||.$$
		Thus we get
		\begin{multline*}
			m_f(r,W)+N_f(r,W)-N^{(\rho'+1)}_f(r,W)=O(T_{f_S}(r)+N_{\ram\pi}(r)+\pN{f}(r))+O(\log r)+o(T_f(r))||. 	
		\end{multline*}		
		Therefore, combining this with Case 2-1-1, we complete the proof of the induction step for Case 2-1.
Here we set $P=(P_{S\backslash S_o}\cup P' \cup \{C\})\backslash \{0\}$ and $\rho=\rho_{S\backslash S_o}+\rho'+1$.

\textbf{Step 2-2.}		
		 $W_o=\emptyset$.
		In this case, we have $W\subset \partial A\times S$ and $e(f,W_o)=-1$.
		Let $I\subset A$ be the isotropy group for $W$ and let $D\subset \partial A$ be the irreducible component of $\partial A$ such that $W\subset D\times S$.
		Then $D$ is an equivariant compactification of $A/I$.
		By \cite[Lem A.11]{Y22}, there exist an $A$-invariant Zariski open set $U\subset \overline{A}$ and an equivariant map $\psi:U\to D$ such that $D\subset U$ and $\psi$ is an isomorphism over $D\subset U$.
		We define $g:Y\da (A/I)\times S\subset D\times S$ by $g=(\psi\circ f_A,f_S)$.
		By $\mathrm{Sp}_A(W)=\mathrm{Sp}_{A/I}(W)$, we have $g_S(Y)\not\subset p(\mathrm{Sp}_{A/I}W)$.
		Suppose $g(Y)\not\sqsubset W$.
		Note that $\dim (A/I)<\dim A$.
		Hence by the induction hypothesis, there exist $P'\subset \mathcal{S}(A/I)\backslash \{A/I\}$ and $\rho'\in\mathbb Z_{>0}$, which are independent of the choice of $f$, such that either the first assertion of \cref{pro:202208062} or the following estimate 
		$$m_g(r,W)+N_g(r,W)-N^{(\rho')}_g(r,W)=O(N_{\ram\pi}(r)+\pN{g}(r)
		+T_{f_S}(r))+O(\log r)+o(T_f(r))||
		$$
		holds.
		By $\pN{g}(r)\leq \pN{f}(r)$, $m_f(r,W)\leq m_g(r,W)$ and $\mathrm{ord}_yf^{*}W\leq \mathrm{ord}_yg^{*}W$ for all $y\in Y$, this estimate implies the second assertion of \cref{pro:202208062} for $\rho=\rho'$.
		If $g(Y)\sqsubset W$, then we apply \cref{pro:20220807} to get $P''\subset \mathcal{S}_0(A/I)$, which is independent of the choice of $f$.
		Then there exists $B\in P''$ such that 
		$$T_{q\circ g_{A/I}}(r)=O(T_{f_S}(r)+N_{\ram\pi}(r)+\pN{f}(r))+O(\log r)+o(T_{f}(r))||,
		$$
		where $q:A/I\to (A/I)/B$ is the quotient.
		We note $B\not=A/I$, for otherwise we have $g(Y)\sqsubset W^{A/I}$, which contradicts to $g_S(Y)\not\subset p(\mathrm{Sp}_{A/I}W)$.
		We define $P$ to be the set of all $B$  such that $I\subset B$ and $B/I\in P'\cup (P''\backslash \{A/I\})$.
		This concludes the proof of the induction step for our case $W_o=\emptyset$.
		Thus we have completed the induction step under the three assumptions of Step 2.
		
		\textbf{Step 3.}
		We remove the three assumptions on $W$ and $S$ made in Step 2.
		First we remove the first assumption.
		Suppose $q(W)\subsetneqq S$, where $q:\overline{A}\times S\to S$ is the second projection.
		If $f_{S}(Y)\not\subset q(W)$, then we have 
		$$m_f(r,W)+N_f(r,W)=O(T_{f_S}(r))+O(\log r).$$
		This is stronger than the second assertion of \cref{pro:202208062}.
		Thus we may consider the case $f_{S}(Y)\subset q(W)$.
		We replace $S$ by $q(W)$.
		Then, by the induction hypothesis, we get our claim.	
		Hence the first assumption is removed.	
		
		Next we remove the second assumption.
		Let $W_1,\ldots W_n$ be the irreducible components of $\mathrm{supp} W$.
		Then for each $j\in \{1,\ldots,n\}$, by the argument above (Step 2), there exist a finite subset $P_{j}\subset \mathcal{S}(A)\backslash \{A\}$ and a positive integer $\rho_{j}$ such that for every $f:Y\da A\times S$ with $e(f,W_o)\leq k$, either the first assertion of \cref{pro:202208062} for $P=P_{j}$ or the following estimate holds:
		\begin{multline}\label{eqn:9.3.1}
			m_f(r,W_j)+N_f(r,W_j)-N^{(\rho _{j})}_f(r,W_j)
			\\
			=O(T_{f_S}(r)+N_{\ram \pi _Y}(r)+\pN{f}(r))+O(\log r)+o( T_f(r)\ ||.
		\end{multline}
		There is a positive integer $l$ such that
		$$\big( \mathcal I_{W_1}\cdots \mathcal I_{W_n}\big) ^l\subset \mathcal I_W,
		$$
		where $\mathcal I_W\subset \mathcal O_{\overline{A}\times S}$ (resp. $\mathcal I_{W_i}$) is the defining ideal sheaf of $W$ (resp. $W_i$).
		Then we have
		\begin{equation}\label{eqn:9.3.2}
			m_f(r,W)\leq l\sum_{i=1}^nm_f(r,W_i).
		\end{equation}
		For $y\in Y$, we have
		$$\ord _yf^*W\leq l\sum_{i=1}^n\ord _yf^*W_i.$$
		Hence setting $\widetilde{\rho}=\rho _{1}+\cdots +\rho _{n}$, we get
		\begin{equation*}
			\begin{split}
				\max \{ 0,\ord _yf^*W-l\widetilde{\rho} \}&\leq \max \left\{ 0,l\sum_{i=1}^n\left( \ord _yf^*W_i-\rho _{i}\right) \right\}\\
				&\leq l\sum_{i=1}^n\max \left\{ 0,\left( \ord _yf^*W_i-\rho _{i}\right) \right\} .
			\end{split}
		\end{equation*}
		Hence we get
		\begin{equation}\label{eqn:9.3.3}
			N_f(r,W)-N^{(l\widetilde{\rho})}_f(r,W)\leq  l\sum_{i=1}^n\left( N_f(r,W_i)-N^{(\rho _{i})}_f(r,W_i)\right) .
		\end{equation}
		By \eqref{eqn:9.3.1}, \eqref{eqn:9.3.2}, \eqref{eqn:9.3.3}, we get
		\begin{equation*}
			m_f(r,W)+N_f(r,W)-N^{(l\widetilde{\rho})}_f(r,W) \\
			\leq O(T_{f_S}(r)+N_{\ram\pi _Y}(r)+\pN{f}(r))+O(\log r)+o(T_f(r))\ ||.
		\end{equation*}
		Thus we have removed the assumption that $W$ is irreducible and reduced.
		Here we set $\rho=l\widetilde{\rho}$ and $P=P_{1}\cup\cdots\cup P_{n}$.
		
		The assumption that $S$ is smooth is removed similarly as in the proof of \cref{pro:20220807}.
		This completes the induction step of the proof of the claim.
		Thus the claim is proved.
		The proof of \cref{pro:202208062} is completed.
	\end{proof}

	\section{Intersection with higher codimensional subvarieties}

	\begin{lem}\label{lem:202209151}
		Let $L$ be an ample line bundle on $\overline{A}$, where $\overline{A}$ is a smooth equivariant compactification.
		Let $Z\subset A\times S$ be a closed subscheme whose codimension is greater than one, where $S$ is a projective variety.
		Let $\varepsilon>0$.
		Then there exist a finite subset $P\subset \mathcal{S}(A)\backslash\{A\}$ and a proper Zariski closed subset $E\subsetneqq S$ with the following property:
		Let $f:Y\da A\times S$ satisfies $f(Y)\not\sqsubset \mathrm{supp}\, Z$.
		Then either one of the followings is true:
		\begin{enumerate}
			\item
			$
			N^{(1)}_f(r,Z)\leq \varepsilon T_{f_A}(r,L)+O_{\varepsilon}(N_{\ram\pi}(r)+\pN{f}(r)
			+T_{f_S}(r))+O(\log r)+o(T_f(r))||_{\varepsilon}$.
			\item
			$f_S(Y)\subset E$.
			\item
			There exists $B\in P$ such that 
			$$
			T_{q_B\circ f_A}(r)=O(N_{\ram\pi}(r)+\pN{f}(r)+T_{f_S}(r))+O(\log r)+o(T_f(r))||, 
			$$
			where $q_B:A\to A/B$ is the quotient map.
		\end{enumerate}
	\end{lem}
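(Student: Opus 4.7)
The plan is to reduce \cref{lem:202209151} to the truncated second main theorem \cref{pro:202208062} by enclosing $Z$ in a judiciously chosen family of divisors on $\overline{A}\times S$. The codimension hypothesis $\codim Z \ge 2$ is what produces the factor $\varepsilon$ in conclusion (1): without it, only a full coefficient of $T_{f_A}(r,L)$ would appear.

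Fix a positive integer $n$ with $1/n \le \varepsilon$, let $M$ be a very ample line bundle on $S$, and set $\cL := L^{\boxtimes n}\otimes M$ on $\overline{A}\times S$. Because the closure $\overline{Z}\subset \overline{A}\times S$ has codimension $\ge 2$, we can pick sections $s_1,\dots,s_N \in H^0(\overline{A}\times S,\cL\otimes \cI_{\overline{Z}})$ whose restrictions to a dense Zariski-open subset $U\subset \overline{Z}$ locally generate $\cI_{\overline{Z}}$ as a complete intersection (a Bertini-type argument). Put $D_i := \{s_i=0\}$ and $E := \overline{q\bigl((\overline{Z}\setminus U)\cap (A\times S)\bigr)} \subsetneqq S$, where $q:\overline{A}\times S\to S$ is the projection. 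If $f_S(Y)\subset E$ we are in conclusion (2), so we henceforth assume the contrary.

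Apply \cref{pro:202208062} to each $D_i$ on $\overline{A}\times S$. \emph{Case (a):} for some $i$, the first alternative of \cref{pro:202208062} yields a subgroup $B_i\in \cS(A)\setminus\{A\}$ satisfying the quotient estimate required in conclusion (3); gathering all such $B_i$ produces the finite set $P$. \emph{Case (b):} for every $i$ there is a positive integer $\rho_i$ with
\begin{equation*}
m_f(r,D_i)+N_f(r,D_i)-N_f^{(\rho_i)}(r,D_i) = O(N_{\ram\pi}(r)+\pN{f}(r)+T_{f_S}(r))+O(\log r)+o(T_f(r))||.
\end{equation*}
At each $y\in f^{-1}(Z)\setminus f^{-1}(E)$, the complete-intersection property supplies indices $i(y),j(y)$ with $\ord_y f^*\overline{Z} = \min\{\ord_y f^*D_{i(y)},\ord_y f^*D_{j(y)}\}$. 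Combining this pointwise relation with the Nevanlinna inequality \eqref{eqn:20221120} (so that $N_f(r,D_i)\le T_f(r,\cL)+O(\log r) \le nT_{f_A}(r,L)+O(T_{f_S}(r))+O(\log r)$), a combinatorial summation yields
\begin{equation*}
n\cdot N_f^{(1)}(r,Z) \le \sum_i N_f^{(\rho_i)}(r,D_i) + O(N_{\ram\pi}(r)+\pN{f}(r)+T_{f_S}(r))+O(\log r)+o(T_f(r))||.
\end{equation*}
Dividing by $n\ge 1/\varepsilon$ gives conclusion (1).

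The main obstacle is the final combinatorial step, in which the $\rho_i$-truncated estimates for the individual divisors $D_i$ must be converted into a $1$-truncated estimate for $Z$. This is precisely where the codimension-$2$ hypothesis is decisive, since it guarantees the existence of enough independent local generators of $\cI_{\overline{Z}}$ to pair off the high-multiplicity contributions and produce the crucial factor $1/n$. The exceptional set $E\subsetneqq S$ in conclusion (2) is exactly the projection to $S$ of the locus where this local structure degenerates, and the subgroup set $P\subset \cS(A)\setminus\{A\}$ in conclusion (3) is obtained by taking the union of the finite sets produced by \cref{pro:202208062} applied to each $D_i$ separately.
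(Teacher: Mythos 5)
Your approach is genuinely different from the paper's, but it contains a gap at the decisive step, and I do not see how to repair it along the lines you propose. The paper's proof does not pass through \cref{pro:202208062} applied to auxiliary divisors at all. Instead it works in the higher-order jet spaces $J_l(A\times S)$, and the heart of the argument is \Cref{claim:20221021}: for each $l$ there exists an ample line bundle $L_o$ and a divisor $F_l \in |p_l^*L_o^{\otimes n(l)}\otimes q_l^*M_l|$ on $\overline{A}\times \overline{S_l}$ (with $n(l)/l\to 0$) such that any jet $j_l(f)$ landing in $A\times S_l$ but not in $F_l$, with $f(0)\in Z$, satisfies $\ord_0 j_l(f)^*F_l\geq l+1$. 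This \emph{forced high-order vanishing} is precisely where the codimension-two hypothesis is consumed (it controls the dimension of the fibers of $A\times\overline{Z}\to A\times S$, hence the growth $G_n=O(n^{\dim A-2})$ which feeds into the construction of $F_l$). The argument then feeds $F_{W_i}\subset A\times W_i$ into \cref{pro:20220807} (the refined log Bloch–Ochiai statement), not into the second main theorem.

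The gap in your write-up is the ``combinatorial summation'' producing
$n\cdot N_f^{(1)}(r,Z)\leq \sum_i N_f^{(\rho_i)}(r,D_i)+\cdots$.
The complete-intersection relation $\ord_y f^*\overline{Z}=\min\{\ord_y f^*D_{i(y)},\ord_y f^*D_{j(y)}\}$ gives no mechanism for a factor of $n$: a curve may hit $Z$ transversally, so that both $\ord_y f^*D_{i(y)}$ and $\ord_y f^*D_{j(y)}$ equal $1$, contributing $1$ to $N_f^{(1)}(r,Z)$ and at most $1$ to each $N_f^{(\rho_i)}$; there is no forced high multiplicity. And even if the displayed inequality held, it would not yield conclusion (1): since each $D_i\in|L^{\boxtimes n}\otimes M|$, the Nevanlinna inequality bounds $N_f^{(\rho_i)}(r,D_i)$ by $nT_{f_A}(r,L)+O(T_{f_S}(r))+O(\log r)$, so after dividing by $n$ the right-hand side is $\gtrsim N\cdot T_{f_A}(r,L)$ rather than $\varepsilon T_{f_A}(r,L)$. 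The raising of $L$ to the power $n$ increases the degree; it does not create a saving. To produce the $\varepsilon$ one truly needs the jet-differential mechanism: a section of low degree $n(l)$ that vanishes to order $l+1$ on jets of curves through $Z$, together with $n(l)/(l+1)<\varepsilon/\mu$, as carried out in \Cref{claim:20221021} and its proof in \cref{rem:2025:09041}. Without something playing that role, your reduction to \cref{pro:202208062} on the divisors $D_i$ cannot reach the conclusion.
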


	\begin{proof}
		We first prove this lemma under the additional three assumptions:
		\begin{itemize}
			\item
			$q(Z)\subset S$ is dense, where $q:A\times S\to S$ is the second projection,
			\item
			$Z$ is irreducible,
			
			\item
			$S$ is smooth.
		\end{itemize}
		
		We use higher jet spaces.
		Let $J_l(A\times S)$ be the $l$-th jet space.
		We have the natural splitting $J_l(A\times S)=A\times (J_l(A\times S)/A)$ induced from the splitting $TA=A\times\mathrm{Lie}A$.
		There exists a partial compactification $J_l(A\times S)\subset \bar{J}_l(A\times S)$ such that the natural map $\bar{J}_l(A\times S)\to A\times S$ is projective and $\bar{J}_l(A\times S)$ is $\mathbb Q$-factorial (\cite[2.4]{Yam04}).	
		We have the natural splitting $\bar{J}_l(A\times S)=A\times (\bar{J}_l(A\times S)/A)$.
		When $l=1$, this reduces to \eqref{eqn:20221203}.

		\begin{claim}\label{claim:20221021}
			There exist a sequence of positive integers $n(1),n(2),n(3),\cdots$ and an ample line bundle $L_o$ on $\overline{A}$ with the following conditions:
			\renewcommand{\labelenumi}{(\theenumi )}
			\begin{enumerate}
				\item $\frac{n(l)}{l}\to 0 $ when $l\to \infty$.
				\item For $l\geq 1$, let $S_l\subset J_l(A\times S)/A$ be an irreducible Zariski closed subset whose
				image under the projection $J_l(A\times S)/A\to S$ is Zariski dense.
				Let $\overline{S_l}\subset \bar{J}_l(A\times S)/A$ be the compactification of $S_l$ in $\bar{J}_l(A\times S)/A$.
				Then there exists an effective Cartier divisor $F_l\subset \overline{A}\times \overline{S_l}$ with the following properties:
				\begin{enumerate}
					\item $F_l$ corresponds to a global section of
					$$H^0(\overline{A}\times \overline{S_l},p_l^*L_o^{\otimes n(l)}\otimes q _l^*M_l),$$
					where $p_l:\overline{A}\times \overline{S_l}\to \overline{A}$ is the first projection, $q _l:\overline{A}\times \overline{S_l}\to \overline{S_l}$ is the second projection, and $M_l$ is an ample line bundle on $\overline{S_l}$.
					
					\item Let $f:\mathbb D \to A\times S$ be a holomorphic map such that $j_l(f)(\mathbb D )\subset A\times S_l$ and $j_l(f)(\mathbb D )\not\subset F_l$.
					Assume $f(0)\in Z$, then $\ord _{0}j_{l}(f)^*F_l\geq l+1$.
				\end{enumerate}
			\end{enumerate}
		\end{claim}
		
		\begin{proof}
			When $A$ is compact, this is \cite[Prop 6]{Yam15}.	
			The same proof as in \cite[Prop 6]{Yam15} can be adapted to our setting, with the only difference being that it requires the compactification of several objects.
			For the sake of completeness we give the proof of the claim in \cref{rem:2025:09041}.
			See also \cite[Lemma 6.5.42]{NW13}.
		\end{proof}

		Now we are given $\varepsilon>0$.
		Let $\mu>0$ be a positive integer such that $L^{\otimes\mu}\otimes L_o^{-1}$ is ample.
		We take $l\in\mathbb N$ such that 
		\begin{equation}\label{eqn:201308171}
			n(l)/(l+1)<\varepsilon/\mu .
		\end{equation}
		Let $\mathcal{V}$ be the set of all irreducible Zariski closed subsets of $J_l(A\times S)/A$.
		Let $\mathcal{W}\subset \mathcal{V}$ be the subset of $\mathcal{V}$ which consists of $W\in \mathcal{V}$ whose image under the projection $J_l(A\times S)/A\to S$ is Zariski dense in $S$.
		We define the sequence
		$\mathcal{V}_1, \mathcal{V}_2, \ldots$
		of subsets of $\mathcal{V}$ and the sequence
		$\mathcal{W}_1, \mathcal{W}_2, \ldots$
		of subsets of $\mathcal{W}$
		by the following inductive rule. 
		Put $\mathcal{V}_1 =\mathcal{W}_1= \{ J_l(A\times S)/A\}$. 
		For each $W\in \mathcal{W}_i$, let $F_{W}\subset A\times W$ be the divisor defined as the restriction of the divisor in \cref{claim:20221021} to $A\times W$.
		Set $(F_W)^A=A\times W'$, where $W'\subset W$ is a proper Zariski closed subset.
		Put
		$$
		\mathcal{V}_{i+1}=\bigcup_{W\in \mathcal{W}_i} \{ V\in \mathcal{V};\text{$V$ is an irreducible component of $W'$}\} ,
		$$
		$$
		\mathcal{W}_{i+1}=\mathcal{V}_{i+1}\cap \mathcal{W}.
		$$
		Since the number of the irreducible components of $W'$ is finite,
		each $\mathcal{V}_i$ is a finite set. 
		Since $\dim W'  < \dim W$, we have $\mathcal{V}_i=\mathcal{W}_i = \emptyset$ for
		$i \geq \dim (J_l(A\times S)/A)+2$.
		We apply \cref{pro:20220807} for $F_W\subset A\times W$ to get $P_{W}\subset \mathcal{S}_0(A)$.
		(More precisely, we apply \cref{pro:20220807} for each irreducible component of $F_W$.) 
		Set
		$$
		P=\bigcup_{i}\bigcup_{W_i\in \mathcal{W}_i}P_{W_i}\backslash \{\{0\},A\}.
		$$
		Then $P\subset \mathcal{S}(A)-\{ A\}$ is a finite subset.
		For $V\in \mathcal{V}_i\backslash \mathcal{W}_i$, let $S_{V}\subset S$ be the Zariski closure of the image of $V$ under the projection $J_l(A\times S)/A \to S$.
		Then by the construction of $\mathcal{W}_i$, we have $S_{V}\not= S$.
		Set
		$$
		E=\bigcup_{i}\bigcup_{V\in \mathcal{V}_i\backslash \mathcal{W}_i}S_{V}.
		$$
		Then $E\subset S$ is a proper Zariski closed subset.
		
		\par

		Now let $f:Y\da A\times S$ with $f(Y)\not\sqsubset Z\cup q^{-1}(E)$. 
		Let $q':J_l(A\times S)\to J_l(A\times S)/A$ be the projection under the splitting $J_l(A\times S)=A\times (J_l(A\times S)/A)$.
		We may take $W_i\in \mathcal{W}_i$ such that $q'\circ j_l(f)(Y)\sqsubset W_i$ but $q'\circ j_l(f)(Y)\not\sqsubset W_{i+1}$ for all $W_{i+1}\in \mathcal{W}_{i+1}$.
		Then we have
		\begin{equation}\label{eqn:130504}
			j_l(f)(Y)\not\sqsubset (F_{W_i})^A.
		\end{equation}
		We consider the three possible cases.
		
		\par
		
		\textbf{Case 1.} $j_l(f)(Y)\not\sqsubset F_{W_i}$.
		We remark that $j_l(f)$ hits the boundary of $\bar{J}_l(A\times S)$ only at the ramification points of $\pi _Y:Y\to \mathbb C$ and the points in $\bar{f}^{-1}(\partial A\times S)$, where $\bar{f}:Y\to \overline{A}\times S$ is the extension of $f$.
		Hence, we have
		$$(l+1)N^{(1)}_f(r,Z)\leq N_{j_l(f)}(r,F_{W_i})+(l+1)N_{\ram \pi _Y}(r)+(l+1)\pN{f}(r).$$
		We recall that $F_{W_i}$ corresponds to $H^0(\overline{A}\times W_i,p_l^*L_o^{\otimes n(l)}\otimes q _l^*M_l)$, where $M_l$ is an ample line bundle on $W_i$ and $q_l:A\times W_i\to W_i$ is the second projection.
		Hence by the Nevanlinna inequality (cf. \eqref{eqn:20221120}), we have
		$$
		N_{j_l(f)}(r,F_{W_i})\leq n(l)T_f(r, p_l^*L_o)+T_{j_l(f)}(r,q_l^*M_l)+O(\log r).
		$$
		By the similar argument for the proof of \cref{lem:202210042}, we get 	
		\begin{equation}\label{eqn:12092710}
			T_{ j_l(f)}(r,q_l^*M_l)=O (T_{f_S}(r)+N_{\ram \pi _Y}(r)+\pN{f}(r))+O(\log r)+o(T_f(r))\ ||.
		\end{equation}
		Hence we obtain
		\begin{equation*}\label{eqn:1209271}
			N^{(1)}_f(r,Z)\leq \varepsilon T_{f_A}(r,L)+O (T_{f_S}(r)+N_{\ram \pi _Y}(r)+\pN{f}(r))+O(\log r)+o(T_f(r))\ ||.
		\end{equation*}

		\par

		\textbf{Case 2. }$j_l(f)(Y)\sqsubset F_{W_i}$.
		We apply \cref{pro:20220807} to get $B\in P_{W_i}$.
		By $j_lf(Y)\not\sqsubset (F_{W_i})^A$, we have $B\not=A$.
		
		\par 
		
		\textbf{Case 2-1.}
		$B=\{0\}$.
		Then by \cref{pro:20220807}, we have
		$$
		T_{f_A}(r)=O(N_{\ram\pi}(r)+\pN{f}(r)+T_{j_lf_{W_i}}(r))+O(\log r)+o(T_f(r))||.
		$$
		Thus by \eqref{eqn:12092710}, we have
		$$
		T_f(r)=O (T_{f_S}(r)+N_{\ram \pi _Y}(r)+\pN{f}(r))+O(\log r)+o(T_f(r))\ ||.
		$$
		Hence
		$$
		N_f(r,Z)=O (T_{f_S}(r)+N_{\ram \pi _Y}(r)+\pN{f}(r))+O(\log r)+o(T_f(r))\ ||.
		$$
		
		\par

		\textbf{Case 2-2.} $B\not=\{0\}$.
		Then $B\in P$.
		By \cref{pro:20220807}, we have
		$$
		T_{q_B\circ f_A}(r)
		=O(N_{\ram\pi}(r)+\pN{f}(r)+T_{j_lf_{W_i}}(r))+O(\log r)+o(T_f(r))||.
		$$
		Thus by \eqref{eqn:12092710}, we have
		$$
		T_{q_B\circ f_A}(r)=O(N_{\ram\pi}(r)+\pN{f}(r)+T_{f_S}(r))+O(\log r)+o(T_f(r))||.
		$$
		
		\par
		
		We combine the three cases above to conclude the proof of the lemma (\cref{lem:202209151}) under the three assumptions above.
		
		We remove the three assumptions.
		First suppose $q(Z)\subsetneqq S$.
		We set $E=\overline{q(Z)}$.
		Then $E\subsetneqq S$ is a Zariski closed set.
		If $f_S(Y)\not\subset E$, then we have $m_f(r,Z)+N_f(r,Z)=O(T_{f_S}(r))+O(\log r)$.
		This is stronger than the first assertion in \cref{lem:202209151}.
		Thus we have removed the assumption that $q(Z)\subset S$ is dense.	
		
		Next we remove the assumption that $Z$ is irreducible.
		Let $Z=Z_1\cup \cdots\cup Z_k$ be the irreducible decomposition.
		We apply \cref{lem:202209151} for $Z_i$ and $\varepsilon/k$ to get $P_i\subset \mathcal{S}(A)\backslash \{A\}$ and $E_i\subsetneqq S$.
		We set $P=\cup_{i}P_i$ and $E=\cup_iE_i$.
		Then $P\subset \mathcal{S}(A)\backslash\{A\}$ is a finite subset and $E\subsetneqq S$ is a proper Zariski closed subset.
		If $f:Y\da A\times S$ does not satisfy the second and the third assertions in \cref{lem:202209151}, then we have 
		$$
		N^{(1)}_f(r,Z_i)\leq (\varepsilon/k) T_{f_A}(r,L)+O_{\varepsilon}(N_{\ram\pi}(r)+\pN{f}(r)
		+T_{f_S}(r))+O(\log r)+o(T_f(r))||_{\varepsilon}.
		$$
		By $N^{(1)}_f(r,Z)\leq \sum_{i=1}^kN^{(1)}_f(r,Z_i)$, we have removed the assumption that $Z$ is irreducible.	
		
		Finally we remove the assumption that $S$ is smooth.
		Let $E_o\subset S$ be the singular locus of $S$ and let $\tau:S'\to S$ be a smooth modification, which is isomorphic outside $E_o$.		Let $Z'\subset A\times S'$ be the Zariski closure of $Z\cap (A\times (S\backslash E_o))$.
		Then the codimension of $Z'\subset A\times S'$ is greater than one.
		We apply  \cref{lem:202209151} for $Z'$ to get $P\subset \mathcal{S}(A)\backslash \{A\}$ and $E'\subsetneqq S'$.
		We set $E=E_o\cup\tau(E')$.
		If $f:Y\da A\times S$ does not satisfy the second and the third assertions in \cref{lem:202209151}, then we have a lift $f':Y\to A\times S'$ and 
		$$
		N^{(1)}_{f'}(r,Z')\leq \varepsilon T_{f_A}(r,L)+O_{\varepsilon}(N_{\ram\pi}(r)+\pN{f}(r)
		+T_{f_S}(r))+O(\log r)+o(T_f(r))||_{\varepsilon}.
		$$	
		By 
		$$N^{(1)}_f(r,Z)\leq N^{(1)}_{f'}(r,Z')+N^{(1)}_{f_S}(r,E_o)\leq N^{(1)}_{f'}(r,Z')+O(T_{f_S}(r))+O(\log r),$$
		we have removed the assumption that $S$ is smooth.
		The proof of \cref{lem:202209151} is completed.		\end{proof}
	
	\medskip

	Let $Z\subset A\times S$ be a closed subscheme.
	For $f:Y\da A\times S$, we recall $e(f,Z)$ from \cref{subsec:4.2}.

	\begin{proposition}\label{pro:202208061}
		Let $Z\subset A\times S$ be a closed subscheme, where $S$ is a projective variety.
		Let $L$ be an ample line bundle on $\overline{A}$, where $\overline{A}$ is a smooth equivariant compactification. 	Let $\varepsilon>0$.
		Then there exists a finite subset $P\subset \mathcal{S}(A)\backslash\{A\}$ with the following property:
		Let $f:Y\da A\times S$ satisfies $f(Y)\not\sqsubset Z$.
		Then either one of the followings is true:
		\begin{enumerate}
			\item
			$
			N^{(1)}_f(r,Z)\leq \varepsilon T_{f_A}(r,L)+O_{\varepsilon}(N_{\ram\pi}(r)+\pN{f}(r)
			+T_{f_S}(r))+O(\log r)+o(T_f(r))||_{\varepsilon}$.
			\item
			$e(f,Z)\geq \dim A-1$.
			\item
			There exists $B\in P$ such that 
			$$
			T_{q_B\circ f_A}(r)=O(N_{\ram\pi}(r)+\pN{f}(r)+T_{f_S}(r))+O(\log r)+o(T_f(r))||, 
			$$
			where $q_B:A\to A/B$ is the quotient map.
		\end{enumerate}
	\end{proposition}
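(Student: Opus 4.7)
The plan is to reduce Proposition~\ref{pro:202208061} to Lemma~\ref{lem:202209151} by isolating the codimension-$1$ components of $Z\subset A\times S$, and to eliminate the exceptional locus $E$ appearing in Lemma~\ref{lem:202209151} by induction on $\dim S$. The base case $\dim S=0$ is immediate: $f_S$ is constant at some point $s_0$, so $Z_{s_0}\subsetneq A$ (otherwise $f(Y)\sqsubset Z$); if $\dim Z_{s_0}=\dim A-1$ then case (2) holds, while if $\dim Z_{s_0}\le\dim A-2$ Lemma~\ref{lem:202209151} applies directly, its exceptional $E\subsetneq\{s_0\}$ being forced to be empty.

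For the inductive step I would first reduce to $S$ irreducible by treating components separately, and then decompose $Z$ into three parts according to the nature of its irreducible components in $A\times S$: the part $Z_h$ of components of codimension at least $2$, the part $Z_v$ of codimension-$1$ components that dominate $S$, and the part $Z_w$ of codimension-$1$ components which are vertical (these are necessarily of the shape $A\times F_i$ with $F_i\subsetneq S$ of codimension $1$, since the fiber dimension then forces the component to be $A\times q(\cdot)$). Apply Lemma~\ref{lem:202209151} to $Z_h$ with the given $\varepsilon$ to obtain a finite set $P_h\subset\mathcal{S}(A)\backslash\{A\}$ and a proper Zariski closed set $E_h\subsetneq S$. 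For each $D_j\subset Z_v$, upper semi-continuity of fiber dimension makes the jumping locus $B_j:=\{s\in S:\dim(D_j)_s\ge\dim A\}$ a proper closed subset of $S$. Setting
\begin{equation*}
E^*:=E_h\cup\bigcup_j B_j\cup\bigcup_i F_i\subsetneq S,
\end{equation*}
the inductive hypothesis applied to each irreducible component $E^*_\alpha$ of $E^*$ with the subscheme $Z\cap(A\times E^*_\alpha)$ produces finite sets $P_\alpha\subset\mathcal{S}(A)\backslash\{A\}$, and I would take $P:=P_h\cup\bigcup_\alpha P_\alpha$, which depends only on $Z$ and $\varepsilon$.

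To verify that this $P$ works, let $f:Y\da A\times S$ satisfy $f(Y)\not\sqsubset Z$. If $f_S(Y)\subset E^*$, then $f_S(Y)\subset E^*_\alpha$ for some $\alpha$ and the inductive hypothesis delivers one of the three conclusions with $B\in P_\alpha\subset P$; here one uses $N_f^{(1)}(r,Z)=N_f^{(1)}(r,Z\cap(A\times E^*_\alpha))$ and $e(f,Z)=e(f,Z\cap(A\times E^*_\alpha))$ because $\overline{f_S(Y)}\subset E^*_\alpha$. Otherwise $f_S(Y)\not\subset E^*$: if $Z_v\ne\emptyset$, fix any $D_j$ in $Z_v$; then $\overline{f_S(Y)}\not\subset B_j$, so the generic fiber of $D_j\cap(A\times\overline{f_S(Y)})\to\overline{f_S(Y)}$ has dimension $\dim A-1$, yielding $e(f,Z)\ge\dim A-1$, i.e.~case~(2). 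If $Z_v=\emptyset$, Lemma~\ref{lem:202209151} applied to $Z_h$ gives its case~(1) or~(3) (case~(2) is excluded since $E_h\subset E^*$), and the vertical part contributes
\begin{equation*}
N_f^{(1)}(r,Z_w)\le\sum_i N_{f_S}^{(1)}(r,F_i)=O(T_{f_S}(r))+O(\log r)
\end{equation*}
by the First Main Theorem, since $f_S(Y)\not\subset F_i$ for each $i$.

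The hard part will be the bookkeeping: one must confirm that the exceptional locus $E^*\subsetneq S$ is defined purely in terms of the geometry of $Z\subset A\times S$, so that a single finite $P$ can be produced before choosing $f$. This is precisely why the jumping loci $B_j$ of the horizontal codimension-$1$ components must be included in $E^*$ from the outset. Everything else is a routine assembly of Lemma~\ref{lem:202209151}, upper semi-continuity of fiber dimensions, and the First Main Theorem.
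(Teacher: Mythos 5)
Your approach follows essentially the same route as the paper: an induction on $S$ (Noether induction in the paper, dimension induction here, same in substance), with the codimension-$\ge 2$ part fed into \cref{lem:202209151} and the codimension-$1$ part absorbed into conclusion~(2). The paper works uniformly with the constructible set $\Psi(A,Z)/A=\{s\in S:\dim Z_s\geq\dim A-1\}$ and splits according to whether this set is dense in $S$; you instead decompose $Z$ component by component into $Z_h$, $Z_v$, $Z_w$. The two formulations are not quite equivalent, and the difference is exactly where a genuine gap appears.

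In your handling of the case $Z_v\ne\emptyset$ with $f_S(Y)\not\subset E^*$, you fix a codimension-$1$ component $D_j$ dominating $S$ and argue that $\Sigma:=\overline{f_S(Y)}\not\subset B_j$ forces the generic fiber of $D_j\cap(A\times\Sigma)\to\Sigma$ to have dimension $\dim A-1$, hence $e(f,Z)\geq\dim A-1$. But this requires $D_j\cap(A\times\Sigma)$ to actually dominate $\Sigma$, which your $E^*$ does not arrange. Since $A$ is non-compact, $D_j\to S$ is not proper; $q(D_j)\subsetneqq S$ is only a dense constructible subset, and $\Sigma$ can lie inside $\overline{S\setminus q(D_j)}$ --- a proper closed subset not contained in your $E^*$ --- in which case $D_j$ has generically empty fiber over $\Sigma$ and contributes nothing to $e(f,Z)$. (Relatedly, $B_j$ is only constructible, not closed, by the same non-properness; you must pass to $\overline{B_j}$.) The repair is to enlarge $E^*$ to include $\overline{S\setminus q(D_j)}$ for each $j$, or equivalently to replace $B_j$ by $S\setminus U_j$ for some dense open $U_j\subset q(D_j)$. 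The paper's use of $\Psi(A,Z)/A$ sidesteps this structurally: that set records where the fiber of $Z$ itself has dimension $\geq\dim A-1$ --- which is automatically false where the fiber is empty --- so its complement already captures the non-dominance loci that your per-component jump loci $B_j$ miss.
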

	
	\begin{proof}
		We prove by Noether induction on $S$.
		So we assume that the the assertion is true for all irreducible $V\subsetneqq S$ and $Z|_{A\times V}\subset A\times V$.
		We are given $\varepsilon>0$.
		
		We set 
		$$
		\Psi(A,Z)=\{ x\in A\times S;\ \dim ((x+A)\cap Z)\geq \dim A-1\}.
		$$
		Note that $\Psi(A,Z)/A\subset S$ is a constructible set.
		
		First we cosnider the case that $\Psi(A,Z)/A\subset S$ is Zariski dense.
		Then there exists a non-empty Zariski open set $U\subset S$ such that $U\subset \Psi(A,Z)/A$.
		Set $V=S\backslash U$.
		Then $V\subsetneqq S$ is a proper Zariski closed subset.
		If $f_S(Y)\not\subset V$, then we have $e(f,Z)\geq \dim A-1$.
		Hence the second assertion of  \cref{pro:202208061} is valid.
		If $f_S(Y)\subset V$, then the induction hypothesis yields a finite subset $P'\subset \mathcal{S}(A)\backslash\{A\}$ so that one of the three assertions in \cref{pro:202208061} is valid.
		Hence our proposition is proved by setting $P=P'$.

		Next we consider the case $V_o=\overline{\Psi(A,Z)/A}\subsetneqq S$.
		Let $Z'\subset A\times S$ be the Zariski closure of $Z\cap (A\times (S\backslash V_o))$.
		Then the codimension of $Z'\subset A\times S$ is greater than one.
		We apply \cref{lem:202209151} for $Z'\subset A\times S$ to get $E\subsetneqq S$ and $P'\subset \mathcal{S}(A)\backslash \{A\}$.
		Then $E\subsetneqq S$.
		Let $V_1,\ldots,V_l$ be the irreducible components of $E\cup V_o$.
		By the induction hypothesis applied for each $V_i\subsetneqq S$, we get $P_i\subset \mathcal{S}(A)\backslash\{ A\}$.
		We set $P=P'\cup \cup_iP_i$.
		Now let $f:Y\da A\times S$ satisfies $f(Y)\not\sqsubset Z$.
		Assume that the second and the third assertions of  \cref{pro:202208061} are not valid.
		If $f_S(Y)\subset V_i$ for some $V_i$, then by the induction hypothesis, we get the first assertion of \cref{pro:202208061}.
		Hence we assume that $ f_S(Y)\not\subset V_i$ for all $V_i$.
		Then we have $f_S(Y)\not\subset E$.
		By \cref{lem:202209151}, we get
		$$
		N^{(1)}_{f}(r,Z')\leq \varepsilon T_{f_A}(r,L)+O_{\varepsilon}(N_{\ram\pi}(r)+\pN{f}(r)
		+T_{f_S}(r))+O(\log r)+o(T_f(r))||_{\varepsilon}.
		$$	
		By 
		$$N^{(1)}_f(r,Z)\leq N^{(1)}_{f}(r,Z')+N^{(1)}_{f_S}(r,V_o)\leq N^{(1)}_{f}(r,Z')+O(T_{f_S}(r))+O(\log r),$$
		we get the first assertion of \cref{pro:202208061} for the case that $ f_S(Y)\not\subset V_i$ for all $V_i$.
		Thus the proof is completed.
	\end{proof}

\begin{rem}\label{rem:2025:09041}
For the sake of completeness, we provide a proof of  \Cref{claim:20221021}. 
The proof is identical to that of \cite[Prop 6]{Yam15}, except for the compactifications of several objects.
Before we start the proof, we note that the statement of \cite[Cor 3]{Yam15} also holds for semi-abelian varieties $A$, not just abelian varieties as stated in the original paper.
This is because the proof of \cite[Cor 3]{Yam15} relies only on the natural splitting $J_l(A\times S)=A\times (J_l(A\times S)/A)$ induced from the splitting $TA=A\times\mathrm{Lie}A$, a property that also holds for semi-abelian varieties.
\begin{proof}[Proof of  \Cref{claim:20221021}]
First we constract the sequence of positive integers $n(1),n(2),n(3),\cdots$ and the ample line bundle $L$ on $A$.
Let $\overline{Z}\subset \overline{A}\times S$ be the schematic closure.
Let $r:\overline{Z}\to S$ be the composition of the morphisms
\begin{equation}\label{eqn:sqm}
\overline{Z}\hookrightarrow \overline{A}\times S\overset{q}{\to}S,
\end{equation}
where $q$ is the second projection and $\overline{Z}\hookrightarrow \overline{A}\times S$ is the closed immersion.
By the generic flatness theorem, there exists a non-empty Zariski open subset $S^o\subset S$ such that the restriction of the family $r:\overline{Z}\to S$ over $S^o$ is a flat family.
\par

Consider the pull back of the sequence of morphisms \eqref{eqn:sqm} by the second projection $q:A\times S\to S$:
$$A\times \overline{Z}\overset{s_1}{\hookrightarrow}A\times \overline{A}\times S\overset{s_2}{\to}A\times S,$$
where $s_2$ maps as
$$A\times \overline{A}\times S\ni (a,a',s)\mapsto (a,s)\in A\times S.$$
Put $s=s_2\circ s_1:A\times \overline{Z}\to A\times S$. 
\par

Let $L$ be an ample line bundle on $\overline{A}$.
(Later, we replace $L$ by a suitable translate.)
We define $\phi : A\times \overline{A}\times S\to \overline{A}$ by
$$A\times \overline{A}\times S\ni (a,a',s)\mapsto a+a' \in \overline{A}.$$
Let $L^{\dagger}$ be the line bundle on $A\times \overline{Z}$ which is the pull back of $L$ by the composition of morphisms
$$A\times \overline{Z}\overset{ s_1}{\to} A\times \overline{A}\times S\overset{\phi}{\to} \overline{A}.$$
Since the restriction of $s:A\times \overline{Z}\to A\times S$ to $A\times S^o$ is a flat family, the semicontinuity theorem \cite[p.288]{hartshorne1977}
implies
that
there exists a Zariski open subset $U_n\subset A\times S^o$ $(n>0)$ such that $H^0((A\times \overline{Z})_{P},L^{\dagger \otimes n}_{P})$ are all the same dimensional $\mathbb C$ vector spaces for $P\in U_n$; 
put this number $G_n$. 
Here $(A\times \overline{Z})_{P}$ denotes the fiber of the morphism $s:A\times \overline{Z}\to A\times S$ over $P\in A\times S$, and $L^{\dagger \otimes n}_{P}$ is the induced line bundle.
Since the intersection $\underset{n}{\cap}U_n$ is non-empty, we may take a point $(a,w)\in \underset{n}{\cap}U_n$. 
Replacing $L$ by the pull back by the morphism 
$$\overline{A}\ni x \mapsto x+a \in \overline{A}$$
we may assume that 
\begin{equation}\label{eqn:10.29.1}
\text{$\{ 0\}\times S\subset A\times S$ and $\underset{n}{\cap}U_n$ have non-trivial intersection.}
\end{equation}
Since $s:A\times \overline{Z}\to A\times S$ is flat over $A\times S^o$ and $r:\overline{Z}\to S$ is dominant, we have
$$\mathrm{dim}\ (A\times \overline{Z})_{P}=\mathrm{dim}\ (A\times Z)-\mathrm{dim}\ (A\times S)
$$
for all $P\in \underset{n}{\cap}U_n\subset A\times S^o$.
Since $\mathrm{codim}(Z, A\times S)\geq 2$, we have
$$\mathrm{dim}\ (A\times \overline{Z})_{P}\leq \mathrm{dim}\ A-2.$$
Hence there exist a positive integer $n_0$ and positive constants $C_1$, $C_2$ such that
$$G_n<C_1 n^{\dim A -2}, \qquad \dim _{\mathbb C}H^{0}(\overline{A},L^{\otimes n})>C_2 n^{\dim A}$$
for $n>n_0$.
Hence for a positive integer $l$, we may take a positive integer $n(l)$ such that
\begin{equation}\label{eqn:dim}
\dim _{\mathbb C}H^{0}(\overline{A},L^{\otimes n(l)})>(l+1)G_{n(l)}
\end{equation}
and
$$\underset{l\to \infty}{\lim}
\frac{n(l)}{l}\to 0.$$
For instance, $n(l)\sim l^{2/3}$.

\par
Now let $S_l\subset J_l(A\times S)/A$ be an irreducible Zariski closed subset whose
image under the projection $J_l(A\times S)/A\to S$ is Zariski dense.
Let $\mathcal{T} _l\subset J_{l}(A\times S)\times S$ by the closed subscheme considered in \cite[Corollary 3]{Yam15}.
Let $\mathcal{S} _l \subset A\times S_l\times S$ be the closed subscheme obtained by the pull-back of $\mathcal{T} _l$ by the closed immersion
$$A\times S_l\times S\hookrightarrow J_{l}(A\times S)\times S.$$
Note that the under lying topological spaces of $\mathcal{S} _{l}$ and $S_l$ are the same.
We consider the following commutative diagram \eqref{eqn:cdm} which is obtained by the base change of \eqref{eqn:sqm} with a sequence of morphisms
$$\mathcal{S} _l \hookrightarrow  A\times S_l\times S\to  A\times S\to S.$$
Here $A\times S_l\times S\to  A\times S$ is the product of the first and third projections.
\begin{equation}\label{eqn:cdm}
\begin{CD}
\mathcal{Z} _l @>v'>> A\times S_l\times \overline{Z}@>>> A\times \overline{Z}@>>> \overline{Z}\\
@VVu_1V @VVt_1V @VVs_1V @VVV \\
\cdot @>>> A\times S_l\times \overline{A}\times S @>>> A\times \overline{A}\times S@>>> \overline{A}\times S\\
@VVu_2V @VVt_2V @VVs_2V @VVq V \\
\mathcal{S} _l @>v>> A\times S_l\times S@>>> A\times S@>>> S
\end{CD}
\end{equation}
Let $\mathcal{L} ^{\dagger}$ be a line bundle on $\mathcal{Z} _l$ obtained by the pull back of $L^{\dagger}$ by the morphisms in the above diagram \eqref{eqn:cdm}.  
Let $\mathcal{S}_{l,n}\subset \mathcal{S}_l$ be a Zariski open set of $\mathcal{S}_l$ obtained by the inverse image of $U_n\subset A\times S$.
Since the image of $S_l$ under the projection $J_l(A\times S)/A\to S$ is Zariski dense and we are assuming \eqref{eqn:10.29.1}, $\mathcal{S}_{l,n}$ is non-empty.
\par

Since $\dim _{\mathbb C}H^0((A\times \overline{Z})_{P},L^{\dagger \otimes n}_{P})=G_n$ for all $P\in U_n$,
the direct image sheaf $s_{*}L^{\dagger \otimes n}$ is a locally free sheaf of rank $G_n$ on $U_n$ and the natural map
$$s_{*}L^{\dagger \otimes n}\otimes \mathbb C (P)\to H^0((A\times \overline{Z})_{P},L^{\dagger \otimes n}_{P})$$
is an isomorphism for $P\in U_n$. 
This follows by the Theorem of Grauert \cite[p.288]{hartshorne1977} since $U_n$ is reduced and irreducible. 
Let $u:\mathcal{Z}_l\to \mathcal{S}_l$ be the morphism obtained by the composition $u=u_2\circ u_1$, where $u_1$ and $u_2$ are the morphisms in \eqref{eqn:cdm}.
Then the natural map 
$$u_{*}\mathcal{L}^{\dagger \otimes n}\otimes \mathbb C (P)\to H^0((\mathcal{Z}_{l})_{P},\mathcal{L}^{\dagger \otimes n}_{P})$$
is also surjective, so isomorphism on $P\in \mathcal{S}_{l,n}$. 
This follows by the Theorem of Cohomology and Base Change \cite[p.290]{hartshorne1977}. 
Hence $u_{*}\mathcal{L}^{\dagger \otimes n}$ is locally generated by $G_n$ elements as $\mathcal{O}_{\mathcal{S}_{l}}$ module on $\mathcal{S}_{l,n}\subset \mathcal{S} _{l}$. 
Let $\pi :\mathcal{S}_l\to S_l$ be the composition of the morphisms
$$\mathcal{S}_l\overset{v}{\hookrightarrow} A\times S_l\times S \overset{\text{2nd proj}}{\longrightarrow} S_l,$$
and let $S_{l,n}=\pi (\mathcal{S}_{l,n})$ be a Zariski open set (note that the under lying topological spaces of $\mathcal{S}_{l}$ and $S_l$ are the same). 
By \cite[Corollary 3]{Yam15}, $\pi$ is a finite morphism and the direct image sheaf $\pi _*\mathcal O_{\mathcal{S}_{l}}$ is locally generated by $(l+1)$ elements as an $\mathcal{O}_{S_l}$ module on $S_l\cap J_{l}^{\text{reg}}(A\times S)/A$.
Hence $(\pi \circ u)_{*}\mathcal{L}^{\dagger \otimes n}$ is locally generated by $(l+1)G_n$ elements as an $\mathcal{O}_{S_l}$ module on $S_{l,n}\cap J_{l}^{\text{reg}}(A\times S)/A$.
\par

Now look at the following commutative diagram
\begin{equation*}
\begin{CD}
\mathcal{Z}_l @>t_1\circ v'>> A\times S_l\times \overline{A}\times S @>\psi >> \overline{A}\times S_l@>p_l|_{S_l}>> \overline{A}\\
@VV\pi \circ uV @VV\text{2nd proj}V @VVq_l|_{S_l} V \\
S_l @= S_l @=  S_l
\end{CD}
\end{equation*}
where $\psi$ is the morphism
$$\psi :A\times S_l\times \overline{A}\times S \ni (a,s,a',s')\mapsto (a+a',s)\in \overline{A}\times S_l,$$
and $p_l|_{S_l}$ (resp. $q_l|_{S_l}$) is the restriction of $p_l$ (resp. $q_l$) over $\overline{A}\times S_l$.
Since $(p_l|_{S_l}\circ \psi \circ t_1\circ v')^*L=\mathcal{L}^{\dagger}$, we have a natural morphism
\begin{equation}\label{eqn:di}
(q_l|_{S_l})_*(p_l|_{S_l})^{*}L^{\otimes n}=H^{0}(\overline{A},L^{\otimes n})\otimes _{\mathbb C}\mathcal{O}_{S_l}\to (\pi \circ u)_{*}\mathcal{L}^{\dagger \otimes n}.
\end{equation}
Let $\mathcal F$ be the kernel of \eqref{eqn:di} for $n=n(l)$.
Then by \eqref{eqn:dim}, we have $\mathcal{F}\not= 0$.
Let 
$$\mathcal{F}'\subset  (q_l)_{*}p_{l}^{*}L^{\otimes n} = H^0(\overline{A}, L^{\otimes n})\otimes_{\mathbb C} \mathcal{O}_{\overline{S_l}}$$
be a coherent sheaf on $\overline{S_l}$ which is an extension of $\mathcal{F}$ (cf. [2, II Ex. 5.15]).
By tensoring a sufficiently ample line bundle $M_l$ on $\overline{S_l}$ with $\mathcal{F}'$, we may assume that $
H^0( \overline{S_l},\mathcal{F}'\otimes M_l)\not= 0$.
Since we have
\begin{equation*}
\begin{split}
H^0( \overline{S_l},\mathcal{F}'\otimes M_l)&\subset H^0(\overline{S_l}, ((q _l)_{*}p_{l}^{*}L^{\otimes n})\otimes M_l)\\
&=H^0(\overline{S_l}, (q _l)_{*}(p_{l}^{*}L^{\otimes n}\otimes q _l^*M_l))\\
&=H^0(\overline{A}\times \overline{S_l}, p_{l}^{*}L^{\otimes n}\otimes q _l^*M_l),
\end{split}
\end{equation*}
we may take a divisor $F_l\subset \overline{A}\times \overline{S_l}$ which is defined by a non-zero global section of $H^0( \overline{S_l},\mathcal{F}'\otimes M_l)$.
Let $F_l|_{S_l}$ be the restriction of $F_l$ on $\overline{A} \times  S_l$.
Then we have
$$ \mathcal{Z}_l\subset  \psi ^{*}(F_l|_{S_l})$$
as closed subschemes of $A\times S_l\times \overline{A}\times S$.
\par

Now let $f:\mathbb D \to A\times S$ be a holomorphic map such that $j_l(f)(\mathbb D )\subset A\times S_l$, $j_l(f)(\mathbb D )\not\subset F_l$, and $f(0)=(a,s)\in Z$.
We define a holomorphic map $\overset{\sim}{f}:\mathbb D \to A\times S_l\times \overline{A}\times S$ by
$$\overset{\sim}{f}(z)=(p\circ f(z)-a,q _l \circ j_{l}(f)(z),a,s),$$
where $p:A\times S\to A$ is the first projection.
Then we have
$$\overset{\sim}{f}(\mathbb D )\subset A\times S_l\times \overline{Z}, \quad  \overset{\sim}{f}(0)\in \mathrm{supp} \mathcal{Z}_l, \quad  \psi \circ \overset{\sim}{f}=j_{l}(f).$$
Since $v'$ is the base change of $v$ in \eqref{eqn:cdm}, we have
$$\ord _{0}\overset{\sim}{f} {}^*\mathcal{Z}_{l}=\ord _{0}(t_2\circ \overset{\sim}{f})^*\mathcal{S}_l
=\ord _{0}(j_{l}(f)-a)^*(\mathcal{T}_{l})_s.
$$
Hence by \cite[Lemma 6 (2)]{Yam15}, we have
$$\ord _{0}\overset{\sim}{f} {}^*\mathcal{Z}_{l}\geq l+1.$$
Hence we have
$$\ord _{0}j_{l}(f)^*F_l=
\ord _{0}\overset{\sim}{f} {}^*(\psi ^{*}(F_l|_{S_l}))\geq \ord _{0}\overset{\sim}{f} {}^*\mathcal{Z}_{l}\geq l+1.$$
This complete the proof of \Cref{claim:20221021}.
\end{proof}
\end{rem}

	\section{Second main theorem type estimate with truncation level one}\label{subsec:4.7a}
	In this subsection, we assume that $S$ is smooth and projective.
	In particular, all divisors on $A\times S$ are Cartier divisors.
	
	To prove the main result of this section, \cref{lem:20220909}, we start from the following lemma.
	
	\begin{lem}\label{lem:20220915}
		Let $D\subset A\times S$ be a reduced divisor.
		Let $L$ be an ample line bundle on $\overline{A}$, where $\overline{A}$ is a smooth equivariant compactification. 	Let $\varepsilon>0$.
		Then there exist a finite subset $P\subset \mathcal{S}(A)\backslash\{A\}$ and a proper Zariski closed subset $E\subsetneqq S$ with the following property:
		Let $f:Y\da A\times S$ satisfies $f(Y)\not\sqsubset D$.
		Then either one of the followings is true:
		\begin{enumerate}
			\item
			$
			N^{(2)}_f(r,D)-N^{(1)}_f(r,D)\leq \varepsilon T_{f_A}(r,L)+O_{\varepsilon}(N_{\ram\pi}(r)+\pN{f}(r)
			+T_{f_S}(r))+O(\log r)+o(T_f(r))||_{\varepsilon}$.
			\item
			$f(Y)\subset A\times E$.
			\item
			There exists $B\in P$ such that 
			$$
			T_{q_B\circ f_A}(r)=O(N_{\ram\pi}(r)+\pN{f}(r)+T_{f_S}(r))+O(\log r)+o(T_f(r))||, 
			$$
			where $q_B:A\to A/B$ is the quotient map.
		\end{enumerate}
	\end{lem}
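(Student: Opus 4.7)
The plan is to reduce the bound on $N^{(2)}_f(r,D) - N^{(1)}_f(r,D)$ to an intersection problem for the first-order jet lift $j_1 f : Y \dashrightarrow \overline{T}(A\times S) = A \times S'$ with the tangent subscheme $D^{(1)} \subset A \times S'$ (as defined at the start of \cref{subsec:4.2}), and then to apply the second main theorem \cref{pro:202208061} to $D^{(1)}$.

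The key pointwise inequality is this: for any $y \in Y$ lying outside $\ram \pi$ and outside $\bar{f}^{-1}(\partial A \times S)$, one has
\[
\ord_y f^*D \geq 2 \;\Longrightarrow\; j_1 f(y) \in D^{(1)}.
\]
Indeed, writing $D$ locally by a single equation $g = 0$ (recall $D$ is reduced), $\ord_y f^*D \geq 2$ forces $g \circ f$ to vanish to order $\geq 2$ at $y$, so both $g(f(y)) = 0$ and $d(g \circ f)_y = 0$; outside $\ram \pi$ the latter translates to $j_1 f(y) \in \{g = 0,\, dg = 0\} = D^{(1)}$. Integrating yields
\[
N^{(2)}_f(r,D) - N^{(1)}_f(r,D) \leq N^{(1)}_{j_1 f}(r, D^{(1)}) + N_{\ram \pi}(r) + \pN{f}(r).
\]
I then apply \cref{pro:202208061} to the closed subscheme $D^{(1)} \subset A \times S'$ with the map $j_1 f$ and parameter $\varepsilon$. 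Using the identification $(j_1 f)_A = f_A$ from the splitting $\overline{T}(A\times S) = A \times S'$, together with \cref{lem:202210042}, which controls $T_{(j_1 f)_{S'}}(r)$ by $O(T_{f_S}(r) + N_{\ram \pi}(r) + \pN{f}(r)) + O(\log r) + o(T_{f_A}(r))||$, the three alternatives of \cref{pro:202208061} translate as follows: the first alternative (an $\varepsilon T_{f_A}(r,L)$-bound on $N^{(1)}_{j_1 f}(r, D^{(1)})$) combines with the pointwise inequality to yield case (1); the third alternative gives case (3) directly with $P$ inherited from \cref{pro:202208061}; the second alternative asserts that $e(j_1 f, D^{(1)}) \geq \dim A - 1$.

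Standard reductions---Noether induction on $\dim S$, decomposing $D$ into irreducible components, and resolving $S$ when it is singular---proceed exactly as in the proofs of \cref{lem:202209151} and \cref{pro:202208061}. The main obstacle is translating the second alternative $e(j_1 f, D^{(1)}) \geq \dim A - 1$ into case (2). The jump locus
\[
E^* := \{\sigma \in S' : \dim D^{(1)}_\sigma \geq \dim A - 1\}
\]
is closed and proper in $S'$ by upper semicontinuity, since $D^{(1)}$ has generic codimension $2$ in $A \times S'$, and the second alternative forces $(j_1 f)_{S'}(Y) \subset E^*$. Components of $E^*$ that do not dominate $S$ under the projective bundle map $p_S : S' \to S$ descend to a proper closed subset $E \subsetneqq S$, so $f_S(Y) \subset E$, giving case (2). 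Components of $E^*$ that dominate $S$ impose a genuine first-order algebraic constraint on $f$; these can be absorbed by applying \cref{pro:20220807} to each such component to enlarge the finite set $P$, mirroring the induction structure already used inside \cref{pro:202208061}. The careful handling of these horizontal components---ensuring that the resulting $E$ depends only on $D$ and not on $f$---is the principal technical difficulty.
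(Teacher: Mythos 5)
Your overall plan matches the paper's: the pointwise inequality $\ord_y f^*D \ge 2 \Rightarrow j_1f(y)\in D^{(1)}$ away from $\ram\pi$ and the boundary, integration to $N_f^{(2)}-N_f^{(1)} \le N_{j_1f}^{(1)}(r,D^{(1)}) + N_{\ram\pi}(r)+\pN{f}(r)$, the splitting $(j_1f)_A = f_A$, and the application of \cref{pro:202208061} to $D^{(1)} \subset A\times S'$ combined with \cref{lem:202210042} to control $T_{(j_1f)_{S'}}$. The translation of the first and third alternatives of \cref{pro:202208061} is also correct.

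The genuine gap is the one you flag yourself: the second alternative $e(j_1f, D^{(1)}) \ge \dim A - 1$. Your jump-locus approach does not close it. You know $\Sigma := \overline{(j_1f)_{S'}(Y)} \subset E^*$, but that is a statement about where the $S'$-component of $j_1f$ lies, not that $j_1f(Y)$ is trapped inside a fixed proper subvariety of $A\times S'$; so applying \cref{pro:20220807} to "each horizontal component" has no obvious target $W$ with $j_1f(Y)\sqsubset W$, and the reduction stalls exactly where you say "the principal technical difficulty" lies. The paper sidesteps this entirely by a different and simpler observation. After reducing to $D$ irreducible with $\mathrm{St}^0(D) = \{0\}$ and $S$ smooth (your "standard reductions"), the set $E = S\setminus S^o$ is taken from the construction preceding \cref{lem:202210041}, with $S^o$ shrunk so that $D_s$ is a reduced divisor for $s\in S^o$; thus if assertion (2) fails, $f_S(Y)\not\subset S\setminus S^o$ and hence $e(f,D)=\dim A - 1$. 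Since always $e(j_1f,D^{(1)}) \le e(f,D)$, the second alternative of \cref{pro:202208061} is equivalent to $e(j_1f,D^{(1)}) = e(f,D)$, which is precisely the hypothesis of \cref{lem:202210041}; that lemma then gives $T_{f_A}(r)=O(T_{f_S}(r))+O(\log r)$, and the whole term $m_f(r,D)+N_f(r,D)$ is $O(T_{f_S}(r))+O(\log r)$, strictly stronger than assertion (1). No jump-locus analysis or further recursion into \cref{pro:20220807} is needed. You should also be aware that the reduction to irreducible $D$ in the paper's Case 3 is not entirely routine: a point with $\ord_y f^*D \ge 2$ may arise from a simple crossing of two components $D_i, D_j$, and the paper handles this by applying \cref{pro:202208061} to the codimension-$\ge 2$ intersections $Z_{ij}=D_i\cap D_j$.
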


	\begin{proof}
		We prove in the following three cases.
		
		\textbf{Case 1.}\ 
		$D$ is irreducible and $\mathrm{St}^0(D)=\{0\}$.
		We recall a Zariski open set $S^o\subset S$ (cf. \cref{lem:202210041}).
		By replacing $S^o$ by a smaller non-empty Zariski open set, we may assume that $D_s\subset A$ is a reduced divisor for all $s\in S^o$.
		Set $E=S\backslash S^o$.
		Set $S'=\overline{(\mathrm{Lie}A\times TS)}$. 	
		We apply \cref{pro:202208061} for $D^{(1)}\subset A\times S'$ and $\varepsilon>0$ to get a finite subset $P\subset \mathcal{S}(A)\backslash\{A\}$.
		
		Let $f:Y\da A\times S$ satisfies $f(Y)\not\sqsubset D$.
		We assume that the assertions 2 and 3 of the statement do not occur.
		Then $e(f,D)=\dim A-1$.
		If $e(j_1f,D^{(1)})= \dim A-1$, then by \cref{lem:202210041}, we have $T_f(r)=O(T_{f_{S}}(r))+O(\log r)$.
		Hence
		$$
		m_f(r,D)+N_f(r,D)=O(T_{f_S}(r))+O(\log r).
		$$
		This is stronger than the assertion 1.
		Thus in the following, we assume $e(j_1f,D^{(1)})<\dim A-1$.
		Then by \cref{pro:202208061}, we get
		$$
		N^{(1)}_{j_1f}(r,D^{(1)})\leq \varepsilon T_{f_A}(r,L)+O_{\varepsilon}(N_{\ram\pi}(r)+\pN{f}(r)
		+T_{(j_1f)_{S'}}(r))+O(\log r)+o(T_f(r))||_{\varepsilon}.
		$$
		By  \cref{lem:202210042} and
		$$
		N^{(2)}_f(r,D)-N^{(1)}_f(r,D)\leq N^{(1)}_{j_1f}(r,D^{(1)})+N_{\ram\pi}(r)+\pN{f}(r),
		$$
		we conclude the proof in the case 1.
		
		\textbf{Case 2.}\
		$D$ is irreducible, but $\mathrm{St}^0(D)$ is general.
		We set $B=\mathrm{St}^0(D)$.
		We apply the case 1 above for $A'=A/B$, $D'=D/B$ to get $P'\subset \mathcal{S}(A')\backslash\{ A'\}$ and $E\subsetneqq S$.
		We define $P\subset \mathcal{S}(A)\backslash \{A\}$ by $C\in P$ if and only if $B\subset C$ and $C/B\in P'$.
		
		\textbf{Case 3.}\
		We treat the general case.
		Let $D=D_1+\cdots+D_k$ be the irreducible decomposition.
		Set $Z_{ij}=D_i\cap D_j$.
		Then $Z_{ij}\subset A\times S$ has codimension greater than one.
		We apply \cref{pro:202208061} for $Z_{ij}$ to get $P_{ij}\subset \mathcal{S}(A)\backslash \{A\}$ and $E_{ij}\subsetneqq S$ such that $\overline{\Psi(A,Z_{ij})}=A\times E_{ij}$.
		We apply the case 2 above for each $D_i$ to get $P_i\subset \mathcal{S}(A)\backslash \{A\}$ and $E_i\subsetneqq S$.
		Then we set $P=\cup_iP_i\cup \cup_{i,j}P_{ij}$ and $E=\cup_{i}E_i\cup \cup_{i,j}E_{ij}$.
	\end{proof}
	
	\begin{proposition}\label{lem:20220909}
		Let $D\subset A\times S$ be a reduced divisor.
		Let $\overline{A}$ be a smooth equivariant compactification such that $p(\mathrm{Sp}_A\overline{D})\subsetneqq S$ is a proper Zariski closed set, where $p:\overline{A}\times S\to S$ is the second projection and $\overline{D}\subset \overline{A}\times S$ is the Zariski closure.
		Let $L$ be an ample line bundle on $\overline{A}$.
		Let $\varepsilon>0$.
		Then there exist
		a finite subset $P\subset \mathcal{S}(A)\backslash \{ A\}$, and a proper Zariski closed subset $E\subsetneqq S$
		such that for every $f:Y\da A\times S$ with $f(Y)\not\sqsubset D$, either one of the followings holds:
		\begin{enumerate}
			\item
			There exists $B\in P$ such that 
			$$
			T_{q_B\circ f_A}(r)=O(N_{\ram\pi}(r)+\pN{f}(r)+T_{f_S}(r))+O(\log r)+o(T_f(r))||, 
			$$
			where $q_B:A\to A/B$ is the quotient map.
			\item
			$f(Y)\sqsubset A\times E$.
			\item
			\begin{multline*}
				m_f(r,\overline{D})+N_f(r,\overline{D})\leq N^{(1)}_f(r,\overline{D})
				+\varepsilon T_{f_A}(r,L)\\
				+O_{\varepsilon}(N_{\ram\pi}(r)+\pN{f}(r)
				+T_{f_S}(r))+O(\log r)||.
			\end{multline*}
		\end{enumerate}
	\end{proposition}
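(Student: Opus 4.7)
The plan is to combine the weakly-truncated second main theorem \cref{pro:202208062} with an iterated jet-space version of the truncation-reduction estimates \cref{lem:20220915} and \cref{pro:202208061}, in order to lower the truncation level from some $\rho \in \mathbb{Z}_{>0}$ down to $1$ at the price of an additive $\varepsilon\, T_{f_{A}}(r, L)$. First I would apply \cref{pro:202208062} to the reduced divisor $\overline{D} \subset \overline{A} \times S$. The standing hypothesis $p(\mathrm{Sp}_{A}\overline{D}) \subsetneqq S$ lets me include $p(\mathrm{Sp}_{A}\overline{D})$ in the final exceptional set $E$, so that either alternative~(2) of the conclusion already holds or we may assume $f_{S}(Y) \not\subset p(\mathrm{Sp}_{A}\overline{D})$. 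Then \cref{pro:202208062} yields a finite $P_{0} \subset \mathcal{S}(A) \setminus \{A\}$ and an integer $\rho$ such that, unless alternative~(1) holds with some $B \in P_{0}$, one has
\[
m_{f}(r, \overline{D}) + N_{f}(r, \overline{D}) - N^{(\rho)}_{f}(r, \overline{D}) = O(N_{\ram\pi}(r) + \pN{f}(r) + T_{f_{S}}(r)) + O(\log r) + o(T_{f}(r))\|.
\]
It therefore suffices to bound $N^{(\rho)}_{f}(r, \overline{D}) - N^{(1)}_{f}(r, \overline{D})$ by $\varepsilon\, T_{f_{A}}(r, L)$ modulo admissible error.

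For this I would telescope
\[
N^{(\rho)}_{f}(r, \overline{D}) - N^{(1)}_{f}(r, \overline{D}) = \sum_{k=1}^{\rho - 1} \bigl(N^{(k+1)}_{f}(r, \overline{D}) - N^{(k)}_{f}(r, \overline{D})\bigr),
\]
and treat each summand via jets. A point $y \in Y$ at which $\mathrm{ord}_{y} f^{*} \overline{D} \geq k+1$ maps under the $k$-th jet lift $j_{k} f$ into a closed subscheme $\overline{D}^{\{k\}} \subset \overline{A} \times (J_{k}(\overline{A} \times S)/A)$ defined by $\overline{D}$ together with its first $k$ iterated differential ideals (generalising the scheme $Z^{(1)}$ introduced in \cref{subsec:4.2}). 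This subscheme has codimension $\geq k+1 \geq 2$, and up to the corrections $N_{\ram\pi}(r) + \pN{f}(r)$ arising from ramification of $\pi$ and boundary intersections of $j_{k} f$, one obtains $N^{(k+1)}_{f}(r, \overline{D}) - N^{(k)}_{f}(r, \overline{D}) \leq N^{(1)}_{j_{k} f}(r, \overline{D}^{\{k\}}) + N_{\ram\pi}(r) + \pN{f}(r)$. I would then apply \cref{pro:202208061} to $\overline{D}^{\{k\}}$ with threshold $\varepsilon/(\rho-1)$. Exploiting the $A$-splitting of $J_{k}(\overline{A} \times S)$ (which yields $T_{(j_{k}f)_{A}}(r) = T_{f_{A}}(r) + O(\log r)$) together with an iterated form of \cref{lem:202210042} (controlling $T_{(j_{k}f)_{S_{k}}}(r)$ by $T_{f_{S}}(r) + N_{\ram\pi}(r) + \pN{f}(r)$ up to admissible error), alternative~(1) of \cref{pro:202208061} delivers the desired bound on $N^{(1)}_{j_{k} f}(r, \overline{D}^{\{k\}})$; alternative~(2) forces $f_{S}(Y)$ into a proper Zariski closed $E_{k} \subsetneqq S$, and alternative~(3) contributes a finite $P_{k} \subset \mathcal{S}(A) \setminus \{A\}$. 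Setting $P := P_{0} \cup \bigcup_{k} P_{k}$ and $E := p(\mathrm{Sp}_{A}\overline{D}) \cup \bigcup_{k} E_{k}$ and summing over $k = 1, \ldots, \rho - 1$ yields alternative~(3) of the conclusion.

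The hard part will be the careful jet-scheme bookkeeping. Concretely, one must verify that alternative~(2) of \cref{pro:202208061} applied to $\overline{D}^{\{k\}}$ always descends to a proper Zariski closed subset of $S$ that is independent of $f$, and one must establish a higher-jet analog of \cref{lem:202210042} that controls $T_{(j_{k}f)_{S_{k}}}(r)$ by $T_{f_{S}}(r) + N_{\ram\pi}(r) + \pN{f}(r)$ with the usual error terms. Both are technical but routine extensions of the $k = 1$ constructions of \cref{subsec:4.2} and of the arguments of \cref{sec:rbc}, in the framework of \cite{Yam04, Yam15}; the only analytic input beyond those sections is the repeated application of the logarithmic derivative estimate \cref{thm:20231119}.
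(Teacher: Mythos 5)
Your first step (applying \cref{pro:202208062} to $\overline D$ to obtain $\rho$, and absorbing $p(\mathrm{Sp}_A\overline D)$ into $E$) is exactly what the paper does. After that the routes diverge, and here your proposal becomes substantially heavier than necessary and has a real gap. The paper avoids the telescoping sum and higher jets entirely by invoking the elementary pointwise inequality
\[
N^{(\rho)}_f(r,D)-N^{(1)}_f(r,D)\;\leq\;(\rho-1)\bigl(N^{(2)}_f(r,D)-N^{(1)}_f(r,D)\bigr),
\]
which follows from $\min\{\rho,n\}-\min\{1,n\}\leq(\rho-1)\bigl(\min\{2,n\}-\min\{1,n\}\bigr)$ for every integer $n\geq 0$. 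This reduces the entire truncation step to the single difference $N^{(2)}_f-N^{(1)}_f$, which is precisely what \cref{lem:20220915} controls using only the \emph{first} jet and $D^{(1)}$. One applies \cref{lem:20220915} with $\varepsilon'=\varepsilon/(\rho-1)$, and then multiplication by $\rho-1$ gives the $\varepsilon T_{f_A}(r,L)$ term. No higher-jet subschemes $\overline D^{\{k\}}$, no higher-jet analog of \cref{lem:202210042}, and no new $e$-comparison lemma are needed.

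Your telescoping route would require a whole infrastructure the paper has only developed for $k=1$: a higher-jet analog of \cref{lem:202210042} to control $T_{(j_kf)_{S_k}}(r)$, a higher-jet version of the comparison $e(j_kf,\overline D^{\{k\}})\leq e(f,D)$ and of \cref{lem:202210041} to dispose of the degenerate alternative, and a precise accounting of how the ramification error grows with $k$. You flag these as ``technical but routine'', but they are genuinely absent from the paper because the elementary inequality makes them unnecessary. Moreover, you misdescribe the alternatives of \cref{pro:202208061}: its alternative~(2) is the degeneracy condition $e(f,Z)\geq\dim A-1$, not a Zariski-closedness condition $f_S(Y)\subset E_k$ (that is the conclusion of \cref{lem:202209151}). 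In the $k=1$ case, \cref{lem:20220915} handles the degenerate alternative by observing $e(j_1f,D^{(1)})=e(f,D)=\dim A-1$ and invoking \cref{lem:202210041} to get $T_f(r)=O(T_{f_S}(r))+O(\log r)$; for $k\geq 2$ you would need a new argument, and the proposal does not supply it. I recommend replacing the telescoping step with the one-line inequality above and applying \cref{lem:20220915} once.
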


	\begin{proof}
		We apply \cref{pro:202208062} for $\overline{D}\subset \overline{A}\times S$ to get a finite subset $P_1\subset \mathcal{S}(A)\backslash\{A\}$ and $\rho\in\mathbb Z_{>0}$.
		We apply \cref{lem:20220915} for $D\subset A\times S$ and $\varepsilon'=\varepsilon/(\rho-1)$ to get a finite subset $P_2\subset \mathcal{S}(A)\backslash\{A\}$ and $E'\subsetneqq S$.
		We set $P=P_1\cup P_2$ and $E=p(\mathrm{Sp}_A\overline{D})\cup E'$.

		Now let $f:Y\da A\times S$ satisfies $f(Y)\not\sqsubset D$.
		We assume that the assertions 1 and 2 of our lemma do not hold.
		By \cref{pro:202208062}, we get
		\begin{equation*}
			m_f(r,D)+N_f(r,D)-N^{(\rho)}_f(r,D)=O(N_{\ram\pi}(r)
			+\pN{f}(r)+T_{f_S}(r))+O(\log r)+o(T_f(r))||.
		\end{equation*}
		We apply \cref{lem:20220915} to get
		$$N^{(2)}_f(r,D)-N^{(1)}_f(r,D)\leq \varepsilon' T_{f_A}(r,L)+O(N_{\ram\pi}(r)+\pN{f}(r)
		+T_{f_S}(r))+O(\log r)+o(T_f(r))||.$$
		By $N^{(\rho)}_f(r,D)-N^{(1)}_f(r,D)\leq (\rho-1) (N^{(2)}_f(r,D)-N^{(1)}_f(r,D))$, we get
		\begin{multline}\label{eqn:20220915}
			m_f(r,D)+N_f(r,D)-N^{(1)}_f(r,D)\leq (\rho-1)\varepsilon'T_{f_A}(r,L)
			\\
			+
			O(N_{\ram\pi}(r)+\pN{f}(r)
			+T_{f_S}(r))+O(\log r)+o(T_f(r))||.
		\end{multline}
		This completes the proof.
	\end{proof}

	\section{Proof of \cref{thm2nd}} 
	\label{subsec:4.7}

	In this section, we complete the proof of \cref{thm2nd} by proving a stronger result \cref{thm:20221201}.
	Let $B\subset A$ be a semi-abelian subvariety and let
	$\overline{A}$ be an equivariant compactification.
	
	\begin{dfn}
		We say that $\overline{A}$ is compatible with $B$ if there exists an equivariant compactification $\overline{A/B}$ for which the quotient map $q_{B}:A\to A/B$ extends to a morphism $\overline{q_B}:\overline{A}\to \overline{A/B}$.
	\end{dfn}

	Let $W\subset A\times S$ be a $B$-invariant closed subvariety.
	Then we have $W/B\subset (A/B)\times S$.
	If $\overline{A}$ is compatible with $B$, then $\overline{q_B}$ yields $p:\overline{A}\times S\to(\overline{A/B})\times S$.

	\begin{lem}\label{lem:20221124}
		Let $W\subset A\times S$ be a $B$-invariant closed subvariety.
		Let $D\subset W$ be a reduced divisor.
		Then there exists a smooth, projective, equivariant compactification $\overline{A}$ which is compatible with $B$ and $p(\mathrm{Sp}_B\overline{D})\subsetneqq \overline{W/B}$, where $\overline{D}\subset \overline{A}\times S$ is the Zariski closure.
	\end{lem}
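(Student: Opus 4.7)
The plan is to construct $\overline{A}$ in two stages: first produce a smooth projective equivariant compactification of $A$ compatible with $B$ using the principal bundle structure $q_B:A\to A/B$, then ensure the strict inclusion $p(\mathrm{Sp}_B\overline{D})\subsetneqq \overline{W/B}$ by combining an interior dimension count with a boundary refinement. Starting from a smooth projective equivariant compactification $\overline{A/B}$ of $A/B$ (existence is standard for semi-abelian varieties) together with a smooth projective equivariant compactification $\overline{B}$ of $B$, the associated-bundle construction for the principal $B$-bundle $q_B$ produces a smooth projective equivariant compactification $\overline{A}_0$ of $A$ and a proper morphism $\overline{q_B}:\overline{A}_0\to \overline{A/B}$ extending $q_B$, so that $\overline{A}_0$ is already compatible with $B$; alternatively one may use toroidal/fan-compatible compactifications for semi-abelian varieties.

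For the strict inclusion, an interior argument handles the generic case. For $\bar w=(w,s)$ in $W/B\subset \overline{W/B}$, the interior part of the fiber, $p^{-1}(\bar w)\cap(A\times S)=q_B^{-1}(w)\times\{s\}$, is a single $B$-orbit and lies in $W$ by $B$-invariance of $W$. Since $D\subsetneqq W$ is a divisor, this $B$-orbit cannot be contained in $D$ for a generic choice of $\bar w$: otherwise $D$ would contain a dense open subset of $W$, contradicting $D\subsetneqq W$. As $\overline{D}\cap(A\times S)=D$, for such generic $\bar w$ no interior lift of $\bar w$ lies in $\mathrm{Sp}_B\overline{D}$, and therefore $\bar w\notin p(\mathrm{Sp}_B\overline{D})$ provided that no ``vertical'' boundary component of $\mathrm{Sp}_B\overline{D}$ dominates $\overline{W/B}$.

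The remaining task is to eliminate such vertical boundary components. If there exists an irreducible component $C\subset \mathrm{Sp}_B\overline{D}\cap(\partial A\times S)$ with positive-dimensional $B$-stabilizer whose $p$-image dominates $\overline{W/B}$, I would replace $\overline{A}_0$ by an equivariant blow-up along the $B$-invariant Zariski closure of $C$ (and the lower-dimensional boundary centers it meets), followed by an equivariant resolution of singularities. Monitoring the procedure by the lexicographic pair $(\dim C, \dim\text{ of its }B\text{-stabilizer})$ shows that each such step strictly decreases this invariant applied to the strict transform of $\overline{D}$, so after finitely many refinements one obtains the desired $\overline{A}$. The main obstacle is precisely this termination step: one must verify that blowing up along a $B$-invariant center genuinely strictly decreases the lexicographic invariant attached to the strict transform of $\overline{D}$ and that no new maximal bad components are introduced by the blow-up. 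The interior part of the argument, by contrast, follows immediately from $D$ being a codimension-one reduced divisor in $W$ together with the principal $B$-bundle structure of $q_B$.
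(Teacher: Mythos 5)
Your interior argument is fine as far as it goes, but it carries almost none of the weight of the statement. What you establish is that for generic $\bar w\in W/B$, no lift of $\bar w$ in $A\times S$ lies in $\mathrm{Sp}_B\overline D$; the entire difficulty is that $\mathrm{Sp}_B\overline D$ may well be concentrated along the boundary $\partial A\times S$, and the boundary portion of the fiber over $\bar w$ is precisely what your argument cannot touch. You correctly flag this, but the mechanism you propose to fix it does not work as written.

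Specifically, the iterative blow-up is where the proof genuinely breaks down. Two independent gaps: (1) there is no reason an equivariant blow-up along a $B$-invariant center in $\partial A$ preserves compatibility with $B$, because the center need not be saturated for $\overline{q_B}$, so the extension $\overline{A}\to\overline{A/B}$ can fail to exist after the modification; (2) the termination claim is simply asserted. Blowing up a boundary stratum creates new boundary divisors, and the strict transform of $\overline D$ can meet these in new components whose $B$-stabilizer and dimension need not be smaller than those of the component you removed — and even if they were, you would still need to show that the ``lexicographic invariant'' is bounded below along the process, which is not obvious. You yourself call this the main obstacle; it is in fact a genuine hole, not a routine verification. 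The paper avoids the issue entirely by taking a completely different route: it passes to the geometric generic fiber $W_{\bar\eta}=B\times_{\mathbb C}\overline{\mathbb C(\eta)}$ of $W\to W/B$, invokes an existing result (\cite[Lem.~12.7]{Y22}) that produces a smooth equivariant compactification of a semi-abelian variety for which $\mathrm{Sp}$ of the closure of a given divisor is \emph{empty}, and then descends that compactification to $\mathbb C$ via Sumihiro's theorem and the fan description of toric compactifications of the torus part. Compatibility with $B$ is built in by constructing $\overline A$ as an associated bundle with fiber $\overline B$ over $\overline{A/B}$. The emptiness of $\mathrm{Sp}_{B_{\bar\eta}}\overline{D_{\bar\eta}}$ at the geometric generic point directly gives $p(\mathrm{Sp}_B\overline D)\subsetneqq\overline{W/B}$, with no induction or blow-ups required. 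If you want to repair your argument you would essentially have to reprove \cite[Lem.~12.7]{Y22}, which is a nontrivial combinatorial/toric statement; simply asserting descent of a lexicographic complexity under ad hoc blow-ups will not do.
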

	
	\begin{proof}
		We consider $W\to W/B$.
		We take the schematic generic point $\eta\in W/B$.
		Let $D_{\eta}\subset W_{\eta}$ be the schematic generic fiber.
		Let $\bar{\eta}:\mathrm{Spec}\, \overline{\mathbb C(\eta)}\to W/B$ be the geometric point.
		Then $W_{\bar{\eta}}=B\times_{\mathbb C}\overline{\mathbb C(\eta)}$ is a semi-abelian variety over $\overline{\mathbb C(\eta)}$.
		By \cite[Lem 12.7]{Y22}, 
		there exists a smooth equivariant compactification $\overline{W_{\bar{\eta}}}$ such that $\overline{D_{\bar{\eta}}}\subset \overline{W_{\bar{\eta}}}$ satisfies 
		\begin{equation}\label{eqn:20221028}
			\mathrm{Sp}_{W_{\bar{\eta}}}\overline{D_{\bar{\eta}}}=\emptyset.
		\end{equation}

		We remark that there exists a smooth equivariant compactification $\overline{B}$ such that $\overline{W_{\bar{\eta}}}$ is obtained by the base change of $\overline{B}$.
		Namely
		$$\overline{W_{\bar{\eta}}}=\overline{B}\times_{\mathbb C}\overline{\mathbb C(\eta)}.$$
		To see this, we consider the canonical extension $0\to T_B\to B\to B_0\to 0$, where $T_B$ is an algebraic torus and $B_0$ is an abelian variety.
		By \cite[Lem A.6]{Y22}, there exists a smooth equivariant compactification $\overline{(T_B)_{\bar{\eta}}}$ such that $\overline{W_{\bar{\eta}}}=(\overline{(T_B)_{\bar{\eta}}}\times W_{\bar{\eta}})/(T_B)_{\bar{\eta}}$, where $(T_B)_{\bar{\eta}}=T_B\times_{\mathbb C}\overline{\mathbb C(\eta)}$.
		By Sumihiro's theorem, $\overline{(T_B)_{\bar{\eta}}}$ is a torus embedding associated to some complete fan (cf. \cite[Sec. A.2]{Y22}).
		Hence there exists a smooth equivariant compactification $\overline{T_B}$ such that $\overline{(T_B)_{\bar{\eta}}}=\overline{T_B}\times_{\mathbb C}\overline{\mathbb C(\eta)}$.
		Let $\overline{B}=(\overline{T_B}\times B)/T_B$.
		Then $\overline{B}\times_{\mathbb C}\overline{\mathbb C(\eta)}=\overline{W_{\bar{\eta}}}$ as desired.
		By \cite[Lem A.8]{Y22}, we may assume moreover that $\overline{B}$ is projective by replacing $\overline{B}$ by a modification $\hat{B}\to \overline{B}$, for the property \eqref{eqn:20221028} remains true under this modification.
		Then $\overline{T_B}$ is projective.

		Next we shall construct an equivariant compactification $\overline{A}$ which is compatible with $B$ and the general fibers of $\overline{A}\to\overline{A/B}$ are $\overline{B}$ constructed above.
		We have the canonical extensions $0\to T_B\to B\to B_0\to 0$, $0\to T_A\to A\to A_0\to 0$ and $0\to T_{A/B}\to A/B\to A_0/B_0\to 0$.
		Then we have $0\to T_B\to T_A\to T_{A/B}\to 0$.
		We may take a section $T_{A/B}\to T_A$ so that $T_A=T_B\times T_{A/B}$.
		Set $A'=A/T_{A/B}$.
		Then $B\subset A'$ and $A'/B=A_0/B_0$.
		The $B$-torsor $A\to A/B$ is the pull-back of $A'\to A'/B$ by $A/B\to A'/B$. 
		
		Let $\overline{B}$ be the compactification above so that $\overline{B}=(\overline{T_B}\times B)/T_B$.
		Set $\overline{A'}=(\overline{T_B}\times A')/T_B$.
		Then $\overline{A'}=(\overline{B}\times A')/B$. 
		Then the general fibers of $\overline{A'}\to A'/B$ are $\overline{B}$.
		Let $\overline{T_{A/B}}$ be a smooth projective equivariant compactification.
		Set $\overline{A}=((\overline{T_B}\times \overline{T_{A/B}})\times A)/(T_B\times T_{A/B})$ and $\overline{A/B}=(\overline{T_{A/B}}\times (A/B))/T_{A/B}$.
		Then $\overline{A}$ is compatible with $B$ and the general fibers of $\overline{A}\to\overline{A/B}$ are $\overline{B}$.
		Moreover $\overline{A}$ and $\overline{A/B}$ are smooth and projective by the proof of \cite[Lem A.8]{Y22}.

		Now let $\overline{D}\subset \overline{W}$ be the compactification of $D$.
		Then we claim
		\begin{equation}\label{eqn:202210281}
			(\overline{D})_{\bar{\eta}}=\overline{D_{\bar{\eta}}}
		\end{equation}
		in $(\overline{W})_{\bar{\eta}}
		$.
		To see this, we note that the induced map $\overline{W}_{\eta}\to \overline{W}$ is homeomorphism onto its image (cf. \cite[\href{https://stacks.math.columbia.edu/tag/01K1}{Tag 01K1}]{stacks-project}). 
		Hence $(\overline{D})_{\eta}=\overline{D_{\eta}}$.
		Note that $(\overline{W})_{\bar{\eta}}\to (\overline{W})_{\eta}$ is a closed morphism (cf.  \cite[\href{https://stacks.math.columbia.edu/tag/01WM}{Tag 01WM}]{stacks-project}).
		Hence $(\overline{D_{\eta}})_{\bar{\eta}}=\overline{D_{\bar{\eta}}}$.
		Combining these two observations, we have
		$(\overline{D})_{\bar{\eta}}=(\overline{D_{\eta}})_{\bar{\eta}}=\overline{D_{\bar{\eta}}}$.
		This shows \eqref{eqn:202210281}.
		Since $\mathrm{Sp}_B\overline{D}\subset \overline{W}$ is $B$-invariant, $(\mathrm{Sp}_B\overline{D})_{\bar{\eta}}\subset (\overline{W})_{\bar{\eta}}$ is $B_{\bar{\eta}}$-invariant.
		Moreover $(\mathrm{Sp}_B\overline{D})_{\bar{\eta}}\subset (\overline{D})_{\bar{\eta}}$.
		Hence $(\mathrm{Sp}_B\overline{D})_{\bar{\eta}}\subset\mathrm{Sp}_{B_{\bar{\eta}}}((\overline{D})_{\bar{\eta}})$.
		Combining this with \eqref{eqn:20221028} and \eqref{eqn:202210281},
		we have
		$(\mathrm{Sp}_B\overline{D})_{\bar{\eta}}\subset\mathrm{Sp}_{B_{\bar{\eta}}}\overline{D_{\bar{\eta}}}=\emptyset$.
		Hence $p(\mathrm{Sp}_B\overline{D})\subset \overline{W/B}$ is a proper Zariski closed set.
	\end{proof}

	Let $X$ be a quasi-projective variety.
	Let $D\subset X$ be a reduced (Weil) divisor.
	Let $E\subset D$ be a subset.
	Let $f:Y\to X$ be a holomorphic map such that $f(Y)\not\subset D$.
	We set
	$$
	\overline{N}_f(r, E)=\frac{1}{\mathrm{deg}\ \pi} \int_{2\delta}^{r}\mathrm{card}\ (Y(t)\cap f^{-1}E)\ \frac{d t}{t} .
	$$
	When $D$ is a Cartier divisor, then we have $\overline{N}_f(r,D)=N^{(1)}_f(r,D)$.

	Let $S$ be a projective variety and let $B\subset A$ be a semi-abelian subvariety.
	We denote by $I_B$ the set of all holomorphic maps $f:Y\to A\times S$ such that the following three estimates hold:
	\begin{itemize}
		\item 
		$T_{q_B\circ f_A}(r)=O(\log r)+o(T_f(r))||$, where $q_B:A\to A/B$ is the quotient,
		\item
		$N_{\ram\pi}(r)=O(\log r)+o(T_f(r))||$,
		\item
		$T_{f_S}(r)=O(\log r)+o(T_f(r))||$.
	\end{itemize}
	
	\begin{rem}\label{rem:20221203}
		If $f\in I_{\{0\}}$, then $f:Y\to A\times S$ does not have essential singularity over $\infty$.
		Indeed, $f\in I_{\{0\}}$ yields $T_{f_A}(r)=O(\log r)+o(T_f(r))||$ and  $T_{f_S}(r)=O(\log r)+o(T_f(r))||$.
		Hence $T_f(r)=O(\log r)||$, thus $N_{\ram\pi}(r)=O(\log r)||$.
		These two estimates implies that $f$ does not have essential singularity over $\infty$ (cf. \cref{lem:20230411} ). 
	\end{rem}
	
	For a proper birational morphism $\varphi:V\to W$, we denote by $\mathrm{Ex}(\varphi)\subset V$ the exceptional locus of $\varphi$.
	If $f:Y\to W$ is a holomorphic map such that $f(Y)\not\subset \varphi(\mathrm{Ex}(\varphi))$, then there exists a unique lift $g:Y\to V$ of $f$.

	The following lemma brings together the results from \cref{sec:rbc} to \cref{subsec:4.7a} and is essential for the applications that follow.

	\begin{lem}\label{lem:20220909gg}
Let $B \subset A$ be a semi-abelian subvariety, and let $W \subset A \times S$ be a $B$-invariant closed subvariety, where $S$ is projective. Let $D \subset W$ be a reduced Weil divisor.  
Then the following objects exist with the specified property described later: 
\begin{itemize}
    \item a smooth projective equivariant compactification $A \subset \overline{A}$,
    \item a proper birational morphism $\varphi: V \to \overline{W}$ with $V$ smooth, such that 
    \(\varphi^{-1}(\overline{D} \cup \partial W) \subset V\) is a simple normal crossing divisor.
\end{itemize}
The satisfied property: Let $Z \subset W$ be a Zariski closed subset with $\mathrm{codim}(Z,W) \ge 2$ and $Z \subset D$, let $L$ be an ample line bundle on $V$, and let $\varepsilon > 0$. Then there exist
\begin{itemize}
    \item a finite subset $P \subset \mathcal{S}(B) \setminus \{B\}$, and
    \item a proper Zariski closed subset $\Phi \subsetneqq W$ with $\varphi(\mathrm{Ex}(\varphi)) \subset \overline{\Phi}$,
\end{itemize}
such that for every holomorphic map $f: Y \to W$ satisfying $f(Y) \not\subset D \cup \Phi$ and $f \in I_B$, one of the following holds:
\begin{enumerate}
    \item There exists $C \in P$ such that $f \in I_C$.
    \item Let $f': Y \to V$ be the lift of $f$. Then
    \[
    T_{f'}(r, K_V(\varphi^{-1}(\overline{D} \cup \partial W))) \le \overline{N}_f(r, D \setminus Z) + \varepsilon\, T_{f'}(r,L) + O(\log r) + o(T_{f'}(r))||.
    \]
\end{enumerate}
\end{lem}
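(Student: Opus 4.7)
The plan is to combine the geometric construction of \cref{lem:20221124} with the second main theorem type estimates of \cref{lem:20220909} and \cref{pro:202208061} (applied with respect to the $B$-action on the $B$-invariant subvariety $W$ in place of the $A$-action on $A\times S$), and then to convert the resulting bound for $T_f(r,[\overline{D}])$ into a bound for $T_{f'}(r,K_V(\varphi^{-1}(\overline{D}\cup\partial W)))$ by adjunction. First I build the geometric data, which depends only on $A,B,W,D$. Apply \cref{lem:20221124} to obtain a smooth projective equivariant compactification $\overline{A}$ that is compatible with $B$ and satisfies $p(\mathrm{Sp}_B\overline{D})\subsetneqq \overline{W/B}$, where $p\colon \overline{A}\times S\to \overline{A/B}\times S$ is the quotient. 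Then take any log resolution $\varphi\colon V\to\overline{W}$ with $V$ smooth such that $\varphi^{-1}(\overline{D}\cup\partial W)$ is simple normal crossing.

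Next, given $Z,L,\varepsilon$, I apply the $B$-version of \cref{lem:20220909} to $\overline{D}\subset\overline{W}$ with parameter $\varepsilon/2$; since $W$ is $B$-invariant and $\overline{A}$ is compatible with $B$, the jet-space splitting $J_l(A\times S)=A\times(J_l(A\times S)/A)$ underlying the proofs of the preceding sections restricts to a $B$-equivariant splitting on $W$, and the stabilizer hypothesis holds by construction. This yields a finite $P_1\subset\mathcal{S}(B)\setminus\{B\}$ and a proper Zariski closed $\Phi_1\subsetneqq W$ such that for every $f\in I_B$ with $f(Y)\not\sqsubset\overline{D}\cup\Phi_1$, either $f\in I_C$ for some $C\in P_1$, or
$$m_f(r,\overline{D})+N_f(r,\overline{D})\leq N^{(1)}_f(r,\overline{D})+\tfrac{\varepsilon}{2}T_{f'}(r,L)+O(\log r)+o(T_{f'}(r))||.$$
In parallel, the $B$-version of \cref{pro:202208061} applied to $Z$ with parameter $\varepsilon/2$ gives $P_2,\Phi_2$ so that either $f\in I_C$ for some $C\in P_2$, or $N^{(1)}_f(r,Z)\leq\tfrac{\varepsilon}{2}T_{f'}(r,L)+O(\log r)+o(T_{f'}(r))||$. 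Setting $P:=P_1\cup P_2$ and $\Phi:=\Phi_1\cup\Phi_2\cup\varphi(\mathrm{Ex}(\varphi))$, the First Main Theorem together with $N^{(1)}_f(r,\overline{D})\leq\overline{N}_f(r,D\setminus Z)+N^{(1)}_f(r,Z)$ produces
$$T_f(r,[\overline{D}])\leq \overline{N}_f(r,D\setminus Z)+\varepsilon\, T_{f'}(r,L)+O(\log r)+o(T_{f'}(r))||.$$

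Finally, I convert this into the stated bound on $T_{f'}(r,K_V(\varphi^{-1}(\overline{D}\cup\partial W)))$ via adjunction. Since $\varphi$ is a log resolution, $K_V(\varphi^{-1}(\overline{D}+\partial W))$ equals $\varphi^{*}\bigl(K_{\overline{W}}(\partial W)+\overline{D}\bigr)$ plus an exceptional divisor whose support lies in $\mathrm{Ex}(\varphi)\subset\Phi$ and therefore contributes nothing to the order function, since $f(Y)\not\subset\Phi$. The pullback $\varphi^{*}[\overline{D}]$ gives $T_f(r,[\overline{D}])$, bounded as above. For $\varphi^{*}K_{\overline{W}}(\partial W)$, adjunction on $\overline{W}\subset\overline{A}\times S$ together with the triviality of $K_{\overline{A}}(\partial A)$ for an equivariant compactification of the semi-abelian variety $A$ shows that $K_{\overline{W}}(\partial W)$ is, up to a bounded correction supported on fibres of $\overline{q_B}\times\mathrm{id}_S$, the pullback of a divisor class on $\overline{W/B}$; hence its order function is $O(T_{q_B\circ f_A}(r)+T_{f_S}(r))=O(\log r)+o(T_f(r))||$ by the hypothesis $f\in I_B$. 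Summing these contributions yields the conclusion. The main obstacle is precisely this last adjunction step: identifying $K_{\overline{W}}(\partial W)$ as essentially the pullback under $\overline{q_B}\times\mathrm{id}_S$ modulo terms negligible under $f\in I_B$, which requires careful bookkeeping of the boundary components of $\overline{W}$ and of the exceptional divisors of $\varphi$, and fully exploits the compatibility of $\overline{A}$ with $B$ provided by \cref{lem:20221124}.
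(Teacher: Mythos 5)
Your proposal has a genuine gap at its core. You want to apply ``$B$-versions'' of \cref{lem:20220909} and \cref{pro:202208061} directly to $\overline{D}\subset\overline{W}$, treating $W$ with its $B$-action as if it were $B\times S$. But those propositions (and all of \cref{subsec:4.2}--\cref{subsec:4.7a}) are stated and proved for a product $A\times S$: the entire machinery depends on the two projections $A\times S\to A$ and $A\times S\to S$, which give $f_A$, $f_S$, the jet splitting $J_l(A\times S)=A\times(J_l(A\times S)/A)$, and the partial compactification $\overline{A}\times S$. A $B$-invariant closed subvariety $W\subset A\times S$ is a $B$-torsor over $W/B$, but this torsor is \emph{not} trivial in general, so $W\not\cong B\times(W/B)$ and there is no projection $W\to B$. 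Your claim that ``the jet-space splitting restricts to a $B$-equivariant splitting on $W$'' therefore does not give what is needed: it does not produce a decomposition of $W$ as a product, and without that decomposition none of the propositions you cite can be invoked. This is exactly the obstacle flagged in \cref{rem:20250904}: ``$W$ cannot be decomposed into $B\times(W/B)$, this obstructs the application of the previous results to $f\colon Y\to W$.''

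The paper's proof circumvents this by constructing an \'etale cover in Steps 2--4: it finds a subvariety $S'\subset W$ with $S'\to U$ \'etale ($U$ the smooth locus of $W/B$), so that the $B$-action induces an \'etale map $\bar\psi\colon\overline{B}\times S'\to p^{-1}(U)$, which does map \emph{from} a genuine product. Then, crucially, because $Y$ is not simply connected, the paper must lift $f$ along a covering $\mu\colon Y'\to Y$ to a map $g\colon Y'\to\overline{B}\times\Sigma$ before applying \cref{lem:20220909}, \cref{pro:202208062}, and \cref{lem:202209151} to $g$, and only then translates the estimates for $g$ back to $f$. Your proposal omits both the \'etale cover and the passage to $Y'$, which are the actual technical content. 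Your adjunction step is likewise not as presented: the canonical bundle identity the paper uses is the \'etale pullback relation $\bar\psi^{*}K_{V_o}(\varphi_o^{-1}(\overline{D}+\partial W))=K_{\overline{B}\times\Sigma}(D'+\partial(B\times\Sigma))|_{\overline{B}\times S'}$ combined with $K_{\overline{B}\times\Sigma}(\partial(B\times\Sigma))=q^{*}K_{\Sigma}$, not a statement that $K_{\overline{W}}(\partial W)$ is ``essentially a pullback under $\overline{q_B}\times\mathrm{id}_S$'' (note also that $\overline{W}$ is not assumed smooth, so adjunction on $\overline{W}\subset\overline{A}\times S$ is not directly available).
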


Our main technical issue in the proof is that $W$ cannot be decomposed into $B\times (W/B)$, this obstructs the application of the previous results to $f: Y \to W$.
We handle this issue by considering a generically finite surjective map $\sigma:\Sigma\to \overline{W/B}$ which gives a rational map $\overline{B}\times \Sigma\dashrightarrow \overline{W}$.
We can then apply our previous results to the lift
 $Y'\to \overline{B}\times \Sigma$ and translate them to obtain the estimates for $f: Y \to W$.
 A careful argument is required to ensure that these strategies work correctly.
 This involves constructing several additional objects not mentioned above.
 See also \cref{rem:20250904} for additional discussion.

	\begin{proof}
	We divide the proof into several steps.
	
	\textbf{Step 1.} The construction of $\overline{A}$ and $\varphi: V \to \overline{W}$.
By \cref{lem:20221124}, we may take an equivariant compactification $\overline{A}$ such that
		\begin{itemize}
			\item
			$\overline{A}$ is smooth and projective, 
			\item
			$\overline{A}$ is compatible with $B$, and
			\item $p(\mathrm{Sp}_B\overline{D})\subsetneqq \overline{W/B}$, where $p:\overline{W}\to \overline{W/B}$ is induced from $\overline{A}\to\overline{A/B}$.
		\end{itemize}
		Let $U\subset W/B$ be the smooth locus of $W/B$.
		Then $p^{-1}(U)\subset \overline{W}$ is smooth.
		We take a smooth modification $\varphi_o:V_o\to \overline{W}$ which is isomorphism over $p^{-1}(U)$.
		We apply the embedded resolution of Szab{\'o}  \cite{Sza94} 
		for $\varphi_o^{-1}(\overline{D}+\partial W)\subset V_o$ to get a smooth modification $\tau:V\to V_o$ such that $\widetilde{D}=(\varphi_o\circ \tau)^{-1}(\overline{D}+\partial W)$ is simple normal crossing and $\tau$ is an isomorphism outside the locus where $\varphi_o^{-1}(\overline{D}+\partial W)$ is not simple normal crossing.
		Let $\varphi:V\to \overline{W}$ be the composite of $\tau:V\to V_o$ and $\varphi_o:V_o\to \overline{W}$.
		We have 
		\begin{equation}\label{eqn:20221126}
			K_{V}(\varphi^{-1}(\overline{D}\cup \partial W))
			\leq
			\tau^*K_{V_o}(\varphi_o^{-1}(\overline{D}+\partial W))+E
		\end{equation}
		for some effective divisor $E\subset V$ such that $\tau(E)\subset V_o$ is contained in the locus where $\varphi_o^{-1}(\overline{D}+\partial W)$ is not simple normal crossing.
		Since $p^{-1}(U)\cap \partial W$ is simple normal crossing, we have $p^{-1}(U)\cap \tau(E)\subset \overline{D}$.
		
	\textbf{Step 2.}
	The construction of $\Sigma$.
	Let $\Psi:B\times \overline{W}\to \overline{W}$ be the $B$-action.
		We may take a smooth subvariety $S'\subset W$ such that the induced map $S'\to U$ is {\'e}tale.
		Then $\Psi$ yields an {\'e}tale map 
		$$\psi:B\times S'\to p^{-1}(U)\cap W.$$
		Let $\overline{B}\subset\overline{A}$ be the compactification.
		Then $\overline{B}$ is an equivariant compactification.
		The map $\psi$ extends to a map
		$$
		\bar{\psi}:\overline{B}\times S'\to p^{-1}(U).
		$$ 
		Since $\overline{A}$ is compatible with $B$, $\bar{\psi}$ is {\'e}tale.
		We take a smooth compactification $S'\subset \Sigma$ such that the maps $S'\hookrightarrow W$ and $S'\to W/B$ extend to $\Sigma\to \overline{W}$ and $\sigma:\Sigma\to \overline{W/B}$.
		Then $\bar{\psi}:\overline{B}\times S'\to p^{-1}(U)\subset V_o$ induces a rational map $\widehat{\psi}:\overline{B}\times \Sigma\dashrightarrow V_o$.

		\[
		\begin{tikzcd}
			\overline{B}\times S' \arrow[r, hookrightarrow] \arrow[d] \arrow[bend left, "\bar{\psi}"]{rrr}&	\overline{B}\times \Sigma \arrow[r, dashrightarrow]
			\arrow[dashed, bend left, "\widehat{\psi}" ']{rr}
			\arrow[d, "q"] & \overline{W} \arrow[d, "p"] & V_o\arrow[l,"\varphi_o" ] &V\arrow[l,"\tau" ] \\
			S' \arrow[r, hookrightarrow] &	\Sigma \arrow[r, "\sigma"] & \overline{W/B}& &
		\end{tikzcd}
		\]
		We take a closed subscheme $I\subset \overline{B}\times\Sigma$ such that $q(\mathrm{supp} I)\subset \Sigma\backslash S'$ and the ratinal map $\widehat{\psi}:\overline{B}\times\Sigma\dashrightarrow V_o$ extends to a morphism 
		$$\widetilde{\psi}:\mathrm{Bl}_I(\overline{B}\times\Sigma)\to V_o.$$

\textbf{Step 3.}
		The construction of $P$ and $\Phi$ from the given $Z$, $L$, and $\varepsilon > 0$.
		Let $Z\subset W$ be a Zariski closed subset such that $\mathrm{codim}(Z,W)\geq 2$ and $Z\subset D$. 
		We take a closed subscheme $\widehat{Z}\subset V_o$ such that
		\begin{itemize}
			\item
			$\mathrm{supp}\widehat{Z}=\overline{p^{-1}(U)\cap (\tau(E)\cup\varphi_o^{-1}(Z))}$,
			\item
			$(p\circ \varphi)^{-1}(U)\cap E\subset (p\circ\varphi)^{-1}(U)\cap \tau^*\widehat{Z}$, as closed subschemes of $(p\circ\varphi)^{-1}(U)$.
		\end{itemize}
		Then we have $\mathrm{supp}(\widehat{Z})\subset \varphi_o^{-1}(\overline{D})$ and $\mathrm{codim}(\widehat{Z},V_o)\geq 2$.	
		
		Let $L$ be an ample line bundle on $V$.
		Let $L_{V_o}$ be an ample line bundle on $V_o$.
		Then, there exists a positive constant $c_1>0$ such that for every $g:Y\to V$, we have the estimate:
		\begin{equation}\label{eqn:202211303}
			T_{\tau\circ g}(r,L_{V_o})\leq c_1T_g(r,L)+O(\log r).
		\end{equation}
		Let $L_{\overline{B}}$ be an ample line bundle on $\overline{B}$.
		Let $\kappa:\mathrm{Bl}_I(\overline{B}\times\Sigma)\to \overline{B}$ be the composite of $\mathrm{Bl}_I(\overline{B}\times\Sigma)\to \overline{B}\times\Sigma$ and the first projection $\overline{B}\times\Sigma\to\overline{B}$.
		Then, there exists a positive constant $c_2>0$ such that for every $g:Y\to V_o$ with $g(Y)\not\subset \widetilde{\psi}(\mathrm{Bl}_I(\overline{B}\times\Sigma)\backslash (\overline{B}\times S'))$, we have the estimate (cf. \cite[Lemma 1]{Yam15}):
		\begin{equation}\label{eqn:202211304}
			T_{g'}(r,\kappa^*L_{\overline{B}})\leq c_2T_g(r,L_{V_o})+O(\log r),
		\end{equation}
		where $g':Y\to \mathrm{Bl}_I(\overline{B}\times\Sigma)$ is a lift of $g$.
		
		Let $D'\subset \overline{B}\times \Sigma$ be the reduced divisor defined by the Zariski closure of $\bar{\psi}^{-1}(\overline{D}\cap p^{-1}(U))\subset \overline{B}\times S'$.
		Then we have
		$$
		q(\mathrm{Sp}_BD')\subsetneqq \Sigma.
		$$
		Let $Z'\subset \overline{B}\times \Sigma$ be the schematic closure of $\bar{\psi}^{*}(\widehat{Z}\cap p^{-1}(U))\subset \overline{B}\times S'$.
		Then $\mathrm{codim}(Z',\overline{B}\times \Sigma)\geq 2$ and $\mathrm{supp}Z'\subset D'$.
		Hence we have $q(\mathrm{Sp}_BZ')\subsetneqq \Sigma$.
		We apply \cref{pro:202208062} for $Z'\subset \overline{B}\times \Sigma$ to get $P_0\subset \mathcal{S}(B)\backslash\{B\}$ and $\rho\in \mathbb Z_{>0}$.

		Let $\varepsilon>0$.
		We set $\varepsilon'=\frac{\varepsilon}{c_1c_2(\rho+2)}$.
		We may apply \cref{lem:20220909} for $D'\subset \overline{B}\times \Sigma$, $L_{\overline{B}}$ and $\varepsilon'$ to get $P_1\subset \mathcal{S}(B)\backslash\{B\}$ and $E_1\subsetneqq \Sigma$.
		We apply \cref{lem:202209151} for $Z'\subset \overline{B}\times \Sigma$, $L_{\overline{B}}$ and $\varepsilon'$ to get $P_2\subset \mathcal{S}(B)\backslash\{B\}$ and $E_2\subsetneqq \Sigma$.
		We take a non-empty Zariski open set $U'\subset U$ such that $S'\to U$ is finite over $U'$.
		We set 
		$$\Phi=\big( \varphi(\mathrm{Ex}(\varphi))\cup p^{-1}(\overline{W/B}\backslash U')\cup p^{-1}(\sigma (E_1\cup E_2\cup q(\mathrm{Sp}_BZ'))\big) \cap W.$$
		We set $P=P_0\cup P_1\cup P_2$.
		
		\textbf{Step 4.} The estimates for the lift $Y'\to \overline{B}\times \Sigma$.
		Now let $f:Y\to W$ be a holomorphic map such that $f(Y)\not\subset D\cup \Phi$ and $f\in I_B$.
		We assume that the first assertion of \cref{lem:20220909gg} does not valid.
		By $p\circ f(Y)\not\subset \overline{W/B}\backslash U'$, we may take a lift 
		$$g:Y'\to \overline{B}\times \Sigma$$ 
		such that 
		$$g^{-1}(\partial B\times \Sigma)\subset g_{\Sigma}^{-1}(\Sigma\backslash S').$$
		Hence we have 
		\begin{equation*}
			\pN{g}(r)\leq N_{g_{\Sigma}}(r,\Sigma\backslash S')=O(\log r)+o(T_f(r))||,
		\end{equation*}
		\begin{equation*}
			T_{g_{\Sigma}}(r)=O(\log r)+o(T_f(r))||.
		\end{equation*}
		Note that the covering $\mu:Y'\to Y$ is unramified outside $g_{\Sigma}^{-1}(\Sigma\backslash S')$.
		Hence we have
		\begin{equation*}
			N_{\ram\pi'}(r)=O(\log r)+o(T_f(r))||,
		\end{equation*}
		where $\pi':Y'\to \mathbb C_{>\delta}$ is the composite $\pi'=\pi\circ \mu$.

		Hence by \cref{lem:20220909}, we get 
		\begin{equation*}
			m_{g}(r,D')+N_{g}(r,D')-\overline{N}_{g}(r,D')\leq\varepsilon' T_{g_B}(r,L_{\overline{B}})
			+O(\log r)+o(T_f(r))||.
		\end{equation*}
		By \cref{pro:202208062} 
		\begin{equation*}
			m_{g}(r,Z')+N_{g}(r,Z')\leq N^{(\rho)}_{g}(r,Z')+
			O(\log r)+o(T_f(r))||.
		\end{equation*}
		By \cref{lem:202209151}, we get
		$$
		N^{(1)}_{g}(r,Z')\leq \varepsilon' T_{g_B}(r,L_{\overline{B}})
		+O(\log r)+o(T_f(r))||.
		$$
		Hence
		$$
		m_{g}(r,Z')+N_{g}(r,Z')\leq \rho\varepsilon'T_{g_B}(r,L_{\overline{B}})
		+O(\log r)+o(T_f(r))||.
		$$  
		Note that $K_{\overline{B}\times \Sigma}(\partial (B\times \Sigma))=q^*K_{\Sigma}$.
		Hence
		\begin{multline}\label{eqn:202211301}
			m_{g}(r,Z')+N_{g}(r,Z')+T_{g}(r,K_{\overline{B}\times \Sigma}(D'+\partial (B\times \Sigma)))\leq \overline{N}_{g}(r,D'\backslash Z')
			\\
			+(\rho+2)\varepsilon' T_{g_B}(r,L_{\overline{B}})
			+O(\log r)+o(T_f(r))||.
		\end{multline}

	\textbf{Step 5.}  Translating the estimate \eqref{eqn:202211301} to the original map $f:Y\to W$.
	We continue to consider $f:Y\to W$ from the previous step.
		Since $\bar{\psi}:\overline{B}\times S'\to p^{-1}(U)$ is {\'e}tale, we have
		$$
		\bar{\psi}^*K_{V_o}(\varphi_o^{-1}(\overline{D}+\partial W))=K_{\overline{B}\times \Sigma}(D'+\partial (B\times \Sigma))|_{\overline{B}\times S'}.
		$$
		Let $f':Y\to V$ be the lift of $f:Y\to W$.
		Then $\tau\circ f':Y\to V_o$ is the lift of $f$.
		By \cite[Lemma 3.1]{Yam06} we have  
		\begin{equation}
			T_{g}(r,K_{\overline{B}\times \Sigma}(D'+\partial (B\times \Sigma)))=T_{\tau\circ f'}(r,K_{V_o}(\varphi_o^{-1}(\overline{D}+\partial W)))+O(\log r)+o(T_f(r))||.
		\end{equation}
		Similarly we have (cf. \cite[Lemma 1]{Yam15})
		\begin{equation}
			m_{g}(r,Z')+N_{g}(r,Z')=m_{\tau\circ f'}(r,\widehat{Z})+N_{\tau\circ f'}(r,\widehat{Z})+O(\log r)+o(T_f(r))||,
		\end{equation}
		\begin{equation}\label{eqn:202211302}
			\overline{N}_{g}(r,D'\backslash Z')\leq \overline{N}_{f}(r,D\backslash Z)+O(\log r)+o(T_f(r))||.
		\end{equation}
		Hence combinning \eqref{eqn:202211301}--\eqref{eqn:202211302}, we get
		\begin{multline}
			m_{\tau\circ f'}(r,\widehat{Z})+N_{\tau\circ f'}(r,\widehat{Z})+T_{\tau\circ f'}(r,K_{V_o}(\varphi_o^{-1}(\overline{D}+\partial W)))
			\leq \overline{N}_{f}(r,D\backslash Z)
			\\
			+(\rho+2)\varepsilon' T_{g_B}(r,L_{\overline{B}})
			+O(\log r)+o(T_f(r))||.
		\end{multline}
		By the choice of $\widehat{Z}\subset V_o$, we have $(p\circ \varphi)^{-1}(U)\cap E\subset (p\circ\varphi)^{-1}(U)\cap \tau^*\widehat{Z}$.
		Hence
		$$
		m_{f'}(r,E)+N_{f'}(r,E)\leq m_{\tau\circ f'}(r,\widehat{Z})+N_{\tau\circ f'}(r,\widehat{Z})+O(\log r)+o(T_f(r))||.
		$$
		Hence by \eqref{eqn:20221126}, we get
		\begin{multline}
			T_{f'}(r,K_{V}(\varphi^{-1}(\overline{D}\cup \partial W)))
			\leq \overline{N}_{f}(r,D\backslash Z)
			\\
			+(\rho+2)\varepsilon' T_{g_B}(r,L_{\overline{B}})
			+O(\log r)+o(T_f(r))||.
		\end{multline}
		By \eqref{eqn:202211303} and \eqref{eqn:202211304}, we have $T_{g_B}(r,L_{\overline{B}})\leq c_1c_2T_{f'}(r,L)$.
		Thus we get the second assertion of \cref{lem:20220909gg}.
		The proof is completed.
	\end{proof}

	Let $V$ be a $\mathbb Q$-factorial, projective variety.
	Let $F\subset V$ be an effective Weil divisor.
	Then there exists a positive integer $k$ such that $kF$ is a Cartier divisor.
	Let $g:Y\to V$ be a holomorphic map.
	Then we set
	$$
	T_g(r,F)=\frac{1}{k}T_g(r,\mathcal{O}_V(kF))+O(\log r).
	$$
	This definition does not depend on the choice of $k$.

	The following lemma generalizes \cref{lem:20220909gg} by considering a finite map $X\to A\times S$ instead of a closed subvariety $W\subset A\times S$.
	This lemma is a consequence of \cref{lem:20220909gg}.
	
	\begin{lem}\label{cor:20220805}
		Let $B\subset A$ be a semi-abelian subvariety.
		Let $X$ be a normal variety with a finite map $a:X\to A\times S$, where $S$ is a projective variety.
		Let $D\subset X$ be a reduced Weil divisor.
		Then the following objects exist with the specified property described later: 
		\begin{itemize}
		\item a compactification $\overline{X}$,
		\item
		 a proper birational morphsim $\varphi:\overline{X}'\to \overline{X}$, where $\overline{X}'$ is $\mathbb Q$-factorial.
		 \end{itemize}
		 The satisfied property: 
		Let $Z\subset X$ be a Zariski closed set such that $\mathrm{codim}(Z,X)\geq 2$ and $Z\subset D$.
		Let $L$ be a big line bundle on $\overline{X}'$ and let $\varepsilon>0$.
		Then there exist  
		\begin{itemize}
			\item 
			a finite subset $P\subset \mathcal{S}(B)\backslash\{B\}$,
			\item
			a proper Zariski closed set $\Xi\subsetneqq X$ with $\varphi(\mathrm{Ex}(\varphi))\subset \overline{\Xi}$
		\end{itemize}
		such that for every holomorphic map $f:Y\to X$ with $f(Y)\not\subset D\cup\Xi$ and $a\circ f\in I_B$, one of the following holds:
		\begin{enumerate}
			\item
			There exists $C\in P$ such that $a\circ f\in I_C$.
			\item
			Let $H\subset \overline{X}'$ be a reduced Weil divisor defined by $\varphi^{-1}(\overline{D\cup X_{\mathrm{sing}}\cup \partial X})=H\cup \Gamma$, where $\Gamma\subset \overline{X}'$ is a Zariski closed subset of codimension greater than one.
			Let $g:Y\to \overline{X}'$ be the lift of $f$. 		
			Then
			\begin{equation*}
				T_g(r,K_{\overline{X}'}(H))
				\leq \overline{N}_f(r,D\backslash Z)+\varepsilon T_{g}(r,L)
				\\
				+O(\log r)+o(T_g(r))||.
			\end{equation*}
		\end{enumerate}
	\end{lem}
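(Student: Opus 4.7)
The plan is to reduce to \cref{lem:20220909gg} by passing to the image of $a$ inside a $B$-invariant ambient subvariety. I would first fix a smooth projective equivariant compactification $\overline{A}$ of $A$ compatible with $B$ (via \cref{lem:20221124}), set $W_0 := a(X) \subset A \times S$, and let $W \subset A \times S$ be the Zariski closure of $B \cdot W_0$, which is $B$-invariant and contains $W_0$. Let $\overline{X}$ be the normalization, in the function field $\mathbb{C}(X)$, of the closure of $W_0$ in $\overline{A} \times S$; then $a$ extends to a finite morphism $\bar a : \overline{X} \to \overline{W}$. I would then choose a reduced Weil divisor $D_W \subset W$ whose closure $\overline{D_W} \subset \overline{W}$ contains both $\bar a(\overline D \cup \overline{X}_{\mathrm{sing}})$ and the branch divisor of $\bar a$, and apply \cref{lem:20220909gg} to the triple $(B, W, D_W)$, which produces a smooth proper birational $\varphi_W : V \to \overline{W}$ with $H_V := \varphi_W^{-1}(\overline{D_W} \cup \partial W)$ simple normal crossing. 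The variety $\overline{X}'$ is then constructed as a $\mathbb{Q}$-factorial modification of the normalization of the dominant component of $\overline{X} \times_{\overline{W}} V$, carrying a proper birational $\varphi : \overline{X}' \to \overline{X}$ and a generically finite $\pi : \overline{X}' \to V$.

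The key tool enabling the transfer of estimates is logarithmic Riemann--Hurwitz. Because the branch of $\bar a$ is contained in $\overline{D_W}$ by construction, $\pi$ is unramified outside $H_V$, and the standard log Riemann--Hurwitz identity yields $K_{\overline{X}'}(\pi^{-1}(H_V)_{\mathrm{red}}) = \pi^*(K_V(H_V))$. Since the reduced divisor $H$ defined in the statement is contained in $\pi^{-1}(H_V)_{\mathrm{red}}$, writing the difference as an effective reduced Weil divisor $F$ one obtains
\begin{equation*}
T_g(r, K_{\overline{X}'}(H)) \;\le\; T_{g_V}(r, K_V(H_V)) + O(\log r)
\end{equation*}
for any $g: Y \to \overline{X}'$ with image not contained in $F$, where $g_V := \pi \circ g$.

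For the main statement, given $Z, L, \varepsilon$ as in the hypothesis, set $Z_W := \bar a(Z) \subset D_W$ (of codimension $\ge 2$ in $W$ since $\bar a$ is finite), fix an ample line bundle $L_V$ on $V$, and choose $\varepsilon' := \varepsilon / C$, where $C$ is the constant in the comparison $T_{g_V}(r, L_V) \le C\,T_g(r, L) + O(\log r)$ (available because $L$ is big on $\overline{X}'$ and $\pi$ is generically finite). Applying \cref{lem:20220909gg} to $(Z_W, L_V, \varepsilon')$ produces a finite $P_W \subset \mathcal{S}(B) \setminus \{B\}$ and a proper $\Phi_W \subsetneqq W$; take $P := P_W$ and let $\Xi \subsetneqq X$ be the union of $a^{-1}(\Phi_W) \cap X$, $\varphi(\mathrm{Ex}(\varphi)) \cap X$, $\varphi(F) \cap X$, and a proper Zariski closed set chosen so that $a^{-1}(\overline{D_W}) \setminus (D \cup \Xi) = \emptyset$. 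For $f: Y \to X$ with $a \circ f \in I_B$ and $f(Y) \not\subset D \cup \Xi$, applying \cref{lem:20220909gg} to $a \circ f: Y \to W$ yields either alternative~1 (giving $a \circ f \in I_C$ for some $C \in P$), or else the Nevanlinna bound on $V$; combining the latter with the Riemann--Hurwitz inequality above and the bounds $\overline{N}_{a \circ f}(r, D_W \setminus Z_W) \le \overline{N}_f(r, D \setminus Z) + O(\log r)$ (by the choice of $\Xi$) and $T_{g_V}(r, L_V) \le C\,T_g(r, L) + O(\log r)$ delivers the claimed inequality.

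The main obstacle will be the careful interplay between the branch of $\bar a$ and the given divisor $D$: one needs $D_W$ to contain the branch of $\bar a$ for the Riemann--Hurwitz identity to be clean, while simultaneously ensuring that $a^{-1}(\overline{D_W}) \setminus D$ remains a proper Zariski closed subset of $X$ that can be absorbed into $\Xi$. This forces $a$ to be generically \'etale over $W_0 \setminus a(\overline D)$, which may fail and require shrinking $X$ and enlarging $\Xi$ to absorb the resulting codimension-one contribution. A related subtlety is constructing $\overline{X}'$ as a $\mathbb{Q}$-factorial variety without destroying the finite structure of $\pi$, which can be handled by a small $\mathbb{Q}$-factorial modification compatible with the order-function formalism for $\mathbb{Q}$-Weil divisors set up just before the statement.
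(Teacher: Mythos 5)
Your broad strategy—pass to the image of $a$, make it $B$-invariant, apply \cref{lem:20220909gg}, and transfer the estimate back via log Riemann--Hurwitz through the normalized base change—matches the paper's Steps 2--4 in spirit. However there are two gaps.

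First, you do not actually handle the case $B\not\subset \mathrm{St}^0(a(X))$. Your fix—replacing $W_0=a(X)$ by the $B$-invariant $W=\overline{B\cdot W_0}$—does not work: when $W_0$ is not $B$-invariant, $\dim W>\dim W_0$, so the extension of $a$ to $\overline{X}$ is finite onto $\overline{W_0}$ but \emph{not} onto $\overline{W}$, so $\overline{X}\times_{\overline W} V$ is no longer generically finite over $V$, and more importantly, when you apply \cref{lem:20220909gg} with this enlarged $W$, the exceptional set $\Phi\subsetneq W$ it outputs can perfectly well contain all of $W_0$; then the hypothesis $f(Y)\not\subset D\cup\Phi$ is unverifiable for maps $a\circ f$ that by construction land in $W_0$. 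The paper instead treats this case separately: it invokes \cref{cor:20220806} (the refinement of log Bloch--Ochiai) to upgrade $a\circ f\in I_B$ to $a\circ f\in I_{\mathrm{St}^0(W)}$, and hence to $I_{B'}$ with $B'=B\cap\mathrm{St}^0(W)\subsetneqq B$, outputting $P=\{B'\}$. Only once $B\subset\mathrm{St}^0(W)$ does it apply \cref{lem:20220909gg}, and then $W=a(X)$ is already $B$-invariant with no enlargement needed.

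Second, the bound $\overline{N}_{a\circ f}(r,D_W\backslash Z_W)\leq\overline{N}_f(r,D\backslash Z)+O(\log r)$ is false in general and cannot be repaired ``by the choice of $\Xi$''. Since $D_W$ must contain the branch locus of $\bar a$ (you need this for the Riemann--Hurwitz identity), the map $a\circ f$ will hit $D_W$ at every ramification point of $a$ that $f$ passes through, even when $f(y)\notin D$; those hits contribute to the left side but not to the right, and they are not finite in number. Requiring $a^{-1}(\overline{D_W})\setminus D\subset\Xi$ does not prevent $f$ from meeting that locus infinitely often—it only prevents $f(Y)$ from being identically contained in it. The paper avoids this by \emph{not} discarding the extra piece $F$ of the log canonical bundle: writing $p^{-1}(\widetilde D)_{\mathrm{red}}=F+H$ (with $F$ the branch part), the ramification formula gives $K_{\overline{X}'}(F+H)=p^*K_V(\widetilde D)$, then $\overline{N}_{a\circ f}(r,D'\backslash Z')\le \overline{N}_g(r,F\backslash\overline{X}'_{\mathrm{sing}})+\overline{N}_g(r,H\backslash\varphi^{-1}(Z\cup X_{\mathrm{sing}}))$, and $\mathbb Q$-factoriality plus the Nevanlinna inequality bound $\overline{N}_g(r,F\backslash\overline{X}'_{\mathrm{sing}})\le T_g(r,F)+O(\log r)$; the $T_g(r,F)$ term then cancels against the $F$ in $K_{\overline{X}'}(F+H)$, leaving exactly the claimed estimate for $K_{\overline{X}'}(H)$. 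Your version throws away $T_g(r,F)$ prematurely and consequently has no mechanism to absorb the branch-locus counting.

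Two smaller points: the paper gets $\mathbb Q$-factoriality of $\overline{X}'$ for free from \cite[Lemma 2]{NWY13} (because the cover of $V$ is unramified outside the snc divisor $\widetilde D$), so no further ``small $\mathbb Q$-factorial modification'' is needed—adding one would risk spoiling the clean ramification formula. And the paper's $Z'$ must include not only $a(Z\cup X_{\mathrm{sing}})$ but also the image of $\overline{X}'_{\mathrm{sing}}$, precisely so that the refined counting of $H$-hits above is valid; your $Z_W:=\bar a(Z)$ is missing these ingredients.
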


	\begin{proof}
	We set $W=a(X)$.
	Since $a$ is finite, $W\subset A\times S$ is a Zariski closed set.
	We divide the proof into several steps.

	\textbf{Step 1.}
	We first consider the case that $B\not\subset \mathrm{St}^0(W)$.
		We apply \cref{cor:20220806} to get $\Xi\subset W$.
		We set $B'=B\cap \mathrm{St}^0(W)$.
		Then $B'\subsetneqq B$.
		We set $P=\{ B'\}$.
		Now let $f:Y\to X$ satisfies $f(Y)\not\subset D\cup\Xi$ and $a\circ f\in I_B$.
		By \cref{cor:20220806} we have $a\circ f\in I_{\mathrm{St}^0(W)}$.
		Hence $a\circ f\in I_{B'}$.
		This shows \cref{cor:20220805}, provided $B\not\subset \mathrm{St}^0(W)$.
		In the following, we consider the case $B\subset \mathrm{St}^0W$, which allows us to apply \cref{lem:20220909gg}.
		
		\textbf{Step 2.}
		We find $\overline{X}$ and $\overline{X}'$.
		Let $R\subset W$ be a reduced Weil divisor such that $a:X\to W$ is {\'e}tale outside $R$.
		We take a reduced Weil divisor $D'\subset W$ such that $a(D\cup X_{\mathrm{sing}})\cup R\subset D'$.
		By \cref{lem:20220909gg},  there exist a compactification $W\subset \overline{W}$ and a proper birational morphism $\psi:V\to \overline{W}$, where $V$ is smooth and $\psi^{-1}(\overline{D'}\cup \partial W)\subset V$ is a simple normal crossing divisor.
		Let $\overline{X}\to \overline{W}$ be obtained from $X\to W$ by the normalization.
		Let $p:\overline{X}'\to V$ be obtained from the base change and normalization.

		\begin{equation*}
			\begin{tikzcd}
				X \arrow[r,hookrightarrow] \arrow[d, "a" ']	& \overline{X} \arrow[d]  &\overline{X}' \arrow[l,  "\varphi" ' ] \arrow[d, "p"] \\
				W\arrow[r,hookrightarrow]  & \overline{W}    &V \arrow[l, "\psi"] 
			\end{tikzcd}
		\end{equation*}
		Note that $p:\overline{X}'\to V$ is unramified outside $\widetilde{D}=\psi^{-1}(\overline{D'}\cup \partial W)$.
		Since $\widetilde{D}$ is simple normal crossing, \cite[Lemma 2]{NWY13} yields that $\overline{X}'$ is $\mathbb
		Q$-factorial.

		\textbf{Step 3.}
		We find $P$ and $\Xi$ from the given $Z$, $L$ and $\varepsilon$.
		Let $Z\subset X$ be a Zariski closed set such that $\mathrm{codim}(Z,X)\geq 2$ and $Z\subset D$. 	We set 
		$$Z'=a(Z\cup X_{\mathrm{sing}}
		)\cup (W\cap (\psi\circ p)(\overline{X}'_{\mathrm{sing}})).$$
		Then $\mathrm{codim}(Z',W)\geq 2$ and $Z'\subset D'$.
		Let $L$ be a big line bundle on $\overline{X}'$.
		Let $L_{V}$ be an ample line bundle on $V$.
		Then there exists a positive integer $l\in\mathbb Z_{\geq 1}$ such that $p^*L_{V}(E)=L^{\otimes l}$ for some effective divisor $E\subset \overline{X}'$.
		Let $\varepsilon>0$.
		We set $\varepsilon'=\varepsilon/l$.
		
		Now by \cref{lem:20220909gg} for $Z'\subset D'$, $L_{V}$ and $\varepsilon'$, we get a finite subset $P\subset \mathcal{S}(B)\backslash \{ B\}$ and a proper Zariski closed subset $\Phi\subsetneqq W$.
		We set 
		$$\Xi=a^{-1}(D'\cup \Phi)\cup (X\cap \varphi(E))\cup (X\cap \varphi(\mathrm{Ex}(\varphi))).$$

		\textbf{Step 4.} 
		Let $f:Y\to X$ be a holomorphic map such that $a\circ f\in I_B$ and $f(Y)\not\subset D\cup \Xi$.
		Assume that the first assertion of \cref{cor:20220805} does not valid.
		Let $g:Y\to \overline{X}'$ be the lift of $f$.
		Then $p\circ g:Y\to V$ is the lift of $a\circ f:Y\to W$.
		Hence by \cref{lem:20220909gg}, we get 
		\begin{equation}\label{eqn:202211261}
			T_{p\circ g}(r,K_V(\widetilde{D}))\leq \overline{N}_{a\circ f}(r,D'\backslash Z')
			+\varepsilon' T_{p\circ g}(r,L_V)+O(\log r)+o(T_{p\circ g}(r))||
		\end{equation}
		We define a reduced divisor $F$ on $\overline{X'}$ by $p^{-1}(\widetilde{D})=F+H$.
		By the ramification formula, we have
		\begin{equation*}
			K_{\overline{X'}}(F+H)=p^*K_{V}(\widetilde{D}).
		\end{equation*}
		Thus we have
		\begin{equation*}
			T_{g}(r,K_{\overline{X'}}(F+H)) 	=T_{p\circ g}(r,K_{V}(\widetilde{D}))+O(\log r).
		\end{equation*}
		Combining this estimate with \eqref{eqn:202211261} and $p^*L_{V}(E)=L^{\otimes l}$, we get
		\begin{equation}\label{eqn:20221201}
			T_{g}(r,K_{\overline{X'}}(F+H))\leq \overline{N}_{a\circ f}(r,D'\backslash Z')
			+\varepsilon T_{g}(r,L)+O(\log r)+o(T_{ g}(r))||
		\end{equation}

		We estimate the right hand side of \eqref{eqn:20221201}.
		We have
		\begin{equation}\label{eqn:202212011}
			\overline{N}_{a \circ
				f}(r,\D'\backslash Z')
			\leq \overline{N}_{g}(r,F\backslash  \overline{X}'_{\mathrm{sing}})+\overline{N}_{g}(r,H\backslash  \varphi^{-1}(Z\cup X_{\mathrm{sing}}))
		\end{equation}
		We set $H'$ by $H=H'+\varphi^{-1}(\partial X+\overline{D})$.
		Then $\varphi(H')\subset X_{\mathrm{sing}}$.
		Hence by $\overline{N}_f(r,\partial X)=0$, 
		we have
		\begin{equation}\label{eqn:202208051} 	\overline{N}_g(r,H\backslash \varphi^{-1}(Z\cup X_{\mathrm{sing}}))\leq \overline{N}_f(r,D\backslash Z).
		\end{equation}
		Since $\overline{X}'$ is $\mathbb Q$-factorial, we may take a positive integer $k$ such that $kF$ is a Cartier divisor.
		Since $F\cap (\overline{X}'\backslash \overline{X}'_{\mathrm{sing}})$
		is a Cartier divisor
		on $\overline{X}'\backslash \overline{X}'_{\mathrm{sing}}$, we have
		$$
		k\ \mathrm{ord}_z g^{*}F=\mathrm{ord}_z g^{*}(kF)
		$$
		for $z\in g^{-1} (\overline{X}'\backslash \overline{X}'_{\mathrm{sing}})$,
		and hence
		$$
		k\min \{ 1,\mathrm{ord}_z g^{*}F\}
		\leq \mathrm{ord}_z g^{*}(kF).
		$$
		Thus we get
		\begin{equation*}
			k\overline{N}_g(r,F\backslash \overline{X}'_{\mathrm{sing}})\leq
			N_g(r,kF).
		\end{equation*}
		By the Nevanlinna inequality (cf. \eqref{eqn:20221120}), we have
		$$
		N_g(r,kF)\leq T_{g}(r,kF)+O(\log r).
		$$
		Hence
		\begin{equation*}
			\overline{N}_g(r,F\backslash \overline{X}'_{\mathrm{sing}})\leq T_{g}(r,F)+O(\log r).
		\end{equation*}
		Combining this with \eqref{eqn:202212011} and \eqref{eqn:202208051}, we get
		$$
		\overline{N}_{a \circ
			f}(r,\D'\backslash Z')\leq \overline{N}_f(r,D\backslash Z)+T_{g}(r,F)+O(\log r).	$$
		Combining this estimate with \eqref{eqn:20221201}, we get the second assertion of \cref{cor:20220805}.
		This concludes the proof.
	\end{proof}

	\begin{proposition}\label{prop:20220902}
		Let $\Sigma$ be a smooth quasi-projective variety which is of log general type.
		Assume that there is a morphism $a:\Sigma\to A\times S$ such that $\dim \Sigma=\dim a(\Sigma)$, where $S$ is a projective variety.
		Let $B\subset A$ be a semi-abelian subvariety.
		Then there exist a finite subset $P\subset \mathcal{S}(B)\backslash\{B\}$ and a proper Zariski closed set $\Phi\subsetneqq \Sigma$ with the following property:
		Let $f:Y\to \Sigma$ be a holomorphic map such that $a\circ f\in I_B$ and $f(Y)\not\subset \Phi$. 
		Then either one of the followings holds:
		\begin{enumerate}
			\item
			There exists $C\in P$ such that $a\circ f\in I_C$.
			\item
			$f$ does not have essential singularity over $\infty$.
		\end{enumerate}
	\end{proposition}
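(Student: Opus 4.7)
The plan is to apply Lemma~\ref{cor:20220805} together with the bigness of a log-canonical bundle derived from the log general type hypothesis on $\Sigma$. Since $\dim\Sigma=\dim a(\Sigma)$, factor $a=a'\circ\alpha$ with $\alpha\colon\Sigma\to X$ birational and $a'\colon X\to A\times S$ finite, where $X$ is the normalization of $a(\Sigma)$ in $\mathbb{C}(\Sigma)$. Fix a smooth projective compactification $\overline\Sigma$ with simple normal crossing boundary $\partial\Sigma$, so that $K_{\overline\Sigma}(\partial\Sigma)$ is big. Apply Lemma~\ref{cor:20220805} to $a'\colon X\to A\times S$ with $D=\emptyset$ to obtain a compactification $\overline X$ and a proper birational morphism $\varphi\colon\overline X'\to\overline X$ with $\overline X'$ $\mathbb{Q}$-factorial, together with a reduced Weil divisor $H\subset\overline X'$ satisfying $\varphi^{-1}(\overline{X_{\mathrm{sing}}\cup\partial X})=H\cup\Gamma$ with $\mathrm{codim}(\Gamma,\overline X')\geq 2$.

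The next step is to check that $K_{\overline X'}(H)$ is big. The open complement $\overline X'\setminus(H\cup\Gamma)$ is identified via $\varphi$ with the smooth locus $X_{\mathrm{sm}}\subset X$, and $X_{\mathrm{sm}}$ is smooth, quasi-projective, and birational to $\Sigma$; hence $\bar\kappa(X_{\mathrm{sm}})=\bar\kappa(\Sigma)=\dim\Sigma$ by birational invariance of the log Kodaira dimension, and, passing to a further smooth SNC resolution of $(\overline X',H)$ if necessary, this forces $K_{\overline X'}(H)$ to be big. Fix an ample line bundle $L$ on $\overline X'$; choose $n\in\mathbb Z_{>0}$ and an effective divisor $E\subset\overline X'$ with $nK_{\overline X'}(H)=L+E$ in $\mathrm{Pic}(\overline X')\otimes\mathbb Q$. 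Set $\varepsilon:=1/(2n)$ and invoke Lemma~\ref{cor:20220805} again with this $L$, $\varepsilon$ (still $D=Z=\emptyset$) to get a finite set $P\subset\mathcal{S}(B)\setminus\{B\}$ and a proper Zariski closed subset $\Xi\subsetneqq X$ with $\varphi(\mathrm{Ex}(\varphi))\cap X\subset\Xi$. Define
\[
\Phi:=\alpha^{-1}\!\bigl(\Xi\cup(\varphi(\mathrm{Supp}\,E)\cap X)\bigr)\subsetneqq\Sigma,
\]
which is proper Zariski closed since the birational $\varphi$ maps every proper divisor of $\overline X'$ to a proper subset of $\overline X$.

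Finally, let $f\colon Y\to\Sigma$ satisfy $a\circ f\in I_B$ and $f(Y)\not\subset\Phi$, and suppose the first conclusion of the proposition fails. Let $g\colon Y\to\overline X'$ be the lift of $\alpha\circ f$, which exists as $f(Y)\not\subset\alpha^{-1}(\Xi)$. With $D=Z=\emptyset$, conclusion~(2) of Lemma~\ref{cor:20220805} reads
\[
T_g(r,K_{\overline X'}(H))\leq\varepsilon\,T_g(r,L)+O(\log r)+o(T_g(r))||.
\]
Since $g(Y)\not\subset\mathrm{Supp}\,E$, the Nevanlinna inequality~\eqref{eqn:20221120} applied to the effective $E$ in $nK_{\overline X'}(H)=L+E$ yields $T_g(r,L)\leq n\,T_g(r,K_{\overline X'}(H))+O(\log r)$. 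Combining,
\[
\tfrac12\,T_g(r,L)=(1-n\varepsilon)\,T_g(r,L)\leq O(\log r)+o(T_g(r))||,
\]
so $T_g(r,L)=O(\log r)||$. Now take a smooth common resolution $\widetilde W$ of $\overline\Sigma$ and $\overline X'$, enlarging $\Phi$ by the (codimension-$\geq 1$) exceptional loci so that the common lift $\widetilde f\colon Y\to\widetilde W$ is well defined. Then $T_{\widetilde f}(r)=O(\log r)||$ for any ample bundle on $\widetilde W$, while $N_{\mathrm{ram}\,\pi}(r)=O(\log r)||$ is inherited from $a\circ f\in I_B$. Lemma~\ref{lem:20230411} now yields an extension of $\widetilde f$ across $\infty$, and projection through $\widetilde W\to\overline\Sigma$ produces the desired extension of $f$.

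The principal obstacle is the bigness of $K_{\overline X'}(H)$: because $\overline X'$ is only known to be $\mathbb Q$-factorial (not smooth) and $H$ not a priori SNC, one must carry out a further smooth SNC resolution to realize the log Kodaira dimension of $(\overline X',H)$ as that of $X_{\mathrm{sm}}$, and then track how bigness pulls back. A routine technicality is enlarging $\Phi$ to simultaneously absorb $\Xi$, $\mathrm{Supp}\,E$, and the exceptional loci of $\widetilde W\to\overline\Sigma,\overline X'$, all of which are codim~$\geq 1$, so $\Phi\subsetneqq\Sigma$ remains proper.
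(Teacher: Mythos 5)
The core estimate you want to extract from \cref{cor:20220805} is correct in outline, but there is a genuine gap in the justification that $K_{\overline X'}(H)$ is big, and it traces back to your decision to apply \cref{cor:20220805} with $D=\emptyset$.

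You argue that $\overline X'\setminus(H\cup\Gamma)$ is (essentially) the smooth locus $X_{\mathrm{sm}}$ of the normalization $X$, and that $\bar\kappa(X_{\mathrm{sm}})=\bar\kappa(\Sigma)=\dim\Sigma$ ``by birational invariance of the log Kodaira dimension.'' That invariance holds for \emph{proper} birational morphisms between smooth models, not for abstract birational equivalence of quasi-projective varieties: for instance $\mathbb C^2$ and $(\mathbb C^*)^2$ are birational but $\bar\kappa(\mathbb C^2)=-\infty$ and $\bar\kappa((\mathbb C^*)^2)=0$. The same failure can occur here. Take $A=(\mathbb C^*)^2$, $S$ a point, and $\Sigma=(\mathbb C^*)^2\setminus C$ for a high-degree curve $C$, with $a\colon\Sigma\hookrightarrow A$ the inclusion; then $\Sigma$ is of log general type while $X=X_{\mathrm{sm}}=(\mathbb C^*)^2$ has $\bar\kappa=0$. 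With your choice $D=\emptyset$, the divisor $H$ only sees $X_{\mathrm{sing}}$ and $\partial X$, so $K_{\overline X'}(H)$ is \emph{not} big and the ensuing cancellation $T_g(r,L)=O(\log r)||$ cannot be derived. The birational map $\alpha\colon\Sigma\to X$ is not proper, so it loses information precisely at the boundary $\overline\Sigma\setminus\Sigma$; this is exactly the information $D$ must encode.

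The paper's proof avoids this by taking a smooth partial compactification $\overline\Sigma$ with a \emph{proper} birational $\overline\varphi\colon\overline\Sigma\to X$, identifying a codimension-$\ge2$ set $Z\subset X$ over which $\overline\varphi$ is not an isomorphism, and defining $D$ as the closure of $(X\setminus Z)\cap\overline\varphi(\overline\Sigma\setminus\Sigma)$. Then $X\setminus(Z\cup D)$ is isomorphic (through $\overline\varphi$) to a dense Zariski-open subset of $\Sigma$, and together with \cite[Lemma~4]{NWY13} this yields that $X\setminus(D\cup X_{\mathrm{sing}})$ is of log general type, hence (via \cite[Lemma~3]{NWY13}) that $K_{\overline X'}(H)$ with $H$ built from $\varphi^{-1}(\overline{D\cup X_{\mathrm{sing}}\cup\partial X})$ is big. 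Crucially, the term $\overline N_{\varphi\circ f}(r,D\setminus(Z\cap D))$ that then reappears on the right-hand side of \cref{cor:20220805} vanishes, because $f$ maps into $\Sigma$ and the isomorphism over $X\setminus Z$ sends $D\setminus Z$ into $\overline\Sigma\setminus\Sigma$; your version with $D=\emptyset$ also has no such term, but at the cost described above. To repair your argument you would need to reinstate this boundary divisor $D$ and then verify the vanishing of the corresponding counting function, which is exactly the paper's route.
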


	\begin{proof}
		Let $W\subset A\times S$ be the Zariski closure of $a(\Sigma)$.
		Let $\pi :X\to W$ be the normalization with respect to $a$
		and let $\varphi :\Sigma\to X$ be the induced map.
		Then $\varphi$ is birational.
		Let $\overline{\Sigma}$ be a smooth partial compactification such that $\varphi$
		extends to a projective morphism $\overline{\varphi}:\overline{\Sigma}\to X$ and
		$\overline{\Sigma}\backslash \Sigma$ is a divisor on $\overline{\Sigma}$. 
		Since $\overline{\varphi}$ is birational and $X$ is normal, there exists a
		Zariski closed subset $Z\subset X$ whose codimension is greater than one
		such that $\overline{\varphi}:\overline{\Sigma}\to X$ is an isomorphism over $X\backslash Z$.
		In particular $X\backslash Z$ is smooth.
		Let $D$ be the Zariski closure of
		$(X\backslash Z)\cap \bar{\varphi}(\bar{\Sigma}\backslash \Sigma)$ in $X$.
		Then $D$ is a reduced divisor on $X$ and $X\backslash (Z\cup D)$
		is of log-general type.
		Thus by \cite[Lemma 4]{NWY13}, $X\backslash (X_{\mathrm{sing}}\cup D)$
		is of log-general type.
		Note that $(D\cap Z)\cup X_{\mathrm{sing}}\subset Z$.

		We apply \cref{cor:20220805} to get $\overline{X}$ and $\psi:\overline{X}'\to \overline{X}$.
		We define a reduced divisor $H\subset \overline{X}'$ to be 	
		$\psi^{-1}(\overline{D\cup X_{\mathrm{sing}}\cup \partial X})=H\cup \Gamma$, where $\Gamma\subset \overline{X}'$ is a Zariski closed subset of codimension greater than one.	
		Since $X\backslash (D\cup X_{\mathrm{sing}})$ is of log-general type, we deduce that $\psi ^{-1}(X\backslash (D\cup
		X_{\mathrm{sing}}))=\overline{X}'\backslash (H\cup \Gamma)$ is also of log-general type.
		Thus by \cite[Lemma 3]{NWY13}, $K_{\overline{X}'}(H)$ is big.
		
		By Kodaira's lemma, there exist an effective divisor $E\subsetneqq \overline{X}'$ and a positive integer $l\in\mathbb Z_{\geq 1}$ such that $lK_{\overline{X}'}(H)-E$ is ample.	
		Hence if $g:Y\to \overline{X}'$ satisfies $g(Y)\not\subset E$, then 
		$$T_{g}(r)= O( T_{g}(r,K_{\overline{X'}}(H)))+O(\log r).$$

		By \cref{cor:20220805} applied to $Z\cap D$, $L=K_{\overline{X}'}(H)$ and $\varepsilon=1/2$, we get $P\subset \mathcal{S}(B)\backslash\{B\}$ and $\Xi\subsetneqq X$.
		We set $\Phi=\varphi^{-1}(\Xi\cup\psi(E))$.
		Let $f:Y\to \Sigma$ be a holomorphic map such that $a\circ f\in I_B$ and $f(Y)\not\subset \Phi$.
		Then $\varphi\circ f(Y)\not\subset \Xi$. 
		Suppose that the first assertion of \cref{prop:20220902} is not valid.
		Then by \cref{cor:20220805}, we get
		\begin{equation*}
			T_{g}(r,K_{\overline{X}'}(H))
			\leq \overline{N}_{\varphi\circ f}(r,D\backslash Z)+\frac{1}{2}T_{g}(r,K_{\overline{X}'}(H))
			\\
			+O(\log r)+o(T_{g}(r))||,
		\end{equation*}
		where $g:Y\to \overline{X}'$ is the lift of $\varphi\circ f:Y\to X$.
		We have
		$$
		\overline{N}_{\varphi\circ f}(r,D)=\overline{N}_{\varphi\circ f}(r,D\cap Z).$$
		Hence we get
		$$
		T_{g}(r,K_{\overline{X}'}(H))=O(\log r)+o(T_{g}(r))||.
		$$ 	 	
		Thus $T_{\varphi\circ f}(r)=O(\log r) ||$.
		This shows $T_{a\circ f}(r)=O(\log r) ||$. 
		Hence $a\circ f\in I_{\{0\}}$.
		Hence by \cref{rem:20221203}, $f$ does not have essential singularity over $\infty$.
	\end{proof}

	The following theorem implies \cref{thm2nd}, when $S$ is a single point.
	
	\begin{thm}\label{thm:20221201}
		Let $X$ be a smooth quasi-projective variety which is of log general type.
		Assume that there is a morphism $a:X\to A\times S$ such that $\dim X=\dim a(X)$, where $S$ is a projective variety.
		Then there exists a proper Zariski closed set $\Xi\subsetneqq X$ with the following property:
		Let $f:Y\to X$ be a holomorphic map with the following three properties:
		\begin{enumerate}[label=(\alph*)]
			\item \label{item1}
			$T_{(a\circ f)_S}(r)=O(\log r)+o(T_f(r))||$, 
			\item \label{item2}
			$N_{\ram\pi}(r)=O(\log r)+o(T_f(r))||$,
			\item \label{item3}
			$f(Y)\not\subset \Xi$.
		\end{enumerate}
		Then $f$ does not have essential singularity over $\infty$.
	\end{thm}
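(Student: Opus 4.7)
The starting observation is that hypotheses \ref{item1} and \ref{item2} together say precisely $a\circ f\in I_A$: the remaining requirement $T_{q_A\circ(a\circ f)_A}(r)=O(\log r)+o(T_{a\circ f}(r))||$ for membership in $I_A$ is vacuous since $q_A:A\to A/A$ is the zero map, and since $\dim X=\dim a(X)$ makes $a$ generically finite onto its image, $T_f(r)$ and $T_{a\circ f}(r)$ are comparable up to $O(\log r)$, so the error terms $o(T_f(r))$ and $o(T_{a\circ f}(r))$ may be identified.

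The approach is to descend on $\dim B$ via iterated application of \cref{prop:20220902}. I would first build, without reference to $f$, a finite set $\mathcal B$ of semi-abelian subvarieties of $A$ together with proper Zariski closed subsets $\Phi_B\subsetneqq X$ for each $B\in\mathcal B$, defined recursively: include $A$ in $\mathcal B$; and for each $B\in\mathcal B$ invoke \cref{prop:20220902} on $(a,B)$ to obtain a finite $P_B\subset\mathcal S(B)\setminus\{B\}$ and $\Phi_B\subsetneqq X$, then add every element of $P_B$ to $\mathcal B$. Since $\dim C<\dim B$ for each $C\in P_B$ and each $P_B$ is finite, this recursion terminates and $\mathcal B$ is finite. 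Set
\[
\Xi=\bigcup_{B\in\mathcal B}\Phi_B,
\]
a finite union of proper Zariski closed subsets of $X$, hence itself a proper Zariski closed subset; crucially, it depends on $a$ alone.

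Now fix $f$ satisfying \ref{item1}--\ref{item3}. Starting at $B_0=A$, I would build a chain $B_0\supsetneq B_1\supsetneq\cdots$ in $\mathcal B$ with $a\circ f\in I_{B_i}$ at every stage. At the $i$-th step, $a\circ f\in I_{B_i}$ and $f(Y)\not\subset\Phi_{B_i}\subset\Xi$ allow \cref{prop:20220902} to be applied with $B=B_i$: conclusion~(2) immediately yields that $f$ has no essential singularity over $\infty$, so we are done; otherwise conclusion~(1) supplies some $B_{i+1}\in P_{B_i}\subset\mathcal B$ with $a\circ f\in I_{B_{i+1}}$ and $\dim B_{i+1}<\dim B_i$, and we iterate. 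Since $\dim B_i$ strictly decreases, conclusion~(2) must trigger within at most $\dim A$ steps, completing the proof.

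The main subtlety is not computational but conceptual: the Zariski closed set $\Xi$ must be constructed once, before any $f$ is chosen, so that a single set works uniformly for every admissible $f$. This is automatic from the above construction, since \cref{prop:20220902} supplies $(P_B,\Phi_B)$ as functions of $(B,a)$ alone; the recursive tree of subvarieties, and therefore $\Xi$, is intrinsic to $(X,a)$. The remainder of the argument is then a straightforward bookkeeping induction along the descending chain of semi-abelian subvarieties.
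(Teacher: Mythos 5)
Your overall strategy — build a finite tree of semi-abelian subvarieties by iterating \cref{prop:20220902}, take $\Xi$ to be the union of the exceptional sets $\Phi_B$ it returns, then run a descending chain for a given $f$ until conclusion~(2) fires — is exactly the paper's argument, and the termination/uniformity bookkeeping is handled correctly.

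However, there is a genuine gap in your opening paragraph. You assert that because $\dim X=\dim a(X)$, the map $a$ is generically finite onto its image, ``so $T_f(r)$ and $T_{a\circ f}(r)$ are comparable up to $O(\log r)$.'' That comparability is \emph{not} automatic: generic finiteness only gives a proper Zariski closed set $E\subsetneqq X$ outside of which $a$ is quasi-finite, and the inequality $T_g(r)=O(T_{a\circ g}(r))+O(\log r)$ holds only for maps $g:Y\to X$ with $g(Y)\not\subset E$. If $f(Y)$ lies inside a positive-dimensional fibre of $a$, then $T_{a\circ f}(r)=O(\log r)$ while $T_f(r)$ can be unbounded, and your claimed identification of $o(T_f(r))$ with $o(T_{a\circ f}(r))$ fails — so hypotheses (a), (b) do \emph{not} imply $a\circ f\in I_A$ for such $f$. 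The paper avoids this by making $E$ the very first ingredient of $\Xi$: it sets $\Xi=E\cup\bigcup_i\Xi_i$. Your construction $\Xi=\bigcup_{B\in\mathcal B}\Phi_B$ omits $E$, and nothing in \cref{prop:20220902} builds it in for you (that proposition already takes $a\circ f\in I_B$ as a hypothesis, so it cannot supply the conversion). The fix is simple — enlarge $\Xi$ to contain $E$ and then perform the $o(T_f)\rightsquigarrow o(T_{a\circ f})$ translation only after assuming $f(Y)\not\subset\Xi$ — but as written the argument's first step is unjustified.
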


	\begin{proof}
		We take a Zariski closed set $E\subsetneqq X$ such that if $g:Y\to X$ satisfies $g(Y)\not\subset E$, then $T_{g}(r)=O(T_{a\circ g}(r))+O(\log r)$.
		We take a sequence $\mathcal{P}_i\subset \mathcal{S}(A)$ of finite sets as follows.
		Set $\mathcal{P}_1=\{A\}$.
		Given $\mathcal{P}_i$, we define $\mathcal{P}_{i+1}$ as follows.
		For each $B\in\mathcal{P}_i$, we apply \cref{prop:20220902} to get $\Phi_B\subsetneqq X$ and $P_B\subset \mathcal{S}(B)\backslash\{B\}$.
		We set $\mathcal{P}_{i+1}=\cup_{B\in \mathcal{P}_i}P_B$ and $\Xi_{i+1}=\cup_{B\in\mathcal{P}_i}\Phi_B$.
		Set $\Xi=E\cup \cup_i\Xi_i$.
		Then $\Xi\subsetneqq X$ is a proper Zariski closed set.
		
		Let $f:Y\to X$ satisfy the three properties in \cref{thm:20221201}.
		Then $f\in I_A$.
		We take $i$ which is maximal among the property that there exists $B\in \mathcal{P}_i$ such that $f\in I_B$. 	
		Then by $f(Y)\not\subset \Phi_B\subset \Xi$,
		\cref{prop:20220902} implies that $f$ does not have essential singularity over $\infty$.
	\end{proof}
	We state and prove a more general result than that used in the implications of \cref{cor:20221102}: \ref{being general type1}$\implies$\ref{pseudo Picard1} and \ref{spab}$\implies$\ref{strong LGT1}.
	 This result will be used in the proof of \cite[Theorem C]{CDY22}. 
	\begin{cor}\label{cor:GGL}
		Let $X$ be a smooth quasi-projective variety and let $a:X\to A\times S^{\circ}$ be a morphism such that $\dim X=\dim a(X)$, where $S^{\circ}$ is a smooth quasi-projective variety ($S^{\circ}$ can be a point). Write $b:X\to S^{\circ}$ as the composition of $a$ with the projection map $A\times S^{\circ}\to S^{\circ}$. Assume that $b$ is dominant.
		\begin{thmlist}
			\item\label{coritem1} 
			Suppose $S^{\circ}$ is pseudo Picard hyperbolic.
			If $X$ is of log general type, then $X$  is pseudo Picard hyperbolic.
			\item \label{coritem2} Suppose $\Spalg(S^{\circ})\subsetneqq S^{\circ}$.
			If $\Spab(X)\subsetneqq X$, then $\Spalg(X)\subsetneqq X$.   
		\end{thmlist} 
	\end{cor}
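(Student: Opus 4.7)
The plan for part (i) is as follows. Let $S$ be a smooth projective compactification of $S^\circ$, so that $a$ is viewed as $X \to A \times S^\circ \hookrightarrow A \times S$ with $\dim X = \dim a(X)$. Applying \cref{thm:20221201} produces a proper Zariski closed subset $\Xi_1 \subsetneqq X$. Since $b$ is dominant and $\Spp(S^\circ) \subsetneqq S^\circ$ by hypothesis, the set $\Xi := \Xi_1 \cup b^{-1}(\Spp(S^\circ))$ is a proper Zariski closed subset of $X$. For any holomorphic map $f: \mathbb D^* \to X$ with essential singularity at the origin and $f(\mathbb D^*) \not\subset \Xi$, equip $Y = \mathbb D^*$ with $\pi_Y : z \mapsto 1/z$ onto some $\mathbb C_{>\delta}$, so that $\pi_Y$ is unramified and condition (b) of \cref{thm:20221201} is automatic. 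Since $b\circ f(\mathbb D^*) \not\subset \Spp(S^\circ)$, the map $b\circ f$ cannot have essential singularity and therefore extends holomorphically to $\mathbb D \to S$; integrating $(b\circ f)^*\omega$ for a K\"ahler form $\omega$ on $S$ shows that $T_{b\circ f}(r) = O(\log r)$, verifying condition (a). \Cref{thm:20221201} then forces $f$ to extend holomorphically at the origin, contradicting the assumption. Hence $\Spp(X) \subset \Xi \subsetneqq X$.

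For part (ii), let $E_1 \subsetneqq X$ denote the locus over which $a$ fails to be quasi-finite, and set $\Xi := \Spab(X) \cup E_1 \cup b^{-1}(\Spalg(S^\circ))$, a proper Zariski closed subset of $X$ by the hypotheses and the dominance of $b$. It suffices to show that every positive-dimensional closed subvariety $V \subset X$ with $V \not\subset \Xi$ is of log general type. For such $V$, the restriction $a|_V$ is generically finite onto its image, and $\overline{b(V)}$ is of log general type (otherwise it would lie in $\Spalg(S^\circ)$). The key step is to treat the dominant morphism $b|_V : V \to \overline{b(V)}$ as a fibration: for a very general closed point $s \in \overline{b(V)}$, the fiber $F := b|_V^{-1}(s)$ maps via $a|_F$ into $A \times \{s\} \cong A$ with $\dim F = \dim a|_F(F)$; moreover, $\Spab(X) \cap V \subsetneqq V$ implies $F \not\subset \Spab(X)$ for very general $s$, so $\Spab(F) \subset \Spab(X) \cap F \subsetneqq F$. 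Passing to a smooth model $\widetilde F$ of $F$, one checks that $\Spab(\widetilde F) \subsetneqq \widetilde F$ and that $\widetilde F$ still has maximal quasi-Albanese dimension; \cref{cor:20221102} then yields that $F$ is of log general type. Since both $\overline{b(V)}$ and the very general fiber of $b|_V$ are of log general type, the logarithmic Iitaka additivity for fibrations with base of log general type (due to Viehweg and Kawamata) yields that $V$ is of log general type.

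The main obstacle lies in part (ii): one must arrange a simultaneous log-resolution so that the fibration $b|_V : V \to \overline{b(V)}$ and its very general fiber $F$ may be replaced by smooth models for which both \cref{cor:20221102} and the Iitaka additivity theorem apply. Verifying that $\Spab(\widetilde F) \subsetneqq \widetilde F$ on a resolution and that maximal quasi-Albanese dimension is preserved is routine but requires tracking birational invariance. For part (i), the only non-trivial step is recognizing that holomorphic extendability of $b\circ f$ from $\mathbb D^*$ to $\mathbb D$ converts to the growth estimate $T_{b\circ f}(r) = O(\log r)$ needed in \cref{thm:20221201}, which follows from a standard Nevanlinna-theoretic argument.
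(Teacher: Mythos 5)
Your proof of part (i) matches the paper's argument essentially line for line: same choice of $\Xi$, same use of pseudo Picard hyperbolicity of $S^\circ$ to convert extendability of $b\circ f$ into the growth bound $T_{b\circ f}(r)=O(\log r)$, same invocation of \cref{thm:20221201}.

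Your argument for part (ii) is correct but takes a genuinely different route from the paper's. The paper proceeds in two stages: it first establishes $\bar{\kappa}(V)\geq 0$ by one application of Fujino's additivity to $b|_V$ (using only that the base $\overline{b(V)}$ is of log general type and that a general fiber has $\bar{\kappa}\geq 0$ via \cref{prop:Koddimabb}), then passes to the log Iitaka fibration $j:V\to J(V)$ of a birational model and argues by contradiction: a positive-dimensional very general Iitaka fiber $F$ would, after a second application of Fujino's additivity to $b|_F$, force $b(F)$ to be a point, whence \cref{lem:20230509} gives $\Spab(F)=F$ and hence $\Spab(V)=V$. You instead fiber $V$ directly over $\overline{b(V)}$, show that a very general fiber $F$ of $b|_V$ has maximal quasi-Albanese dimension into $A$ and satisfies $\Spab(\widetilde F)\subsetneqq\widetilde F$ on a smooth model, apply the implication (e)$\Rightarrow$(a) of \cref{cor:20221102} to conclude $F$ is of log general type, and then use Fujino's additivity exactly once. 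This is arguably cleaner as it reuses \cref{cor:20221102} rather than replicating the Iitaka-fibration argument from its proof, and it avoids the two-stage structure; the cost is that one must pass to a smooth model $\widetilde F$ of the (possibly singular) fiber and verify that $\Spab(\widetilde F)\subsetneqq\widetilde F$ and maximal quasi-Albanese dimension persist under the resolution $\widetilde F\to F$, both of which are routine but need the observation that a nonconstant rational map $A'\dashrightarrow\widetilde F$ regular off codimension two pushes forward, under the birational morphism $\widetilde F\to F\hookrightarrow X$, to such a map into $X$. The use of \cref{cor:20221102} here is not circular, since the implication (e)$\Rightarrow$(a) of that theorem is proved in \S 2 purely via \cref{prop:Koddimabb} and \cref{lem:20230509}, independently of \cref{thm:20221201}.
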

	
	\begin{proof}
		Let $S$ be a smooth projective variety that compactifies $S^\circ$. 
		\\
		\noindent{\em Proof of \cref{coritem1}:}	Since $S^\circ$ is pseudo Picard hyperbolic, there exists a proper Zariski closed subset $Z\subset S^\circ$ such that each holomorphic map $g:\bD^*\to S^\circ$ with $g(\bD^*)\not\subset Z$ has no essential singularity at $\infty$. 
		By applying \cref{thm:20221201} to $X\to A\times S$, we obtain a proper Zariski closed set $\Xi\subsetneqq X$ that satisfies the property given in \cref{thm:20221201}. Set $\Sigma:=b^{-1}(Z)\cup \Xi$.  Since $b$ is dominant, $\Sigma$ is a proper Zariski closed subset of $X$.  Then for any holomorphic map $f:\bD^*\to X$ with $f(\bD^*)\not\subset \Sigma$, we have $b\circ f(\bD^*)\not\subset Z$ and $f(\bD^*)\not\subset \Xi$. It follows that
		$
		T_{b\circ f}(r)=O(\log r).
		$ 
		Hence $f$ verifies Properties \Cref{item1} and \Cref{item3} in   \cref{thm:20221201}. 
		Note that Property \Cref{item2}  in \cref{thm:20221201} is automatically satisfied as  $N_{\ram\pi}(r)=0$. We can now apply \cref{thm:20221201} to conclude that $f$ does not have an essential singularity over $\infty$. Therefore, $X$ is pseudo Picard hyperbolic.
		
		\medspace
		
		\noindent {\em Proof of \cref{coritem2}:}   
		Since $\dim X = \dim a(X)$, there is a proper Zariski closed subset $\Upsilon \subsetneqq X$ such that the restriction of $a|_{X \backslash \Upsilon}:X \backslash \Upsilon\to A\times S^\circ$ is quasi-finite. 
		Let $\Xi := b^{-1}(\Spalg(S^\circ)) \cup \Spab(X) \cup \Upsilon$.  
		Then by the assumptions $\Spalg(S^\circ)\subsetneqq S^\circ$ and $\Spab(X)\subsetneqq X$, we have $\Xi\subsetneqq X$.

		Let $V$ be a closed subvariety of $X$ that is not contained in $\Xi$. Then we have
		\begin{equation}\label{eqn:20230510}
			\Spab(V)\subsetneqq V.
		\end{equation} 
		In the following, we shall prove that $V$ is of log general type to conclude $\Spalg(X)\subset \Xi$.

		We first show $\bar{\kappa}(V)\geq 0$.
		Note that for a general fiber $F$ of $b|_{V}:V\to S^\circ$, we have $\dim F=\dim c(F)$, where $c:X\to A$ is the composition of $a$ with the projection map $A\times S^\circ\to A$. 
		By \cref{prop:Koddimabb}, it follows that $\bar{\kappa}(\overline{c(F)})\geq 0$, and hence $\bar{\kappa}(F)\geq 0$. 
		Also note that $b(V)$ is not contained in $\Spalg(S^\circ)$, and thus $\overline{b(V)}$ is of log general type.  
		We use Fujino's addition formula for logarithmic Kodaira dimensions \cite[Theorem 1.9]{Fuj17} to conclude that $$\bar{\kappa}(V)\geq \bar{\kappa}(F)+\bar{\kappa}(\overline{b(V)})\geq 0.$$  
		Hence we have proved $\bar{\kappa}(V)\geq 0$.
		We may consider the logarithmic Iitaka fibration of $V$.
		
		Next we show $\bar{\kappa}(V)=\dim V$.
		After replacing $V$ with a birational modification, we can assume that $V$ is smooth and that the logarithmic Iitaka fibration $j:V\to J(V)$ is regular. Assume contrary that $V$ is not of log general type. Note that for a very general fiber $F$ of $j$, the followings hold:
		\begin{enumerate}[label*=(\alph*)]
			\item \label{item:positive} $\dim F>0$;
			\item  $F$ is smooth;
			\item   \label{item:zero}   $\bar{\kappa}(F)=0$;
			\item \label{item:avoid} $b(F)\not\subset \Spalg(S^\circ)$;
			\item \label{item:same dim} $F\not\subset \Upsilon$.   
		\end{enumerate} 
		By \Cref{item:avoid}, we have $\bar{\kappa}(\overline{b(F)})=\dim b(F)\geq 0$. 
		\Cref{item:same dim} implies that for a general fiber $Y$ of $b|_{F}:F\to S^\circ$, we have $\dim Y=\dim c(Y)$. 
		Using \cref{prop:Koddimabb}, we have $\bar{\kappa}(\overline{c(Y)})\geq 0$, and thus $\bar{\kappa}(Y)\geq 0$. 
		Using \cite[Theorem 1.9]{Fuj17} again, we can conclude that $$\bar{\kappa}(F)\geq \bar{\kappa}(Y)+\bar{\kappa}(\overline{b(F)})\geq 0.$$
		\Cref{item:zero} implies that $\bar{\kappa}(Y)=0$ and $\bar{\kappa}(\overline{b(F)})=0$. Hence $b(F)$ is a point. 
		This implies that $\dim F=\dim c(F)$. 
		Combining with \Cref{item:positive,item:zero}, \cref{lem:20230509} yields $\Spab(F)=F$. 
		Since $F$ is a very general fiber of $j:V\to J(V)$, we get $\Spab(V)=V$.
		This contradicts to \eqref{eqn:20230510}.
		Thus we have proved that $V$ is of log general type, hence $\Spalg(X)\subset \Xi$.
	\end{proof}

The following proposition finds applications in the third part of this paper series.
	
	\begin{proposition}\label{prop:20230405}
		Let $D\subset A$ be a reduced divisor such that $\mathrm{St}(D)=\{ a\in A; a+D=D\}$ is finite.
		Let $Z\subset D$ be a Zariski closed subset such that $\mathrm{codim}(Z,A)\geq 2$.
		Then there exists a proper Zariski closed subset $\Xi\subsetneqq A$ with the following property:
		Let $f:Y\to A$ be a holomorphic map with the following three properties:
		\begin{enumerate}[label=(\alph*)]
			\item  
			$N_{\ram\pi}(r)=O(\log r)+o(T_f(r))||$, 
			\item  
			$f(Y)\not\subset \Xi\cup D$,
			\item
			$\mathrm{ord}_yf^*D\geq 2$ for all $y\in f^{-1}(D\backslash Z)$.	\end{enumerate}
		Then $f$ does not have essential singularity over $\infty$.
	\end{proposition}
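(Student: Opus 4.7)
The plan is to combine the truncation-level-one Second Main Theorem of \cref{lem:20220909}, applied to $\overline{D}\subset\overline{A}$, with the codimension-two bound of \cref{lem:202209151}, applied to $Z$, and to iterate on proper semi-abelian subvarieties in the spirit of \cref{thm:20221201}. Hypothesis (c) will be used to absorb the truncated counting function $N^{(1)}_f(r,\overline{D})$ into $\frac{1}{2}N_f(r,\overline{D})$, which, together with bigness of $\overline{D}$ on a well-chosen compactification, will force $T_f(r)=O(\log r)||$; \cref{lem:20230411} then concludes.

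Concretely, since $\mathrm{St}(D)$ is finite, $\mathrm{St}^0(D)=\{0\}$, so \cref{lem:20221124} (applied with $W=A$) yields a smooth projective equivariant compactification $\overline{A}$ with $\mathrm{Sp}_A\overline{D}=\emptyset$. Fix $\varepsilon>0$ and an ample line bundle $L$ on $\overline{A}$. Applying \cref{lem:20220909} to $\overline{D}$ (with $S$ a point) and \cref{lem:202209151} to $Z$ produces a finite subset $P_0\subset \mathcal{S}(A)\setminus\{A\}$ and a proper Zariski closed subset $\Phi_0\subsetneqq A$ such that every $f:Y\to A$ with $f(Y)\not\subset\Phi_0\cup D$ and $f\in I_A$ (which is automatic from (a), as $S$ is a point and $q_A\circ f\equiv 0$) satisfies either $f\in I_C$ for some $C\in P_0$, or both
\begin{equation*}
m_f(r,\overline{D})+N_f(r,\overline{D})-N^{(1)}_f(r,\overline{D})\leq \varepsilon T_f(r,L)+O(\log r)+o(T_f(r))||,
\end{equation*}
\begin{equation*}
N^{(1)}_f(r,Z)\leq \varepsilon T_f(r,L)+O(\log r)+o(T_f(r))||.
\end{equation*}
In the latter case, hypothesis (c) gives $N^{(1)}_f(r,\overline{D})\leq\frac{1}{2}N_f(r,\overline{D})+N^{(1)}_f(r,Z)$, and combining with \cref{thm:first} yields $T_f(r,\mathcal{O}_{\overline{A}}(\overline{D}))\leq 4\varepsilon T_f(r,L)+O(\log r)+o(T_f(r))||$. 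Since $\mathrm{St}(D)$ is finite, the logarithmic Iitaka fibration combined with \cref{prop:Koddimabb} forces $\overline{D}$ to be big on $\overline{A}$ (using $K_{\overline{A}}+\partial A\sim 0$): otherwise $A\setminus D$ would fibre over a lower-dimensional base with general fibre a translate of a positive-dimensional semi-abelian subvariety stabilising $D$, contradicting that $\mathrm{St}(D)$ is finite. Kodaira's lemma now bounds $T_f(r,L)$ by a constant multiple of $T_f(r,\overline{D})$ off a proper closed subset (absorbed into $\Xi$), so choosing $\varepsilon$ sufficiently small forces $T_f(r)=O(\log r)||$; \cref{lem:20230411} concludes.

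If instead $f\in I_C$ for some $C\in P_0$, then $f$ effectively factors through a translate $c_0+C$ of the proper semi-abelian subvariety $C\subsetneq A$. For $c_0$ outside a proper Zariski closed set of \emph{bad} cosets, the restriction $D':=D\cap(c_0+C)$ is a reduced divisor on $c_0+C\cong C$ with $\mathrm{St}_C(D')$ finite, $Z':=Z\cap(c_0+C)$ has codimension at least two in $c_0+C$, and hypothesis (c) is inherited. The proposition then applies inductively on $C$ with $\dim C<\dim A$; the bad cosets across the finitely many subgroups $C$ arising at all stages (organised via a chain of families $\mathcal{P}_1,\mathcal{P}_2,\ldots$ paralleling the proof of \cref{thm:20221201}) are absorbed into $\Xi$, and the induction terminates after finitely many steps.

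The main obstacle is the iteration bookkeeping: one must verify that, for each subgroup $C$ arising in the iteration, the \emph{bad} cosets $c_0+C$ --- namely those with $\mathrm{St}_C(D\cap(c_0+C))$ infinite, with $\mathrm{codim}(Z\cap(c_0+C),c_0+C)<2$, or with $c_0+C\subset D$ --- form a proper Zariski closed subset of $A$. This requires a generic-fibre analysis along each quotient $q_C:A\to A/C$ and a uniform choice of $\Xi$ that simultaneously handles every subgroup from the finitely many iteration steps.
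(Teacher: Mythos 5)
Your first step is essentially sound (modulo details about passing to a modification where $\partial A + \overline{D}$ is SNC, which the paper handles via the map $\varphi: V\to\overline{A}$), and your use of hypothesis (c) to absorb $N^{(1)}_f(r,\overline{D})$ into $\tfrac{1}{2}N_f(r,\overline{D})+N^{(1)}_f(r,Z)$ is exactly the mechanism the paper uses (compare the step ``$2\overline{N}_f(r,D\setminus(Z\cup\varphi(\mathrm{Ex}\varphi)))\leq T_g(r,H)$'' in the paper's \Cref{claim:20230405}). But the iteration step has a genuine gap.

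You write: ``If instead $f\in I_C$ for some $C\in P_0$, then $f$ effectively factors through a translate $c_0+C$ of the proper semi-abelian subvariety $C\subsetneq A$.'' This is not what $f\in I_C$ means. The condition $f\in I_C$ is $T_{q_C\circ f}(r)=O(\log r)+o(T_f(r))||$, a statement about the \emph{growth rate} of the projection $q_C\circ f:Y\to A/C$ relative to $T_f$, not a statement that $q_C\circ f$ is constant or that the image of $f$ lies in a coset of $C$. The map $f$ can still have Zariski-dense image in $A$ while satisfying $f\in I_C$ — indeed, the whole point of the conclusion ``either $f\in I_C$ or $f$ does not have essential singularity'' is that the first case is \emph{not} degenerate. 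Consequently, ``restricting $D$ to cosets $c_0+C$'' and running an induction on $\dim C$ with $D'=D\cap(c_0+C)$ does not get off the ground: there is no coset to restrict to. The entire bad-coset bookkeeping you flag as ``the main obstacle'' is downstream of this incorrect reduction.

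The paper's proof sidesteps this by never restricting the target: it fixes $A$ and $D$ throughout, and iterates only the semi-abelian subgroup $B$ that controls the hypothesis $f\in I_B$. The key input is \cref{lem:20220909gg}, applied with $W=A$ (trivially $B$-invariant). That lemma takes the hypothesis $f\in I_B$, passes to a ``twisted cover'' of the form $\overline{B}\times\Sigma$ with $\Sigma$ generically finite over $\overline{A/B}$ (precisely because $A$ does not split as $B\times(A/B)$, cf.\ \cref{rem:20250904}), and produces either an SMT-type estimate for $f$ on $A$, or $f\in I_C$ with $C\in\mathcal{S}(B)\setminus\{B\}$. Your first step implicitly unfolds the $B=A$ case of that lemma (where the cover collapses, since $A/A$ is a point and \cref{lem:20220909} and \cref{lem:202209151} can be applied to $D\subset A$ directly), but the $B\subsetneq A$ steps of the iteration genuinely need the $\overline{B}\times\Sigma$ mechanism, not a restriction to cosets. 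To repair your proposal you would need to replace the coset-restriction induction by a version of \cref{lem:20220909gg} for the pair $(A,D)$ and semi-abelian subvariety $B$, which is in effect the paper's proof.
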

	
	\begin{proof}
		For a semi-abelian variety $B\subset A$, we denote by $J_B$ the set of all holomorphic maps $f:Y\to A$ such that 
		\begin{itemize}
			\item
			$f\in I_B$,  
			\item
			$f(Y)\not\subset D$, and
			\item
			$\mathrm{ord}_yf^*D\geq 2$ for all $y\in f^{-1}(D\backslash Z)$. \end{itemize}
		Given a semi-abelian variety $B\subset A$, we first prove the following claim.
		
		\begin{claim}\label{claim:20230405}
			There exist a finite subset $P_B\subset \mathcal{S}(B)\backslash\{B\}$ and a proper Zariski closed set $\Phi_B\subsetneqq A$ with the following property:
			Let $f:Y\to A$ be a holomorphic map such that $f\in J_B$ and $f(Y)\not\subset \Xi_B$. 
			Then either one of the followings holds:
			\begin{enumerate}[label=(\alph*)]
				\item
				There exists $C\in P_B$ such that $f\in J_C$.
				\item
				$f$ does not have essential singularity over $\infty$.
			\end{enumerate}
		\end{claim}
		
		\begin{proof}[Proof of \cref{claim:20230405}]
			Since $\mathrm{St}(D)$ is finite, $A\backslash D$ is of log-general type.
			We apply \cref{lem:20220909gg} to get a smooth projective equivariant compactification $\overline{A}$ and a proper birational morphism $\varphi:V\to \overline{A}$, where $V$ is smooth and $\varphi^{-1}(\overline{D}\cup \partial A)$ is a simple normal crossing divisor.
			Since $A\backslash D$ is of log-general type, we deduce that $\varphi ^{-1}(A\backslash D)=V\backslash \varphi^{-1}(\overline{D}\cup \partial A)$ is also of log-general type.
			Thus $K_{V}(\varphi^{-1}(\overline{D}\cup \partial A))$ is big.
			
			By Kodaira's lemma, there exist an effective divisor $E\subsetneqq \overline{A}'$ and a positive integer $l\in\mathbb Z_{\geq 1}$ such that $L=lK_{V}(\varphi^{-1}(\overline{D}\cup \partial A))-E$ is ample.	
			Hence if $g:Y\to V$ satisfies $g(Y)\not\subset E$, then 
			$$T_{g}(r)= O( T_{g}(r,K_{V}(\varphi^{-1}(\overline{D}\cup \partial A))))+O(\log r).$$

			By \cref{lem:20220909gg} applied to $(Z\cup \varphi (\mathrm{Ex}(\varphi)))\cap D$, $L$ and $\varepsilon=1/3l$, we get a finite subset $P_B\subset \mathcal{S}(B)\backslash\{B\}$ and a proper Zariski closed set $\Phi_B\subsetneqq A$ with $\varphi (\mathrm{Ex}(\varphi))\subset \overline{\Phi_B}$.
			We set $\Xi_B=\Phi_B\cup\varphi(E)$.
			Let $f:Y\to A$ be a holomorphic map such that $f\in J_B$ and $f(Y)\not\subset \Xi_B$.
			Then $ f(Y)\not\subset D\cup \Xi_B$ and $f\in I_B$.
			Suppose that the first assertion of \cref{claim:20230405} is not valid.
			Then $f\not\in I_C$ for all $C\in P_B$.
			By \cref{lem:20220909gg}, we get
			\begin{equation*}
				T_{g}(r,K_{V}(\varphi^{-1}(\overline{D}\cup \partial A)))
				\leq \overline{N}_{f}(r,D\backslash (Z\cup \varphi (\mathrm{Ex}(\varphi))))+\frac{1}{3l}T_{g}(r,L)
				\\
				+O(\log r)+o(T_{g}(r))||,
			\end{equation*}
			where $g:Y\to V$ is the lift of $ f:Y\to A$.
			By $g(Y)\not\subset E$, we have 
			$$
			T_{g}(r,L)\leq lT_{g}(r,K_{V}(\varphi^{-1}(\overline{D}\cup \partial A)))+O(\log r).
			$$
			Hence we get
			\begin{equation*}
				\frac{2}{3}T_{g}(r,K_{V}(\varphi^{-1}(\overline{D}\cup \partial A)))
				\leq \overline{N}_{f}(r,D\backslash (Z\cup \varphi (\mathrm{Ex}(\varphi))))
				\\
				+O(\log r)+o(T_{g}(r))||.
			\end{equation*}

			We estimate the first term of the right hand side.
			Since $\overline{A}$ is smooth and equivariant, $\partial A$ is a simple normal crossing divisor.
			Hence we may decompose as $\varphi^{-1}(\overline{D}\cup \partial A)=H+F$ so that $F=\varphi^{-1}(\partial A)$.
			Then $H=\overline{\varphi^{-1}(D)}$.
			The induced map $V\backslash (F\cup \mathrm{Ex}(\varphi))\to A\backslash \varphi(\mathrm{Ex}(\varphi))$ is isomorphic.
			Hence by $\mathrm{ord}_yf^*D\geq 2$ for all $y\in f^{-1}(D\backslash Z)$, we have
			$$
			2\overline{N}_{f}(r,D\backslash (Z\cup \varphi (\mathrm{Ex}(\varphi))))\leq T_g(r,H).
			$$
			By $\bar{\kappa}(V\backslash F)=0$ and $g(Y)\not\subset \mathrm{Ex}(\varphi)$, we have $T_g(r,K_V(F))+O(\log r)>0$.
			Hence
			$$
			2\overline{N}_{f}(r,D\backslash (Z\cup \varphi (\mathrm{Ex}(\varphi))))\leq T_g(r,K_V(H+F))+O(\log r).
			$$
			Hence we get
			$$
			T_{g}(r,K_{V}(H+F))=O(\log r)+o(T_{g}(r))||.
			$$ 	 	
			Thus $T_{g}(r)=O(\log r) ||$.
			This shows $T_{f}(r)=O(\log r) ||$. 
			Hence $f\in I_{\{0\}}$.
			Hence by \cref{rem:20221203}, $f$ does not have essential singularity over $\infty$.
		\end{proof}

		We take a sequence $\mathcal{P}_i\subset \mathcal{S}(A)$ of finite sets as follows.
		Set $\mathcal{P}_1=\{A\}$.
		Given $\mathcal{P}_i$, we define $\mathcal{P}_{i+1}$ as follows.
		For each $B\in\mathcal{P}_i$, we apply \cref{claim:20230405} to get $\Xi_B\subsetneqq A$ and $P_B\subset \mathcal{S}(B)\backslash\{B\}$.
		We set $\mathcal{P}_{i+1}=\cup_{B\in \mathcal{P}_i}P_B$ and $\Xi_{i+1}=\cup_{B\in\mathcal{P}_i}\Xi_B$.
		Set $\Xi=\cup_i\Xi_i$.
		Then $\Xi\subsetneqq A$ is a proper Zariski closed set.
		
		Let $f:Y\to A$ satisfy the three properties in \cref{prop:20230405}.
		Then $f\in J_A$.
		We take $i$ which is maximal among the property that there exists $B\in \mathcal{P}_i$ such that $f\in J_B$. 	
		Then by $f(Y)\not\subset \Xi_B\subset \Xi$,
		\cref{claim:20230405} implies that $f$ does not have essential singularity over $\infty$.
	\end{proof}

	\begin{rem}\label{rem:20250904}
	The proof of \cref{thm2nd} is based on the arguments in \cite{Yam15} and \cite{NWY13}.
A major difficulty in treating the non-compact case, compared with the compact case in \cite{Yam15}, is the lack of the Poincar{\'e} reducibility theorem.
To overcome this problem, we use a more general type of "cover" than an {\'e}tale cover.
In \cite{NWY08}, Noguchi, Winkelmann, and the third author utilized a "flat cover" (cf. the commutative diagram just after \cite[(5.9)]{NWY08}).
An important issue with using this flat cover is to construct a lift of holomorphic maps onto it, such that the order function of the lift is bounded by that of the original map (cf. \cite[Lemma 5.8]{NWY08}).
The authors in \cite{NWY08} only considered holomorphic maps from the complex plane $\mathbb C$. Thanks to the simple connectedness of $\mathbb C$, such a lift could be constructed easily.
In contrast, we consider holomorphic maps from the covering space $\pi:Y\to \bC_{>\delta}$, which is not simply connected.
Therefore, in this paper, we choose a different cover $\sigma:\Sigma\to \overline{W/B}$ in the proof of \cref{lem:20220909gg}.
After the base change by this cover, we obtain the lift $g:Y'\to \overline{B}\times \Sigma$ of $f:Y\to W$ from the covering space $Y'\to Y$.
We then apply the results of \cref{subsec:4.2}--\cref{subsec:4.7a} to this lift $g$.
We remark that the map $g_{\overline{B}}:Y'\to \overline{B}$ may hit the boundary $\partial B$. This is the reason why we treat the situation $f:Y\da A\times S$ in \cref{subsec:4.2}--\cref{subsec:4.7a}.
We also remark that even if we restrict the statement of \cref{thm2nd} to $f:\mathbb D^*\to X$, we still need to consider a ramified covering $Y'\to \mathbb D^*$ in the proof to obtain the map $g:Y'\to \overline{B}\times \Sigma$ mentioned above.
	\end{rem}

\section{Proof of \cref{thm:theorem b}}\label{sec:11}
 We consider another application of \cref{thm2nd}, whose proof requires a ramified covering of $\mathbb{C}_{>\delta}$.
The following theorem is used in Part~II of this series.
Although it appears only implicitly in \cite[\S 6]{CDY22original} and is not stated explicitly, we find it convenient to formulate it here as follows, since the original work \cite{CDY22original} was divided into separate papers.
See also \cite[\S 3]{Yam10}.

\begin{thm}\label{thm:20250911}
Let $X$ be a smooth quasi-projective variety with a smooth projective compactification $\overline{X}$ such that the boundary divisor $D=\overline{X}\setminus X$ is a simple normal crossing divisor. 
Assume that there exists a finite surjective morphism $p:\overline{\Sigma}\to \overline{X}$ from a normal projective variety $\overline{\Sigma}$ together with non-zero sections $\tau_1,\ldots,\tau_l \in H^0(\overline{\Sigma}, p^*\Omega_{\overline{X}}(\log D))$ satisfying the following two conditions:
\begin{enumerate}[label=(\roman*)]
\item \label{cond:i}
setting $\Sigma=p^{-1}(X)$, the variety $\Sigma$ is of log-general type and admits a morphism $a:\Sigma\to A$ into a semi-abelian variety $A$ with $\dim \Sigma=\dim a(\Sigma)$;
\item \label{cond:iii}
defining
\[ 
R:=\{s\in \overline{\Sigma} \mid \exists\, i \ \text{such that } \tau_i(s)=0 \}, 
\]
then $R\subsetneqq \overline{\Sigma}$ is a proper Zariski closed subset, and $p:\overline{\Sigma}\to \overline{X}$ is \'etale outside $R$.
\end{enumerate}
Then every holomorphic map $f:\mathbb D^*\to X$ with Zariski dense image has a holomorphic extension $\bar{f}:\mathbb D\to\overline{X}$.
\end{thm}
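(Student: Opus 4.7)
The plan is to reduce \cref{thm:20250911} to \cref{thm2nd} by lifting $f$ through the finite cover $p\colon\overline\Sigma\to\overline X$, with the sections $\tau_i$ used to verify the small-ramification hypothesis of \cref{thm2nd} for the lift. Identify $\bD^*$ with $\bC_{>1}$ via the inversion $w\mapsto 1/w$, so that essential singularity at $0$ becomes essential singularity at $\infty$. Let $Y$ be a connected component of the normalization of the fiber product $\bC_{>1}\times_X\Sigma$. The two projections yield a proper surjective holomorphic map $\pi\colon Y\to\bC_{>1}$, ramified only over the discrete set $f^{-1}(p(R))$, together with a holomorphic lift $\tilde f\colon Y\to\Sigma$ satisfying $p\circ\tilde f=f\circ\pi$. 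Zariski density of $f(\bD^*)$ in $X$, combined with finiteness of $p$ and irreducibility of $\Sigma$ (via a dimension count: a proper Zariski closed $\Sigma'\subsetneqq\Sigma$ would satisfy $\dim p(\Sigma')<\dim X$, contradicting Zariski density of $f$), forces $\tilde f(Y)$ to be Zariski dense in $\Sigma$.

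The main technical step is to establish the small-ramification estimate $N_{\ram\pi}(r)=O(\log r)+o(T_{\tilde f}(r))||$. To this end, fix a smooth projective resolution $\mu\colon\widehat\Sigma\to\overline\Sigma$ with $\widehat D:=(\mu^{-1}(p^{-1}(D)))_{\mathrm{red}}$ simple normal crossing, and let $\widehat f\colon Y\to\widehat\Sigma$ be the lift of $\tilde f$ (which exists because $\tilde f(Y)$ is Zariski dense). The canonical morphism $\mu^*p^*\Omega_{\overline X}(\log D)\to\Omega_{\widehat\Sigma}(\log\widehat D)$ sends each $\tau_i$ to a global logarithmic $1$-form $\widehat\tau_i$ on $\widehat\Sigma$, and the image $\widehat f(Y)$ avoids $\widehat D$ since $\tilde f(Y)\subset\Sigma$. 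The logarithmic derivative estimate \eqref{eq:Noguchi} applied to $\widehat\tau_i$ and $\widehat f$ gives $m(r,\xi_i)=O(\log r)+o(T_{\tilde f}(r))||$ for the meromorphic functions $\xi_i:=\widehat f^*\widehat\tau_i/\pi^*(dz)$ on $Y$. The crucial geometric input from \ref{cond:iii} is that, because $p$ is étale outside $R$, every ramification point $y$ of $\pi$ maps under $\widehat f$ into the zero locus of some $\widehat\tau_i$; a local coordinate computation of the commutative square $p\circ\tilde f=f\circ\pi$ at $y$ shows that $\mathrm{ord}_y\widehat f^*\widehat\tau_i\geq\mathrm{ord}_y\ram\pi$ for a suitable index $i=i(y)$. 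Combining these inequalities with \eqref{eq:First} applied to each $\xi_i$ yields the desired small-ramification bound.

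With the ramification estimate in hand, I apply \cref{thm2nd} to $\tilde f\colon Y\to\Sigma$: hypothesis \ref{cond:i} asserts that $\Sigma$ is of log general type and that $a\colon\Sigma\to A$ witnesses maximal quasi-Albanese dimension, so the theorem produces a proper Zariski closed $\Xi\subsetneqq\Sigma$. Zariski density of $\tilde f(Y)$ ensures $\tilde f(Y)\not\subset\Xi$, and \cref{thm2nd} then yields a holomorphic extension $\overline{\tilde f}\colon\overline Y\to\overline\Sigma$, where $\overline Y$ compactifies $Y$ and $\pi$ extends properly to $\overline\pi\colon\overline Y\to\bC_{>1}\cup\{\infty\}$. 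For each $\bar y\in\overline\pi^{-1}(\infty)$ the value $p\circ\overline{\tilde f}(\bar y)\in\overline X$ is the limit of $p(\tilde f(y))=f(\pi(y))$ as $y\to\bar y$, and since $f$ is single-valued these limits coincide across the different $\bar y$. Hence $f$ extends continuously across the puncture, and the extension is holomorphic into $\overline X$ by Riemann's removable singularity theorem, yielding $\overline f\colon\bD\to\overline X$.

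The main obstacle is the local analysis in the ramification estimate: one must rigorously show, at each ramification point $y$ of $\pi$, that the vanishing order of $\widehat f^*\widehat\tau_i$ dominates the ramification order of $\pi$ for a suitable $i$, exploiting the étaleness of $p$ outside $R$ to rule out unwanted cancellations between the differential of $\widehat f$ and the vanishing of $\widehat\tau_i$. This translation from vanishing of logarithmic $1$-forms to control of ramification of $\pi$ is the heart of the argument; once it is in place, the Nevanlinna machinery of \cref{subsec:notion Nevanlinna} and \cref{sec:20250904} closes the loop.
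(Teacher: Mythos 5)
Your overall strategy is the same as the paper's: pull $f$ back along the cover to get $\tilde f\colon Y\to\Sigma$, use the sections $\tau_i$ together with the lemma on logarithmic derivatives (\eqref{eq:Noguchi}, \eqref{eq:First}) to verify the small-ramification hypothesis of \cref{thm2nd}, apply \cref{thm2nd}, and descend the extension back to $f$. However, there is a genuine gap in the ramification estimate, precisely at the step you flag as ``the heart of the argument.''

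You bound $N_{\ram\pi}(r)$ by applying \eqref{eq:First} to the meromorphic functions $\xi_i=\widehat f^{\,*}\widehat\tau_i/\pi^*(dz)$. This requires each $\xi_i$ to be a nonzero meromorphic function; otherwise $N_{\xi_i}(r,[0])$ is meaningless. But nothing in the hypotheses rules out $\widehat f^{\,*}\widehat\tau_i\equiv 0$ for some (or even all) $i$: even with $\widehat f(Y)$ Zariski dense and $\widehat\tau_i\not\equiv0$, the curve may be everywhere tangent to $\ker\widehat\tau_i$ (for instance, a Zariski dense leaf of the foliation defined by a translation-invariant $1$-form on a semi-abelian variety). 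Your choice of ``a suitable index $i=i(y)$'' does not save this: it can happen that every $i$ with $\widehat f(y)$ in the zero locus of $\widehat\tau_i$ satisfies $\widehat f^{\,*}\widehat\tau_i\equiv 0$, and then the bound $\ord_y\widehat f^{\,*}\widehat\tau_i\ge\ord_y\ram\pi$ is vacuous. The ramification points you need to count could hit exactly those components of $R$ controlled only by the degenerate forms.

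This is exactly the case the paper handles separately (Case~2 in the proof of \cref{thm:20250911}), and doing so requires a nontrivial additional ingredient: \cref{lem:20250911}, which shows that when $g^*\omega_i=0$, the truncated counting function $\overline{N}_g(r,u(Z))$ for any component of the zero locus is still $o(T_g(r))+O(\log r)\,||$. Its proof is not a corollary of the logarithmic-derivative machinery; it goes through the quasi-Albanese map, picks out the largest semi-abelian subvariety on which the corresponding invariant form vanishes, uses the Poincar\'e lemma to produce a local primitive, and then runs a Riemann--Roch/ideal-sheaf estimate to get high-order vanishing of auxiliary sections. Without this lemma (or an equivalent device), the ramification bound cannot be closed. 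As a secondary remark: in the nondegenerate case, your claimed inequality $\ord_y\widehat f^{\,*}\widehat\tau_i\ge\ord_y\ram\pi$ is correct, but the cleanest way to see it is the paper's observation that $g^*\omega_i$ can be regarded as a holomorphic section of the \emph{line bundle} $\pi^*\Omega^1_{\bC_{>\delta}}$ (trivialized by $\pi^*dz$), so $\xi_i$ is automatically holomorphic; this replaces the local coordinate computation and avoids any worry about unwanted poles.
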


In the second condition \ref{cond:iii}, we observe that $p^*\Omega_{\overline{X}}(\log D)$ is a locally free sheaf of rank $\dim X$ on $\overline{\Sigma}$.
At a point $s\in\overline{\Sigma}$, we then consider
$\tau_i(s)\in (p^*\Omega_{\overline{X}}(\log D))_s\otimes \mathbb C$, where $(p^*\Omega_{\overline{X}}(\log D))_s$ denotes the stalk at $s\in\overline{\Sigma}$.
Thus the set $\{s\in\overline{\Sigma}; \tau_i(s)=0\}$ is a Zariski closed subset of $\overline{\Sigma}$, since it coincides with the zero locus of a section $\tau_i$ of the corresponding vector bundle on $\overline{\Sigma}$.

We shall see in Part~II of our series that a covering $\overline{\Sigma}\to \overline{X}$ with the property of \cref{thm:20250911} exists provided there is a representation $\varrho:\pi_1(X)\to G(K)$ into an almost simple algebraic group $G$ defined over a $p$-adic field $K$, where $\varrho$ is big, unbounded, and Zariski dense.
This covering is realized by a spectral cover constructed from the $\varrho$-equivariant pluriharmonic map $\widetilde{X}\to\Delta(G)$ from the universal cover $\widetilde{X}$ of $X$ to the Bruhat–Tits building $\Delta(G)$ of $G$.
This pluriharmonic map was developed by the second author and co-authors \cite{BDDM}, and it will be employed in  Part~II of our series to construct the corresponding spectral cover.

To prove \cref{thm:20250911} we start with the following lemma.

\begin{lem}\label{lem:20250911}
Let $X$ be a smooth quasi-projective variety with a smooth projective compactification $\overline{X}$ such that the boundary divisor $D=\overline{X}\setminus X$ is a simple normal crossing divisor.
Let $\omega \in H^0(\overline{X}, \Omega_{\overline{X}}(\log D))$ be a nonzero section, and let $u:Z \to X$ be a morphism from a smooth quasi-projective variety $Z$ such that $u^*\omega \in H^0(Z, \Omega_Z)$ vanishes.
Then, for every holomorphic map $g:Y \to X$ with Zariski dense image and $g^*\omega=0$, we have
$\overline{N}_g(r,  u(Z))= o(T_{g}(r))+O(\log r)||$.
\end{lem}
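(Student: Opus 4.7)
The plan is to transfer the problem to the quasi-Albanese variety $A$ of $X$ via the canonical morphism $\alpha\colon X\to A$, and apply the Nevanlinna counting estimates for semi-abelian varieties developed earlier in the paper.

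First I would replace $u(Z)$ by its Zariski closure $V\subset X$ (admissible since $\overline{N}_g(r, u(Z)) \leq \overline{N}_g(r, V)$) and assume $V$ irreducible; since $u\colon Z\to V$ is generically smooth onto its image, $u^*\omega=0$ propagates to $\omega|_{V_{\mathrm{sm}}}=0$ in $\Omega^1_{V_{\mathrm{sm}}}$. By the universal property of the quasi-Albanese, the pullback $\alpha^*\colon H^0(A,\Omega^1_A)\xrightarrow{\cong}H^0(\overline X,\Omega^1_{\overline X}(\log D))$ is an isomorphism, so $\omega=\alpha^*\tilde\omega$ for a unique nonzero translation-invariant $\tilde\omega\in H^0(A,\Omega^1_A)$; the conditions $g^*\omega=0$ and $\omega|_V=0$ then transfer to $(\alpha g)^*\tilde\omega=0$ and $\tilde\omega|_{\alpha(V)_{\mathrm{sm}}}=0$.

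Translation invariance of $\tilde\omega$ makes $\ker\tilde\omega\subset TA$ an integrable corank-one distribution whose leaves are cosets of the connected complex Lie subgroup $\tilde B\subset A$ with $\mathrm{Lie}(\tilde B)=\ker(\tilde\omega|_e)$. Both $(\alpha g)(Y)$ and $\alpha(V)$ are tangent to this foliation and hence contained in single such cosets. A direct dimension count—using that a coset $c+\tilde B$ has real codimension two in $A$, while $\alpha(V)$ is a complex subvariety of real dimension $2\dim_{\mathbb C}\alpha(V)$—forces $\dim_{\mathbb C}\alpha(V)<\dim_{\mathbb C}A$, so $\alpha(V)$ is a proper algebraic subvariety of $A$ (regardless of whether $\tilde B$ is algebraic or Zariski dense in $A$). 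I would then apply the counting estimates \cref{pro:202208062} and \cref{lem:202209151} to $\alpha g\colon Y\to A$ and the closed subscheme $\alpha(V)\subset A$, with the auxiliary projective variety $S$ taken to be a point, yielding
\[
\overline{N}_{\alpha g}(r,\alpha(V))=o(T_{\alpha g}(r))+O(N_{\ram\pi}(r)+\pN{\alpha g}(r)+\log r)\,||.
\]
Using \cref{cor:20220806} to absorb the ramification and boundary terms—exploiting the Zariski density of $g(Y)$ in $X$ and the resulting constraint $(\alpha g)\in I_B$ for a suitable semi-abelian subvariety $B\subset A$—the right-hand side simplifies to $o(T_{\alpha g}(r))+O(\log r)\,||$.

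Pulling back through $\alpha$, whose restriction to $V$ is quasi-finite onto its image over a dense open subset, yields $\overline{N}_g(r,V)=O(\overline{N}_{\alpha g}(r,\alpha(V)))+O(\log r)=o(T_g(r))+O(\log r)\,||$, completing the proof. The main obstacle is the transcendental case where $\tilde B$ is Zariski dense in $A$: here the leaves of $\ker\tilde\omega$ are not algebraic, so the usual stratification arguments on $A$ must be adapted; nevertheless $\alpha(V)$ remains a proper algebraic subvariety of $A$ by the dimension count above, and the Nevanlinna counting estimates for arbitrary closed subschemes of semi-abelian varieties (\cref{pro:202208062}, \cref{lem:202209151}, together with \cref{cor:20220806}) are formulated precisely to handle this configuration, so the argument goes through.
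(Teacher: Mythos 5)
The proposal takes a genuinely different route from the paper, but it has a substantive gap in the core of the argument.

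The central claim, that applying \cref{pro:202208062} and \cref{lem:202209151} to $\alpha\circ g\colon Y\to A$ and $\overline{\alpha(V)}\subset A$ yields $\overline{N}_{\alpha g}(r,\alpha(V))=o(T_{\alpha g}(r))+O(N_{\ram\pi}(r)+\pN{\alpha g}(r)+\log r)\,||$, does not hold. \cref{pro:202208062} is a second main theorem type estimate with weak truncation: its useful alternative bounds $m_f(r,W)+N_f(r,W)-N^{(\rho)}_f(r,W)$, not the truncated counting function $\overline{N}_f=N^{(1)}_f$ by itself. For a (codimension one) divisor $W$ the first main theorem gives $m+N=T+O(\log r)$, so there is no reason for $\overline{N}_f(r,W)$ to be $o(T_f(r))$. \cref{lem:202209151} does give a small bound on $N^{(1)}_f(r,Z)$, but only under the hypothesis $\mathrm{codim}(Z,A\times S)\ge 2$; your dimension count shows only that $\overline{\alpha(V)}\subsetneqq A$, i.e.\ codimension at least one, which is insufficient. (The reasoning via containment in a coset of the analytic kernel $\tilde B$ is also delicate, since when $\tilde B$ is not algebraic the Zariski closure of $\alpha(V)$ need not lie in a single coset; the correct statement is that the translation-invariant form $\tilde\omega$ vanishes along $\overline{\alpha(V)}$, which gives properness, but still only codimension one.) Finally, the step ``using \cref{cor:20220806} to absorb the ramification and boundary terms'' does not apply: \cref{cor:20220806} concerns maps $f\colon Y\da W$ whose image lies \emph{inside} $W$, whereas here $\alpha\circ g$ has Zariski dense image in $\overline{\alpha(X)}$ and certainly does not land inside $\overline{\alpha(V)}$.

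The paper's proof is built on a stronger observation that your proposal does not isolate: writing $B\subset A_X$ for the largest semi-abelian subvariety on which $\tilde\omega$ pulls back to zero, and $\beta=q_B\circ\alpha$, the universal property of the quasi-Albanese forces the map $Z\to A_X$ to factor through the Albanese of $Z$ and hence land inside $B$, so that $\beta(u(Z))$ is a \emph{single point} $a\in A'=A_X/B$, not merely a proper subvariety. This pointwise collapse is what allows the construction of zero-dimensional subschemes $V_n$ supported at the image $s$ of $a$ in $S$ (the normalized Zariski closure of $\beta(X)$), whose lengths grow like $O(n^{\dim S-1})$ while $h^0(\overline S,M^{k})$ grows like $k^{\dim S}$: the Riemann--Roch comparison then produces sections with vanishing order $\ge n$ against an ample degree $\sim n^{1-1/(2\dim S)}$. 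Combined with the fact that $g^*\omega=0$ forces $(\beta\circ g)$ to take value $s$ with infinite ``logarithmic'' multiplicity (so $\mathrm{ord}_y(\psi\circ g)^*V_n\ge n$ at every $y$ with $g(y)\in u(Z)$), this gives $n\,\overline N_g(r,u(Z))\le k_n T_{\psi\circ g}(r,M)+O(\log r)$ with $k_n/n\to 0$, which is the quantitative engine of the lemma. None of this order-of-vanishing mechanism appears in your proposal, and without it the desired $o(T_g(r))$ bound does not emerge from the semi-abelian counting estimates you invoke.
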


\begin{proof}
Let $\alpha:X \to A_X$ be the quasi-Albanese map.
Then there exists a non-zero holomorphic one-form $\tilde{\omega}$ on $A_X$ such that $\alpha^*\tilde{\omega}=\omega$.
Let $B \subset A_X$ be the largest semi-abelian subvariety of $A_X$ on which the pull-back of the holomorphic one form $\tilde{\omega}$ vanishes.
Set $A' = A_X / B$.
Let $q:A_X \to A'$ be the quotient map, and let $\beta:X \to A'$ be the composition of $\alpha:X \to A_X$ with $q:A_X \to A'$.

We claim that $\beta(u(Z))$ is a point.
Indeed, we have the following commutative diagram:
	\[
		\begin{tikzcd}
			X \arrow[r, "\alpha"] & A_{X} \arrow[r, "q"] & A_X/B=A' \\
			Z \arrow[u, "u"] \arrow[r, "a"] & A_{Z} \arrow[u, "j"]
		\end{tikzcd}
	\]
	Here $a:Z \to A_{Z}$ denotes the quasi-Albanese map, and $j:A_{Z} \to A_X$ is the induced morphism of algebraic groups.
By the definition of $B$, the image of $j(A_{Z})$ is contained in $B$.
Therefore, $\beta(u(Z))$ is a single point, as claimed.

Let $\beta(u(Z))=\{a\}$ for some $a\in A'$.
Let $\phi:S\to A'$ be the normalization of the Zariski closure $\overline{\beta(X)}\subset A'$.
Since $X$ is smooth, $\beta$ factors $\phi$ as follows:
	\[
	X\stackrel{\psi}{\to} S\stackrel{\phi}{\to} A'.
	\]
	Since $\phi$ is finite and $\beta(u(Z))=\{a\}$, there is a point $s\in S$ so that $\psi(u(Z))=\{s\}$.

	There exists a non-zero logarithmic one-form $\omega'$ on $A'$ such that $\widetilde{\omega} = q^*\omega'$.
Then
\begin{equation}\label{eqn:202509111} 
\beta^*\omega' = \omega. 
\end{equation} 
Since $\omega'$ is a linear logarithmic one-form on $A'$, we have $d\omega' = 0$.
Hence, by the Poincaré lemma, in some analytic neighborhood $U$ of $a$ in $A'$, there exists a holomorphic function $h \in \mathcal{O}_{A'}^{\mathrm{an}}(U)$ such that $dh = \omega'$ on $U$ and $h(a) = 0$.
By \eqref{eqn:202509111}, $\beta^*dh$ is not identically zero.
Consequently, $h \circ \phi\in \mathcal{O}^{\mathrm{an}}_S(\phi^{-1}(U))$ is a non-constant holomorphic function such that $h\circ\phi(s)=0$.

	Let $\mathcal{O}^{\mathrm{an}}_{S,s}$ be the stalk at $s$ in the sense of analytic spaces, and let $\mathfrak{m}_s \subset \mathcal{O}^{\mathrm{an}}_{S,s}$ denote the maximal ideal.
Since $S$ is normal, $\mathcal{O}^{\mathrm{an}}_{S,s}$ is an integral domain.
We have verified that $h \circ \phi \in \mathcal{O}^{\mathrm{an}}_{S,s}$ is nonzero.
Moreover, since $h \circ \phi(s) = 0$, we have $h \circ \phi\in\mathfrak{m}_s$.
Hence we have
\begin{equation}\label{eqn:202509112}
\dim \mathcal{O}^{\mathrm{an}}_{S,s}/(h \circ \phi)=\dim S-1.
\end{equation}
For every $n\in \bZ_{>0}$, consider the zero-dimensional subscheme on $S$ defined by $V_n:=\spec \cO^{\mathrm{an}}_{S,s}/((h\circ\phi)+\mathfrak{m}_s^n)$.  
	Then $V_n$ is supported at $s$.

	We take a   projective compactification $\overline{S}$ for $S$.
	 We fix an ample line bundle $M$ on $\overline{S}$.  
	Consider the following short exact sequence 
	$$
	0\to M^k\otimes \mathcal{I}_{V_n}\to   M^k\to M^k\otimes \mathcal{O}_{V_n}\to 0,
	$$
	which yields a short exact sequence
	$$
	0\to H^0(\overline{S}, M^k\otimes \mathcal{I}_{V_n})\to   H^0(\overline{S}, M^k)\to H^0(\overline{S}, M^k\otimes \mathcal{O}_{V_n})
	$$
	By \eqref{eqn:202509112}, we have
	$$
	h^0(\overline{S}, M^k\otimes \mathcal{O}_{V_n})=h^0(V_n,  \cO_{V_n})=O(n^{\dim S-1})
	$$
	as $n\to \infty$. 
	By Riemann-Roch theorem, there are positive constants $c>0$ and $\kappa>0$ such that
	$h^0(\overline{S}, M^k)>ck^{\dim S}$ for $k>\kappa$.
	If we take $k_n\sim n^{1-\frac{1}{2\dim S}}$ when $n\to \infty$, it follows that 
	$$
	h^0(\overline{S}, M^{k_n})>h^0(V_n, \cO_{V_n})
	$$
	when $n\gg 1$. Hence there are nonzero sections 
	$$\sigma_n\in H^0(\overline{S}, M^{k_n}\otimes \mathcal{I}_{V_{n}})$$
when $n\gg 1$.  
Let $D_n\subset \overline{S}$ be the corresponding divisor.
Then we have $V_n\subset D_n$ as closed subschemes on $\overline{S}$.

We now consider $\psi\circ g:Y\to S$ which is a holomorphic map with Zariski dense image.
Hence $\psi\circ g(Y)\not\subset D_n$.
By \cref{thm:first}, we have
	$$
	N_{\psi\circ g}(r, V_n)\leq N_{\psi\circ g}(r, D_n)\leq T_{\psi\circ g}(r, M^{k_n})+O(\log r)=k_n T_{\psi\circ g}(r, M)+O(\log r).
	$$
	For every $y\in Y$ so that $g(y)\in u(Z)$, it follows that $\psi\circ g(y)=s$. 
	Since $g^*\omega=0$, it follows that
	\begin{align}\label{eq:nontrivial}
		(\beta\circ g)^*\omega'=g^*\omega=0.
	\end{align}
	Hence $(h\circ\phi)\circ (\psi \circ g)$ is constant on the connected component of $(\beta\circ g)^{-1}(U)$ containing $y$ which is thus zero.
	Therefor $\psi\circ g:U\to S$ factors the zero locus $(h\circ \phi=0)$.
	This implies that  $\ord_y(\psi \circ g)^*V_n\geq n$, hence
	$$
	n\overline{N}_{g}(r, u(Z))\leq N_{\psi\circ g}(r, V_n).
	$$
	In conclusion,
	$$
	\overline{N}_{g}(r, u(Z))\leq \frac{k_n}{n} T_{\psi\circ g}(r, M)+O(\log r).
	$$
On the other hand, $T_{\psi\circ g}(r, M)\leq cT_{g}(r, L)+O(\log r)$ for some constant $c>0$ since order functions decrease under rational map $\overline{X}\dashrightarrow \overline{S}$ induced by $\psi$.  
By $k_n/n\to 0$ as $n\to \infty$, we get 
\begin{equation}\label{eqn:202304151}
	\overline{N}_{g}(r, u(Z))\leq \ep T_{g}(r, L)+O_{\varepsilon}(\log r) 
	\end{equation}
	for every $\ep>0$.

	Suppose that $\varliminf_{r\to\infty}T_g(r,L)/\log r<+\infty$.
	Then by \cref{lem:20230415}, we have $T_g(r,L)=O(\log r)$.
			Then by \eqref{eqn:202304151}, we have $\overline{N}_{g}(r, u(Z))=O(\log r)$, in particular 
	\begin{equation}\label{eqn:202304152}
	\overline{N}_{g}(r, u(Z))=o(T_{g}(r,L))+O(\log r).
	\end{equation}	
		Next we assume $\varliminf_{r\to\infty}T_g(r,L)/\log r=+\infty$.
		Then $\log r=o(T_g(r,L))$.		
		Then by \eqref{eqn:202304151}, we have $\overline{N}_{g}(r, u(Z))\leq \ep T_{g}(r, L)+o(T_g(r,L))$ for all $\varepsilon>0$.	
		Hence we get $\overline{N}_{g}(r, u(Z))=o(T_g(r,L))$, in particular we get \eqref{eqn:202304152}.	
		The proof is completed.	
\end{proof}

\begin{proof}[Proof of \cref{thm:20250911}]
Consider a resolution of singularities $\mu:  S\to \Sigma$ and a projective compactification $\overline{S}$ of $S$ with $E:=\overline{S}\backslash S$ a simple normal crossing divisor.
We may assume that $\mu:S\to\Sigma$ extends to $\mu:\overline{S}\to\overline{\Sigma}$.
Let $q:\overline{S}\to\overline{X}$ be the composite of $\mu:\overline{S}\to \overline{\Sigma}$ and $p:\overline{\Sigma}\to\overline{X}$.
Set $\omega_i=\mu^*\tau_i$ as an element of $H^0(\overline{S}, q^*\Omega_{\overline{X}}(\log D))$.
Set $Z_{i}:=\{s\in \overline{S}; \omega_i(s)=0\}$ for $i\in\{1,\ldots, l\}$.
Then
\begin{equation}\label{eqn:202509121}
\mu^{-1}(R)\subset \bigcup_{i} Z_{i}
\end{equation}
as Zariski closed subsets in $\overline{S}$.

Consider a holomorphic map $f:\bC_{>\delta}\to X$ with Zariski dense image.  The generically finite proper surjective morphism $q:S\to X$ induces a proper surjective holomorphic map   $\pi:Y\to \bC_{>\delta}$  from a Riemann surface $Y$ to  $\bC_{>\delta}$ and  a holomorphic map $g:Y\to S$  satisfying the following commutative diagram:
\begin{equation}\label{figure:curve}
 	\begin{tikzcd}
	Y\arrow[r, "g"] \arrow[d, "\pi"] & S\arrow[d, "q"]\\
	\bC_{>\delta}\arrow[r, "f"] & X
\end{tikzcd}
\end{equation}
Note that $\pi$ is unramified outside $(\mu\circ g)^{-1}(R)$.    
 By \eqref{eqn:202509121}, it follows that 
 \begin{equation}\label{eqn:20250912}
\mathrm{supp} ( \ram \pi)\subset \bigcup_{i}g^{-1}(Z_{i}).
 \end{equation}

We claim
\begin{align}\label{eq:ramification}
	 	N_{{\rm ram} \pi}(r) =o( T_{g}(r)) + O(\log r) ||.
\end{align} 
We prove this.
By \eqref{eqn:20250912}, we have 
	\begin{align*}
		N_{{\rm ram}\pi }(r)\leq \deg \pi \cdot 	\overline{N}_g(r, \bigcup_{i}Z_{i}).  
	\end{align*}
	It then suffices to prove that 
	$$\overline{N}_g(r,  Z_{i})= o(T_{g}(r, L))+O(\log r)||$$ 
	for every  $Z_{i}$. 
	By \eqref{figure:curve}, we have a morphism of sheaves $g^*q^*\Omega_X\to \pi^*\Omega_{\mathbb C_{>\delta}}$.
 Since $\omega_i\in H^0(S, q^*\Omega_{X})$, we may consider $g^*\omega_i$ as a holomorphic section of $\pi^*\Omega^1_{\mathbb C_{>\delta}}$.

	\textbf{Case 1.} 
	$g^*\omega_i\neq 0$.
Write
	$$
	\xi:=\frac{ g^*\omega_i}{\pi^*dz},
	$$   
	which gives a holomorphic map $\xi:Y\to  \bP^1\backslash\{\infty\}$, since $g^*\omega_i$ is a holomorphic section of $\pi^*\Omega^1_{\mathbb C_{>\delta}}$. 
Hence $N_{\xi}(r,[\infty])=0$.
By \eqref{eq:First} one has 
	\begin{align*} 
		N_{\xi}(r,[0])= o(T_{g}(r, L))+O(\log r)||.
	\end{align*}
If $g(y)\in Z_{i}$, then $g^*\omega_i(y)=0$ as a section of $\pi^*\Omega^1_{\mathbb C_{>\delta}}$.
Hence $\xi(y)=0$, therefore
	 $$
	 \overline{N}_g(r, Z_{i})\leq N_{\xi}(r,[0]).
	 $$
These two estimates yield
	$$
	\overline{N}_g(r, Z_{i})= o(T_{g}(r, L))+O(\log r)||.
	$$

	\textbf{Case 2.} 
	$g^*\omega_i= 0$.
	By the morphism $q^*\Omega_{\overline{X}}(\log D)\to \Omega_{\overline{S}}(\log E)$, we regard $\omega_i$ as a section of $H^0(\overline{S},\Omega_{\overline{S}}(\log E))$.
	Then we have $g^*\omega_i= 0$ as sections of $\Omega_{Y}$.
	Let $u:Z\to S$ be a smooth modification of an irreducible component of $Z_{i}\cap S$.
	Since $u(Z)\subset Z_{i}$, the definition of $Z_i$ yields $u^*\omega_i=0$ in $H^0(Z,\Omega_Z)$.
	Therefore by \cref{lem:20250911}, we have $\overline{N}_g(r,  u(Z))= o(T_{g}(r))+O(\log r)||$.
This shows $\overline{N}_g(r,  Z_{i})= o(T_{g}(r, L))+O(\log r)||$. 	
	Hence we have proved \eqref{eq:ramification}.

	Now we identify $\bD^*$ with $\bC_{>1}$  by taking a transformation $z\mapsto\frac{1}{z}$.
	By \cref{thm2nd} and \eqref{eq:ramification}, we have an extension $\bar{g}:\bar{Y}\to\overline{S}$.
	Thus we have an extension $\bar{f}:\mathbb D\to\overline{X}$.
	Indeed, given a sequence $(a_n)$ in $\mathbb C_{>\delta}$ satisfing $a_n\to\infty$, we may take a sequence $(b_n)$ in $Y$ such that $p(b_n)=a_n$ and $b_n\to \beta\in \overline{Y}$.
Then $\bar{p}(\beta)=\infty$ and $f(a_n)=\pi\circ g(b_n)\to \pi\circ \bar{g}(\beta)$.
Thus $(f(a_n))$ converges for every sequence $(a_n)$ satisfing $a_n\to\infty$.
Thus $f$ has continuous extension $\bar{f}:\mathbb C_{>\delta}\cup\{\infty\}\to \overline{X}$, which is holomorphic by Riemann's removable singularity theorem.
\end{proof}

	\providecommand{\bysame}{\leavevmode ---\ }
	\providecommand{\og}{``}
	\providecommand{\fg}{''}
	\providecommand{\smfandname}{\&}
	\providecommand{\smfedsname}{\'eds.}
	\providecommand{\smfedname}{\'ed.}
	\providecommand{\smfmastersthesisname}{M\'emoire}
	\providecommand{\smfphdthesisname}{Th\`ese}

%	\bibliography{biblio}

\begin{thebibliography}{BDDM22}
		
		\bibitem[BD19]{BD19}
		{\scshape D.~Brotbek {\normalfont \smfandname} Y.~Deng} -- {\og Kobayashi
			hyperbolicity of the complements of general hypersurfaces of high degree\fg},
		\emph{Geom. Funct. Anal.} \textbf{29} (2019), no.~3, p.~690--750.
		
		\bibitem[BDDM22]{BDDM}
		{\scshape D.~{Brotbek}, G.~{Daskalopoulos}, Y.~{Deng} {\normalfont \smfandname}
			C.~{Mese}} -- {\og {Pluriharmonic maps into buildings and symmetric
				differentials}\fg}, \emph{arXiv e-prints} (2022), p.~arXiv:2206.11835.
		
		\bibitem[BK24]{BK24}
		{\scshape G.~B\'erczi {\normalfont \smfandname} F.~Kirwan} -- {\og
			Non-reductive geometric invariant theory and hyperbolicity\fg}, \emph{Invent.
			Math.} \textbf{235} (2024), no.~1, p.~81--127.
		
		\bibitem[{Cad}24]{Cad24}
		{\scshape B.~{Cadorel}} -- {\og {Hyperbolicity of generic hypersurfaces of
				polynomial degree via Green-Griffiths jet differentials}\fg}, \emph{arXiv
			e-prints} (2024), p.~arXiv:2406.19003.
		
		\bibitem[CD21]{CD21}
		{\scshape B.~{Cadorel} {\normalfont \smfandname} Y.~{Deng}} -- {\og {Picard
				hyperbolicity of manifolds admitting nilpotent harmonic bundles}\fg},
		\emph{arXiv e-prints} (2021), p.~arXiv:2107.07550.
		
		\bibitem[CDY22]{CDY22original}
		{\scshape B.~{Cadorel}, Y.~{Deng} {\normalfont \smfandname} K.~{Yamanoi}} --
		{\og {Hyperbolicity and fundamental groups of complex quasi-projective
				varieties}\fg}, \emph{arXiv e-prints: arXiv:2212.12225} (2022).
		
		\bibitem[CDY25]{CDY22}
		\bysame , {\og {Hyperbolicity and fundamental groups of complex
				quasi-projective varieties (II): via non-abelian Hodge theories}\fg},
		(2025).
		
		\bibitem[Dar16]{Dar16}
		{\scshape L.~Darondeau} -- {\og On the logarithmic {G}reen-{G}riffiths
			conjecture\fg}, \emph{Int. Math. Res. Not. IMRN} (2016), no.~6,
		p.~1871--1923.
		
		\bibitem[DMR10]{DMR}
		{\scshape S.~Diverio, J.~Merker {\normalfont \smfandname} E.~Rousseau} -- {\og
			Effective algebraic degeneracy\fg}, \emph{Invent. Math.} \textbf{180} (2010),
		no.~1, p.~161--223.
		
		\bibitem[EG03]{ElG}
		{\scshape J.~El~Goul} -- {\og Logarithmic jets and hyperbolicity\fg},
		\emph{Osaka J. Math.} \textbf{40} (2003), no.~2, p.~469--491.
		
		\bibitem[Fuj17]{Fuj17}
		{\scshape O.~Fujino} -- {\og Notes on the weak positivity theorems\fg}, in
		\emph{Algebraic varieties and automorphism groups. Proceedings of the
			workshop held at RIMS, Kyoto University, Kyoto, Japan, July 7--11, 2014},
		Tokyo: Mathematical Society of Japan (MSJ), 2017, p.~73--118 (English).
		
		\bibitem[Har77]{hartshorne1977}
		{\scshape R.~Hartshorne} -- \emph{Algebraic geometry}, Graduate Texts in
		Mathematics, vol. 156, Springer-Verlag, Berlin, 1977.
		
		\bibitem[Kaw81]{Kaw81}
		{\scshape Y.~Kawamata} -- {\og Characterization of abelian varieties\fg},
		\emph{Compos. Math.} \textbf{43} (1981), p.~253--276 (English).
		
		\bibitem[KR24]{KR24}
		{\scshape S.~{Kebekus} {\normalfont \smfandname} E.~{Rousseau}} -- {\og {Entire
				curves in C-pairs with large irregularity}\fg}, \emph{arXiv e-prints} (2024),
		p.~arXiv:2410.01245.
		
		\bibitem[Lan91]{Lan97}
		{\scshape S.~Lang} -- \emph{Number theory. {III}}, Encyclopaedia of
		Mathematical Sciences, vol.~60, Springer-Verlag, Berlin, 1991, Diophantine
		geometry.
		
		\bibitem[McQ98]{McQ}
		{\scshape M.~McQuillan} -- {\og Diophantine approximations and foliations\fg},
		\emph{Inst. Hautes \'Etudes Sci. Publ. Math.} (1998), no.~87, p.~121--174.
		
		\bibitem[MT22]{MT}
		{\scshape J.~Merker {\normalfont \smfandname} T.-A. Ta} -- {\og Degrees {$d
				\geqslant(\sqrt{n} \log n)^n$} and {$d\geqslant (n\log n)^n$} in the
			conjectures of {G}reen-{G}riffiths and of {K}obayashi\fg}, \emph{Acta Math.
			Vietnam.} \textbf{47} (2022), no.~1, p.~305--358.
		
		\bibitem[Nog81]{Nog81}
		{\scshape J.~Noguchi} -- {\og Lemma on logarithmic derivatives and holomorphic
			curves in algebraic varieties\fg}, \emph{Nagoya Math. J.} \textbf{83} (1981),
		p.~213--233.
		
		\bibitem[NW14]{NW13}
		{\scshape J.~Noguchi {\normalfont \smfandname} J.~Winkelmann} --
		\emph{Nevanlinna theory in several complex variables and diophantine
			approximation}, Grundlehren Math. Wiss., vol. 350, Tokyo: Springer, 2014
		(English).
		
		\bibitem[NWY08]{NWY08}
		{\scshape J.~Noguchi, J.~Winkelmann {\normalfont \smfandname} K.~Yamanoi} --
		{\og The second main theorem for holomorphic curves into semi-abelian
			varieties. {II}\fg}, \emph{Forum Math.} \textbf{20} (2008), no.~3,
		p.~469--503.
		
		\bibitem[NWY13]{NWY13}
		{\scshape J.~Noguchi, J.~Winkelmann {\normalfont \smfandname} K.~Yamanoi} --
		{\og Degeneracy of holomorphic curves into algebraic varieties. {II}\fg},
		\emph{Vietnam J. Math.} \textbf{41} (2013), no.~4, p.~519--525 (English).
		
		\bibitem[Siu04]{Siu02}
		{\scshape Y.-T. Siu} -- {\og Hyperbolicity in complex geometry\fg}, in
		\emph{The legacy of {N}iels {H}enrik {A}bel}, Springer, Berlin, 2004,
		p.~543--566.
		
		\bibitem[{Sta}22]{stacks-project}
		{\scshape T.~{Stacks project authors}} -- {\og The stacks project\fg},
		\url{https://stacks.math.columbia.edu}, 2022.
		
		\bibitem[Sza94]{Sza94}
		{\scshape E.~Szab{\'o}} -- {\og Divisorial log terminal singularities\fg},
		\emph{J. Math. Sci., Tokyo} \textbf{1} (1994), no.~3, p.~631--639 (English).
		
		\bibitem[Voj00]{Vojta2000abc}
		{\scshape P.~Vojta} -- {\og On the abc conjecture and diophantine approximation
			by rational points\fg}, \emph{American Journal of Mathematics} \textbf{122}
		(2000), no.~4, p.~843--872.
		
		\bibitem[Yam04]{Yam04}
		{\scshape K.~Yamanoi} -- {\og Algebro-geometric version of {Nevanlinna}'s lemma
			on logarithmic derivative and applications\fg}, \emph{Nagoya Math. J.}
		\textbf{173} (2004), p.~23--63 (English).
		
		\bibitem[Yam06]{Yam06}
		\bysame , {\og On the truncated small function theorem in {Nevanlinna}
			theory\fg}, \emph{Int. J. Math.} \textbf{17} (2006), no.~4, p.~417--440
		(English).
		
		\bibitem[Yam10]{Yam10}
		\bysame , {\og On fundamental groups of algebraic varieties and value
			distribution theory\fg}, \emph{Ann. Inst. Fourier} \textbf{60} (2010), no.~2,
		p.~551--563 (English).
		
		\bibitem[Yam15a]{Yam15}
		\bysame , {\og Holomorphic curves in algebraic varieties of maximal albanese
			dimension\fg}, \emph{Int. J. Math.} \textbf{26} (2015), no.~6, p.~45
		(English), Id/No 1541006.
		
		\bibitem[Yam15b]{yamanoi2015kobayashi}
		\bysame , {\og Kobayashi hyperbolicity and higher-dimensional nevanlinna
			theory\fg}, in \emph{Geometry and Analysis on Manifolds: In Memory of
			Professor Shoshichi Kobayashi}, Springer, 2015, p.~209--273.
		
		\bibitem[Yam23]{Y22}
		\bysame , {\og {Bloch's principle for holomorphic maps into subvarieties of
				semi-abelian varieties}\fg}, \emph{arXiv e-prints:arXiv:2304.05715} (2023),
		{to appear in Publ. Res. Inst. Math. Sci.}
		
	\end{thebibliography}
%	\bibliographystyle{smfalpha}
	
\end{document}